\theoremstyle{remark}
\newtheorem*{remark}{\bf Remark}
\newtheorem*{notation}{\bf Notation}
\theoremstyle{plain}
\newtheorem{theorem}{\bf Theorem}[section]
\newtheorem{proposition}[theorem]{\bf Proposition}
\newtheorem{Theorem}{\bf Theorem}
\newtheorem{lemma}[theorem]{\bf Lemma}
\newtheorem{corollary}[theorem]{\bf Corollary}
\newtheorem*{conjecture*}{\bf  Conjecture}
\def\A{{\mathbb A}}
\def\C{{\mathbb C}}
\def\R{{\mathbb R}}
\def\Z{{\mathbb Z}}
\def\Q{{\mathbb Q}}
\def\D{\mathbb{D}}
\def\KK{\mathbb{K}}
\def\LL{\mathbb{L}}
\def\p{\mathbb{P}}
\def\N{{\mathbb N}}
\def\K{\mathcal{K}}
\def\om{\omega}
\DeclareMathOperator{\ndiv}{div}
\DeclareMathOperator{\ord}{ord}
\DeclareMathOperator{\an}{an}
\DeclareMathOperator{\per}{Per}
\DeclareMathOperator{\hdiv}{\widehat{div}}
\DeclareMathOperator{\Per}{Per}
\DeclareMathOperator{\crit}{Crit}
\def\bif{\textup{bif}}
\def\loc{\textup{loc}}
\DeclareMathOperator{\poly}{Poly}
\DeclareMathOperator{\spec} {Spec}
\def\and{{\quad\text{and}\quad}}
\begin{document}

\title[Special curves for cubic polynomials]
{Classification of special curves in the space of cubic polynomials}
\author{Charles Favre}
\address{CMLS, \'Ecole polytechnique, CNRS, Universit\'e Paris-Saclay, 91128 Palaiseau Cedex, France}
\email{charles.favre@polytechnique.edu}
\author{Thomas Gauthier}
\address{LAMFA, Universit\'e de Picardie Jules Verne, 33 rue Saint Leu, 80039 Amiens Cedex}
\email{thomas.gauthier@u-picardie.fr}

\thanks{First author is supported by the ERC-starting grant project "Nonarcomp" no.307856, both authors are partially supported by ANR project ``Lambda'' ANR-13-BS01-0002}

\date{\today}

\begin{abstract}
We describe all special curves in the parameter space of complex cubic polynomials, that is all algebraic irreducible curves containing infinitely many 
post-critically finite polynomials. This solves in a strong form a conjecture by Baker and DeMarco for cubic polynomials.

 Let $\per_m(\lambda)$ be the algebraic curve 
consisting of those cubic polynomials that admit an orbit of period $m$ and multiplier $\lambda$.
We also prove that an irreducible component of $\per_m(\lambda)$
is special if and only if $\lambda =0$.
\end{abstract}

\maketitle

\tableofcontents

\section{Introduction}

The space $\poly_d$ of  complex polynomials of degree $d\ge2$ modulo affine conjugacy forms a complex analytic space that admits a ramified  parameterization
by the affine space  $\A^{d-1}_\C$. The study of the set of degree $d$ polynomials with special dynamical features forms the core of the modern theory of holomorphic dynamics. We shall be concerned here with the distribution of the set of post-critically finite (PCF) polynomials for which all critical points have a finite orbit under iteration. This set is a countable union of points defined over a number field, see e.g.~\cite[Corollary 3]{Ingram}. It was proved in~\cite{Levin1} that 
hyperbolic PCF quadratic polynomials equidistribute to the harmonic measure of the Mandelbrot set.
This convergence was generalized in~\cite{baker-hsia} in degree $2$, and later in~\cite{favregauthier} in any degree where it was proved that under a mild assumption
 any sequence of Galois-invariant finite subsets of PCF polynomials converges in the sense of measures to the so-called bifurcation measure. This fact was further explored in~\cite{gauthiervigny1}.
The support of this measure has been characterized in several ways in a series of works~\cite{favredujardin,dujardin2,dujardin-higher,gauthier-higher}, and it was shown by the second author~\cite{Article1} to have maximal
Hausdorff dimension $2(d-1)$.

\smallskip

In a beautiful recent paper~\cite{BD}, Baker and DeMarco have proposed a way to describe the distribution of PCF polynomials from the point of view of the Zariski topology. 
They defined  \emph{special} algebraic subvarieties as those subvarieties $Z\subset \poly_d$ admitting a Zariski-dense subset formed by PCF polynomials, and asked about the classification of such varieties. More precisely, they offered a quite general conjecture~\cite[Conjecture 1.4]{BD} inspired by the Andr\'e-Oort conjecture in arithmetic geometry  stating that any polynomial\footnote{the conjecture is actually stated for any rational maps, and a stronger conjecture related to Pink and Zilber's conjectures can be found in~\cite{Demarco-stable}.} lying in a special (proper) subvariety should admit a critical orbit relation (see also~\cite[Conjecture 6.56]{Silverman-modulispace}). They gave a proof of a stronger version of this conjecture in the case the subvariety was isomorphic to an affine line.

Our objective is to give the list of all special curves in the case $d=3$, thereby proving Baker-DeMarco's conjecture for cubic polynomials. 
The geometry of the space of cubic polynomials has been thoroughly explored in the seminal work~\cite{BH} of Branner and Hubbard. Instead of using their parameterization, we shall follow~\cite{favredujardin}  and consider the parameterization $(c,a) \mapsto P_{c,a}$ of the parameter space by the affine plane
with
\[P_{c,a}(z):=\frac{1}{3}z^3-\frac{c}{2}z^2+a^3, \ z\in\C, (c,a)\in \C^2~.\] 
Observe that $P_{c,a}$ then admits two critical points $c_0:= 0$ and $c_1:=c$ and that this map defines
a finite branched cover of the moduli space $\textup{Poly}_3$ of cubic polynomials with marked critical points.

~

Here is our main result.
\begin{Theorem}\label{tm:classification}
An irreducible curve $C$ in the space $\poly_3$ contains an infinite collection of post-critically finite polynomials if and only if one of the following holds.
\begin{enumerate}
\item One of the two critical points is persistently pre-periodic on $C$, i.e. there exist integers $m>0$ and $k\ge0$ such that 
 $P_{c,a}^{m+k}(c_0) = P_{c,a}^k(c_0)$ or $P_{c,a}^{m+k}(c_1) = P_{c,a}^k(c_1)$ for all $(c,a)\in C$.
\item There is a persistent collision of the two critical orbits on $C$, i.e.  there exist $(m,k)\in\mathbb{N}^2\setminus\{(1,1)\}$ such that 
$P_{c,a}^{m}(c_1) = P_{c,a}^k(c_0)$ for all $(c,a)\in C$.
\item The curve $C$ is given by the equation $\{(c,a),\, 12a^3-c^3-6c=0\}$, and coincides with the set of cubic polynomials having a non-trivial symmetry, i.e. the set of parameters $(c,a)$ for which $Q_c(z):=-z+c$ commutes with $P_{c,a}$.
\end{enumerate}
\end{Theorem}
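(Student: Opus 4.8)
The plan is to prove the two implications separately: the ``if'' direction is soft, while the ``only if'' direction rests on arithmetic equidistribution followed by a dynamical rigidity analysis carried out at the degenerate places of the family.

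\emph{The ``if'' direction.} The point is that in each of the three cases, being PCF for a parameter $(c,a)\in C$ reduces to the preperiodicity of a \emph{single} critical point: in case~(1) one critical orbit is finite all along $C$ by hypothesis; in case~(2) a persistent collision $P_{c,a}^{m}(c_1)=P_{c,a}^{k}(c_0)$ makes the two critical orbits eventually share a common tail, so one is preperiodic iff the other is; and in case~(3) the involution $Q_c(z)=-z+c$ commutes with $P_{c,a}$ and exchanges $c_0=0$ with $c_1=c$, so the two critical orbits are conjugate, hence simultaneously finite or infinite. Restricting the family to the normalization of $C$ thus yields a one-dimensional analytic family of cubic polynomials whose PCF parameters are exactly those where one fixed critical point $c_\star$ is preperiodic. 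One checks in each case that $c_\star$ is active on $C$ (it is bounded at the point $c=a=0$, which lies on all three curves, and escapes near infinity) unless it is persistently preperiodic, in which case $C$ is entirely PCF and there is nothing to prove; when $c_\star$ is active, the Misiurewicz parameters are dense in its non-empty bifurcation locus on $C$ by the usual normal-families argument, producing infinitely many PCF parameters. Finally, imposing $Q_c\circ P_{c,a}=P_{c,a}\circ Q_c$ is a direct computation whose solution set is exactly $\{12a^3-c^3-6c=0\}$, which proves the last assertion of~(3).

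\emph{The ``only if'' direction: arithmetic equidistribution.} Assume $C$ carries infinitely many PCF polynomials. Since the PCF locus is a $\mathrm{Gal}(\overline{\Q}/\Q)$-invariant set of algebraic points, $C$ is defined over $\overline{\Q}$; I pass to the normalization $\nu\colon\widetilde C\to C\subset\A^2$, a smooth projective $\overline{\Q}$-curve, and introduce for $i\in\{0,1\}$ the critical Green's function $G_i(c,a):=G_{P_{c,a}}(c_i)$ together with the induced height $\widehat h_i$ on $\widetilde C$; by the B\"ottcher / escape-rate construction these heights come from semipositive adelic metrized line bundles $\overline L_i$. If one of the two critical points is persistently preperiodic on $C$ we are in case~(1), so assume not; then both $\overline L_i$ have positive degree. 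A PCF parameter has $c_0$ and $c_1$ both preperiodic, hence has height $0$ for \emph{both} $\widehat h_0$ and $\widehat h_1$, so the PCF points form a generic sequence of small points for each $\overline L_i$. The arithmetic equidistribution theorem (Yuan), applied twice, then shows that at every place $v$ of $\overline{\Q}$ the Galois orbits of these points equidistribute on the Berkovich curve $\widetilde C_v^{\mathrm{an}}$ both to the normalized equilibrium measure of $\overline L_0$ and to that of $\overline L_1$. Hence, at every place, the two bifurcation measures of $C$ --- the normalizations of $dd^{c}(G_0\circ\nu)$ and of $dd^{c}(G_1\circ\nu)$ --- coincide; in particular the activity loci of $c_0$ and of $c_1$ on $C$ agree at every place.

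\emph{The ``only if'' direction: dynamical rigidity.} It remains to deduce a critical orbit relation from this place-by-place coincidence of bifurcation measures, and this is the heart of the argument. The strategy I would follow is to test the coincidence at the places where the restricted family $(P_{c,a})_{(c,a)\in C}$ degenerates: at each point of $\widetilde C$ lying over infinity (for the archimedean place) and at each non-archimedean place of bad reduction, the localized family is a family of cubic polynomials over a complete non-archimedean field, and the fine behavior of $G_0,G_1$ there is controlled by the non-archimedean dynamics of the limiting maps --- equivalently, by the Branner--Hubbard / DeMarco--Pilgrim combinatorics of the escape regions, i.e. by the itineraries of the two critical points in the dynamical tree of the limit. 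Equality of the local equilibrium measures should rigidify these local models enough to force the itineraries of $c_0$ and $c_1$ to remain ``parallel'' along every degeneration of $C$; propagating this coincidence across $C$ by analytic continuation, and invoking the Baker--DeMarco theorem for affine lines together with the one-dimensional equidistribution results recalled above to handle the degenerate strata, one should conclude that either the two critical orbits must eventually meet --- giving $P_{c,a}^{m}(c_1)=P_{c,a}^{k}(c_0)$ identically on $C$, i.e. case~(2), with the degenerate stratum $\{c=0\}$ (where $c_0=c_1$) corresponding to $(m,k)=(0,0)$ --- or that the only remaining rigid linkage between two distinct critical orbits of a cubic polynomial holds, namely commutation with $Q_c(z)=-z+c$, i.e. case~(3). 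I expect this transfer, from the soft coincidence of measures to a genuine global algebraic relation, to be the main obstacle: it requires a careful study of the boundary of $\poly_3$ and of its non-archimedean analogues, and it is precisely here that the combinatorial geometry special to cubic polynomials (as opposed to the unicritical families) is used. The companion statement on $\per_m(\lambda)$ --- an irreducible component being special iff $\lambda=0$ --- should then follow from the same machinery, together with the identification of the curves $\per_m(0)$ as the instances of case~(1) where a critical point is persistently periodic.
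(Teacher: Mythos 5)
Your ``if'' direction follows the same soft argument as the paper (which packages it in Lemma~\ref{lm:per(n,k)infintepcf} and the implication $(2)\Rightarrow(1)$ of Theorem~\ref{tm:alaBDM}): reduce each case to preperiodicity of a single marked critical point, check that point is active somewhere on $C$ because $G$ is proper, then use normal families to produce infinitely many Misiurewicz parameters. That part is fine.

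For the ``only if'' direction, however, there are two genuine gaps. First, you propose to apply Yuan's equidistribution theorem twice, once to each adelic line bundle $\overline L_0$, $\overline L_1$. This requires the metrics built from $g_{0,v}$ and $g_{1,v}$ individually to be \emph{continuous} on a compactification of the normalization of $C$, and that continuity is precisely the unresolved issue that this paper is organized to avoid. The authors state explicitly that they sidestep it by working with the family $\max\{s_0\,g_{0,v}, s_1\,g_{1,v}\}$ for $s_0,s_1>0$, which is visibly continuous (it is comparable to $\log^+\max\{|c|,|a|\}$ by Proposition~\ref{prop:growth Green}), and then let $s\to(1,0)$ and $s\to(0,1)$ to recover information about $g_{0,v}$ and $g_{1,v}$ separately (they even credit Ghioca--Ye for a direct proof of continuity of the individual metrics; that is a nontrivial separate input, not a formality). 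As written, your argument silently assumes the Ghioca--Ye continuity.

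Second, and more seriously, the passage from the place-by-place coincidence of bifurcation measures to a concrete algebraic relation between the critical orbits is exactly the heart of the proof, and you acknowledge it as ``the main obstacle'' without supplying it. The paper's route is quite different from the combinatorics of escape regions you gesture at: it first upgrades proportionality of $g_0,g_1$ to an \emph{adelic} identity $\varphi_{c,a}(P^q(c_0))^{s_0}=\zeta\,\varphi_{c,a}(P^q(c_1))^{s_1}$ near each branch at infinity (Theorem~\ref{tm:samegreen}(2), using the explicit arithmetic of the B\"ottcher expansion from Lemma~\ref{lm:bottcher1}); it then feeds the orbit $(P^n(c_0),P^n(c_1))$, which satisfies adelic branch conditions at all places, into Xie's algebraization theorem (Theorem~\ref{tm:xie}) to produce an invariant correspondence in $\A^1\times\A^1$; Medvedev--Scanlon / Pakovich then forces this correspondence to be the graph of a polynomial commuting with an iterate of $P_{c,a}$ (Theorem~\ref{thm:symmetry}); and the final classification of those commuting polynomials (Proposition~\ref{curves Z(q,m,zeta)}) extracts cases (1)--(3). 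Without a substitute for this chain --- in particular without a mechanism that turns measure-theoretic coincidence into a polynomial identity valid on all of $C$ --- your proposal does not yet prove the theorem.

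Two smaller remarks. You invoke the Baker--DeMarco affine-line theorem to ``handle the degenerate strata,'' but the whole point here is that $C$ may have several branches at infinity, which is exactly where the affine-line hypothesis fails; the present paper's Puiseux/Böttcher analysis and Xie's theorem are brought in precisely to replace it, so this invocation is circular. And the last sentence about $\Per_m(\lambda)$ is not a corollary of the classification statement alone: the paper's Theorem~\ref{thm:perm} requires a separate non-archimedean multiplier argument (Propositions~\ref{prop:estimate-mult} and~\ref{prop:multiplier}, using Kiwi's work over $\C((t))$ and a place of residual characteristic $3$), which has no analogue in your sketch.
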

 Recall that for any integer $m\ge1$ and any complex number $\lambda\in \C$
the set $\per_m(\lambda)$ consisting of all polynomials $P_{c,a}\in \poly_3$ that admits at least one periodic orbit of period $m$ and multiplier $\lambda$ is an algebraic curve (see \S\ref{sec:perm} for a more precise description).

The geometry of these curves has been explored by several authors, especially when $\lambda =0$. The irreducible components of $\Per_m(0)$ has been proven to be smooth by Milnor~\cite{Milnor-cubic}, and the escape components of these curves have been described in terms of Puiseux series by Bonifant, Kiwi and Milnor~\cite{BKM} (see also~\cite[\S 7]{Kiwi:cubic}). On the other hand, DeMarco and Schiff~\cite{DMS} have given an algorithm to compute their Euler characteristic.

From the point of view of pluripotential theory, the distribution of the sequence of curves $(\Per_m(\lambda))_{m\geq1}$ has been completely described by Bassanelli and Berteloot in~\cite{BB2} in the case $|\lambda|\leq1$ (see also~\cite{DistribTbif} for the case $|\lambda|>1$ and~\cite{BB3} for the case of quadratic rational maps).

Inspired by a similar result from Baker and DeMarco, see~\cite[Theorem~1.1]{BD} we also give a characterization of
those $\per_m(\lambda)$ that contain infinitely many PCF polynomials. This answers a conjecture of DeMarco in the case of cubic polynomials (see e.g.~\cite[Conjecture 6.59]{Silverman-modulispace}). More precisely, we prove
\begin{Theorem}\label{thm:perm}
For any $m\ge1$, the curve $\per_m(\lambda)$ contains infinitely many post-critically finite polynomials if and only if $\lambda =0$.
\end{Theorem}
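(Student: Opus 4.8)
The plan is to reduce Theorem~\ref{thm:perm} to the classification obtained in Theorem~\ref{tm:classification}. First I would establish the easy direction: if $\lambda = 0$, then every polynomial $P_{c,a}$ lying on $\per_m(0)$ has a superattracting cycle of period $m$, hence one of its two critical points is periodic of period $m$ along the whole curve; each irreducible component of $\per_m(0)$ is therefore of type (1) in Theorem~\ref{tm:classification} (with $k=0$), and contains infinitely many PCF parameters because the remaining critical point still varies and its PCF locus is an infinite set of points on the curve (using, e.g., the equidistribution/positivity of the bifurcation current associated to the free critical point, which is non-torsion along $\per_m(0)$). Conversely, suppose an irreducible component $C$ of $\per_m(\lambda)$ is special; by Theorem~\ref{tm:classification} it must be of type (1), (2) or (3). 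I will rule out the possibility $\lambda \neq 0$ in each case.

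For type (3), the symmetry curve $\{12a^3 - c^3 - 6c = 0\}$: here $P_{c,a}$ commutes with the involution $Q_c(z) = -z+c$, which swaps the two critical points $0$ and $c$. I would compute directly that on this curve the multiplier of any cycle equals the multiplier of its image under $Q_c$, and exploit the explicit form of $P_{c,a}$ on the symmetry locus to show that the resulting one-parameter family of multiplier functions $\lambda_m(c,a)$ is non-constant on $C$ — indeed $C$ contains PCF parameters with a superattracting cycle (so $\lambda = 0$ is attained) as well as parameters where the relevant cycle is repelling — so $C$ cannot be a component of $\per_m(\lambda)$ for any fixed $\lambda \neq 0$. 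For types (1) and (2), the argument is similar in spirit: on such a curve one critical point is persistently preperiodic or the two critical orbits persistently collide, and I would argue that the family still contains both a parameter with a superattracting cycle of period $m$ (forcing $\lambda$, if constant, to be $0$) and a parameter where that cycle is hyperbolic repelling; more robustly, I would show the multiplier of the period-$m$ cycle is a non-constant algebraic function on $C$ unless $\lambda = 0$, using that the curves $\per_m(\lambda)$ for distinct $\lambda$ are distinct and that a fixed component $C$ cannot lie in infinitely many of them. The cleanest formulation: the multiplier map $\lambda \colon C \to \C$ (well-defined after passing to a cover marking the cycle) is finite-to-one onto its image unless it is constant equal to $0$, because a constant nonzero multiplier along a curve of type (1), (2) or (3) would force a relation incompatible with the explicit equations.

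The main obstacle I expect is the bookkeeping in types (1) and (2): one must show that no component of $\per_m(\lambda)$ with $\lambda \neq 0$ can persistently carry a preperiodic critical point or a critical-orbit collision. This requires understanding how the period-$m$ cycle and its multiplier behave as $(c,a)$ moves along such a curve, and in particular producing, within the curve, a parameter realizing a superattracting period-$m$ cycle. A convenient tool is to note that a type (1) or (2) curve is itself (a component of) some $\per_{m'}(0)$ or a critical-collision locus, and to use the transversality/intersection results for the curves $\per_m(\cdot)$ together with the fact that the active critical point on $C$ must still be able to be absorbed into the period-$m$ cycle, which happens precisely at parameters where $\lambda = 0$; any constant nonzero value of $\lambda$ is then excluded because it would make $C$ disjoint from those PCF parameters while Theorem~\ref{tm:classification} guarantees $C$ meets the PCF locus in an infinite, hence Zariski-dense, set. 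Putting these pieces together yields that $\lambda = 0$ is the only possibility, completing the proof.
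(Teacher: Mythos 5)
Your proposal takes a genuinely different route from the paper: you try to deduce Theorem~\ref{thm:perm} from the classification Theorem~\ref{tm:classification} by arguing that none of the three curve types can be a component of $\per_m(\lambda)$ with $\lambda\neq 0$. The paper instead proves Theorem~\ref{thm:perm} directly (and before Theorem~\ref{tm:classification}), using only the proportionality statement Theorem~\ref{tm:samegreen}\,(1): it proves Proposition~\ref{prop:estimate-mult}, which shows via Kiwi's non-Archimedean dynamics over $\C((t))$ that a special $C\subset\per_m(\lambda)$ with $\lambda\neq 0$ forces one critical point to be persistently \emph{periodic} and forces $\lambda$ to equal the multiplier of a \emph{repelling} cycle of a PCF \emph{quadratic} polynomial; a Bezout argument then produces a unicritical cubic parameter on $C$, and Proposition~\ref{prop:multiplier} yields contradictory $3$-adic estimates ($|\lambda|_v<1$ because the unicritical degree is $3$, yet $|\lambda|_v=1$ because the quadratic degree $2$ is prime to $3$).

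The gap in your proposal is the crux of the whole theorem, and you do not close it. You assert that ``a constant nonzero multiplier along a curve of type (1), (2) or (3) would force a relation incompatible with the explicit equations'' and that you ``would compute directly'' or argue ``similar in spirit,'' but no such computation is supplied, and it is not at all routine: for curves of type (1) and (2) there are infinitely many candidate curves, parameterized by $(n,k)$ and by $(m,k)$, and for each you would have to exclude \emph{every} $m\geq 1$ and every $\lambda\neq 0$. There is also a conceptual slip in the ``non-constancy of the multiplier'' framing: on a component of $\per_m(\lambda)$ the relevant multiplier is constant \emph{by definition}, so the content you need is precisely that no non-critical cycle of bounded period can have a constant multiplier along a special curve — this is not established by observing that distinct $\lambda$ give distinct curves. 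Finally, your argument that ``a constant nonzero $\lambda$ would make $C$ disjoint from the PCF parameters'' is false as stated: PCF polynomials do lie on $\per_m(\lambda)$ for many $\lambda\neq 0$, since their non-critical cycles have nonzero multipliers; this is exactly why the paper must invoke the subtle arithmetic input (the multiplier of a repelling cycle of a PCF quadratic is a $3$-adic unit), which your sketch lacks.
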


The general strategy of the proof of these two theorems was set up by Baker and DeMarco. 
They start with an irreducible algebraic curve $C\subset \poly_3$ containing infinitely many PCF polynomials (in Theorem~\ref{thm:perm} the curve $C$ is a component
of some $\per_m(\lambda)$).  We observe however that they used at several key points their assumption
that  the curve $C$ has a single branch at infinity. To remove this restriction we had to include two new ingredients:
\begin{itemize}
\item
we construct a one parameter family of heights for which Thuillier-Yuan's equidistribution theorem~\cite{yuan,thuillier} applies;
\item 
we investigate systematically the arithmetic properties of the coefficients of the expansion of the B\"ottcher coordinates and its dependence on the parameters 
$c,a$.
\end{itemize}
We propose also a new way to build the symmetry by relying on a recent algebraization result of Xie~\cite{Xie} that gives a criterion for when a formal curve in the affine plane is a branch of an algebraic curve.

\smallskip

A characteristic feature of our proofs is to look at the dynamics induced by cubic polynomials over various fields: 
over the complex numbers and over $p$-adic fields (see e.g. \S\ref{sec:propor}), over the field of Laurent series (see the proof of Proposition~\ref{prop:branch} and \S \ref{sec:perm}), and 
over a number field (see \S \ref{sec:curves in P3}).  We use at one point the universality theorem of McMullen~\cite{McMullen3} which is a purely Archimedean statement. 
Moreover the work of Kiwi~\cite{Kiwi:cubic} on non-Archimedean cubic polynomials over a field of residual characteristic zero plays a key role in the proof of Theorem~\ref{thm:perm}.

\medskip

Let us describe in more detail how we proceed, and so pick an irreducible algebraic curve $C\subset \poly_3$ containing infinitely many PCF polynomials.
We may suppose that  neither $c_0$ nor $c_1$ are persistently pre-periodic on $C$. By a theorem of McMullen~\cite[Lemma~2.1]{McMullen4} this is equivalent to 
say that both critical points exhibit bifurcations at some (possibly different) points in $C$.  There is a more quantitative way to describe the set of bifurcations
using the Green function $g_{c,a}(z) := \lim_{n\to \infty} \frac1{3^n} \log \max \{ 1, | P^n(z)|\}$. Indeed both  functions $g_0(c,a) := g_{c,a}(c_0)$, $g_1(c,a) := g_{c,a}(c_1)$ are non-negative and pluri-subharmonic, and it is a fact~\cite[\S 5]{Demarco1} that the support of the positive measure $\Delta g_0|_C$ (resp. $\Delta g_1|_C$) is equal to the set of parameters where $c_0$ (resp. $c_1$) is unstable. 

\smallskip

The first step consists in proving that $g_0|_C$ and $g_1|_C$ are proportional, and this conclusion is obtained by applying an equidistribution result of points of small height due to Yuan~\cite{yuan} and Thuillier~\cite{thuillier}. We first observe  that $C$ is necessarily defined over a number field $\KK$ since it contains infinitely many PCF polynomials, so that  we may introduce the functions $g_{0,v}, g_{1,v}$ for all (not necessarily Archimedean) places $v$ over $\KK$. 
These functions can now be used to build a one-parameter family of heights on $C$ by setting
\[h_s(p) := \frac1{\deg(p)} \sum_{q,v} \max\{s_0 g_{0,v}(q), s_1 g_{1,v}(q)\} \]
where $s = (s_0, s_1)\in \R_+^2$, and the sum ranges over all Galois conjugates $q$ of $p$ and over all places $v$ over $\KK$.
When $s_0$ and $s_1$ are positive integers, then we prove in \S\ref{sec:curves in P3} that the height $h_s$ is induced
by a continuous semi-positive adelic metrization in the sense of Zhang on a suitable line bundle over $C$ of positive degree,
 so that Thuillier-Yuan's theorem 
applies. This gives us sufficiently many  restrictions on $g_0$ and $g_1$ which force their proportionality.
The key arguments are Proposition~\ref{prop:branch} that is close in spirit to~\cite[Proposition~2.1 (3)]{BD}, and the fact that the function
$\max\{g_{0,v}, g_{1,v}\}$ is a proper continuous function on $\poly_3$ for any place $v$.

\medskip

From the proportionality of $g_0$ and $g_1$ on a special curve, we are actually able to conclude the proof of Theorem~\ref{thm:perm}. This step is done in \S\ref{sec:perm}. We suppose by contradiction that our special curve $C$ is an irreducible component of some $\per_m(\lambda)$ with $\lambda \neq 0$. 
Then each branch at infinity of $C$ defines a cubic polynomial over the  field of Laurent series $\C((t))$. 
We show that 
except when $c_0$ or $c_1$ is persistently periodic in $C$ the multipliers of all periodic points are exploding on that branch by~\cite{Kiwi:cubic}.
We then analyze the situation of a unicritical\footnote{i.e. having a single critical point} polynomial in $C$ and computing the norm of the multiplier of its periodic points in a suitable field
of residual characteristic $3$, we are able to get the required contradiction.

\medskip

Let us come back to the proof of Theorem~\ref{tm:classification}. At this point, 
we have an irreducible algebraic curve $C$ defined over a number field $\KK$ and such that $g_{0,v} = g_{1,v}$ at any place $v$ over $\KK$.
Recall that for any polynomial $P_{c,a}$ there exists an analytic isomorphism near infinity conjugating the polynomial to the cubic monomial map. 
This isomorphism is referred to as the B\"ottcher coordinate $\varphi_{c,a}$ of $P_{c,a}$. We prove that when $c,a$ are defined over a number field
then $\varphi_{c,a}$ is a power series with coefficients in a number field whose domain of convergence is positive at any place, see Lemma~\ref{lm:bottcher1} and Proposition~\ref{prop:green is green}.

Building on an argument of Baker and DeMarco, we then show that outside a compact subset of the analytification of  $C$ (for any completion of $\KK$)
the values of the B\"ottcher coordinates at $c_0$ and $c_1$ are proportional up to a root of unity (Theorem~\ref{tm:samegreen} (2)).

\smallskip

The proof now takes a slight twist as we fix any polynomial $P:=P_{c,a}$ that is \emph{not} post-critically finite and for which $(c,a)$ belongs to $C(\LL)$
for some finite field extension $\LL$ of $\KK$. We prove that any such polynomial admits a weak form of symmetry
in the sense that  there exists an irreducible curve $Z_P\subset \p^1 \times \p^1$ that is stable by the map $(P,P)$.
To do so we apply~\cite[Theorem~1.5]{Xie} as an alternative to the arguments of Baker and DeMarco in~\cite[\S 5.6]{BD}.
In order to get a polynomial that commutes with $P$ instead of a correspondence, we proceed as Baker and DeMarco and
use Medvedev-Scanlon's result~\cite[Theorem~6.24]{medvedev-scanlon} (see~\cite[Theorem 4.9]{pakovich} for another proof of this result). 

~

At this point we have proved the following result that we feel is of independent interest. 
\begin{Theorem}\label{tm:alaBDM}
Pick any irreducible complex algebraic curve $C\subset\poly_3$. Then the following assertions are equivalent:
\begin{enumerate}
\item the curve $C$ is special,
\item for any critical point that is not persistenly pre-periodic on $C$, 
the set of PCF polynomials lying in $C$ is equal to the set where this critical point is pre-periodic;
\item the curve $C$ is defined over a number field $\KK$ and there exist integers $(s_0,s_1)\in\mathbb{N}^2\setminus\{(0,0)\}$ 
such that for any place $v\in M_\KK$, we have
\[s_0\cdot g_{0,v} = s_1\cdot g_{1,v}~,\] 
on the analytification of $C$ over the completion of $\KK$ w.r.t. the $v$-adic norm;
\item the curve $C$ is special, and for any sequence $X_k\subset C$ of Galois-invariant finite sets of PCF polynomials with $X_k\neq X_l$ for $l\neq k$, the probability measures $\mu_k$ equidistributed on $X_k$ converge towards (a multiple of) the bifurcation measure $T_\bif\wedge[C]$  as $k\to\infty$;
\item there exists a root of unity $\zeta$, and integers $q,m\ge0$ such that the polynomial $Q_{c,a}(z) := \zeta P_{c,a}^m(z) + (1-\zeta) \frac{c}{2}$ commutes with 
any iterate $P_{c,a}^k$ such that $\zeta^{3^k} = \zeta$, and $Q_{c,a}(P^q_{c,a}(c_i)) = P^q_{c,a}(c_j)$ for some $i,j\in\{0,1\}$ and all $(c,a)\in C$.
\end{enumerate}
\end{Theorem}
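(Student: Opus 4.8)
The plan is to prove Theorem~\ref{tm:alaBDM} as a loop of implications, with the bulk of the work routed through the preceding results of the paper, so that very little new is needed here. Concretely I would establish $(1)\Rightarrow(3)\Rightarrow(2)\Rightarrow(1)$ as the main cycle, and then bolt on $(4)$ and $(5)$ as equivalent reformulations. The implication $(1)\Rightarrow(3)$ is exactly the content of the first two steps of the program sketched in the introduction: if $C$ is special then it contains infinitely many PCF parameters, hence is defined over a number field $\KK$ (by the standard Galois/height argument, as in~\cite{Ingram}), and the equidistribution machinery built on the heights $h_s$ together with Proposition~\ref{prop:branch} and the properness of $\max\{g_{0,v},g_{1,v}\}$ forces the proportionality $s_0 g_{0,v}=s_1 g_{1,v}$ at every place, for some nonzero integer vector $(s_0,s_1)$. (If one of the critical points is persistently pre-periodic on $C$ then the corresponding Green function vanishes identically on $C$ at every place, and one takes $(s_0,s_1)$ accordingly; this degenerate case must be carried along explicitly.)

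For $(3)\Rightarrow(2)$, fix a critical point, say $c_0$, that is not persistently pre-periodic on $C$; then by~\cite[Lemma~2.1]{McMullen4} the measure $\Delta g_0|_C$ is nonzero, so $s_0\neq0$, hence $s_1\neq0$ as well and $g_{0,v}$ and $g_{1,v}$ are everywhere proportional with positive coefficients. A parameter $(c,a)\in C$ is PCF exactly when both $g_0$ and $g_1$ vanish there (over $\C$), and by proportionality $g_0(c,a)=0 \iff g_1(c,a)=0$; moreover $g_i(c,a)=0$ is equivalent to $c_i$ being preperiodic under $P_{c,a}$. Thus $\{\text{PCF in }C\}=\{g_0=0\}=\{c_0 \text{ preperiodic}\}$, which is assertion~(2) for the chosen critical point; the same argument applies to $c_1$. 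The implication $(2)\Rightarrow(1)$ is immediate once we know the set where $c_0$ is preperiodic on $C$ is infinite: this set is Zariski-dense in $C$ because it is the increasing union of the proper subvarieties $\{P^{m+k}(c_0)=P^k(c_0)\}\cap C$, each nonempty (since $c_0$ bifurcates on $C$, so infinitely many of these cut out genuine points), and a curve is not a finite union of points; hence $C$ is special.

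The statement $(1)\Leftrightarrow(4)$ is the pluripotential-theoretic upgrade: $(1)$ gives the number field $\KK$ and the proportionality, hence by $(3)$ the height $h_s$ is induced by a semi-positive adelic metrization of positive degree, so Thuillier--Yuan applies and every sequence of distinct Galois-invariant PCF subsets equidistributes to the corresponding adelic measure, whose Archimedean component is (a multiple of) $T_\bif\wedge[C]$ by~\cite{favregauthier} and~\cite[\S5]{Demarco1}; conversely $(4)$ trivially implies $(1)$ since the existence of even one infinite nested family of distinct PCF sets forces infinitely many PCF parameters. Finally, $(1)\Leftrightarrow(5)$ is the symmetry statement: assuming $(1)$, hence the everywhere-proportionality of the Green functions, we run the B\"ottcher-coordinate analysis (Lemma~\ref{lm:bottcher1}, Proposition~\ref{prop:green is green}, Theorem~\ref{tm:samegreen}(2)) to see that at a non-PCF parameter the two B\"ottcher values are proportional up to a root of unity $\zeta$, apply Xie's algebraization~\cite[Theorem~1.5]{Xie} to produce an irreducible $(P,P)$-invariant curve $Z_P\subset\p^1\times\p^1$, and then invoke Medvedev--Scanlon~\cite[Theorem~6.24]{medvedev-scanlon} to replace the correspondence by an honest polynomial commuting with the appropriate iterates; tracking how this symmetry varies with $(c,a)$ on $C$ and its effect on the critical orbits yields the explicit form $Q_{c,a}(z)=\zeta P_{c,a}^m(z)+(1-\zeta)\tfrac{c}{2}$ with $Q_{c,a}(P^q_{c,a}(c_i))=P^q_{c,a}(c_j)$. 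The converse $(5)\Rightarrow(1)$ follows because such a relation forces a critical-orbit relation making infinitely many parameters on $C$ PCF (or one critical point persistently preperiodic). The main obstacle I anticipate is the last equivalence: extracting a \emph{uniform-in-parameter} symmetry of the clean normalized shape in $(5)$ from the pointwise existence of $Z_P$ requires showing the curves $Z_P$ fit together algebraically over $C$ and that the root of unity $\zeta$, the exponents $m,q$, and the critical-orbit matching $(i,j)$ can be chosen independent of the chosen non-PCF base point — this is exactly where the arithmetic of the B\"ottcher coefficients and the rigidity of Medvedev--Scanlon decompositions have to be combined carefully.
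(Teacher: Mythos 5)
Your overall architecture ($(1)\Rightarrow(3)\Rightarrow(2)\Rightarrow(1)$, then $(4)$ and $(5)$ as satellites hanging off $(1)$ and $(3)$) matches the paper's, and the use of the preceding machinery (Theorem~\ref{tm:samegreen}, Thuillier--Yuan, the B\"ottcher analysis, Xie, Medvedev--Scanlon) is exactly right. However there is one genuine gap, in $(3)\Rightarrow(2)$.

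You write that a parameter in $C$ is PCF exactly when both $g_0$ and $g_1$ vanish there over $\C$, and that ``$g_i(c,a)=0$ is equivalent to $c_i$ being preperiodic under $P_{c,a}$.'' That second claim is false: over an Archimedean place, $g_i(c,a)=0$ only says the critical orbit of $c_i$ is \emph{bounded}, not that $c_i$ is preperiodic. A critical point can lie in the filled Julia set (e.g.\ on the boundary, or in a Siegel disk) with an infinite forward orbit. So knowing $g_0(c,a)=0$ and the proportionality $s_0g_0=s_1g_1$ gives $g_1(c,a)=0$ over $\C$ but does \emph{not} by itself yield preperiodicity of $c_1$, and hence does not give PCF. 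This is precisely the point where the argument must become arithmetic: one observes that $c_0$ preperiodic is an algebraic condition, so $(c,a)$ is defined over a number field; then one applies the proportionality $s_0g_{0,v}=s_1g_{1,v}$ at \emph{all} places $v$ (and at all Galois conjugates of $(c,a)$), deducing $G_v=0$ everywhere; only then does Ingram's criterion (or \cite[Theorem 3.2]{favregauthier}) convert global smallness into preperiodicity of $c_1$ and hence PCF. Your proposal does invoke the adelic proportionality in $(3)$, but the $(3)\Rightarrow(2)$ step as written discards it and works purely over $\C$, which is where the proof breaks. Also watch the degenerate case $s_0=0$: there $c_1$ is persistently preperiodic and PCF follows trivially, but the proportionality argument you give presupposes both weights are nonzero; the paper handles this by a short separate case.

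A smaller point: in $(2)\Rightarrow(1)$ you describe the preperiodicity locus of $c_0$ as an ``increasing union'' of the subvarieties $\{P^{m+k}(c_0)=P^k(c_0)\}$ — this family is not nested, and the correct mechanism is Montel/activity (the critical point is active somewhere on $C$ since $G$ is proper, and near an active parameter one produces infinitely many parameters where $c_0$ lands on a repelling cycle). Your instinct is right but the justification should route through the activity argument rather than a nesting claim.
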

In (4) the current $T_\bif$ is defined as the $dd^c$ of the plurisubharmonic function $g_0+g_1$. Its support in $\C^2$ is known to be equal to the set of unstable parameters, see e.g.~\cite[\S3]{favredujardin}. Notice that for any curve $C$ there exists a critical point which is not persistently pre-periodic on $C$ since by~\cite{BH} the set $\{g_0=0\}\cap\{g_1=0\}$ is \emph{compact} in $\A^2_\C$. In particular, the assertion (2) is consistent.

\smallskip

To complete the proof of Theorem~\ref{tm:classification}, we analyze in more detail the possibilities for a cubic polynomial
to satisfy the condition (5) in the previous theorem. Namely, we prove that the set of parameters admitting a non-trivial symmetry of degree $3^m>1$ is actually \emph{finite}. Theorems~\ref{tm:classification} and~\ref{tm:alaBDM} are proved in \S \ref{sec:thmA}.

~

We have deliberately chosen  to write the entire paper for  cubic polynomials only. This simplifies the exposition, but many parts of the proof actually 
extend to a larger context.
Let us briefly discuss the possible extensions and the limitations of our approach. 

All ingredients are present to prove Baker-DeMarco's conjecture for a \emph{curve} in the space of polynomials of any degree $d\ge2$. It is however not clear to the authors how to obtain the more precise classification of special curves in the same vein as in Theorem~\ref{tm:classification}.

\medskip

We note that there are serious difficulties that lie beyond the methods presented here to handle  higher dimensional special varieties $V$ in $\poly_d$.
The main issue is the following. To apply Yuan's equidistribution theorem of points of small heights it is necessary to have 
a \underline{continuous} \underline{semi-positive} adelic metrics on an \underline{ample} line bundle on a compactification of $V$, 
and we are at the moment very far from being able to check any of the three underlined conditions.

Trying to understand special curves in the space of quadratic maps requires  much more delicate estimates than in the case of polynomials. 
A first important step has been done by DeMarco, Wang and Ye in a recent paper~\cite{DWY}.

\paragraph*{Acknowledgements}
We thank  Xavier Buff  and Laura DeMarco for discussions at a preliminary stage of this project, and the referee for his/her careful reading of this paper and his/her constructive remarks.

While finishing the writing of this paper we have learned that Dragos Ghioca and Hexi Ye have independently obtained a proof of Theorem~\ref{tm:classification}. 
Their approach differs from ours in the sense that they directly prove the continuity of the metrizations induced by the functions $g_{0,v}$ and $g_{1,v}$. 
We get around this problem by considering metrizations induced by $\max\{s_0 g_{0,v}, s_1 g_{1,v}\}$ for positive $s_0, s_1$ instead.
We warmly thank D. Ghioca and H. Ye for sharing with us their preprint.

\bigskip

\section{The B\"ottcher coordinate of a polynomial}
In this section, $K$ is any complete metrized field of characteristic zero containing a square-root $\om$ of $\frac13$.
It may or may not be endowed with a non-Archimedean norm.

If $X$ is an algebraic variety over $K$, then $X^{\an}$ denotes its analytification as a real-analytic or a complex variety if $K$ is Archimedean, 
and as a Berkovich analytic space when $K$ is non-Archimedean (see e.g.~\cite[\S 3.4-5]{Berko}).
\subsection{Basics}
As in the introduction, we denote by $\poly_3 \simeq \A^2$ the space of cubic polynomials defined by 
\begin{equation}\label{eq:param}
P_{c,a}(z):=\frac{1}{3}z^3-\frac{c}{2}z^2+a^3~.
\end{equation}
It is a branched cover of the parameter space of cubic polynomials with marked critical points.
The critical points of $P_{c,a}$ are given by $c_0 := c$ and $c_1 := 0$.

For a fixed $(c,a)\in K^2$ the function $\frac13 \log^+|P_{c,a}(z)| - \log^+|z|$ is bounded on $\A^{1,\an}_K$ so that the sequence
$\frac1{3^n} \log^+|P^n_{c,a}(z)|$ converges uniformly to a continuous sub-harmonic function $g_{c,a}(z)$ that 
is called the Green function of $P_{c,a}$.

We shall write $g_0(c,a) := g_{c,a}(c_0)$, $g_1(c,a) := g_{c,a}(c_1)$, and 
\[G(c,a) := \max \{g_0(c,a), g_1(c,a)\}~.\]
\begin{proposition}\label{prop:growth Green}
The function $G(c,a)$ extends continuously to the analytification $\A^{2,\an}_K$, and
there exists a constant $C= C(K)>0$ such that 
\[\sup_{\A^{2,\an}_K} \left|G(c,a) - \log^+\max\{|a|,|c|\} \right| \le C~,\]
and this constant vanishes when the residual characteristic of $K$ is at least $5$.
\end{proposition}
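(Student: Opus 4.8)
The plan is to compare the growth of $G(c,a)$ with that of $\log^+\max\{|a|,|c|\}$ directly from the definition of the Green function, by finding explicit radii beyond which the escape is controlled. First I would record the elementary estimate that governs one step of iteration: for $|z|$ large, $P_{c,a}(z) = \tfrac13 z^3(1 + O(c/z) + O(a^3/z^3))$, so there is an absolute radius $R_0 = R_0(K)$ such that $|z| \geq R_0\max\{1,|a|,|c|\}$ forces $|P_{c,a}(z)| \geq |z|$ and in fact $\tfrac13\log^+|P_{c,a}(z)| \geq \log^+|z| - \log 3$ (the $\log 3$ disappearing in the non-Archimedean case, and even more so when the residue characteristic is $\geq 5$ since then $|3|=1$ and $|2|=1$). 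Conversely, for $|z|\le R_0\max\{1,|a|,|c|\}$ one gets an upper bound $|P_{c,a}(z)| \le C_1 \max\{1,|a|,|c|\}^3$. These two facts, iterated, show that $g_{c,a}(z)$ is comparable to $\log^+\max\{1,|a|,|c|,|z|\}$ uniformly, with an additive error that is $O(1)$ and vanishes when the residue characteristic is $\geq 5$.

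Next I would specialize $z$ to the two critical points $c_0 = c$ and $c_1 = 0$. For $c_0$ the point $z=c$ is already of size $|c|$, so the lower bound from the previous paragraph applies as soon as $|c| \ge R_0\max\{1,|a|,|c|\}$, i.e. when $|c|$ dominates; this gives $g_0(c,a) = \log^+|c| + O(1)$ on that region. For $c_1 = 0$ one has $P_{c,a}(0) = a^3$, which has size $|a|^3$, so after one iterate we are at a point of size $|a|^3$ and the iteration estimate gives $g_1(c,a) = \log^+|a| + O(1)$ when $|a|$ dominates. Taking the max over the two, $G(c,a) = \log^+\max\{|a|,|c|\} + O(1)$ on the region where $\max\{|a|,|c|\}$ is large; on the complementary bounded region both sides are $O(1)$, so the estimate holds globally. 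Continuity of $G$ on $\A^{2,\an}_K$ follows because $g_{c,a}(z)$ depends continuously on all parameters (uniform convergence of $\tfrac1{3^n}\log^+|P^n_{c,a}(z)|$ on compact subsets of the total space $\A^2\times\A^1$, the bound $\tfrac13\log^+|P_{c,a}(z)| - \log^+|z|$ being locally uniformly bounded), and $c_0,c_1$ are regular functions of $(c,a)$.

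The main obstacle, and the only place requiring genuine care, is tracking the constant and showing it really vanishes when the residual characteristic is $\geq 5$. For this I would work non-Archimedeanly with the ultrametric inequality: when $|2|=|3|=1$, the three terms $\tfrac13 z^3$, $-\tfrac{c}{2}z^2$, $a^3$ have norms $|z|^3$, $|c||z|^2$, $|a|^3$, and as long as $|z| \ge \max\{1,|a|,|c|\}$ the first strictly dominates, so $|P_{c,a}(z)| = |z|^3$ exactly and hence $g_{c,a}(z) = \log|z|$ with no error at all on $\{|z|\ge\max\{1,|a|,|c|\}\}$; evaluating at $z=c$ when $|c|\ge\max\{1,|a|\}$ and (after one step) at $z=0$ when $|a|\ge\max\{1,|c|\}$, together with the case $\max\{|a|,|c|\}\le 1$ where both $g_0,g_1$ vanish, yields the identity $G(c,a) = \log^+\max\{|a|,|c|\}$ on the nose. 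In the remaining cases (Archimedean $K$, or residue characteristic $2$ or $3$) one simply absorbs the finitely many factors of $|2|^{\pm1}$, $|3|^{\pm1}$, $3$ into an explicit additive constant $C(K)$, which is all that is claimed.
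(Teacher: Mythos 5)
The paper's own proof of this Proposition is essentially a pair of citations: Branner--Hubbard \S4 (and Favre--Dujardin \S6) for the Archimedean case, and Favre--Gauthier, Proposition 2.5, for the non-Archimedean estimates and the continuity assertion. You are instead writing out a direct argument, which is perfectly reasonable; the general scheme you follow --- an escape radius $R_0\max\{1,|a|,|c|\}$, a one-step estimate, iteration, then specialization to the two critical points --- is indeed what underlies those references. But there is a genuine error in the way you treat the critical point $c_0 = c$, and it matters precisely for the ``residual characteristic $\ge 5$'' refinement.

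You assert that, when $|2|=|3|=1$, the term $\frac13 z^3$ ``strictly dominates'' the others on $\{|z| \ge \max\{1,|a|,|c|\}\}$, hence $|P_{c,a}(z)| = |z|^3$ and $g_{c,a}(z) = \log|z|$ exactly there. This is false on the boundary circle $|z| = \max\{1,|a|,|c|\}$, which is precisely where you then evaluate. At $z = c$ with $|c|$ the max, both $|\frac13 z^3| = |c|^3$ and $|\frac{c}{2}z^2| = |c|^3$: there is no strict domination, and the ultrametric inequality only gives $|P_{c,a}(c)| \le |c|^3$. Indeed, for other points on the same circle cancellation genuinely occurs; e.g.\ $P_{c,a}\bigl(\frac32 c\bigr) = a^3$, of norm $|a|^3$, not $|c|^3$. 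What rescues the conclusion at the particular point $z = c$ is that $P_{c,a}(c) = -\frac16 c^3 + a^3$ and $|1/6| = 1$ in residual characteristic $\ge 5$, so $|P_{c,a}(c)| = |c|^3$ whenever $|c| > |a|$. This identity --- the two leading terms of the cubic combining to a \emph{unit} multiple of $c^3$ --- is the real content of the residual-characteristic-$\ge 5$ hypothesis at $c_0$, and it is what must be invoked; the ultrametric domination argument you wrote down does not establish it, and would give the wrong answer elsewhere on the same circle. (Also note that when $|a|=|c|$ even this argument does not control $g_0$, because $-\frac16 c^3$ and $a^3$ can cancel; you must then fall back on $g_1$, so the case split $|c|\ge\max\{1,|a|\}$ versus $|a|\ge\max\{1,|c|\}$ has to be phrased with the right strictness.)

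Two smaller points. In the Archimedean case you claim the escape estimate applies directly to $z = c$ ``as soon as $|c| \ge R_0\max\{1,|a|,|c|\}$''; for $R_0 > 1$ that inequality is empty, so in fact you must iterate once, just as you did for $c_1 = 0$, using $P_{c,a}(c) = -\frac16 c^3 + a^3 \sim -\frac16 c^3$. And the blanket claim at the end of your first paragraph that $g_{c,a}(z) = \log^+\max\{1,|a|,|c|,|z|\} + O(1)$ uniformly in $(c,a,z)$ is false (take $a=0$, $z=0$, $c$ large: $g_{c,a}(0)=0$ while the right side is $\log|c|$); the estimate as a two-sided bound only holds for the specific $z$'s you later choose, after one iterate, and the genuine lower bound is for $\max\{g_0,g_1\}$, not for $g_{c,a}(z)$ at an arbitrary $z$.
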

\begin{proof}
A proof of this fact is given in~\cite[\S 4]{BH} (see also~\cite[\S 6]{favredujardin} for a more detailed proof) in the Archimedean case.
When the normed field is non-Archimedean, it is proved in~\cite[Proposition~2.5]{favregauthier} that the sequence $h_n:= \max\{\frac1{3^n} \log^+|P^n_{c,a}(c_0)|, \frac1{3^n} \log^+|P^n_{c,a}(c_1)|\}$ converges uniformly on bounded sets in $K^2$ to $G(c,a)$. Since $h_n$ extends continuously to $\A^{2,\an}_K$, it follows that 
$G$ too. The rest of the proposition also follows  from op. cit.
\end{proof}
\subsection{Expansion of the B\"ottcher coordinate} 
 For any cubic polynomial $P\in K[z]$, we let the \emph{B\"ottcher coordinate} of $P$ be the only formal power series $\varphi$ satisfying the equation
\begin{eqnarray}
\varphi\circ P(z)=\varphi(z)^3\label{eq:bott}
\end{eqnarray}
which is of the form 
\begin{eqnarray}
\varphi(z)=\om z+\alpha+\sum_{k\geq1}a_kz^{-k}~,\label{eq:phi-expand}
\end{eqnarray}
with $\alpha,a_k\in K$ for all $k\geq1$.
\begin{lemma}\label{lm:bottcher1}
Given any $(c,a)\in K\times K$, the B\"ottcher coordinate $\varphi_{c,a}(z)$ of the cubic polynomial $P_{c,a}:= \frac{z^3}{3}-\frac{c}2 z^2+a^3$ exists, is unique, and satisfies
\[\varphi_{c,a}(z)=\om\left(z-\frac{c}{2}\right)+\sum_{k\geq1}a_k(c,a)z^{-k},\]
where 
\begin{equation}\label{eq:deg-good}
a_k(c,a) \in \Z\left[\om,\frac{1}{2}\right][c,a]
\text{ with } \deg \left(a_k\right)=k+1~.
\end{equation}
Moreover the $2$-adic (resp. $3$-adic) norm of the coefficients of $a_k$ are bounded from above by $2^{k+1}$ (resp. $3^{k/2}$).
\end{lemma}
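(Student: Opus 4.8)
The plan is to substitute the ansatz~\eqref{eq:phi-expand} into the Böttcher equation~\eqref{eq:bott} and solve for the coefficients recursively, keeping careful track of where each coefficient lives and of its degree in $(c,a)$. First I would determine the leading terms: comparing the $z^3$-coefficients in $\varphi\circ P_{c,a}(z) = \varphi(z)^3$ forces $\om\cdot\frac13 = \om^3$, i.e. $\om^2 = \frac13$, which is why we assumed $K$ contains a square root of $\frac13$; comparing the $z^2$-coefficients then pins down $\alpha = -\om c/2$, giving the stated form $\varphi_{c,a}(z) = \om(z - \frac{c}{2}) + \sum_{k\ge1} a_k(c,a) z^{-k}$. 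So the first task is just this normalization computation.

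Next I would set up the recursion. Write $\psi(z) := \varphi_{c,a}(z) = \om(z-\frac{c}{2}) + \sum_{k\ge1} a_k z^{-k}$ and expand both sides of $\psi(P_{c,a}(z)) = \psi(z)^3$ as Laurent series in $z^{-1}$. The key structural point is that the coefficient of $z^{-n}$ on the left-hand side has the form $3\om^2\, a_n \cdot (\text{leading factor}) + (\text{polynomial in }c,a,a_1,\dots,a_{n-1})$: more precisely, $\psi(P_{c,a}(z)) = \om P_{c,a}(z) - \om c/2 + \sum_k a_k P_{c,a}(z)^{-k}$, and since $P_{c,a}(z)^{-k} = (z^3/3)^{-k}(1 + \text{lower order})^{-k} = 3^k z^{-3k}(1 + \cdots)$, the term $a_n z^{-3n}$ never reappears at order $z^{-n}$ for $n\ge1$; meanwhile on the right-hand side $\psi(z)^3 = \bigl(\om(z-\frac c2)\bigr)^3 + 3\bigl(\om(z-\frac c2)\bigr)^2 \sum_k a_k z^{-k} + \cdots$ contributes $3\om^2 (z - \frac c2)^2 \cdot a_n z^{-n} + \cdots$, whose $z^{-n}$-coefficient is $3\om^2 a_n$ plus terms involving only $a_1, \dots, a_{n-1}$ (and $c$, $a$). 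Hence at each stage $a_n = \frac{1}{3\om^2}(\text{polynomial expression in } c, a, a_1, \dots, a_{n-1}) = (\text{that polynomial})$, since $3\om^2 = 1$. This simultaneously proves existence and uniqueness by induction on $n$, and shows $a_n \in \Z[\om, \frac12][c,a]$ because the only denominators introduced along the way come from the $\frac c2$ in $P_{c,a}$ and from inverting $3\om^2 = 1$ (no new $3$'s in the denominator) — I would verify that the $\frac13 z^3$ and $a^3$ terms of $P_{c,a}$, together with the binomial expansions of $P_{c,a}(z)^{-k}$, contribute only coefficients in $\Z[\om,\frac12]$.

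For the degree statement $\deg(a_k) = k+1$, I would again argue by induction, assigning weights: give $z$ weight $1$, $c$ weight $1$, $a$ weight $1$ (note $a^3$ has weight $3$, matching $z^3$), so that $P_{c,a}(z)$ is weighted-homogeneous of weight $3$ and the whole equation~\eqref{eq:bott} is weight-homogeneous provided $\varphi$ is assigned weight... — concretely, one checks that $a_k(c,a)$ must be homogeneous of degree $k+1$ in $(c,a)$ for the equation to balance, and that this top-degree part is nonzero (e.g. by tracking the contribution of $a^3$, which feeds a genuinely degree-$(k+1)$ monomial into $a_k$). The norm bounds on the coefficients follow from the same recursion: at the $2$-adic place the only source of growth is the repeated division by $2$ coming from $\frac c2$, and a crude induction gives the bound $2^{k+1}$; at the $3$-adic place one uses that $\om$ has $3$-adic norm $|3|_3^{1/2} = 3^{-1/2}$ wait — rather $|\om|_3 = 3^{-1/2}$ is small, but the factor $3^k$ from $P_{c,a}(z)^{-k} = 3^k z^{-3k}(\cdots)$ and the cubes push norms up, and tracking these through the recursion yields the stated $3^{k/2}$.

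\medskip

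The main obstacle I expect is \emph{bookkeeping}: the recursion for $a_n$ involves a sum over all the ways monomials $a_{i_1} a_{i_2}$ (from the cross terms in $\psi(z)^3$) and products of $a_k$ with the tail of $P_{c,a}(z)^{-k}$ combine to produce $z^{-n}$, and verifying simultaneously (i) membership in $\Z[\om,\frac12][c,a]$, (ii) the exact degree $k+1$ with nonvanishing top part, and (iii) the two prime-by-prime norm bounds, all from one induction, requires an organized choice of notation — most likely an explicit closed form for the recursion of the type $a_n = -\om\sum (\text{terms})$ that one then estimates termwise. None of the individual steps is deep, but the combinatorics of the convolution must be set up cleanly to make the induction transparent; I would isolate the "leading coefficient $3\om^2 = 1$" observation early so that the recursion is genuinely a recursion (no hidden linear solve) and everything else is then a routine, if lengthy, verification.
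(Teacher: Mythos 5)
Your overall approach is the same as the paper's: substitute the ansatz into the Böttcher equation, identify coefficients of powers of $z$, and induct on $k$ while tracking ring membership, degree, and $p$-adic sizes. The normalization step (forcing $\om^2=\frac13$ and $\alpha=-\om c/2$) and the observation that the linear term in $a_{k+1}$ has coefficient $3\om^2=1$, so the recursion is triangular, are both correct and are exactly what the paper does.

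However, your argument for the degree claim is wrong. You assert that if $z,c,a$ are given weight $1$ then ``$a_k(c,a)$ must be homogeneous of degree $k+1$ in $(c,a)$ for the equation to balance.'' This is false: $a_2 = -\frac{5\om}{24}c^3 + \om\bigl(a^3-\frac{c}{2}\bigr)$ contains the degree-$1$ monomial $-\om c/2$. The reason the weight argument breaks is that the constant term $\alpha=-\om c/2$ in $\varphi$ has weight $1$, not weight $0$; after substituting $P(z)$ (weight $3$) for $z$ (weight $1$), the term $\om P(z)$ acquires weight $3$ but $\alpha$ keeps weight $1$, so $\varphi\circ P$ is not weighted-homogeneous and there is no clean balancing. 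More generally, if $\varphi$ were weight-$1$-homogeneous then $\varphi\circ P$ would mix weights $1+2n$ over $n$, so homogeneity is simply incompatible with the composition structure. The paper avoids this entirely: it only proves the upper bound $\deg(a_k)\le k+1$ by a direct term-by-term degree count in the recursion (checking that each term, e.g.\ $a_l(c,a)\cdot\bigl[(1+\frac{3c}{2z}+\frac{a^3}{z^3})^{-l}\bigr]_{k+1-3l}$, has degree at most $k+1-3l+l+1 = k+2-2l \le k+1$), which is the right way to proceed. You should drop the homogeneity claim and instead bound total degrees term by term.

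Two smaller remarks. First, your description of the recursion has an index shift (matching the $z^{-n}$-coefficient produces $a_{n+2}$, not $a_n$, since the leading contribution comes from $3\om^2 z^2\cdot a_{n+2}z^{-(n+2)}$); the paper identifies the $z^{-(k-1)}$-coefficient to extract $a_{k+1}$. Second, for the $3$-adic bound you first wrote $|\om|_3 = 3^{-1/2}$ before correcting yourself; since $\om^2=\frac13$ one has $|\om|_3 = 3^{1/2}>1$, and the growth by $3^{1/2}$ per step is precisely what needs to be propagated through the recursion (together with the factors $3^l$ coming from $P(z)^{-l}$), so the sign matters. Getting the induction to close cleanly requires exactly the organized bookkeeping you anticipate, but homogeneity is not available as a shortcut.
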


\begin{proof}
The defining equation~\eqref{eq:bott} reads as follows:
\begin{multline*}
\left(\om\left(z-\frac{c}{2}\right)+\sum_{k\geq1}a_k(c,a)z^{-k}\right)^3
= \\
\om\left(\frac{z^3}{3} -\frac{c}2 z^2+a^3 -\frac{c}{2}\right)+\sum_{k\geq1}\frac{3^k a_k(c,a)}{z^{3k} \,(1- \frac{3c}{2z}+\frac{3a^3}{z^3})^k}
\end{multline*}
An immediate check shows that terms in $z^3$ and $z^2$ are identical on both sides of the equation. 
Identifying terms in $z$ yields 
\[3 \om^3 (c^2/4) + 3 \om^2 a_1  = 0, \text{ so that }a_1 =  - \frac{\om}4 c^2,\]
whereas identifying constant terms, we get
\[3\om^2a_2 + 6 \om^2 (-c/2) a_1 + \om^3 (-c^3/8) = \om (a^3 - c/2)\]
hence
\[a_2 = - \frac{5\om}{24} c^3 
+ \frac1{3 \om} (a^3 - \frac{c}2)~.\]
This shows~\eqref{eq:deg-good} for $k=1,2$, since $\om^{-1}= 3\om$.

We now proceed by induction. Suppose~\eqref{eq:deg-good} has been proven for $k$.
Identifying terms in $z^{-(k-1)}$ in the equation above, we get
\begin{multline*}
3 \om^2 a_{k+1} - 3 c \om^2 a_k + \frac{3 \om^2}4 \, c^2 a_{k-1} +\\
+ \om \sum_{i+j=k} a_i a_j - \om \frac{c}2\sum_{i+j=k-1} a_i a_j 
+ \sum_{i+j+l = k+1} a_i a_j a_l 
= \\
\sum_{l\ge 1}3^l a_l \,  \left[\left(1+\frac{3c}{2z}+\frac{a^3}{z^3}\right)^{-l}\right]_{k+1- 3l}
\end{multline*}
where $ \left[\left(1+\frac{3c}{2z}+\frac{a^3}{z^3}\right)^{-l}\right]_{j}$ denotes the coefficient
in $z^{-j}$ of the expansion of $(1+\frac{3c}{2z}+\frac{a^3}{z^3})^{-l}$ in power of $z^{-1}$. 
Observe that this coefficient belongs to $\Z[\frac12] [c,a]$, has $2$-adic norm $\le 2^l$, and is a polynomial in $c,a$ of degree at most $j$.
It follows that the polynomial 
\[a_l(c,a) \,  \left[\left(1+\frac{3c}{2z}+\frac{a^3}{z^3}\right)^{-l}\right]_{k+1- 3l}\]
is of degree at most
$k+1 -3l + l+1 = k+2 - 2l < k+1$. The induction step is then easy to complete using again $\om^{-1}= 3\om$.
\end{proof}

\subsection{Extending the B\"ottcher coordinate} 

Recall that $G(c,a) = \max \{ g_0(c,a), g_1(c,a)\}$.
\begin{proposition}\label{prop:green is green}
There exists a constant $\rho = \rho(K)\ge 0$ such that the B\"ottcher coordinate of $P_{c,a}$ is converging in $\{z,\, \log |z| > \rho + G(c,a)\}$.

There exists another constant $\tau = \tau(K)\ge 0$ such that the map
$(c,a,z) \mapsto \varphi_{c,a}(z)$ extends as an analytic map on  the open set 
\[\{ (c,a,z)\in \A^{2,\textup{an}}_{K}\times\A^{1,\textup{an}}_{K}, \, g_{c,a}(z) > G(c,a)+ \tau\}~,\]
and $\varphi_{c,a}$ defines an analytic isomorphism
from $U_{c,a}:= \{g_{c,a}> G(c,a)+ \tau\}$  to $\A^{1,\textup{an}}_{K}\setminus\overline{\mathbb{D}(0,e^{G(c,a)+\tau})}$
satisfying the equation \eqref{eq:bott} on $U_{c,a}$. We have 
\begin{equation}\label{eq:green is green}
g_{c,a}(z)=\log|\varphi_{c,a}(z)|_K \ \text{ on } U_{c,a}~.
\end{equation}
Finally, $\tau = 0$ except if the residual characteristic of $K$ is equal to $2$ or $3$. 
\end{proposition}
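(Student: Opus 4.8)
The plan is to prove the three assertions in turn, running the Archimedean and non-Archimedean cases side by side and pinpointing where the residual characteristics $2$ and $3$ obstruct the vanishing of $\tau$. For the convergence of the series~\eqref{eq:phi-expand}, in the non-Archimedean case I would read it directly off Lemma~\ref{lm:bottcher1}: since $a_k$ has degree $k+1$ and its coefficients lie in $\Z[\om,\tfrac12]$ with the stated $v$-adic bounds, one gets $|a_k(c,a)|_K\le C(K)^{k}\max\{1,|c|_K,|a|_K\}^{k+1}$ with $C(K)=1$ when the residual characteristic is $\ge5$ (the coefficients are then $v$-adic units), $C(K)\le2$ when it is $2$, and $C(K)\le\sqrt3$ when it is $3$; hence the series converges on $\{|z|>C(K)\max\{1,|c|_K,|a|_K\}\}$, and Proposition~\ref{prop:growth Green} — which bounds $\log^+\max\{|c|,|a|\}$ by $G(c,a)$ up to a constant vanishing in residual characteristic $\ge5$ — rewrites this domain as $\{\log|z|>\rho+G(c,a)\}$ for some $\rho=\rho(K)\ge0$, equal to $0$ in residual characteristic $\ge5$. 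In the Archimedean case the complex size of the coefficients of $a_k$ is not controlled by Lemma~\ref{lm:bottcher1}, so I would instead solve~\eqref{eq:bott} by the classical B\"ottcher contraction: writing $\varphi=\om(z-\tfrac c2)+\psi$, equation~\eqref{eq:bott} becomes a fixed-point equation for $\psi$ on the space of functions on $\{|z|>R\}$ of order $O(1/z)$, contracting as soon as $R\ge C_0(K)\max\{1,|c|,|a|\}$, and Proposition~\ref{prop:growth Green} again turns this into a region $\{\log|z|>\rho+G(c,a)\}$.

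For the second assertion I would first record the a priori bound $|g_{c,a}(z)-\log^+\max\{1,|z|,|c|,|a|\}|\le C_1(K)$, obtained by telescoping $\log^+|P_{c,a}^{n+1}(z)|-3\log^+|P_{c,a}^{n}(z)|$ along the forward orbit once $|z|\ge C_0(K)\max\{1,|c|,|a|\}$, then propagating to the complementary disc by the maximum principle for the subharmonic function $g_{c,a}$, and once more invoking Proposition~\ref{prop:growth Green}. It follows that for $\tau=\tau(K)$ large the set $U_{c,a}:=\{g_{c,a}>G(c,a)+\tau\}$ lies inside the domain $\{\log|z|>\rho+G(c,a)\}$ of the first part; in the non-Archimedean case $U_{c,a}$ is in fact a round disc-complement, by the ultrametric shape of the level sets of $g_{c,a}$ above the critical level, so the series itself defines an analytic function of $(c,a,z)$ there (the $a_k$ being polynomials and the convergence locally uniform). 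In the Archimedean case, where $U_{c,a}$ need not be a round disc-complement, I would extend $\varphi_{c,a}$ to $U_{c,a}$ via~\eqref{eq:bott}, setting $\varphi_{c,a}(z):=\varphi_{c,a}(P_{c,a}^{n}(z))^{1/3^{n}}$ for $n$ large — legitimate since $g_{c,a}(P_{c,a}^{n}(z))=3^{n}g_{c,a}(z)\to\infty$ pushes $P_{c,a}^{n}(z)$ into the domain of the first part, and $w\mapsto w^{1/3^{n}}$ is single-valued once its germ at $\infty$ is fixed — which is the classical B\"ottcher construction and gives $\tau=0$. On $U_{c,a}$ the linear term $\om z$ strictly dominates all others, so $\varphi_{c,a}$ is injective and $\log|\varphi_{c,a}(z)|=\log|z|+\log|\om|+o(1)$; to upgrade this to the exact identity~\eqref{eq:green is green} I would compare $u_1:=g_{c,a}$ and $u_2:=\log|\varphi_{c,a}|$, which both satisfy $u\circ P_{c,a}=3u$ on a region forward-invariant under $P_{c,a}$ and whose difference is bounded and tends to $0$ at $\infty$; telescoping along $P_{c,a}$ forces $u_1=u_2$.

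Granting~\eqref{eq:green is green} and injectivity, $\varphi_{c,a}$ sends $U_{c,a}$ into $\A^{1,\an}_K\setminus\overline{\mathbb{D}(0,e^{G(c,a)+\tau})}$; it is a local isomorphism throughout (its derivative is dominated by $\om\neq0$, and $P_{c,a}$ has no critical point on $U_{c,a}$ since both critical levels are $\le G<G+\tau$), and it is proper because $|\varphi_{c,a}(z)|$ is comparable to $|z|$; a proper local isomorphism onto the topologically punctured disc $\A^{1,\an}_K\setminus\overline{\mathbb{D}(0,e^{G(c,a)+\tau})}$ that is injective near $\infty$ has degree one, hence is an isomorphism (alternatively one inverts the series by Lagrange reversion on a comparable region, and checks it is the two-sided inverse). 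When the residual characteristic of $K$ is neither $2$ nor $3$ every constant above vanishes: $\rho=0$ and $C_1=0$ by Proposition~\ref{prop:growth Green}, so $U_{c,a}=\{|z|>e^{G(c,a)}\}$ sits exactly inside the domain of convergence, $w\mapsto w^{1/3^{n}}$ is \'etale and $2$ is a unit, and no functional-equation extension is needed, giving $\tau=0$.

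The delicate point throughout is to match the two descriptions of the domain of convergence — the analytic one, with radius $\asymp\max\{1,|c|,|a|\}$ coming from the coefficient bounds or the contraction, and the dynamical one, in terms of $g_{c,a}$ and $G$ — uniformly in the parameters $(c,a)$ and in the field $K$; this forces one to combine Proposition~\ref{prop:growth Green} with the telescoping control of $g_{c,a}(z)-\log^+\max\{1,|z|,|c|,|a|\}$, and it is precisely the failure of $w\mapsto w^{3}$ to be \'etale in residual characteristic $3$, and of $2$ to be invertible in residual characteristic $2$, that prevents taking $\tau=0$ in those two cases.
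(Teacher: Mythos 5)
Your proposal is essentially correct and follows the same overall strategy as the paper: read the non-Archimedean radius of convergence off the coefficient bounds of Lemma~\ref{lm:bottcher1}, translate via Proposition~\ref{prop:growth Green}, treat the Archimedean case by classical B\"ottcher theory, and deduce~\eqref{eq:green is green} from the conjugacy. A few places differ in the choice of sub-argument, and two minor slips are worth flagging.

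Where you differ from the paper. For the Archimedean convergence the paper simply cites Douady--Hubbard/Branner--Hubbard and checks, by an explicit computation, that $\log|z|>C+G(c,a)$ forces $g_{c,a}(z)>G(c,a)$; you propose instead a fixed-point contraction for $\psi:=\varphi-\om(z-\tfrac c2)$. Both work; the paper's route is closer to the literature, yours is more self-contained but requires you to actually set up the norm on the function space. For the isomorphism statement, in the non-Archimedean case the paper extracts it directly from the identity $\log|\varphi(z)|=\log|\om z|$ on the domain of convergence (which makes $U_{c,a}$ manifestly a round disc-complement once one unwinds Lemma~\ref{lem:classic-estim}), whereas you run a degree-one argument (local isomorphism, proper, injective near $\infty$). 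Both are valid. For~\eqref{eq:green is green} the paper computes the limit $\tfrac1{3^n}\log|P^n(z)|=\tfrac1{3^n}\log|\varphi(P^n(z))|\to\log|\varphi(z)|$ directly (cf.~\eqref{eq:blop}); your telescoping of the bounded difference of the two $3$-homogeneous potentials is equivalent.

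Two points to fix or sharpen. First, the coefficient estimate: Lemma~\ref{lm:bottcher1} bounds the $p$-adic norms of the coefficients of $a_k$ by $2^{k+1}$ (resp. $3^{k/2}$), so since $\deg a_k=k+1$ the correct bound is $|a_k(c,a)|_v\le\bigl(2\max\{1,|c|,|a|\}\bigr)^{k+1}$ (resp. $\bigl(3^{1/2}\max\{1,|c|,|a|\}\bigr)^{k+1}$), not $C(K)^k\max^{k+1}$; this does not change the domain of convergence but the exponent should match. Second, your closing heuristic that $\tau>0$ in residual characteristic $3$ comes ``from the failure of $w\mapsto w^3$ to be \'etale'' is not what is actually at play in the non-Archimedean argument: you do not use the $w\mapsto w^{1/3^n}$ extension there, only the direct series. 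The genuine obstruction is that $\om=3^{-1/2}$ has $|\om|_3=3^{1/2}>1$, so $\log|\varphi(z)|=\log|\om z|\neq\log|z|$ and the coefficients $a_k$ carry $3$-adic poles of order up to $k/2$; this is the source of the positive $\rho$ and $\tau$ (the paper indeed works with $\log|\om z|$ in this case). A similar remark applies in residual characteristic $2$, where the relevant issue is the $2$-adic denominators, not any separability obstruction. Aside from these, your argument is sound.
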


We shall use the following lemma which follows easily from e.g.~\cite[Proposition~2.3]{favregauthier}. 
 \begin{lemma}\label{lem:classic-estim}
 There exists a  constant $\theta = \theta(K) \ge 0$ 
 \[\sup_{\A^{1,\an}_K} |g_{c,a}(z)- \log^+|z|| \le \theta~.\]
Moreover, $\theta$ is equal to $0$ except if the norm on $K$ is Archimedean or the residual characteristic of $K$ is equal to $2$ or $3$.
 \end{lemma}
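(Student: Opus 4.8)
The plan is to prove Lemma~\ref{lem:classic-estim} as a direct consequence of the uniform convergence estimate from \cite[Proposition~2.3]{favregauthier}. First I would recall that $g_{c,a}(z) = \lim_{n\to\infty} \frac{1}{3^n}\log^+|P^n_{c,a}(z)|$ and that the defining difference $\psi(z) := \frac13\log^+|P_{c,a}(z)| - \log^+|z|$ is globally bounded on $\A^{1,\an}_K$; telescoping the series $g_{c,a}(z) - \log^+|z| = \sum_{n\ge 0}\left(\frac{1}{3^{n+1}}\log^+|P^{n+1}_{c,a}(z)| - \frac{1}{3^n}\log^+|P^n_{c,a}(z)|\right)$ reduces the problem to bounding each term, which in turn reduces to a uniform bound on $\psi$ composed with iterates. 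The crucial point — and the reason the statement allows a nonzero constant $\theta$ only in the Archimedean case or in residual characteristic $2$ or $3$ — is that the leading coefficient $\frac13$ of $P_{c,a}$ and the coefficient $\frac{c}{2}$ of the quadratic term are units precisely when the residual characteristic is at least $5$, so that in the good non-Archimedean case one has the sharp ultrametric identity $\log^+|P_{c,a}(z)| = 3\log^+|z|$ for $\log|z|$ large, and by a standard maximum-principle/extremal argument the equality $g_{c,a}(z) = \log^+|z|$ in fact holds everywhere.

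Concretely I would argue as follows. In the non-Archimedean case with residual characteristic $\ge 5$: since $|1/3| = |c/2| = |a^3| = $ (powers dividing only $2,3$, hence) $=1$ when $c,a$ are units — more carefully, one checks directly from the Newton polygon of $P_{c,a}(z) = \frac13 z^3 - \frac{c}{2}z^2 + a^3$ that $|P_{c,a}(z)|_K = |z|_K^3$ as soon as $|z|_K > \max\{1, |c|_K, |a|_K\}$, because the cubic term strictly dominates; and similarly $\log^+|P_{c,a}(z)| = 3\log^+|z|$ on the locus $\log|z| > G(c,a)$ by Proposition~\ref{prop:growth Green}. Hence on $\{\log|z| > G(c,a)\}$ the sequence $\frac{1}{3^n}\log^+|P^n_{c,a}(z)|$ is already constant equal to $\log^+|z|$, giving $g_{c,a}(z) = \log^+|z|$ there; on the complementary (bounded) region the ultrametric submean inequality forces $g_{c,a} \le \log^+|z| = 0$, while $g_{c,a}\ge 0$ always, so equality holds throughout and $\theta = 0$. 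In the remaining cases (Archimedean, or residual characteristic $2$ or $3$) one simply invokes the uniform bound: \cite[Proposition~2.3]{favregauthier} provides an explicit $n$-independent bound $\sup_z \left|\frac{1}{3^n}\log^+|P^n_{c,a}(z)| - \log^+|z|\right| \le \theta(K)$, and passing to the limit in $n$ yields the claim with the same constant $\theta$.

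The main obstacle is not the estimate itself — which is routine telescoping plus a geometric series in $3^{-n}$ — but rather making precise why $\theta$ can be taken to vanish outside residual characteristics $2,3$. This requires the observation that the only primes appearing in the denominators of the coefficients of $P_{c,a}$ (namely $2$ and $3$) are exactly the ones that can spoil the naive ultrametric identity $|P_{c,a}(z)| = |z|^3$ at the boundary of the filled Julia set; for all other $v$ the Green function is literally $\log^+|z|$. I would phrase this cleanly by citing Proposition~\ref{prop:growth Green} (whose constant $C$ already vanishes in residual characteristic $\ge 5$) to control the region $\{\log|z| \le G(c,a)\}$, together with the elementary Newton-polygon computation above to control $\{\log|z| > G(c,a)\}$, and then note that the two regions overlap enough to conclude global equality via the submean-value property of $g_{c,a}$. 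The Archimedean bound and the residual characteristic $2,3$ bound are then subsumed into the single constant $\theta(K)$ coming from \emph{loc.\ cit.}, and no further work is needed.
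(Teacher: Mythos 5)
Since the paper provides no argument for this lemma at all (it is dispatched by a pointer to \cite[Proposition~2.3]{favregauthier}), your proposal is necessarily a different route. But the key step you add does not hold up. The assertion that $g_{c,a}(z)=\log^+|z|$ \emph{everywhere} in residual characteristic $\geq5$ is false whenever $G(c,a)>0$. Take $c=0$ and $|a|>1$ in a field of residual characteristic $\geq5$, and evaluate at $z=0$: one computes $|P_{c,a}(0)|=|a|^3$, and since $|a|^3>\max\{1,|c|,|a|\}$ the escape estimate gives $|P^n_{c,a}(0)|=|a|^{3^n}$ for $n\geq1$, hence $g_{c,a}(0)=\log|a|$, whereas $\log^+|0|=0$. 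The discrepancy equals $G(c,a)=\log|a|$, which is positive and in fact unbounded as $|a|\to\infty$. The precise error is the sentence ``on the complementary (bounded) region the ultrametric submean inequality forces $g_{c,a}\leq\log^+|z|=0$'': the complement of $\{\log|z|>G(c,a)\}$ is the ball $\{|z|\leq e^{G(c,a)}\}$, which coincides with the unit ball only when $G(c,a)=0$; on the larger ball $\log^+|z|$ is not identically $0$, and — as the example shows — the maximum principle does \emph{not} push $g_{c,a}$ below $\log^+|z|$ there, since $g_{c,a}$ is the Green function of the filled Julia set of $P_{c,a}$, not of the unit disk. Your telescoping preamble has the same flaw: $\psi(0)=\frac13\log^+|a^3|=\log^+|a|$ is unbounded in $a$, so the $\theta$ you extract from the geometric series depends on the coefficients of $P_{c,a}$ and not only on $K$.

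The argument you give is correct and does yield $\theta=0$ under the additional hypothesis $\max\{|c|,|a|\}\leq1$ (good reduction), which is what actually holds at all but finitely many places of the number field over which a fixed parameter $(c,a)$ is defined, and is in effect the only situation in which the paper invokes the ``$\theta=0$'' clause (e.g.\ to get $B_v=1$ for almost all $v$ in the proof of Theorem~\ref{thm:symmetry}). So the fix is to restrict the vanishing claim to the good-reduction locus, or, if one insists on a statement uniform in $z$ for arbitrary $(c,a)$, to replace the conclusion by an estimate of the shape $|g_{c,a}(z)-\log^+|z||\leq\theta(K)+G(c,a)$; as it stands, your proof proves something strictly weaker than what it claims.
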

\begin{proof}[Proof of Proposition~\ref{prop:green is green}]
Assume first that $K$ is Archimedean, and set $\tau =0$. In that case most of the statements are proved in~\cite{orsay1} (see also~\cite[\S 1]{BH}). 
In particular,  $\varphi_{c,a}(z)$ is analytic in a neighborhood of $\infty$ and extends to $U_{c,a}$ by invariance and 
defines an isomorphism between the claimed domains. It is moreover analytic in $c,a,z$. 

To estimate more precisely the radius of convergence of the power series~\eqref{eq:phi-expand}, we rely on \cite[\S 4]{BH} as formulated in
\cite[\S 6]{favredujardin}. First choose $C= C_K>0$ such that $G(c,a)  >   \log^+ \max \{|a| , |c| \} - C$. Then
$\log |z| > C + G(c,a)$ implies $|z- \frac{c}2| > \max \{1, |a| , |c| \} - |\frac{c}2|\ge \frac12  \max \{1, |a| , |c| \}$ hence
$\log |z- \frac{c}2| > G(c,a) - \log 2$, so that $g_{c,a}(z)> \log | z - \frac{c}2| - \log 4 > G(c,a)$,
and $\varphi$ converges in $\{z, \, \log |z| > G(c,a) + \rho\}$ with $\rho := C$ as required.

\smallskip

From now on, we assume that the norm on $K$ is non-Archimedean. 

\smallskip

When the  residual characteristic of $K$ is different from $2$ and $3$, then~\eqref{eq:deg-good} implies
$|a_k| \le \max\{1, |c|, |a|\}^{k+1}$ so that $\varphi$ converges for $|z| >  \max\{1, |c|, |a|\}$, and $\log|\varphi(z)| = \log|z|$.
Recall that we have $G(c,a) = \log \max\{1, |c|, |a|\}$ by Proposition~\ref{prop:growth Green} so that one can take $\rho =0$.
Pick any $z$ such that $g(z) > G(c,a)$, and observe that  $|P^n(z)| \to \infty$. Then we get 
\begin{equation}\label{eq:blop}
g_{c,a}(z) = \lim_{n\to\infty}\frac1{3^n} \log |P^n(z)| = \lim_{n\to\infty}\frac1{3^n}\log |\varphi(P^n(z))| = \log |\varphi(z)|=\log|z|~.
\end{equation}
In particular the set $\{g> G(c,a)\}$ is equal to $\A^{1,\textup{an}}_{K}\setminus\overline{\mathbb{D}(0,e^{G(c,a)}})$, and
$\varphi$ is an analytic map from that open set onto itself. It is an isomorphism since $\log|\varphi(z)| = \log|z|$ as soon as
$g(z) > G(c,a)$. The proposition is thus proved in this case with $\tau =0$.

\smallskip

In residual characteristic $2$, $|a_k| \le (2\, \max\{1, |c|, |a|\})^{k+1}$
whence $\varphi$ converges for $|z| >   2 \max\{1, |c|, |a|\}$, and as above $\log|\varphi| = \log |z|$ in that range. 
Recall that  $G(c,a) - \log^+\max \{|c|, |a|\} \ge C = C(K)$, so that $\log|z| > G(c,a) + \log 2 - C_K$ implies  $|z| >   2 \max\{1, |c|, |a|\}$,
which proves that the power series~\eqref{eq:phi-expand} converges for $\log |z| > G(c,a) + \rho$ with $\rho = \log 2 - C$.
Set $\tau := \rho + \theta$ where $\theta$ is the constant given by Lemma~\ref{lem:classic-estim}.
 Using $\log|\varphi(z)| = \log|z|$ as above, we get that $\varphi_{c,a}$ defines an analytic isomorphism
from $U_{c,a}:= \{g_{c,a}> G(c,a)+ \tau\}$  to $\A^{1,\textup{an}}_{K}\setminus\overline{\mathbb{D}(0,e^{G(c,a)+\tau})}$. 

\smallskip

In residual characteristic $3$,  $ |a_k| \le (3^{1/2} \, \max\{1, |c|, |a|\})^{k+1}$ 
whence $\varphi$ converges  $|z| >   3^{1/2} \, \max\{1, |c|, |a|\}$, and  $\log|\varphi| = \log |\om z|$ in that range.
Recall that  $G(c,a) - \log^+\max \{|c|, |a|\} \ge C = C(K)$, so that $\log|z| > G(c,a) +  \log 3^{1/2} - C_K$ implies  $|z| >   3^{1/2}  \max\{1, |c|, |a|\}$,
which proves that the power series~\eqref{eq:phi-expand} converges for $\log |z| > G(c,a) + \rho$ with $\rho = \log 3^{1/2} - C$.
We conclude the proof putting $\tau := \rho + \theta$ as before.
\end{proof}

\begin{remark}
It is possible to argue that $\tau =0$ also in residual characteristic $2$. Although we do not know the optimal constant $\tau$ in residual characteristic $3$, the B\"ottcher coordinate is likely not to induce an isomorphism from $\{g_{c,a}> G(c,a)\}$  to $\A^{1,\textup{an}}_{K}\setminus\overline{\mathbb{D}(0,e^{G(c,a)})}$.
\end{remark}

\section{Curves in $\poly_3$}\label{sec:curves in P3}
In this section we fix a number field $\KK$ containing a square-root $\om$ of $\frac13$ and take an irreducible curve $C$ in $\poly_3$ that is defined over $\KK$.
Our aim is to build suitable height functions on $C$ for which the distribution of points of small height can be described using Thuillier-Yuan's theorem.
Our main statement is Theorem~\ref{tm:bifurcationheights} below.

Recall that given any finite set $S$ of places of $\KK$ containing all Archimedean places, $\mathcal{O}_{\KK,S}$ denotes the ring of $S$-integers in $\KK$ that is of elements of $\KK$ of $v$-norm $\le 1$ for all $v \notin S$.
We also write $\KK_v$ for the completion of $\KK$ w.r.t. the $v$-adic norm.

\subsection{Adelic series}\label{sec:adelic series}

A formal power series $\sum_n a_n z^n$ is said to be \emph{adelic on} $\KK$ if its coefficients belong to $\mathcal{O}_{\KK,S}$ 
where $\KK$ is a number field, and $S$ a finite set of places on $\KK$; and for each place $v$ on $\KK$ 
the series has a positive radius of convergence $r_v:= \limsup_{n\to\infty} |a_n|_v^{-1/n} >0$.
Observe that $r_v =1$ for all but finitely many places.

\begin{lemma}\label{lem:adelic series}
Suppose $\alpha(t) = \sum_n a_n t^n$ is an adelic series with $a_0=0$ and $a_1\neq 0$. Then there exists an adelic series $\beta$ such that
$\beta \circ \alpha (t) =t$.
\end{lemma}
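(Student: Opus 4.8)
The plan is to construct the compositional inverse $\beta$ by the standard formal inversion procedure, and then to prove the crucial point, namely that the resulting coefficients are $S'$-integers for a slightly enlarged set $S'$ and that $\beta$ has positive radius of convergence at every place. First I would normalize: after replacing $\alpha(t)$ by $a_1^{-1}\alpha(t)$ and $\beta(t)$ by $\beta(a_1 t)$ we may assume $a_1 = 1$, at the cost of enlarging $S$ so that $a_1$ becomes a unit in $\mathcal{O}_{\KK,S}$. Now $\alpha(t) = t + \sum_{n\ge 2} a_n t^n$ and one seeks $\beta(t) = t + \sum_{n\ge 2} b_n t^n$ with $\beta(\alpha(t)) = t$. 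Expanding and identifying the coefficient of $t^n$ gives a recursion of the shape $b_n = -a_n - (\text{a polynomial with integer coefficients in } a_2,\dots,a_{n-1}, b_2, \dots, b_{n-1})$, which shows inductively that $b_n \in \Z[a_2,\dots,a_n] \subset \mathcal{O}_{\KK,S}$. Hence $\beta$ is a formal power series over $\mathcal{O}_{\KK,S}$, and $r_v(\beta) = 1$ for all $v\notin S$, so the adelic integrality and the almost-everywhere condition $r_v = 1$ are automatic; the content of the lemma is entirely in the finitely many remaining places.

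The main obstacle is therefore the quantitative estimate showing $r_v(\beta) > 0$ for each of the finitely many places $v \in S$ (and for the Archimedean places). Here I would fix such a $v$ and work purely analytically: since $\alpha$ converges on a disc of radius $r_v(\alpha) > 0$, choose $0 < r \le r_v(\alpha)$ and a bound $M := \sup_{|t|_v \le r} |\alpha(t) - t|_v$, which by continuity of a convergent power series vanishing to order $\ge 2$ can be made arbitrarily small by shrinking $r$ (in the non-Archimedean case one uses the maximum principle / the explicit $\limsup$ formula; in the Archimedean case ordinary continuity). Concretely, for $n \ge 2$ one has $|a_n|_v \le \epsilon\, r^{-n}$ for a suitable $\epsilon$ that tends to $0$ with $r$; choosing $r$ so small that $\epsilon < \tfrac14$ (Archimedean) or $\epsilon \le 1$ (non-Archimedean, where the ultrametric inequality is far more forgiving) makes $\alpha$ a contraction-type perturbation of the identity on the disc $|t|_v \le r$. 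By the formal inversion bound — or, more cleanly, by the non-Archimedean/Archimedean inverse function theorem applied to the analytic map $\alpha$ on that disc — the inverse $\beta$ is analytic on a disc of some positive radius $r' = r'(r) > 0$, and by uniqueness of the formal inverse this analytic inverse has Taylor expansion $\sum b_n t^n$. Therefore $r_v(\beta) \ge r' > 0$.

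An alternative, purely algebraic route to the same radius bound, which avoids invoking an inverse function theorem, is the classical majorant argument: from $|a_n|_v \le C A^n$ for all $n\ge 1$ (valid for any $A > r_v(\alpha)^{-1}$, with a suitable $C$) the recursion for $b_n$ yields, by induction, $|b_n|_v \le C' B^n$ for appropriate constants $C', B$ depending only on $C, A$ (in the Archimedean case one compares with the inverse of the geometric-type majorant series $\tfrac{t}{1 - C A t}$ or similar, whose inverse again has a positive radius of convergence; in the non-Archimedean case the induction is immediate since sums do not grow the norm). This is the "routine calculation" I would not grind through in full, but it is exactly where the hypothesis $r_v(\alpha) > 0$ at \emph{every} $v$ is used, and it delivers $r_v(\beta) > 0$ uniformly enough to conclude. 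Combining the two parts: $\beta$ has $\mathcal{O}_{\KK,S}$-coefficients (possibly after enlarging $S$ to make $a_1$ a unit), has positive radius of convergence at every place, and satisfies $\beta\circ\alpha = t$ by construction — so $\beta$ is the desired adelic series, and undoing the initial normalization by $a_1$ only rescales the variable and the coefficients by powers of the unit $a_1$, preserving adelicity. I do not expect the converse identity $\alpha\circ\beta = t$ to require separate work, as it follows formally once $\beta$ is a two-sided-invertible formal series with $\beta'(0)\neq 0$.
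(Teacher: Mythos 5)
Your proposal is correct and takes essentially the same route as the paper: a formal recursion for the $b_n$ showing they lie in $\mathcal{O}_{\KK,S'}$ after enlarging $S$ to make $a_1$ invertible, followed by the majorant-series (or inverse-function-theorem) argument to get a positive radius of convergence at each of the finitely many problematic places. The normalization to $a_1=1$ up front is a harmless cosmetic variant of the paper's handling of $a_1$, and your closing remark that the right-inverse identity comes for free is also the standard fact the paper implicitly uses.
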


\begin{proof}
Suppose $a_n \in \mathcal{O}_{\KK, S}$  for all $n$, and
write $\beta(t) = \sum_n b_n t^n$. 
The equation  $\beta\circ \alpha (t) =t$ amounts to $b_0=0$, 
$b_1 = a_1^{-1}$, and the relations
\[b_n a_1^n +   \sum_{1\le k\le n-1} b_k \left[\left(\sum_{j\le n} a_j t^j\right)^k\right]_n =0~,\]
for any $n\ge2$ where $[\cdot]_n$ denotes the coefficient in $t^n$ of the power series inside the brackets.
It follows that  $b_n \in \mathcal{O}_{\KK, S'}$  for all $n$
where $S'$ is the union of $S$ and all places $v$ for which $|a_1|_v >1$.
The convergence of the series follows from Cauchy-Kowalewskaia's method of majorant series
or from the analytic implicit function theorem, see \cite{Chirka} and \cite[p. 73]{MR2179691}.
\end{proof}

\begin{lemma}\label{lem:adelic roots}
Pick $k\in \N^*$, and suppose $\alpha(t) = \sum_{n\ge k} a_n t^n$ is an adelic series with $a_k\neq 0$. Then there exists an adelic series $\beta$ such that
$\beta(t)^k = \alpha (t)$.
\end{lemma}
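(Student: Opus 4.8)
The plan is to reduce the statement about $k$-th roots to the previous lemma on compositional inverses, combined with the fact that taking a $k$-th root is an algebraic operation that preserves adelicity. First I would factor out the leading term: write $\alpha(t) = a_k t^k \bigl(1 + \sum_{n\geq 1} c_n t^n\bigr)$ with $c_n = a_{k+n}/a_k$. Adding all places $v$ with $|a_k|_v \neq 1$ to the finite set $S$, the coefficients $c_n$ lie in $\mathcal{O}_{\KK,S}$, and since $a_k$ is a fixed nonzero element of $\KK$ the radii of convergence of $\sum c_n t^n$ are still positive at every place (they differ from those of $\alpha$ only by the constant factor $|a_k|_v$ in the $\limsup$). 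Choosing any fixed $k$-th root $\omega_k$ of $a_k$ in $\KK$ (or in a finite extension, which we may harmlessly pass to) reduces the problem to extracting a $k$-th root of the unit $u(t) := 1 + \sum_{n\geq 1} c_n t^n$, since then $\beta(t) := \omega_k\, t\, u(t)^{1/k}$ would satisfy $\beta(t)^k = \alpha(t)$.

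Next I would handle $u(t)^{1/k}$ directly. Formally one has the binomial series $u(t)^{1/k} = \bigl(1 + \sum c_n t^n\bigr)^{1/k} = \sum_{m\geq 0} \binom{1/k}{m} \bigl(\sum_{n\geq 1} c_n t^n\bigr)^m$, which defines a unique formal power series $v(t) = 1 + \sum_{n\geq 1} d_n t^n$ with $v(t)^k = u(t)$. The coefficients $d_n$ are determined recursively from $k d_n + (\text{polynomial in } c_1,\dots,c_n, d_1,\dots,d_{n-1} \text{ with integer coefficients and no negative powers of } k) = c_n$; inverting the leading coefficient $k$ forces us to add to $S$ all places dividing $k$, after which $d_n \in \mathcal{O}_{\KK,S}$ by induction. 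For convergence at each place $v$, I would invoke the same mechanism cited in the proof of Lemma~\ref{lem:adelic series}: the map $w \mapsto w^k$ from a neighbourhood of $1$ to a neighbourhood of $1$ is analytic with analytic local inverse (its derivative at $1$ is $k \neq 0$, and $k$ is invertible in $\KK_v$), so the analytic implicit function theorem over $\KK_v$ (see \cite{Chirka}, \cite[p.~73]{MR2179691}) produces an analytic $k$-th root of $u$ on a disc of positive radius, which must coincide with the formal series $v$. Hence $v$, and therefore $\beta(t) = \omega_k t\, v(t)$, is adelic, with $a_k^{1/k}$ as its coefficient of $t$, so $\beta(t)^k = \alpha(t)$ as desired.

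One genuinely needs the hypothesis $a_k \neq 0$ (this guarantees the series starts in exact order $k$ so that $\alpha/t^k$ is a unit); and one must be slightly careful about the base field, since $a_k$ may not have a $k$-th root in $\KK$ itself. The cleanest fix is to allow $\beta$ to be adelic over a finite extension $\KK'$ of $\KK$, which is how the lemma will be applied; alternatively, if the intended reading is that $\KK$ is already large enough, one simply picks $\omega_k \in \KK$. The main obstacle — and it is a mild one — is the bookkeeping to see that enlarging the finite set $S$ (to absorb the places where $|a_k|_v \neq 1$ and the places dividing $k$) is enough to keep all coefficients $S$-integral while leaving only finitely many places with radius of convergence $\neq 1$; everything else is a direct transcription of the argument for Lemma~\ref{lem:adelic series}, with the analytic local inverse of $z \mapsto z^k$ playing the role previously played by the inverse of $\alpha$.
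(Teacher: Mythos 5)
Your proof is correct and follows essentially the same route as the paper: both solve the recursion for the coefficients of $\beta$, pass to a finite extension of $\KK$ containing a $k$-th root of the leading coefficient, enlarge $S$ by the finitely many places where $k$ and that leading coefficient fail to be units, and invoke the analytic inverse/implicit function theorem for convergence at each place. Your normalization of first pulling out $a_k t^k$ so one only extracts a $k$-th root of the unit $u(t)$ is a mild cosmetic simplification, not a different method.
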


\begin{proof}
As in the previous proof,  suppose $a_n \in \mathcal{O}_{\KK, S}$  for all $n$, and write $\beta(t) = b_1 t + \sum_{n\ge 2} b_n t^n$. 
We get  $b_1^k = a_1$, and for all $n\ge2$
\[a_n=k b_1^{k-1} b_{n-k} + P_n(b_1, \ldots, b_{n-k-1})~,\]
where $P_n$ is a polynomial with integral coefficients. 
This time all coefficients $b_n$ belong to a finite extension of $\KK$ containing a fixed $k$-th root of $a_1$, and
$S'$ is the union of $S$ and all places $v$ such that $|k b_1^{k-1}|_v <1$.
The analyticity of the series is handled as in the previous proof.
\end{proof}

\begin{lemma}\label{lem:adelic pseudo root}
Pick $k\in \N^*$, and suppose $\alpha(t) = \sum_{n\ge k} a_n t^n$ is an adelic series with $a_k\neq 0$.
Then there exists an adelic series $\beta$ such that
$\alpha \circ \beta(t) = t^k$.
\end{lemma}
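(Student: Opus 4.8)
The plan is to combine the two preceding lemmas. Write $\alpha(t) = a_k t^k(1 + \sum_{n\ge 1} c_n t^n)$ where each $c_n \in \mathcal{O}_{\KK,S}$, after possibly enlarging $S$ to include the places where $|a_k|_v \ne 1$. By Lemma~\ref{lem:adelic roots} applied to the adelic series $a_k t^k + a_k\sum_{n\ge 1} c_n t^{n+k}$ (whose lowest term is $a_k t^k$ with $a_k \ne 0$), there is an adelic series $\gamma(t) = \gamma_1 t + \sum_{n\ge 2}\gamma_n t^n$ with $\gamma_1 \ne 0$ and $\gamma(t)^k = \alpha(t)$. Thus $\gamma$ is an adelic series with $\gamma(0) = 0$ and $\gamma'(0) = \gamma_1 \ne 0$, so Lemma~\ref{lem:adelic series} provides an adelic series $\delta$ with $\delta \circ \gamma(t) = t$, i.e.\ $\delta$ is the compositional inverse of $\gamma$.

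Now set $\beta(t) := \gamma(\delta(t)^{\,?})$... more carefully: we want $\alpha \circ \beta(t) = t^k$. Since $\alpha = \gamma^{\circ}$ composed-into-$k$-th-power, precisely $\alpha(u) = \gamma(u)^k$, we need $\beta$ so that $\gamma(\beta(t))^k = t^k$, which is guaranteed if $\gamma(\beta(t)) = t$, i.e.\ if $\beta = \delta$. Hence simply take $\beta := \delta$: then $\alpha \circ \beta(t) = \alpha(\delta(t)) = \gamma(\delta(t))^k = t^k$, and $\beta$ is adelic by Lemma~\ref{lem:adelic series}. This gives the result.

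I would present the argument in this order: first normalize $\alpha$ to extract the leading $a_k t^k$ and record that the tail has $S$-integral coefficients after enlarging $S$; second invoke Lemma~\ref{lem:adelic roots} to get an adelic $k$-th root $\gamma$ of $\alpha$ with nonvanishing linear coefficient; third invoke Lemma~\ref{lem:adelic series} to invert $\gamma$, obtaining an adelic $\beta$ with $\gamma\circ\beta = \mathrm{id}$; fourth conclude $\alpha\circ\beta = \gamma(\beta(\cdot))^k = t^k$.

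There is essentially no real obstacle here since both ingredients are already available; the only points requiring a line of care are the bookkeeping of the finite set of places (each application of the previous lemmas enlarges $S$, but only finitely, so $\beta$ remains adelic) and checking that $\gamma$ indeed has a nonzero linear term, which follows because $\gamma^k = \alpha$ has $t^k$ as its lowest-order term with nonzero coefficient, forcing $\gamma$ to have order exactly $1$. I expect the write-up to be three or four lines.
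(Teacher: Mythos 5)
Your argument is correct, and it takes a genuinely different route from the paper. The paper proves this lemma by \emph{direct construction}: after normalizing so that the leading coefficient of $\alpha$ is $1$, it writes out the functional equation $\alpha\circ\beta(t)=t^k$ as a single product of power series equal to $1$, identifies coefficients of $t^n$ order by order to obtain a recursion $k\,a_{n+1} + (\text{lower-order terms}) = 0$, observes this keeps coefficients in $\mathcal{O}_{\mathbb{L},S'}$ once one enlarges $S$ by the places where $|k|_v<1$, and then appeals to the analytic inverse function theorem together with convergence of $(1+t)^{1/n}$ to get positive radius of convergence at every place. You instead \emph{compose the two preceding lemmas}: Lemma~\ref{lem:adelic roots} gives an adelic $k$-th root $\gamma$ of $\alpha$ with $\gamma(0)=0$, $\gamma'(0)\neq0$; Lemma~\ref{lem:adelic series} inverts $\gamma$ to produce an adelic $\delta$; and $\beta:=\delta$ does the job since $\alpha\circ\delta = (\gamma\circ\delta)^k = t^k$. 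This is shorter and avoids rewriting the recursion from scratch, at the modest cost of passing through a possible field extension (to adjoin a $k$-th root of $a_k$, already present in Lemma~\ref{lem:adelic roots} — and in fact the paper's normalization to leading coefficient $1$ tacitly requires the same). One small point worth making explicit: Lemma~\ref{lem:adelic series} produces a \emph{left} inverse $\delta\circ\gamma=\mathrm{id}$, and you then use $\gamma\circ\delta=\mathrm{id}$; this is standard for formal power series of order $1$ (e.g.\ apply the lemma once more to $\delta$ and use associativity of composition), but since your final equality hinges on the right-inverse property it deserves a half-sentence. Your opening normalization $\alpha(t)=a_kt^k(1+\sum c_nt^n)$ is actually not needed, since you end up applying Lemma~\ref{lem:adelic roots} directly to $\alpha$ itself.
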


\begin{proof}
\smallskip
The equation $ \alpha \circ \beta(t) = t^k$ is equivalent to 
\[\left( 1+ \sum_{j\ge2} a_j t^{j-1}\right)^k \left(1 + \sum_{l\ge1} \alpha_l \left( t + \sum_{i\ge2} a_i t^i\right)^l\right)=1~.\]
Identifying terms of order $t^n$, one obtains
\[k a_{n+1} + \left[\left( 1+ \sum_{2\le j\le n} a_j t^{j-1}\right)^k \left(1 + \sum_{l\ge1} \alpha_l \left( t + \sum_{1\le i \le n} a_i t^i\right)^l\right)\right]_n= 0\]
which shows that $\beta$ is unique, has coefficients in $\mathcal{O}_{\mathbb{L},S'}$ where $S'$ contains $S$ and all places at which $|k|_v <1$.
The fact that $\beta$ is analytic at all places is a consequence of the inverse function theorem and the fact that the power series $t \mapsto (1+t)^{1/n} := 1+ \frac1n t + \frac{(1/n)(1/n-1)}2 t^2+ O(t^3)$ has a positive radius of convergence.
\end{proof}

We shall also deal with \emph{adelic series at infinity} which we define to be series of the form $\alpha (z)  =  \sum_{0\le k\le N} b_k z^k+ \sum_{k\ge1} \frac{a_k}{z^k}$ with $N\in \N$, $ b_k, a_k \in \mathcal{O}_{\KK,S}$ and
$\sum_{k\ge1} a_k t^k$ is an adelic series. Observe that this is equivalent to assume that $\alpha(t^{-1})^{-1}$ is an adelic series.

\subsection{Puiseux expansions}
We shall need the following facts on the Puiseux parameterizations of a curve defined over $\KK$.
These are probably well-known but we include a proof for the convenience of the reader. 
\begin{proposition}\label{lm:Puiseux}
Suppose $P\in \KK[x,y]$ is a polynomial such that 
$P(0,0) =0$ and $P(0,y)$ is not identically zero. Denote by $\mathsf{n}: \hat{D}\to D := \{P=0\}$
the normalization map, and pick any point $\mathfrak{c}\in \mathsf{n}^{-1}(0)\in\hat{D}$.

Then one can find a finite extension $\mathbb{L}$ of $\KK$, a finite set of places $S$ of $\mathbb{L}$, 
a positive  integer $n>0$, and an adelic series $\beta(t) \in \mathcal{O}_{\mathbb{L},S}[[t]]$ such that
\begin{enumerate}
\item
there is an isomorphism 
of  complete local rings $\widehat{\mathcal{O}_{\hat{D},\mathfrak{c}}}\simeq \mathbb{L}[[t]]$;
\item
the formal map $t\mapsto (t^n,\beta(t))$ parameterizes the branch $\mathfrak{c}$ in the sense that 
$x(\mathsf{n}(t))  = t^n$, and $y(\mathsf{n}(t)) = \beta(t)$.
\end{enumerate}
\end{proposition}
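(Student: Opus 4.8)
The proof splits into a \emph{formal/geometric} part, producing the data $\mathbb{L}$, $n$ and the parameterization at the level of formal power series, and an \emph{arithmetic} part, showing that the resulting series $\beta$ is adelic. The first part is classical (Puiseux $+$ Cohen); the second is where the work lies, and it is run place by place with the help of the lemmas of \S\ref{sec:adelic series}.

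\emph{Formal parameterization.} Let $\mathbb{L}_0:=\kappa(\mathfrak c)$ be the residue field at the closed point $\mathfrak c\in\hat D$; it is a finite extension of $\KK$. We are in equal characteristic zero, so Cohen's structure theorem gives a $\KK$-algebra isomorphism $\widehat{\mathcal{O}_{\hat D,\mathfrak c}}\simeq\mathbb{L}_0[[s]]$ for a uniformizer $s$, which is (1) (with $\mathbb{L}:=\mathbb{L}_0$). Composing $\KK[x,y]\to\mathcal{O}_{D,0}\to\mathcal{O}_{\hat D,\mathfrak c}\to\mathbb{L}_0[[s]]$ sends $x$ to a series $\xi(s)$ of finite order $n\ge1$ — finite because $P(0,y)\not\equiv0$ forbids $\{x=0\}$ from being a component of $D$, so $x$ does not vanish on the branch — and $y$ to a series $\eta(s)\in s\,\mathbb{L}_0[[s]]$, since $\mathfrak c\mapsto(0,0)$; these satisfy $P(\xi(s),\eta(s))=0$. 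The integer $n$ is now fixed, and what remains is to choose the uniformizer well and to establish that the coefficients are $S$-integral and $v$-adically convergent at every place.

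\emph{Arithmetic part.} Running the Newton--Puiseux algorithm on $P(x,y)=0$ along the branch $\mathfrak c$: since $\mathfrak c$ is a smooth (hence reduced, unibranch) point of $\hat D$, after finitely many substitutions of the form $x=x_1^{q}$ and $y=c_1x_1^{p_1}+\dots+c_rx_1^{p_r}+x_1^{p_r}y_1$, each carried out over a ring $\mathcal{O}_{\mathbb{L},S}[[\cdot]]$ obtained from the previous one by adjoining finitely many algebraic numbers (roots of characteristic polynomials) and finitely many places, one reaches an equation $\widetilde P(x_1,y_1)=0$ with $\widetilde P(0,0)=0$ and $\partial_{y_1}\widetilde P(0,0)\neq0$. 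In this Hensel situation the unique solution $y_1=\psi(x_1)$ with $\psi(0)=0$ has: coefficients in $\mathcal{O}_{\mathbb{L},S}$ (the Hensel recursion only divides by the unit $\partial_{y_1}\widetilde P(0,0)$, after enlarging $S$), positive radius of convergence at every archimedean place (majorant series, as in the proof of Lemma~\ref{lem:adelic series}), positive radius of convergence at every non-archimedean place (the ultrametric implicit function theorem), and radius exactly $1$ at all but finitely many places (good reduction). Unwinding the substitutions reconstructs $\xi$ and $\eta$ from $\psi$ through compositions, $q$-th roots and solutions of $\alpha\circ\gamma=t^{q}$, i.e. through the operations of Lemmas~\ref{lem:adelic series}, \ref{lem:adelic roots} and \ref{lem:adelic pseudo root}; hence $\xi$ and $\eta$ are adelic series for a suitable adelic uniformizer $s$, with $\xi$ of order $n$ and $\eta\in\mathcal{O}_{\mathbb{L},S}[[s]]$. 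Finally, since $\xi$ is adelic of order $n$, Lemma~\ref{lem:adelic pseudo root} provides an adelic series $\theta$ of order $1$ with $\xi\circ\theta(t)=t^{n}$; the substitution $s=\theta(t)$ is an invertible change of uniformizer (Lemma~\ref{lem:adelic series}), after which $x\circ\mathsf n(t)=t^{n}$ and $y\circ\mathsf n(t)=\beta(t):=\eta\circ\theta(t)$ is adelic, proving (2) with $\beta\in\mathcal{O}_{\mathbb{L},S}[[t]]$.

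The delicate point throughout is the control of the \emph{finitely many non-archimedean places of bad reduction} — those dividing the denominators, the characteristic-polynomial roots, the relevant discriminants, or the residue characteristics $2,3,\dots$ entering the $q$-th-root extractions — where one must check that the Newton--Puiseux substitutions and the final Hensel step converge genuinely $v$-adically and not merely formally. This is exactly the role played by the adelic-series lemmas of \S\ref{sec:adelic series}; the archimedean places are handled by the classical analytic Puiseux theorem, and the remaining non-archimedean places by good reduction.
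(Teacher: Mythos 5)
Your proof is correct but takes a genuinely different route from the paper's. The paper resolves the branch by iteratively \emph{blowing up the origin}: each blow-up is an explicit monomial coordinate change $(x,y)=(x',(x'+c)y')$ with $c$ algebraic over $\KK$, so the arithmetic bookkeeping is immediate (one adjoins finitely many algebraic numbers and places per step), and the process stops once the strict transform $C$ is smooth and transverse to the exceptional divisor at a point $p$. One then solves the defining equation $R(z,w)=w - za(z)-wQ(z,w)=0$ of $C$ by the obvious recursion, which manifestly produces coefficients in $\mathcal{O}_{\mathbb{L},S}$, and analyticity at all places follows from the implicit function theorem. Only Lemma~\ref{lem:adelic pseudo root} is needed, at the very end, to turn $x(\mathsf{n}(t))=\alpha(t)=t^{n}(a+\cdots)$ into $t^n$ exactly. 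Your route instead runs the \emph{Newton--Puiseux algorithm} and finishes with a Hensel step; you then need to cite all three adelic-series lemmas to unwind the substitutions. The two desingularization procedures are, of course, essentially the same geometrically, but the blow-up version makes the arithmetic control of each step transparent, whereas in your version the assertion that ``each substitution is carried out over a ring $\mathcal{O}_{\mathbb{L},S}[[\cdot]]$ obtained by adjoining finitely many algebraic numbers and places'' is precisely the content one should check and is left somewhat compressed. One small superfluity: after Newton--Puiseux you already have $x$ equal to $x_1^{n}$ \emph{on the nose} in the last uniformizer $x_1$, so your final invocation of Lemma~\ref{lem:adelic pseudo root} is unnecessary — it is the blow-up approach, not Newton--Puiseux, that genuinely needs that lemma. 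Finally, your shortcut for assertion~(1) via Cohen's structure theorem is fine and slightly cleaner than the paper's construction, but you should observe (as the paper implicitly does when it allows itself to replace $\mathbb{L}$ by a finite extension) that any later enlargement of $\mathbb{L}$ must be compatible with the residue field $\kappa(\mathfrak{c})$ so that~(1) continues to hold.
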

A branch of $D$ at the origin is by definition a point in $\mathsf{n}^{-1}(0)$. 
\begin{proof}
We first reduce the situation to the case $D$ is smooth at $0$. To do so
we blow-up  the origin $X_1 \to \A^2$ and let $D_1$ be the strict transform of $D$. Since $\hat{D}$ is normal
the map $\mathsf{n}$ lifts to a map $\mathsf{n}_1: \hat{D} \to D_1$, and we let
 $p_1$ be the image of $\mathfrak{c}$ in $D_1$. 
 
In the coordinates $(x,y)=(x', x'y')$ (or $(x'y', y')$) the point $p_1$ has coordinates $(0,y_1)$
 where $y_1$ is the solution of a polynomial with values in $\KK$ hence belongs to an algebraic extension of this field. 
We may  thus choose charts $(x,y) = (x', (x'+c)y')$ (or $(x' (y'+c), x')$) with $c\in \bar{\KK}$ such that 
$\mathfrak{c}$ is now a branch of $D_1 =\{P_1 =0\}$ at the origin, and $P_1 \in \bar{\KK}[x',y']$.  

We iterate this process of blowing-up to build a sequence of proper birational morphisms between 
smooth surfaces $X_{i+1} \to X_i$, $i=1, \ldots, N$ until we arrive at the following situation for $X:= X_N$:
the strict transform $C$ of $D$ by $\pi : X \to \A^2$ is smooth at a point $p\in \pi^{-1}(0)$ and 
intersects transversally the exceptional locus of $\pi$.

The normalization map $\mathsf{n}: \hat{D}\to D$ lifts to a map $\mathsf{m}: \hat{D}\to C$ and the image of $\mathfrak{c}$ by 
$\mathsf{m}$ is equal to $p$. Finally there exist coordinates $z,w$ centered at $p$ such that 
$(x,y) = \pi(z,w)= (A(z,w), B(z,w))$ with $A,B\in \bar{\KK}[z,w]$, the exceptional locus of $\pi$ contains $\{z=0\}$,
and $C = \{ R(z,w):= w - z a(z) - w Q(z,w) =0\}$ where $a\in \bar{\KK}[z]$, $Q\in \bar{\KK}[z,w]$ and $Q(0,0)=0$.

Fix an algebraic extension $\mathbb{L}$ of $\KK$ and $S$ finitely many places of $\mathbb{L}$ such that 
$A,B, R$ have their coefficients in $\mathcal{O}_{\mathbb{L},S}$.

We now look for a power series $\gamma (t) = \sum_{k\ge1} \gamma_k t^k$ such that 
$R(t,\gamma(t)) =0$. Its coefficients satisfy the relations
\[\gamma_k = [t^2a(t)]_k +   \left[ 
\left(\sum_{j=1}^{k-1} \gamma_j t^j\right)\, Q\left( t, \sum_{j=1}^{k-1} \gamma_j t^j\right)
\right]_k\]
which implies that $\gamma$ exists, is unique, and all its coefficients belongs to $\mathcal{O}_{\mathbb{L},S}$.
It follows from the analytic implicit function theorem, that $\gamma$ is also analytic as a power series in  $\mathbb{L}_v[[t]]$
for any place $v$.

\smallskip

Let us now consider the two power series $(\alpha(t), \delta(t)):= \pi(t, \gamma(t))$. They both belong to  $\mathcal{O}_{\mathbb{L},S}$,
are analytic at any place, and we have $P(\alpha(t),\delta(t))=0$. Since $P(0,y)$ is not identically zero, we may write
$\alpha (t) = t^n (a + \sum_{k\ge1} \alpha_k t^k)$ for some $n>0$ and $a\neq0$. Replacing $\mathbb{L}$ by a suitable finite extension, 
and $t$ by $a' t$ for a suitable $a'$ we may suppose that $a=1$ and $\alpha_k\in \mathcal{O}_{\mathbb{L},S}$ for all $k$.

By Lemma~\ref{lem:adelic pseudo root}, there exists an invertible  power series $\hat{a}(t) = t + \sum_{k\ge2} a_k t^k$ that is analytic at all places with coefficients
$a_k \in \mathcal{O}_{\mathbb{L},S}$ and such that 
$ \alpha \circ \hat{a} (t) = t^n$. Once this claim is proved one sets $\beta(t) := \delta \circ \hat{a}(t)$,  so that
$\pi ( \hat{a} (t), \gamma( \hat{a} (t))) = (t^n, \beta(t))$.

Since $\mathsf{m}$ is injective and maps the smooth point $\mathfrak{c}\in \hat{D}$ to the smooth point $p\in C$, 
it induces an isomorphism  of complete local rings $\widehat{\mathcal{O}_{C,p}}\simeq \widehat{\mathcal{O}_{\hat{D},\mathfrak{c}}}$. 
Observe that the complete local ring $\widehat{\mathcal{O}_{C,p}} = \mathbb{L}[[z,w]]/\langle R \rangle$ is  
isomorphic to $\mathbb{L}[[t]]$ by sending the class of a formal series $\Phi$ to $\Phi(t,\gamma(t))$).
Composing with the isomorphism  of $ \mathbb{L}[[t]]$ sending $t$ to $\hat{a}(t)$, we get 
an isomorphism  $\widehat{\mathcal{O}_{\hat{D},\mathfrak{c}}}\simeq\mathbb{L}[[t]]$ such that 
$(x(\mathsf{n}(t)), y(\mathsf{n}(t)))  = \pi(\mathsf{n}(t)) =  \pi (\hat{a}(t), \gamma(\hat{a}(t))) = (t^n,\beta(t))$
as required.
\end{proof}

\subsection{Branches at infinity of a curve in  $\textup{Poly}_3$}
Consider an irreducible affine curve $C\subset\poly_3$ defined over a number field $\KK$. 
We denote by $\overline{\poly_3}\simeq \mathbb{P}^2$ the natural compactification of $\poly_3\simeq \A^2$ using the affine coordinates $(c,a)$.
Let $\bar{C}$ be the Zariski closure of the curve $C$ in  $\overline{\poly_3}$, and $\mathsf{n}: \hat{C} \to \bar{C}$ be its normalization.
A \emph{branch at infinity} of $C$ is a point in $\hat{C}$ lying over
$\bar{C} \setminus C$.

\begin{proposition}\label{prop:defineL}
There exists a finite extension $\mathbb{L}$ of $\KK$ and a finite set of places $S$ such that the following holds.

For any branch $\mathfrak{c}$ of $C$ at infinity there is an isomorphism of complete local rings $\widehat{\mathcal{O}_{\hat{C},\mathfrak{c}}}\simeq \mathbb{L}[[t]]$
such that $c(\mathsf{n}(t)), a(\mathsf{n}(t))$ are adelic series at infinity.
\end{proposition}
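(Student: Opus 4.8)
The plan is to deduce Proposition~\ref{prop:defineL} from Proposition~\ref{lm:Puiseux} by a suitable change of coordinates at infinity. First I would fix a branch $\mathfrak{c}$ of $C$ at infinity, i.e. a point of $\hat C$ mapping to some point $p\in\bar C\setminus C\subset\mathbb{P}^2$. Working in homogeneous coordinates $[C:A:W]$ on $\overline{\poly_3}=\mathbb{P}^2$ with $(c,a)=(C/W,A/W)$, the line at infinity is $\{W=0\}$, and $p=[c_0:a_0:0]$ for some $[c_0:a_0]\in\mathbb{P}^1(\bar\KK)$; since $p$ is cut out by polynomial equations over $\KK$, after enlarging $\KK$ to a finite extension we may assume $p$ is $\KK$-rational. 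We then pass to an affine chart of $\mathbb{P}^2$ adapted to $p$: either $\{A\ne0\}$ with coordinates $(x,y)=(C/A,W/A)$, or $\{C\ne0\}$ with coordinates $(x,y)=(A/C,W/C)$, chosen so that $p$ is the point $y=0$, $x=x_0$ for some $x_0\in\KK$. Translating $x$, we may assume $p$ is the origin of an affine chart with coordinates defined over $\KK$, in which $W/(\text{one of }C,A)=y$ and the other ratio is a coordinate $x$, and the curve $C$ becomes an affine plane curve $\{P=0\}$ with $P\in\KK[x,y]$, $P(0,0)=0$, with the extra feature that the branch $\mathfrak{c}$ does not lie on $\{y=0\}$ unless $C$ has $\{W=0\}$ as a component, which is excluded since $C$ is an affine curve. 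I would then note $P(0,y)\not\equiv0$: otherwise $\{x=0\}\subset\{P=0\}$, forcing $C$ to contain a line at infinity or a vertical line there, again impossible.

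Next I apply Proposition~\ref{lm:Puiseux} to this $P$ and the branch $\mathfrak{c}$. This yields a finite extension $\mathbb{L}/\KK$, a finite set of places $S$ of $\mathbb{L}$, an integer $n>0$ and an adelic series $\beta(t)\in\mathcal{O}_{\mathbb{L},S}[[t]]$, together with an isomorphism $\widehat{\mathcal{O}_{\hat C,\mathfrak{c}}}\simeq\mathbb{L}[[t]]$ under which $x(\mathsf{n}(t))=t^n$ and $y(\mathsf{n}(t))=\beta(t)$. I should double-check that the normalization $\hat C$ of $\bar C$ in the sense of this section agrees, near $\mathfrak{c}$, with the normalization of the local affine curve $\{P=0\}$ at $0$ used in Proposition~\ref{lm:Puiseux}; this is automatic because normalization is a local operation on the target and $\bar C$ and $\{P=0\}$ coincide in a Zariski neighborhood of $p$. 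Now I translate back: $c=C/W$ and $a=A/W$ are rational functions of $(x,y)$. In the chart $\{A\ne0\}$ we have $y=W/A$, $x=C/A$, hence $c=C/W=x/y$ and $a=A/W=1/y$; in the chart $\{C\ne0\}$ we get $a=y/\!\text{(shifted }x)$-type expressions and $c=1/(\text{shifted }x)$. In either case $c(\mathsf{n}(t))$ and $a(\mathsf{n}(t))$ are obtained from $t^n$ and $\beta(t)$ by finitely many additions, multiplications, and one inversion of a series with nonzero constant term — here $\beta(0)\ne0$ precisely because $\mathfrak{c}\notin\{y=0\}$, which I arranged above.

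To conclude I invoke the closure properties of adelic series established in Section~\ref{sec:adelic series}. The series $t\mapsto t^n$ and $\beta(t)$ are adelic (after possibly enlarging $S$ so that $n^{-1}$ and the relevant leading coefficients are units). Sums and products of adelic series are adelic essentially by definition, since at each place the radius of convergence of a sum or product is bounded below by the minimum of the two radii, and at all but finitely many places both radii are $1$; and one inverts $\beta$ using the fact that if $\beta(0)=b_0\ne0$ then $b_0-\beta(t)$ vanishes at $0$, so $\beta(t)^{-1}=b_0^{-1}(1-(1-\beta(t)/b_0))^{-1}=b_0^{-1}\sum_{k\ge0}(1-\beta(t)/b_0)^k$, which by Lemma~\ref{lem:adelic series} (applied to produce the formal inverse of a series, or directly by the geometric-series computation) is again adelic after enlarging $S$ by the finitely many places where $b_0$ is not a unit. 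Therefore $c(\mathsf{n}(t))$ and $a(\mathsf{n}(t))$ are adelic series at infinity in the sense of \S\ref{sec:adelic series}, i.e. of the form $\sum_{0\le k\le N}b_kz^k+\sum_{k\ge1}a_kz^{-k}$ with the negative-part series adelic, since they are rational in $t$ with at worst a pole at $t=0$ coming from the factor $1/\beta(t)$ composed with $t^n$. The only mild subtlety — and the place I'd be most careful — is bookkeeping of the finite set $S$: each operation (Proposition~\ref{lm:Puiseux}, the inversions, passing between charts) enlarges $S$ by a controlled finite set depending only on the coefficients of $P$, $n$, and $\beta$, hence only on $C$ and not on the branch $\mathfrak{c}$ once we take the (finite) union over all branches; so a single $(\mathbb{L},S)$ works uniformly, as claimed. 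I would end with a short remark that finitely many branches at infinity exist, so this union is harmless.
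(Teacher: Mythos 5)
Your overall strategy is the same as the paper's: pass to an affine chart of $\mathbb{P}^2$ centered at the point at infinity $p$, apply Proposition~\ref{lm:Puiseux} there, and translate the resulting Puiseux parameterization back to the $(c,a)$-coordinates, tracking the finite place set $S$ through the algebraic manipulations. Your chart is different from the one in the paper — you take $(x,y)=(C/A,W/A)$ (so $a=1/y$ and $c=x/y$), whereas the paper takes $\tau=1/c$ and a shifted $a/c$-type coordinate, which has the pleasant effect of making $c(\mathsf{n}(t))=t^{-n}$ a bare monomial — but that choice of chart is not in itself a problem.

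There is, however, a genuine error in the step where you invert $y$. You claim that $\beta(0)\ne 0$ ``because $\mathfrak{c}\notin\{y=0\}$,'' and then invert $\beta$ as a series with nonzero constant term. This is false. By construction $\mathsf{n}(\mathfrak{c})=p$ is the origin of the chart, and $p$ lies on the line at infinity $\{W=0\}=\{y=0\}$; so $\beta(0)=y(\mathsf{n}(\mathfrak{c}))=y(p)=0$. The fact that the branch $\mathfrak{c}$ is not \emph{contained} in $\{y=0\}$ (which is what you argued) only tells you that $\beta$ is not identically zero, not that its constant term is nonzero — you have confused these two statements. Consequently $a=1/\beta(t)$ is a genuine Laurent series with a pole, and your appeal to the geometric-series inversion of a unit series does not directly apply. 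The argument is salvageable: write $\beta(t)=t^m u(t)$ with $u(0)\neq 0$, invert $u$ using Lemma~\ref{lem:adelic series} (enlarging $S$ by the places where $u(0)$ is not a unit), and conclude $1/\beta=t^{-m}u(t)^{-1}$ is an adelic series at infinity; similarly for $c=t^n/\beta(t)=t^{n-m}u(t)^{-1}$. But as written, the inversion step is invalid. The paper avoids this pitfall entirely by choosing the local coordinate $\tau=1/c$, so that $c(\mathsf{n}(t))=1/\tau(\mathsf{n}(t))=t^{-n}$ requires inverting only the monomial $t^n$ and no power-series inversion at all.

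One further small point: you assert that $\{x=0\}\subset\{P=0\}$ would force $C$ to ``contain a line at infinity,'' but in your chart $\{x=0\}$ is not the line at infinity (that is $\{y=0\}$). The correct justification for $P(0,y)\not\equiv 0$ is simply that $\bar C$ is irreducible, affine in origin, and therefore not equal to the single line $\{x=0\}$, so $\{x=0\}$ cannot be a component. Your bookkeeping of $S$ and the remark that the finitely many branches at infinity allow one to take a single $(\LL,S)$ are correct and match the paper's (implicit) reasoning.
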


\begin{proof}
Pick a branch at infinity $\mathfrak{c}$ of $C$. Let  $p_*$ be the image of $\mathfrak{c}$ in $\overline{\poly_3} \simeq \p^2$. It is given in homogeneous coordinates by $p_* = [c_*:a_*:0]$ and since $C$ is defined over $\KK$ we may assume $c_*, a_*$ are algebraic over $\KK$.
 To simplify the discussion we shall assume that $c_* =1$ so that $p_* = [1:a_*:0]$ (otherwise $p_* = [0:1:0]$ and the arguments are completely analoguous).
Let $d$ be the degree of a defining equation $P\in \KK[c,a]$ of $C$. Observe that $Q(\tau,\alpha) := \tau^d P(\frac1{\tau}, \frac{\alpha}{\tau}-a_*)$ is a polynomial
vanishing at $(0,0)$ such that $Q(0,\alpha)$ is not identically zero. Note that $\{ Q=0\}$ can be identified to an open Zariski subset of the completion of $\{P=0\}$ in $\overline{\poly_3}$, and $\mathfrak{c}$ with a branch of $\{ Q=0\}$ at the origin.

Apply Proposition~\ref{lm:Puiseux} to this branch $\mathfrak{c}$. We get a finite extension $\mathbb{L}$, a finite set of places $S$ of $\mathbb{L}$ containing all archimedean ones, a positive integer $n$, an isomorphism of complete local ring $\mathcal{O}_{\hat{C},\mathfrak{c}} \simeq \mathbb{L}[[t]]$, 
and a power series $\beta\in \mathcal{O}_{\mathbb{L},S}[[t]]$ that is analytic at all places such that
$\alpha(\mathsf{n}(t)) = \beta(t)$ and $\tau(\mathsf{n}(t)) = t^n$. It follows that
$c(\mathsf{n}(t)) = t^{-n}$, and
$a(\mathsf{n}(t)) = t^{-n} \beta(t) - a_*\in \mathcal{O}_{\mathbb{L},S}[[t]]$.
\end{proof}

\subsection{Estimates for the Green functions on a curve in  $\textup{Poly}_3$}

In this section, we fix an irreducible curve $C$ in $\poly_3$ defined over a number field $\mathbb{K}$ and let $\mathbb{L}$ be a finite extension of $\mathbb{K}$ for which Proposition~\ref{prop:defineL} applies. 
Fix a place $v$ of $\mathbb{L}$, and let $g_{0,v}(c,a)$ be the function $g_{0,v}$ evaluated at $c,a$ in the completion $\mathbb{L}_v$ of $\mathbb{L}$
with respect to the $v$-adic norm. 

By \cite{favredujardin} and \cite[Proposition 2.4]{favregauthier}, the function $g_{0,v}$ is the uniform limit on compact sets of $\frac1{3^n}\log^+ |P^n_{c,a}(c_0)|_v$. 
It follows that its lift to the normalization of $C$ is sub-harmonic (in the classical sense when $v$ is Archimedean and in the sense of Thuillier~\cite{thuillier} when 
$v$ is non-Archimedean).

\smallskip

To simplify notations, we also write 
$g_{0,v}(t) := g_{0,v}(c(\mathsf{n}(t)), a(\mathsf{n}(t)))$ where 
the adelic series at infinity $c(\mathsf{n}(t))$ and $a(\mathsf{n}(t))$ are given 
as  above.

\begin{proposition}\label{prop:branch}
For each branch $\mathfrak{c}$ of $C$ at infinity, one of the following two situations occur.
\begin{enumerate}
\item
For any place $v$ of $\mathbb{L}$, the function $g_{0,v}(t)$ extends as a locally bounded subharmonic function through $\mathfrak{c}$.
\item
There exist a finite set of place $S$ of $\LL$, and two constants $a(\mathfrak{c}) \in \Q_+^*$ and $b(\mathfrak{c}) \in \mathcal{O}_{\mathbb{L},S}$ such that 
$g_{0,v} (t) = a(\mathfrak{c}) \log|t|_v^{-1} + \log|b(\mathfrak{c})|_v + o(1)$ for any place $v$ on $\mathbb{L}$.
\end{enumerate}
\end{proposition}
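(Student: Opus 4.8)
The plan is to analyze the behavior of $g_{0,v}(t)$ as $t\to 0$ (i.e. as one approaches the branch $\mathfrak{c}$ at infinity) using the parameterization $c(\mathsf{n}(t)) = t^{-n}$, $a(\mathsf{n}(t)) = t^{-n}\beta(t) - a_*$ supplied by Proposition~\ref{prop:defineL}, together with the Green-function estimates for cubic polynomials. The dichotomy should correspond to whether the critical value $P_{c,a}(c_0)$ escapes to infinity along the branch or stays bounded. More precisely, recall $c_0 = c$, so the relevant orbit starts at $P_{c,a}(c) = \frac{1}{3}c^3 - \frac{c}{2}c^2 + a^3 = -\frac{1}{6}c^3 + a^3$. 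Along the branch, $c = t^{-n} \to \infty$, and $a^3 = (t^{-n}\beta(t) - a_*)^3$, so $P_{c,a}(c_0)$ is an adelic Laurent series in $t$ whose order of vanishing (or pole) at $t=0$ is governed by whether the leading terms of $-\frac16 c^3$ and $a^3$ cancel. First I would set $w := P_{c,a}(c_0)$ and compute its Puiseux/adelic expansion at $\mathfrak{c}$, distinguishing two cases according to whether $\log|w|_v \to +\infty$ uniformly (in a suitable sense across places) or $\log|w|_v$ stays bounded.

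The key tool is the B\"ottcher coordinate machinery of Section~2. Since $g_{c,a}(z) = \log|\varphi_{c,a}(z)|_v$ on $U_{c,a} = \{g_{c,a} > G(c,a) + \tau\}$ (Proposition~\ref{prop:green is green}), and on the branch $G(c,a) = \log^+\max\{|c|_v,|a|_v\} + O(1)$ behaves like $n\log|t|_v^{-1} + O(1)$ (Proposition~\ref{prop:growth Green}), the point $c_0 = c = t^{-n}$ itself satisfies $g_{c,a}(c_0) \le G(c,a)$ always. So one cannot directly apply the B\"ottcher formula at $c_0$; instead one iterates. The natural move is: pick the first iterate $N = N(\mathfrak{c})$ (if it exists) such that $P_{c,a}^{N}(c_0)$ lands in $U_{c,a}$ for $t$ near $0$; then $g_{0,v}(t) = 3^{-N}\log|\varphi_{c,a}(P_{c,a}^N(c_0))|_v$, and since $\varphi_{c,a}(z) = \om(z - \tfrac c2) + \sum_k a_k(c,a) z^{-k}$ with $a_k$ adelic (Lemma~\ref{lm:bottcher1}), and $P_{c,a}^N(c_0)$ is itself an adelic Laurent series in $t$, the right-hand side is $\log$ of an adelic Laurent series, giving $g_{0,v}(t) = a(\mathfrak{c})\log|t|_v^{-1} + \log|b(\mathfrak{c})|_v + o(1)$ with $a(\mathfrak{c}) = 3^{-N}\cdot(\text{pole order}) \in \mathbb{Q}_+^*$ and $b(\mathfrak{c})$ the (adelic, hence eventually an $S$-unit) leading coefficient — this is case (2). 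If on the contrary no such $N$ exists, i.e. the forward orbit of $c_0$ stays in the bounded locus $\{g_{c,a} \le G(c,a) + \tau\}$ for all iterates uniformly as $t\to 0$, then $g_{0,v}(t)$ stays bounded near $\mathfrak{c}$; combined with subharmonicity of its lift to $\hat C$ (already noted in the text, via \cite{favredujardin, favregauthier, thuillier}) and the removable-singularity theorem for bounded subharmonic functions, it extends as a locally bounded subharmonic function — case (1).

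The main obstacle I anticipate is making the case division uniform and simultaneous across all places $v$ — in particular, showing that the integer $N(\mathfrak{c})$, the rational exponent $a(\mathfrak{c})$, and the leading coefficient $b(\mathfrak{c})$ can be chosen independently of $v$. This is exactly where the adelic-series formalism of \S\ref{sec:adelic series} pays off: because $c(\mathsf{n}(t)), a(\mathsf{n}(t))$ are adelic series at infinity, every $P_{c,a}^k(c_0)$ is again an adelic Laurent series, so its order of vanishing at $t=0$ and its leading coefficient are place-independent algebraic data; the constants $\tau = \tau(K)$ and $\rho=\rho(K)$ from Proposition~\ref{prop:green is green} are nonzero only at residual characteristics $2,3$ and for Archimedean places, hence the set of "bad" places is finite and can be absorbed into $S$. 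The one genuinely delicate point is checking that "$P_{c,a}^N(c_0) \in U_{c,a}$ for small $t$" can be detected purely from the Laurent expansion — i.e. that $g_{c,a}(P^N(c_0)) > G(c,a) + \tau$ for $t$ near $0$ is equivalent to the pole order of $P^N(c_0)$ in $t$ strictly exceeding $3^N \cdot n$ (the pole order controlling $G$) by a definite amount; this uses Lemma~\ref{lem:classic-estim} to compare $g_{c,a}(z)$ with $\log^+|z|_v$ up to the bounded error $\theta(K)$, and requires care that the comparison holds simultaneously for cofinitely many $v$ with $\theta = 0$.
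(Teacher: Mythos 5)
Your overall strategy agrees with the paper's: split into the two cases according to whether the critical orbit escapes along the branch, handle the escaping case via the adelic B\"ottcher expansion, and invoke a removable-singularity lemma for subharmonic functions in the bounded case. However, there are two points where the proposal is genuinely incomplete. First, the case division you describe (``does some iterate $P^N_t(c_0)$ land in $U_{c,a}$ for all small $|t|_v$?'') is awkward precisely because of the place-uniformity issue you yourself flag. The paper sidesteps this by formulating the dichotomy entirely over the non-Archimedean field $\LL((t))$: setting $e_q := |\mathsf{P}^q(\mathsf{c}_0)|_t$ (i.e.\ $e^{-\ord_t}$ of the adelic Laurent series), either $e_q$ stays bounded or $e_q\to\infty$. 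This is a single, place-independent condition; once $e_q\to\infty$, the pole order of $\mathsf{P}^q(\mathsf{c}_0)$ eventually dominates the pole order $n$ governing $G$, and that algebraic fact \emph{implies}, simultaneously at every place, that $P^q_t(c_0)$ enters the domain of $\varphi_t$ for $|t|_v$ small (Lemma~\ref{lem:vlat}). Your version tries to phrase the dichotomy in terms of the $v$-dependent domains $U_{c,a}$ directly and then argue afterwards that the division is uniform; the $t$-adic formulation makes that uniformity automatic.

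Second, your treatment of case~(1) has a real gap. You assert that if the orbit stays in the bounded locus then ``$g_{0,v}(t)$ stays bounded near $\mathfrak{c}$'' and conclude via a removable-singularity theorem. But $G(c,a)\sim n\log|t|_v^{-1}$ blows up along the branch, so ``staying in $\{g_{c,a}\le G(c,a)+\tau\}$'' does not by itself bound $g_{0,v}$; and $g_{0,v}$ is a pointwise/uniform-on-compacts limit of the functions $\frac1{3^q}\log^+|P^q_t(c_0)|_v$, so a priori the limit could still blow up as $t\to 0$ even if each term has a controlled pole. The paper's argument subtracts the known pole: it forms $h_q := \frac1{3^q}\log^+|P^q_t(c_0)|_v - \frac{\log^+ e_q}{3^q}\log|t|_v^{-1}$, which is subharmonic and locally bounded on the punctured disk; since $h_q\to g_{0,v}$ uniformly on a boundary circle (because $\frac{\log^+ e_q}{3^q}\to 0$), the maximum principle gives a \emph{uniform} upper bound for all $h_q$ on the whole disk, and Hartogs' theorem then yields a subharmonic limit through $t=0$. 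Boundedness of the extension is a consequence (using $g_{0,v}\ge 0$), not an input. Without the $h_q$/maximum-principle/Hartogs step, the claim that $g_{0,v}$ extends in case~(1) is unjustified.
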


\begin{remark}
This key result  is very similar to~\cite[Proposition~2.1]{BD}.
Ghioca and Ye have proved that $g_{0,v}(t)$ actually extends to a \emph{continuous} function at $t=0$ in case 1.
We also refer to~\cite[Proposition~3.1]{Demarco-stable} for a version of this result in the case of rational maps.  
\end{remark}

\begin{notation}
We endow the field $\LL((t))$ with the $t$-adic norm so that for any Laurent series $Q= \sum a_k t^k$ we have $|Q|_t :=e^{- \ord_t(Q)}$ with $\ord_t(Q) = \min\{k, \, a_k\neq 0\}$. The resulting valued field is complete and non-Archimedean.

In order to avoid confusion, we denote by $\mathsf{P}(z)\in \LL((t))[z]$ the cubic polynomial induced by the family $(P_{c(\mathsf{n}(t)),a(\mathsf{n}(t))})_t$. 
Observe that the critical points of $\mathsf{P}$ are given by $\mathsf{c}_0$ and $\mathsf{c}_1$
which correspond to the adelic series at infinity $0$ and $c(\mathsf{n}(t))$ respectively.
\end{notation}

\begin{proof}[Proof of Proposition~\ref{prop:branch}]
For each $q\in \N^*$, 
we set $e_q:= |\mathsf{P}^q(\mathsf{c}_0)|_t$, so that either the sequence $\{e_q\}_{q\in\N}$ is bounded (i.e. $\mathsf{c}_0$ belongs to the filled-in Julia set of $\mathsf{P}$)
or $e_q \to \infty$ (exponentially fast). 

\smallskip

Suppose we are in the former case, and consider the sequence of subharmonic functions $\frac1{3^q} \log^+ |P^q_t(0)|_v$ defined on a punctured disk
$\mathbb{D}^*_v$
centered at $0$ in $\A^{1,\an}_{\LL_v}$. Since $\frac1{3^q} \log^+ |P^q_t(0)|_v =  \frac{\log^+ e_q}{3^q} \log|t|_v^{-1} + O(1)$, the function 
\[h_q:= \frac1{3^q} \log^+ |P^q_t(0)|_v - \frac{\log^+ e_q}{3^q} \log|t|_v^{-1}\]
is subharmonic on $\mathbb{D}^*_v$ and locally bounded near $0$. It thus extends as a subharmonic function to $\mathbb{D}_v$ by the next lemma.
\begin{lemma}\label{lem:extend psh}
Any subharmonic function on $\mathbb{D}^*_v$ that is bounded from above in a neighborhood of the origin is the restriction of a subharmonic function on $\mathbb{D}_v$. 
\end{lemma}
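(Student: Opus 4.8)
The statement to prove is Lemma~\ref{lem:extend psh}: any subharmonic function on a punctured disk $\mathbb{D}^*_v$ that is bounded above near the origin extends to a subharmonic function on $\mathbb{D}_v$, both in the classical (Archimedean) case and in the sense of Thuillier for a non-Archimedean $\mathbb{L}_v$.

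The plan is to treat the two cases separately but along the same conceptual line, using the extension-by-upper-regularization principle. In the Archimedean case this is the classical removable-singularity theorem for subharmonic functions: given $u$ subharmonic on $\mathbb{D}^*$ and bounded above near $0$, one sets $\tilde u(0) := \limsup_{z\to 0} u(z)$ and $\tilde u = u$ elsewhere, and checks that $\tilde u$ is subharmonic on all of $\mathbb{D}$. The point is that the possible singularity of $u$ at $0$ is at most logarithmic, and a local bound $u \le M$ forces the defect from subharmonicity at $0$ to vanish; I would cite the standard reference (e.g. H\"ormander, \emph{The Analysis of Linear Partial Differential Operators I}, or Ransford, \emph{Potential Theory in the Complex Plane}, Theorem~3.6.1) rather than reprove it. Concretely, one notes $u$ differs from a harmonic function by a negative potential on $\mathbb{D}^*$, compares with $\epsilon \log|z|$, lets $\epsilon \to 0$, and uses that the exceptional set $\{0\}$ is polar.

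For the non-Archimedean case I would work on the Berkovich disk and use Thuillier's potential theory on curves. The key structural fact is that a function subharmonic on $\mathbb{D}^*_v$ has a well-defined behaviour along the single end corresponding to the puncture: restricted to the annuli $\{r \le |t|_v \le 1\}$ it is (after subtracting an affine function of $\log|t|_v^{-1}$) bounded, and on the skeleton it is concave and piecewise affine. The hypothesis that $u$ is bounded above near $0$ rules out a positive slope in $\log|t|_v^{-1}$ at the end, so the Laplacian $\Delta u$, which is a positive measure on $\mathbb{D}^*_v$, has finite mass near $0$ and puts no mass escaping to the puncture with the wrong sign; one then extends $u$ by its limit value at the Gauss-type point over $0$ (equivalently, by upper semicontinuous regularization) and checks $\Delta$ of the extension is still $\ge 0$ by testing against functions supported near $0$. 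This is exactly the non-Archimedean analogue of the classical statement and is implicit in \cite{thuillier}; I would invoke the relevant proposition there (removability of a single point / Riemann-type extension for subharmonic functions on Berkovich curves) and spell out only the reduction to it.

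The main obstacle is the non-Archimedean half: unlike the Archimedean case there is no single off-the-shelf citation that states precisely ``bounded-above subharmonic functions extend across a point'' on a Berkovich disk, so some care is needed to phrase the hypothesis in terms of the retraction to the skeleton and the finiteness of the mass of $\Delta u$ near the end, and to verify semicontinuity and submean-value properties of the extension at type-II, III and IV points near $0$. I expect the cleanest route is: (i) fix $0<r<1$ and on the sub-annulus write $u = h + p$ with $h$ harmonic (affine on the skeleton, matching boundary values at $|t|_v = r$ and $|t|_v = 1$) and $p \le 0$; (ii) observe $p$ is bounded, hence the slope of $h$ in $\log|t|_v^{-1}$ is $\le 0$ by the upper bound hypothesis, so $h$ itself extends continuously across $0$; (iii) extend $p$ by $0$ at $0$ and note this is the decreasing limit of the harmonic majorants, hence subharmonic; (iv) recombine. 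Step (ii) is where the hypothesis is essential and where I would be most careful.
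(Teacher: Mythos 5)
Your Archimedean argument is fine: citing the standard removable-singularity theorem for subharmonic functions is exactly what the paper does.

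For the non-Archimedean half you take a different route, and that route, as written, has a real gap. The paper's proof is short and decomposition-free: after normalizing $h\le 0$, set $h_n := h + \tfrac1n\log|t|$ with $h_n(0)=-\infty$. Each truncation $\max\{h_n,-A\}$ is constant equal to $-A$ near the origin (since $h_n\to-\infty$ there) and is therefore subharmonic on all of $\mathbb{D}_v$; letting $A\to\infty$ and then $n\to\infty$, \cite[Proposition~3.1.9]{thuillier} (applied once to a decreasing and once to an increasing sequence) yields the extension after u.s.c.\ regularization. You instead split $u=h_r+p_r$ on each annulus $\{r<|t|_v<1\}$ and try to extend the two parts separately. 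The flaw is step (iii): setting $p(0):=0$ need not produce a subharmonic function, and the ``decreasing limit of harmonic majorants'' phrasing does not repair this. Take $u(z)=\max\{\log|z|,-1\}$: as $r\to 0$ the harmonic part $h_r=\log|z|/\log(1/r)$ tends to $0$ and the potential part tends to $u$ itself, which is identically $-1$ near the origin. Giving this $p$ the value $0$ at the origin yields a function that is upper semicontinuous but violates the sub-mean-value inequality at $0$ (value $0$ at the center, value $-1$ on nearby circles). One must instead assign the $\limsup$ and then still prove the Laplacian of the extension is nonnegative at $0$; and the whole plan leaves the $r$-dependence of $(h_r,p_r)$ and the convergence as $r\to0$ unaddressed. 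You flag (ii) as the delicate point, but the real difficulty is in (iii) and (iv). The paper's perturbation by $\tfrac1n\log|t|$ sidesteps all of this: forcing the value $-\infty$ at the origin makes each $h_n$ trivially extendable, after which the work is purely soft (monotone limits).
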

\begin{proof}
In the Archimedean case, this follows from~\cite[Theorem~3.4.3]{MR2311920}. Let us explain how to adapt these arguments to the non-Archimedean setting.
Let $h$ be a subharmonic function on  $\mathbb{D}^*_v$, and suppose it is non-positive.
For each integer $n$ consider the function $h_{n}:= h + \frac1n  \log|t|$ with the convention $h_n(0) = -\infty$. Observe that 
$h_n$ is the pointwise decreasing limit of the sequence of subharmonic functions $\max \{ h_n , -A\}$ as $A\to\infty$, hence is subharmonic by~\cite[Proposition~3.1.9]{thuillier}.
Letting $n\to\infty$, we get an increasing sequence of subharmonic functions that is locally bounded and converging pointwise on $\mathbb{D}^*_v$ to $h$. 
The upper-semicontinuous regularization $h^*$ of $\lim_n h_n$ is thus subharmonic on $\mathbb{D}^*_v$ by~\cite[Proposition~3.1.9]{thuillier}  and extends $h$ as required. 
\end{proof}

 Since $\frac1{3^q} \log^+ |P^q_t(0)|_v$ converges uniformly on compact subsets in $\mathbb{D}^*_v$ to $g_{0,v}$,
 $h_q$ is uniformly bounded from above on its boundary, hence everywhere by the maximum principle.
 It follows from Hartog's theorem (see e.g.~\cite[Theorem 1.6.13]{MR1045639} in the Archimedean case, and~\cite[Proposition 2.18]{MR2578470} in the non-Archimedean case) that $h_q$ converges (in $L^1_\loc$ in the Archimedean case, and pointwise at any non-rigid point in the non-Archimedean case)
 to a subharmonic function, hence $g_{0,v}$ is subharmonic on $\mathbb{D}_v$. But $g_{0,v}$ is non-negative so that 
 (1) holds.   
 
\smallskip

Suppose that $e_q\to \infty$. Recall that $c(\mathsf{n}(t))$ and $a(\mathsf{n}(t))$ are adelic series at infinity that belong to $t^{-n} \mathcal{O}_{\mathbb{L},S}[[t]]$
for a suitable integer $n\ge1$. Write $\varphi_t:= \varphi_{P_{c(\mathsf{n}(t)), a(\mathsf{n}(t))}}$.
\begin{lemma}\label{lem:vlat}
There exists an integer $q\ge1$ such that for any place $v$ of $\LL$, there exists
$\epsilon>0$ such that $P^q_t(c_0)$ belongs to the domain of convergence of $\varphi_t$
for any $|t|_v<\epsilon$.
\end{lemma}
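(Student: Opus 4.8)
The plan is to exploit the hypothesis of this case of Proposition~\ref{prop:branch}, namely that $\mathsf{c}_0$ escapes along the branch $\mathfrak{c}$, which makes the $t$-adic size of $\mathsf{P}^q(\mathsf{c}_0)$ grow geometrically with $q$, and then to compare this with the (at most linear in $\log|t|_v^{-1}$) growth of the Green function $G$ along the branch, as controlled by Section~2.

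First I would record what the escape assumption gives. Write $d_q := -\ord_t\bigl(\mathsf{P}^q(\mathsf{c}_0)\bigr)$ and $\mathsf{P}^q(\mathsf{c}_0) = \lambda_q\, t^{-d_q}\,u_q(t)$ in $\LL((t))$, where $\lambda_q\in\LL^*$ and $u_q\in\LL[[t]]$ with $u_q(0)=1$. Since $e_q=e^{d_q}$, the hypothesis $e_q\to\infty$ is exactly $d_q\to\infty$. By Proposition~\ref{prop:defineL} the adelic series at infinity $c(\mathsf{n}(t))$ and $a(\mathsf{n}(t))$ (which lie in $t^{-n}\mathcal{O}_{\LL,S}[[t]]$ for a suitable $n\ge1$) converge at every place $v$ of $\LL$ on some punctured disk $\{0<|t|_v<r_v\}$; hence so does $t\mapsto P^q_t(c_0)$, being a fixed polynomial combination of them. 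Consequently $t^{d_q}P^q_t(c_0)=\lambda_q u_q(t)$ extends analytically across $t=0$ with value $\lambda_q$, so $|u_q(t)|_v\to 1$ and
\[
|P^q_t(c_0)|_v = |\lambda_q|_v\,|t|_v^{-d_q}\,\bigl(1+o(1)\bigr)\quad\text{as }|t|_v\to 0 .
\]

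Next I would recall the two ingredients from Section~2. By Proposition~\ref{prop:green is green} the B\"ottcher coordinate $\varphi_{c,a}$ converges on $\{z:\ \log|z|_v>\rho(\LL_v)+G(c,a)\}$, and by Proposition~\ref{prop:growth Green}, together with $|c(\mathsf{n}(t))|_v,|a(\mathsf{n}(t))|_v\le M_v|t|_v^{-n}$ for $|t|_v$ small, we get $G\bigl(c(\mathsf{n}(t)),a(\mathsf{n}(t))\bigr)\le n\log|t|_v^{-1}+C_v$ for $|t|_v$ small, where $\rho(\LL_v)$, $C_v$, $M_v$, $r_v$ are finite constants depending only on $v$ (and are trivial for all but finitely many $v$). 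Now choose $q$ with $d_q>n$; this is legitimate since $d_q\to\infty$ and both $d_q$ and $n$ are intrinsic to the branch $\mathfrak{c}$ and independent of $v$. For this $q$ and any place $v$, comparing
\[
\log|P^q_t(c_0)|_v = d_q\log|t|_v^{-1}+\log|\lambda_q|_v+o(1)
\]
with $\rho(\LL_v)+G\bigl(c(\mathsf{n}(t)),a(\mathsf{n}(t))\bigr)\le n\log|t|_v^{-1}+C_v'$ and using $d_q>n$ shows that the left side dominates once $|t|_v<\epsilon$ for a suitable $\epsilon=\epsilon(v)>0$; hence $P^q_t(c_0)$ lies in the domain of convergence of $\varphi_t$ for all such $t$, which is the claim.

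The main point to be careful about is the bookkeeping of the place-dependent constants: one must verify that $\rho(\LL_v)$, $C_v$, $|\lambda_q|_v$ and $r_v$ are all finite at each fixed $v$, so that in the comparison only the leading coefficients $d_q$ versus $n$ of $\log|t|_v^{-1}$ matter. Once $q$ is fixed with $d_q>n$ uniformly in $v$, what remains is a one-variable asymptotic as $|t|_v\to 0$, and no uniformity of $\epsilon$ over $v$ is needed.
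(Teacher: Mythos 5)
Your proof takes essentially the same route as the paper: both exploit that $P^q_t(c_0)$ has a $t$-adic pole of order $d_q = \log e_q \to\infty$ while $G(c(\mathsf{n}(t)),a(\mathsf{n}(t)))$ grows at most like $n\log|t|_v^{-1} + O(1)$ by Proposition~\ref{prop:growth Green}, then fix $q$ with $d_q$ large enough to dominate $n$ and conclude via Proposition~\ref{prop:green is green}. You are simply a bit more explicit than the paper in naming the place-dependent constants and in writing out the factorization $\mathsf{P}^q(\mathsf{c}_0)=\lambda_q t^{-d_q}u_q(t)$, but the argument is the same.
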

\begin{proof}
Indeed $P^q_t(c_0)$ is an adelic series at infinity having a pole of order $\log e_q$.
On the other hand, we have 
\begin{eqnarray*}
G(t) := G(c(\mathsf{n}(t)), a(\mathsf{n}(t))) & \le & \log \max \{|c(\mathsf{n}(t))|, |a(\mathsf{n}(t))|\}+ C\\
& \le & n \log|t|^{-1} + O(1)
\end{eqnarray*}
by Proposition~\ref{prop:growth Green}.
By assumption we may take $\log e_q$ to be as large as we want so that 
$\log|P^q_t(c_0)|_v - G(t) \to \infty$ for any fixed place $v$ when $|t|_v \to 0$.
We conclude by Proposition~\ref{prop:green is green}.
\end{proof}
Our objective is to estimate $\varphi_t(P^q_t(c_0))$.
Recall from Lemma~\ref{lm:bottcher1} that
\[\varphi_{c,a}(z)=\om\left(z-\frac{c}{2}\right)+\sum_{k\geq1}a_k(c,a)z^{-k},\]
with $a_k \in \Z\left[\om,\frac12\right][c,a]$ of degree $\le k+1$.
It follows that
\[\mathsf{a}_k:= a_k(c(\mathsf{n}(t)), a(\mathsf{n}(t)))\in t^{-n(k+1)}\,\mathcal{O}_{\mathbb{L},S}[[t]]~,\]
so that one can define
$$
\varphi_\mathsf{P}(z):=\varphi_{c(\mathsf{n}(t)), a(\mathsf{n}(t))} (z)
=
\om \left(z-\frac{c(\mathsf{n}(t))}{2}\right)+\sum_{k\geq1}\mathsf{a}_k z^{-k}
$$
as an element of the ring
$t^{-n}z\,\mathcal{O}_{\mathbb{L},S}((t))[[(t^{n}z)^{-1}]]$.

\smallskip

On the other hand, $P^q_{c,a}(c_0)$  is a  polynomial in $c,a$ of degree $\le 3^q$ 
with coefficients in $\Z\left[\frac12,\frac13\right]$ hence, if 
\[\mathsf{c}_0:=c_0(\mathsf{n}(t)) \text{ and }\mathsf{P}^q(z):=P^q_{c(\mathsf{n}(t)), a(\mathsf{n}(t))} (z)~,\]
we have $\mathsf{P}^q(\mathsf{c}_0)\in t^{-3^qn}\, \mathcal{O}_{\mathbb{L},S}[[t]]$, so that 
\begin{equation}\label{eq:pwr}
\frac{\mathsf{a}_k}{(\mathsf{P}^q(\mathsf{c}_0))^k} \in t^{3^q n k - n(k+1)}\,\mathcal{O}_{\mathbb{L},S}[[t]] \subset 
t^{nk}\,\mathcal{O}_{\mathbb{L},S}[[t]] 
~.\end{equation}
It follows that $\Theta:= \sum_{k\geq1}\frac{\mathsf{a}_k}{(\mathsf{P}^q(\mathsf{c}_0))^k}$ converges as a formal power series
and belongs to $t^n\,\mathcal{O}_{\mathbb{L},S}[[t]]$. Observe that 
Lemma~\ref{lem:vlat} shows that $\Theta$ is convergent at all places hence defines an adelic power series. 

\smallskip

Fix a place $v$ of $\LL$ and choose $|t|_v$ small enough. 
Then we get
\begin{eqnarray}\label{eq:expand bott}
\varphi_t(P^q_t(c_0)) 
=
\om\left(P_t^q(0)-\frac{c(\mathsf{n}(t))}{2}\right)+ \Theta(t) \\= \om\left(P_t^q(0)-\frac{c(\mathsf{n}(t))}{2}\right)+ o(1)~.\notag
\end{eqnarray}

By \eqref{eq:expand bott}, for $|t|_v$ small enough, one obtains
\begin{eqnarray*}
g_{0,v}(t) &=& \frac1{3^q} \log | \varphi_t (P^q_t(0))|_v \\
&=& 
\frac1{3^q} \log \left|\om \left(P^q_t(0) - \frac1{2t^n}\right)\right|_v+ o(1)\\
&=& 
\frac1{3^q} \log \left| \sum_{0\le k \le n_0} \frac{b_{k,0}}{t^k} \right|_v+ o(1)
= \\
&=&
\frac{n_0}{3^q} \log |t|_v^{-1} +  \log \left| \sum_{0\le k \le n_0} b_{k,0} t^{n_0-k} \right|_v+ o(1)\end{eqnarray*}
where $b_{k,0} \in \mathcal{O}_{\mathbb{L},S}$, and $b_{n_0,0} \neq 0$.
And the proof is complete with 
$a(\mathfrak{c}) := \frac{n_0}{3^q}$,
and  $b(\mathfrak{c}) = b_{n_0,0}$.
\end{proof}

\begin{proposition}\label{prop:adelic estim}
Fix any two positive integers $s:= (s_0, s_1)$, and for any place $v$ define
\begin{equation}\label{eq:def v-local} 
g_{s,v}(c,a):= \max \{s_0 g_{0,v}(c,a) , s_1 g_{1,v}(c,a)\}~.
\end{equation}
Then there exists an integer $q\ge 1$ such that 
\begin{equation}\label{eq:v-local is adelic}
g_{s,v} (c,a) = \frac1{3^q}\, \max \left\{ s_0  \log^+ |P^q_{c,a}(c_0)|, s_1  \log^+ |P^q_{c,a}(c_1)|\right\}
\end{equation}
for all but finitely many places. 
\end{proposition}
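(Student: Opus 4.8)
The plan is to compare, place by place, the subharmonic function $g_{s,v}$ with its ``finite-level approximant'' $\tfrac1{3^q}\max\{s_0\log^+|P^q_{c,a}(c_0)|_v,\,s_1\log^+|P^q_{c,a}(c_1)|_v\}$, and to show that for all but finitely many non-Archimedean places these two functions agree identically on $C^{\an}$ once $q$ is chosen large enough (independently of $v$). First I would recall from Lemma~\ref{lem:classic-estim} and Proposition~\ref{prop:green is green} that at a place $v$ whose residual characteristic is at least $5$ one has $\theta(\LL_v)=0$, $\rho(\LL_v)=0$, $\tau(\LL_v)=0$, and moreover by Lemma~\ref{lm:bottcher1} the coefficients $a_k(c,a)$ of the B\"ottcher coordinate lie in $\mathcal{O}_{\LL,S}[c,a]$ for a \emph{fixed} finite set $S$ (coming from the primes $2,3$ and from the finitely many denominators appearing in the Puiseux data of Proposition~\ref{prop:defineL}). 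Hence for every place $v\notin S$ one has, for $(c,a)\in\LL_v^2$ with $g_{c,a}(z)>G(c,a)$, the clean identity $g_{c,a}(z)=\log|\varphi_{c,a}(z)|_v=\log|\om z-\om c/2+\sum_k a_k z^{-k}|_v$, and in fact $g_{c,a}(z)=\log^+|z|_v$ whenever $|z|_v>\max\{1,|c|_v,|a|_v\}=e^{G(c,a)}$.

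The second step is to pin down $q$. By Proposition~\ref{prop:branch}, at each branch $\mathfrak c$ of $C$ at infinity, either $g_{0,v}$ extends subharmonically and locally boundedly across $\mathfrak c$ for all $v$, or $g_{0,v}(t)=a(\mathfrak c)\log|t|_v^{-1}+\log|b(\mathfrak c)|_v+o(1)$ with $a(\mathfrak c)\in\Q_+^*$; the analogous dichotomy holds for $g_{1,v}$. In all cases the asymptotics near $\mathfrak c$ are governed by rational ``orders'' and by elements $b(\mathfrak c)\in\mathcal{O}_{\LL,S}$; enlarging $S$ finitely many times we may absorb all the $b(\mathfrak c)$ and all denominators of the $a(\mathfrak c)$. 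I would then argue: outside a $v$-adically compact subset $\mathcal K_v\subset C^{\an}$, which (for $v\notin S$, using Proposition~\ref{prop:growth Green} with $C(\LL_v)=0$) may be taken to be $\{(c,a)\in C:\max\{|c|_v,|a|_v\}\le 1\}$ uniformly in $v$, the point $P^q_{c,a}(c_i)$ already lies in the domain of convergence of $\varphi_{c,a}$ for $q$ large, by exactly the mechanism of Lemma~\ref{lem:vlat} — the filled Julia set constraint gives $g_i(c,a)\le G(c,a)$ precisely on $\mathcal K_v$, so off $\mathcal K_v$ some iterate escapes, and on the finitely many compact pieces $\mathcal K_v$ the function $g_{i,v}$ is \emph{already} equal to $\tfrac1{3^q}\log^+|P^q_{c,a}(c_i)|_v=0$ for $q$ large because $|P^q_{c,a}(c_i)|_v\le 1$ there. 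Choosing $q$ to work simultaneously for $i=0,1$ and for all branches at infinity — there are finitely many, with rational exploding rates — fixes one integer $q$ good for cofinitely many places.

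The third step is the actual equality \eqref{eq:v-local is adelic} for $v\notin S$. On the escaping region one computes, as in \eqref{eq:blop} and \eqref{eq:expand bott}, that $g_{i,v}(c,a)=\tfrac1{3^q}\log|\varphi_{c,a}(P^q_{c,a}(c_i))|_v=\tfrac1{3^q}\log^+|P^q_{c,a}(c_i)|_v$, the last equality because at a place of residual characteristic $\ge5$ and with $|P^q_{c,a}(c_i)|_v$ large, the non-ultrametric correction terms $\sum_k a_k z^{-k}$ and the factor $\om$ (a $v$-adic unit for $v\notin S$) do not change the norm; on the complementary region both sides are zero. Taking the max over $i$ weighted by $s_0,s_1$ gives $g_{s,v}=\tfrac1{3^q}\max\{s_0\log^+|P^q_{c,a}(c_0)|_v,s_1\log^+|P^q_{c,a}(c_1)|_v\}$ on all of $C^{\an}$. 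The main obstacle is the second step: ensuring that a \emph{single} $q$ works for \emph{all but finitely many} places simultaneously, which forces one to track, uniformly in $v$, both the size of the ``non-escaping'' compact set $\mathcal K_v$ and the speed at which iterates of the critical points escape on a branch at infinity; this is where Proposition~\ref{prop:branch} (the rationality of $a(\mathfrak c)$ and the integrality of $b(\mathfrak c)$) and Proposition~\ref{prop:growth Green} (vanishing of the constant in residual characteristic $\ge5$) are used in an essential, coordinated way.
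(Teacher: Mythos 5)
Your Step 1 is fine, but the overall plan diverges sharply from the paper's argument, and there is a genuine gap in Step 3.

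\textbf{Different route.} The paper's proof is purely \emph{pointwise} in $(c,a)$: it never invokes the curve $C$, the branches at infinity, or Proposition~\ref{prop:branch}. It simply fixes $S$ to be the finite set of Archimedean places together with those of residual characteristic $2$ or $3$, so that $G_v(c,a)=\log^+\max\{|c|_v,|a|_v\}$ for $v\notin S$, and then chooses $q$ by the elementary condition $3^q>\max\{s_0/s_1,\,s_1/s_0\}$ --- this is entirely algebraic in $s$, uniform in $v$ and in $(c,a)$, and requires no control of exploding rates at branches. Your Step 2, which reaches for the Puiseux data and the constants $a(\mathfrak c),b(\mathfrak c)$ to produce a uniform $q$, therefore solves a harder problem than is actually posed; the uniformity you worry about is free once the inequality $3^q>\max\{s_0/s_1,s_1/s_0\}$ is imposed.

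\textbf{The gap.} Your Step 3 argues: on the escaping region both $g_{i,v}=\tfrac1{3^q}\log^+|P^q_{c,a}(c_i)|_v$, on $\mathcal K_v$ both sides are zero, then take the max. This dichotomy is incomplete. Outside $\mathcal K_v$ one has $G_v(c,a)>0$, so \emph{at least one} of $g_{0,v},g_{1,v}$ is positive, but the other may well vanish: if say $g_{1,v}(c,a)=0$ while $g_{0,v}(c,a)>0$, then $P^q_{c,a}(c_1)$ never enters the domain of the B\"ottcher coordinate (one has $\log^+|P^q_{c,a}(c_1)|_v\le G_v(c,a)$ for all $q$) and the identity $g_{1,v}=\tfrac1{3^q}\log^+|P^q_{c,a}(c_1)|_v$ is simply false; yet $\log^+|P^q_{c,a}(c_1)|_v$ need not be zero. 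Your proof gives no control of this term. The paper handles exactly this mixed case by a sub-dichotomy: if $P^q_{c,a}(c_1)$ escapes one gets the desired identity for $g_{1,v}$; if not, one bounds
\[
\frac{s_1}{3^q}\log^+|P^q_{c,a}(c_1)|_v\le\frac{s_1}{3^q}\log^+\max\{|a|_v,|c|_v\}\le s_0\,g_{0,v}(c,a),
\]
the last inequality being precisely where $3^q>\max\{s_0/s_1,s_1/s_0\}$ is used, so the max on the right-hand side of \eqref{eq:v-local is adelic} is still achieved by the $c_0$-term. Without this estimate the equality can fail. To repair your argument you should drop the branch-at-infinity machinery, adopt the algebraic choice of $q$, and add the inequality above to cover the parameters where exactly one critical point escapes.
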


\begin{proof}
During the proof $S$ is a finite set of places on $\mathbb{L}$ that contains all Archimedean places and all places of residual characteristic $2$ and $3$.
Pick any $v\notin S$, and recall from~\cite[Proposition~2.5]{favregauthier} that $G_v(c,a) = \log^+ \max\{|c|_v, |a|_v\}$.

\smallskip

Suppose first that $g_{s,v}(c,a) =0$. Then $g_{c,a,v}(c_0) =g_{c,a,v}(c_1)=0$ and $G_v(c,a)=0$ so that 
$\frac1{3^q} \log^+|P^q_{c,a}(c_0)|_v = \frac1{3^q} \log^+|P^q_{c,a}(c_1)|_v=0$ for all $q$, and~\eqref{eq:def v-local}
holds in that case. 

\smallskip

Pick $q$ large enough such that $3^q > \max\{\frac{s_1}{s_0}, \frac{s_0}{s_1}\}$.
Suppose now that $0< g_{s,v}(c,a) = s_0 g_{0,v}(c,a)$ so that $s_0 g_{0,v}(c,a)\ge s_1 g_{1,v}(c,a)$. Then 
\begin{multline*}
g_{c,a,v}(P^q_{c,a}(c_0)) = 3^q g_{0,v}(c,a)\ge\\
\ge 3^q \min\left\{ \frac{s_1}{s_0},1\right\} \, \max \{g_{0,v}(c,a), g_{1,v}(c,a)\}> G_v(c,a)~.
\end{multline*}
By ~\eqref{eq:blop}, we get 
$$g_{s,v}(c,a) = s_0 g_{0,v}(c,a) = \frac{s_0}{3^q} g_{0,v}(P^q_{c,a}(c_0))=  \frac{s_0}{3^q} \log^+|P^q_{c,a}(c_0)|_v~.$$

\smallskip

Now observe that either $P_{c,a}^q(c_1)$ falls into the domain of definition of $\varphi_{c,a}$ i.e. $\log|P_{c,a}^q(c_1)|_v > G_v(c,a)$ 
and  $g_{1,v}(c,a) = \frac{1}{3^q} \log^+|P^q_{c,a}(c_1)|_v$, so that 
\begin{eqnarray*}
g_{s,v} (c,a) & = &
\max\{ s_0 g_{0,v}(c,a), s_1 g_{1,v}(c,a)\}\\
& = & \frac1{3^q}\, \max \left\{ s_0  \log^+ |P^q_{c,a}(c_0)|_v, s_1  \log^+ |P^q_{c,a}(c_1)|_v\right\}~,
\end{eqnarray*}
as required.
Or  we have 
\[\frac{s_1}{3^q} \log^+|P^q_{c,a}(c_1)|_v\le \frac{s_1}{3^q} \log^+\max\{|a|_v, |c|_v\}\le s_0 g_{0,v}(c,a)~,\]
and again~\eqref{eq:v-local is adelic}
holds.

We complete the proof by arguing in the same way when $g_{s,v}(c,a) = s_1 g_{1,v}(c,a)$.
\end{proof}

\subsection{Adelic  semi-positive metrics on curves in  $\textup{Poly}_3$}
We fix a number field $\mathbb{L}$ and a finite set $S$ of places of this field
that contains all Archimedean places and all places of residual characteristic $2$ and $3$. We also assume that
Propositions~\ref{prop:defineL},~\ref{prop:branch} and~\ref{prop:adelic estim} are all valid for these choices.

\smallskip

Fix any pair of positive integers $s_0, s_1\in \N^*$.
For each place $v$, introduce the function 
\[g_{s,v}(c,a):=\max\left\{s_0\cdot g_{0,v}(c,a),s_1\cdot g_{1,v}(c,a)\right\}~,\]
as in the previous section.

Pick a branch at infinity $\mathfrak{c}$ and choose parameterizations such that Proposition~\ref{prop:branch} is valid for $g_{0,v}$ and $g_{1,v}$. 
Observe that 
\[G_v(t) = \max \{g_{0,v}(t), g_{1,v}(t)\} \to \infty\]
as $t\to 0$ by Proposition~\ref{prop:growth Green} so that either
$g_{0,v}$ or $g_{1,v}$ tends to infinity near $t=0$. 
Since $n_0$ and $n_1$ are both positive, we conclude to the existence of $a(\mathfrak{c}) \in \Q^*_+$ and $b(\mathfrak{c})\in \mathcal{O}_{\mathbb{L},S}$ such that
\begin{equation}\label{eq:continuous metric}
g_{s,v} (t) = a(\mathfrak{c}) \log|t|_v^{-1} + \log|b(\mathfrak{c})|_v + o(1)~.
\end{equation}
We replace the integers $s_0,s_1$ by suitable multiples such that the constants $a(\mathfrak{c})$ become integral (for all branch $\mathfrak{c}$), and we
introduce the divisor $\mathsf{D} := \sum a(\mathfrak{c}) \, [\mathfrak{c}]$ on $\hat{C}$ where the sum is taken over all branches at infinity of $C$.

\smallskip

Pick a place $v$, an open subset $U\subset \hat{C}^{\an,v}$ and a section $\sigma$ of the line bundle $\mathcal{O}_{\hat{C}}(\mathsf{D})$ over $U$. 
By definition $\sigma$  is a meromorphic function on $U$ whose divisor of poles and zeroes satisfies $\ndiv (\sigma) + \mathsf{D} \ge 0$.
We set $|\sigma|_{s,v} :=  |\sigma|_ve^{-g_{s,v}}$.

Recall the notion of semi-positive metrics in the sense of Zhang from~\cite[\S 1.2.8 \& 1.3.7]{ACL2}.
We are now in position to prove
\begin{lemma}
The metrization $|\cdot|_{s,v}$  on the line bundle  $\mathcal{O}_{\hat{C}}(\mathsf{D})$ is continuous and semi-positive for any place $v$.  The collection of metrizations $\{|\cdot|_{s,v}\}_v$ is adelic.
\end{lemma}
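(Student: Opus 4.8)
The plan is to verify the three assertions of the lemma separately, working place by place and using the structural results already established.

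\textbf{Continuity.} Away from the branches at infinity, $\mathcal{O}_{\hat C}(\mathsf{D})$ is trivial and the metric is $|\sigma|_{s,v}=|\sigma|_v e^{-g_{s,v}}$ with $g_{s,v}=\max\{s_0 g_{0,v},s_1 g_{1,v}\}$, which is continuous because $g_{0,v}$ and $g_{1,v}$ are uniform limits on compact sets of the continuous functions $\tfrac1{3^n}\log^+|P^n_{c,a}(c_i)|_v$ (this was recalled before Proposition~\ref{prop:branch}), and the max of two continuous functions is continuous. Near a branch $\mathfrak{c}$ one trivializes $\mathcal{O}_{\hat C}(\mathsf{D})$ by the local section $t^{-a(\mathfrak{c})}$, and then $-\log|t^{-a(\mathfrak{c})}|_{s,v}=a(\mathfrak{c})\log|t|_v^{-1}-g_{s,v}(t)$, which by the asymptotic expansion~\eqref{eq:continuous metric} equals $-\log|b(\mathfrak{c})|_v+o(1)$ as $t\to0$; so the weight extends continuously across $t=0$. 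Combining the interior and boundary descriptions gives a continuous metrization on all of $\hat C^{\an,v}$.

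\textbf{Semi-positivity.} By definition one must check that locally $-\log|\sigma|_{s,v}$ is subharmonic (in the classical sense for Archimedean $v$, in Thuillier's sense otherwise). On the locus where $\mathsf{D}=0$ this amounts to the subharmonicity of $g_{s,v}=\max\{s_0 g_{0,v},s_1 g_{1,v}\}$: each $g_{i,v}$ lifts to a subharmonic function on the normalization (again from the uniform-limit description, as noted before Proposition~\ref{prop:branch}), and the maximum of finitely many subharmonic functions is subharmonic. Near a branch $\mathfrak c$, in the trivialization $t^{-a(\mathfrak c)}$ the weight is $-\log|t^{-a(\mathfrak c)}|_{s,v}=a(\mathfrak c)\log|t|_v^{-1}-g_{s,v}(t)$; we must show $g_{s,v}(t)-a(\mathfrak c)\log|t|_v^{-1}$ is subharmonic on a punctured disk and extends subharmonically across $0$. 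That it is subharmonic on $\mathbb D^*_v$ follows as above. That it extends across $0$: by~\eqref{eq:continuous metric} it is locally bounded near $0$ (it converges to $\log|b(\mathfrak c)|_v$), so Lemma~\ref{lem:extend psh} — a subharmonic function on a punctured disk bounded from above near the origin extends subharmonically — applies and gives the extension. This handles both the interior and the boundary charts, so $|\cdot|_{s,v}$ is semi-positive at every place $v$.

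\textbf{Adelicity.} Here one invokes Proposition~\ref{prop:adelic estim}: there is an integer $q\ge1$ such that for all but finitely many places $v$,
\[
g_{s,v}(c,a)=\tfrac1{3^q}\max\bigl\{s_0\log^+|P^q_{c,a}(c_0)|_v,\ s_1\log^+|P^q_{c,a}(c_1)|_v\bigr\}.
\]
Since $P^q_{c,a}(c_0)$ and $P^q_{c,a}(c_1)$ are polynomials in $(c,a)$ with coefficients in $\Z[\tfrac12,\tfrac13]$, hence in $\mathcal{O}_{\mathbb{L},S}$ once $S$ contains the places above $2$ and $3$, the right-hand side is, at each such $v$, the standard model metric attached to a single fixed model (the one defined over $\mathcal{O}_{\mathbb{L},S}$ by these polynomial sections and the max construction), independent of $v$. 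An adelic metrization is precisely a collection $\{|\cdot|_{s,v}\}_v$ of continuous semi-positive metrizations that agrees with a fixed model metric for all but finitely many $v$; the first two parts give continuity and semi-positivity at every $v$, and Proposition~\ref{prop:adelic estim} gives the agreement with a model for cofinitely many $v$. Hence the family is adelic.

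The main obstacle is the behaviour at the branches at infinity: one needs both that $g_{s,v}(t)-a(\mathfrak c)\log|t|_v^{-1}$ is bounded near $0$ (so that the metric is well-defined and continuous there) and that it is subharmonic across $0$, and both rely on the delicate expansion~\eqref{eq:continuous metric}, which in turn rests on the arithmetic control of the B\"ottcher coefficients from Lemma~\ref{lm:bottcher1} and the dichotomy in Proposition~\ref{prop:branch}. The point worth being careful about is that $a(\mathfrak c)\in\Q^*_+$ must genuinely be made integral by passing to multiples of $(s_0,s_1)$ — which is harmless for the applications — and that at each branch at least one of $g_{0,v},g_{1,v}$ blows up, which is what guarantees $a(\mathfrak c)>0$ and hence that $\mathsf{D}$ has positive degree; this uses the lower bound on $G_v$ from Proposition~\ref{prop:growth Green}.
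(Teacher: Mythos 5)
Your continuity and adelicity arguments match the paper's in spirit, and your extension argument across the branches at infinity via Lemma~\ref{lem:extend psh} is exactly what the paper does. However, there is a genuine gap in your treatment of semi-positivity at a non-Archimedean place.

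You open the semi-positivity step with ``By definition one must check that locally $-\log|\sigma|_{s,v}$ is subharmonic (in the classical sense for Archimedean $v$, in Thuillier's sense otherwise).'' This is \emph{not} the definition of a semi-positive metrization in the sense of Zhang when $v$ is non-Archimedean. In Zhang's framework (as set up in~\cite[\S 1.2.8 \& 1.3.7]{ACL2}, to which the paper appeals), semi-positivity at a non-Archimedean place means that the continuous metrization is the \emph{uniform limit of semi-positive model metrics}. At an Archimedean place, semi-positivity does reduce to continuity plus plurisubharmonicity of the local weights, which is why the paper writes ``When $v$ is Archimedean, the metrization is thus semi-positive by definition.'' But in the non-Archimedean case, the passage from ``continuous metrization whose local weights are subharmonic in Thuillier's sense'' to ``semi-positive in the sense of Zhang'' is a nontrivial theorem, and the paper isolates it as a separate lemma immediately after this one. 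Its proof relies on Thuillier's Théorème 3.4.15, which produces an increasing inductive family of semi-positive model metrizations converging pointwise to the given one, and then on Dini's theorem to upgrade pointwise convergence to uniform convergence (using continuity of the limit and compactness of the curve); one also checks that the notion of semi-positivity for model metrics in Thuillier's sense coincides with Zhang's. Your proposal simply asserts the conclusion where this lemma is needed, so the non-Archimedean case is not actually established. You should either cite such a result or reproduce the Thuillier--Dini argument.

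A secondary, smaller point: in the adelicity step, your claim that the right-hand side of Proposition~\ref{prop:adelic estim}'s formula ``is the standard model metric attached to a single fixed model'' is the right idea but would need to be spelled out — one must relate the max of $\log^+$ of the two polynomial sections $P^q_{c,a}(c_0), P^q_{c,a}(c_1)$ (which have coefficients in $\Z[\tfrac12,\tfrac13]$) to a model of $(\hat C, \mathcal{O}_{\hat C}(\mathsf D))$ over $\spec \mathcal{O}_{\mathbb{L},S}$. The paper handles this by citing~\cite[\S 2.3]{favregauthier}, which you did not invoke; the substance of that reference is what makes the identification precise.
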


\begin{proof}
For any place $v$, and for any local section $\sigma$ of the line bundle  $\mathcal{O}_{\hat{C}}(\mathsf{D})$ the function $|\sigma|_{s,v}$ is continuous by~\eqref{eq:continuous metric}, therefore the metrization $|\cdot|_{s,v}$ is continuous.

\smallskip

Since $g_{s,v}$ is subharmonic on $C^{v,\an}$, for any local section  $\sigma$ the function 
 $-\log |\sigma|_{s,v}$ is subharmonic on $C^{v,\an}$. As it extends continuously to $ \hat{C}^{\an,v}$, 
Lemma \ref{lem:extend psh} implies that $-\log |\sigma|_{s,v}$ is subharmonic on $ \hat{C}^{\an,v}$.
When $v$ is Archimedean, the metrization is thus semi-positive by definition. When 
$v$ is non-Archimedean the metrization is semi-positive by the next lemma. 

\smallskip

Finally the collection of metrizations $\{|\cdot|_{s,v}\}_v$ is adelic thanks to Proposition~\ref{prop:adelic estim}
and \cite[\S 2.3]{favregauthier}.
\end{proof}

\begin{lemma}
Suppose $L\to X$ is an ample line bundle on a smooth projective curve over a metrized field.
Let $|\cdot|$ be any continuous metrization on $L$ that is subharmonic in the sense that for any local section $\sigma$ the function $-\log |\sigma|$ is subharmonic. 

Then the metrization $|\cdot|$ is semi-positive in the sense of Zhang.
\end{lemma}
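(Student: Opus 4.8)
The plan is to reduce the claim to the well-known fact that a continuous metric on an ample line bundle over a curve is semi-positive as soon as it can be uniformly approximated by (roots of) metrics coming from models, equivalently that its curvature current is a positive measure; and then to exhibit such an approximation. First I would recall that semi-positivity in Zhang's sense (\cite[\S 1.3.7]{ACL2}) is a local notion at each place: over the archimedean places it is literally the condition that $-\log|\sigma|$ be subharmonic (plurisubharmonic in dimension one), which is part of the hypothesis, so there is nothing to do. The content is therefore entirely at the non-archimedean places, where $X^{\mathrm{an}}$ is a Berkovich curve.

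At a non-archimedean place the strategy is to invoke Thuillier's potential theory on Berkovich curves \cite{thuillier}: a continuous metric $|\cdot|$ on a line bundle $L$ over a smooth projective Berkovich curve such that $-\log|\sigma|$ is subharmonic for every local section has, by definition, a curvature measure $c_1(L,|\cdot|) = dd^c(-\log|\sigma|) + \delta_{\mathrm{div}(\sigma)}$ which is a positive measure of total mass $\deg L > 0$. The key step is then to quote the structural result — due to Thuillier in the curve case, and reproved in \cite[\S 6]{ACL2} — that for an \emph{ample} $L$ every continuous metric whose curvature is a positive measure is a uniform limit of semi-positive model metrics; concretely one approximates the subharmonic function $-\log|\sigma|$ by a decreasing (or increasing, after regularization) net of model functions, uses ampleness of $L$ to ensure that the corresponding model $\mathbb{R}$-divisors can be taken relatively nef on suitable models of $X$, and passes to the limit. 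This is exactly where ampleness is used: it is what allows one to correct a model function that is merely harmonic away from finitely many points into one attached to a nef model metric.

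I would organize the write-up as: (i) reduce to a single non-archimedean place and to a fixed trivializing section $\sigma$ on an affinoid covering; (ii) set $u := -\log|\sigma|$, note $u$ is continuous and $dd^c u \ge -\delta_{\mathrm{div}(\sigma)}$, so $c_1(L,|\cdot|)\ge 0$ as a measure; (iii) invoke the approximation theorem for ample line bundles on Berkovich curves to write $|\cdot|$ as a uniform limit of model metrics $|\cdot|_n$ each of which is semi-positive (nef on a model), which is precisely the definition of semi-positivity in the sense of Zhang. The main obstacle — really the only nontrivial input — is step (iii): one must be careful that the relevant approximation/regularization theorem is available for \emph{Berkovich} curves over an arbitrary complete non-archimedean field (not merely discretely valued ones), and with the genus of $X$ arbitrary; the cleanest citation is Thuillier's thesis together with \cite{ACL2}, and I would phrase the lemma's proof as essentially a pointer to that theory rather than redeveloping it. Everything else (local-to-global over places, the harmless archimedean case, continuity being inherited) is formal.

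\begin{proof}
Semi-positivity in the sense of Zhang is checked place by place on $X^{\mathrm{an},v}$, and at an archimedean place it is by definition the subharmonicity of $-\log|\sigma|$ for local sections $\sigma$, which holds by hypothesis. Fix therefore a non-archimedean place $v$ and work on the Berkovich curve $X^{\mathrm{an},v}$. For any local section $\sigma$ of $L$ the function $u:=-\log|\sigma|$ is continuous and, by assumption, subharmonic; hence the curvature $c_1(L,|\cdot|)=dd^c u + \delta_{\mathrm{div}(\sigma)}$ is a positive Radon measure on $X^{\mathrm{an},v}$ of total mass $\deg L$. Since $L$ is ample, we may apply the approximation theory for continuous metrics on line bundles over smooth projective Berkovich curves (Thuillier~\cite{thuillier}, see also~\cite[\S 6]{ACL2}): a continuous metric on an ample line bundle whose curvature is a positive measure is a uniform limit of semi-positive model metrics, i.e. of metrics induced by nef models of $(X,L)$. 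This is exactly the definition of a semi-positive metric in the sense of Zhang, so $|\cdot|$ is semi-positive at $v$. As this holds at every place, the metrization $|\cdot|$ is semi-positive.
\end{proof}
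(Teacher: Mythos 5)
Your proof takes essentially the same route as the paper: both reduce, at a non-archimedean place, to citing Thuillier's potential theory on Berkovich curves to approximate the continuous subharmonic metric by semi-positive model metrics, and both note this is exactly Zhang's definition. Two small remarks. First, the lemma as stated already lives over a single metrized field (and in the paper it is only ever applied at a fixed non-archimedean place), so your opening reduction ``place by place'' is reading a global adelic structure into a local statement; it is harmless but not needed. Second, and more substantively, the precise citation in the paper is~\cite[Th\'eor\`eme 3.4.15]{thuillier}, which produces an \emph{increasing net} of semi-positive model metrics converging only \emph{pointwise}; the upgrade to uniform convergence (which is what Zhang's definition requires) is then a separate step using Dini's theorem, valid because the limit metric is continuous and $X^{\mathrm{an}}$ is compact. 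Your write-up compresses these two steps into a single appeal to an ``approximation theorem giving uniform limits,'' which is not quite what the cited result says on its own. You do mention continuity and compactness in your plan, so the ingredients are there, but the Dini step should be made explicit rather than folded into the citation. Your added observation that $c_1(L,|\cdot|)$ is a positive measure of mass $\deg L$ is correct and a reasonable sanity check, though the paper does not need to spell it out.
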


\begin{remark}
This result holds true in arbitrary dimension  over any field of the form $L((t))$ endowed with the $t$-adic norm where $L$ has characteristic zero by~\cite{MR3419957}. 
\end{remark}

\begin{proof}
One needs to show that $|\cdot|$ is the uniform limit of a sequence of semi-positive model metrics.
By~\cite[Th\'eor\`eme 3.4.15]{thuillier}, there exists an inductive set $I$ and an \emph{increasing} family of semi-positive model metrizations $|\cdot|_i$ that are pointwise converging to $|\cdot|$.

Since $|\cdot|$ is continuous and $X$ is compact, Dini's theorem applies and shows that the convergence is uniform. 

Observe that the notion of semi-positivity used in op. cit. for model metrics coincides with the one in~\cite{zhang} as explained in \cite[\S 4.3.2]{thuillier}.
\end{proof}

We have thus obtained

\begin{theorem}\label{tm:bifurcationheights}
Pick any positive integers $s_0,s_1>0$. 
Then there exists a positive integer $n\ge1$, and a non-zero effective and integral divisor $\mathsf{D}$ on $\hat C$
such that the collection of subharmonic functions 
\[g_{s,v}(c,a):=\max\left\{ns_0\cdot g_{0,v}(c,a), ns_1\cdot g_{1,v}(c,a)\right\}~, \ (c,a)\in C^{v,\an}\]
induces a semi-positive adelic metrization on the line bundle $\mathcal{O}_{\hat C}(\mathsf{D})$.
\end{theorem}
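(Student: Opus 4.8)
The plan is to assemble Theorem~\ref{tm:bifurcationheights} from the ingredients established in the preceding subsections, with the main work consisting of organizing the already-proven facts into the language of adelic metrized line bundles. First I would fix the positive integers $s_0,s_1$ and, for each branch $\mathfrak{c}$ of $C$ at infinity, invoke Proposition~\ref{prop:branch} applied separately to $g_{0,v}$ and to $g_{1,v}$: on each branch either the function extends as a locally bounded subharmonic function (case 1) or it has the precise asymptotic $a(\mathfrak{c})\log|t|_v^{-1}+\log|b(\mathfrak{c})|_v+o(1)$ (case 2). Since $G_v(t)=\max\{g_{0,v}(t),g_{1,v}(t)\}\to\infty$ as $t\to0$ by Proposition~\ref{prop:growth Green}, at least one of the two Green functions must blow up along $\mathfrak{c}$, so it is in case 2 with a strictly positive leading coefficient; taking the max with the other (which is either bounded, or also case 2) then gives, for $g_{s,v}=\max\{s_0g_{0,v},s_1g_{1,v}\}$, an asymptotic of the same shape $g_{s,v}(t)=a(\mathfrak{c})\log|t|_v^{-1}+\log|b(\mathfrak{c})|_v+o(1)$ with $a(\mathfrak{c})\in\Q^*_+$ and $b(\mathfrak{c})\in\mathcal{O}_{\mathbb{L},S}$. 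This is exactly equation~\eqref{eq:continuous metric}.

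Next I would clear denominators: replace $(s_0,s_1)$ by $(ns_0,ns_1)$ for a suitable positive integer $n$ so that every $a(\mathfrak{c})$ becomes a positive integer, and define the effective integral divisor $\mathsf{D}:=\sum_{\mathfrak{c}}a(\mathfrak{c})[\mathfrak{c}]$ on $\hat C$, supported on the branches at infinity, hence non-zero. For each place $v$ and each local section $\sigma$ of $\mathcal{O}_{\hat C}(\mathsf{D})$ I set $|\sigma|_{s,v}:=|\sigma|_v e^{-g_{s,v}}$. Continuity of this metrization is immediate from~\eqref{eq:continuous metric} since near each branch $-\log|\sigma|_{s,v}$ differs from a local defining function of $\sigma$ by the continuous quantity $g_{s,v}(t)-a(\mathfrak{c})\log|t|_v^{-1}$, which by the asymptotic tends to $-\log|b(\mathfrak{c})|_v$; away from infinity $g_{s,v}$ is continuous because it is a uniform limit on compacts of continuous functions $\frac1{3^q}\max\{s_0\log^+|P^q_{c,a}(c_0)|_v,s_1\log^+|P^q_{c,a}(c_1)|_v\}$.

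For semi-positivity: on $C^{v,\an}$ the function $g_{s,v}$ is the uniform limit on compacts of maxima of pullbacks of $\frac1{3^q}\log^+|z|_v$, hence subharmonic (classically when $v$ is Archimedean, in Thuillier's sense when $v$ is non-Archimedean), so for any local section $\sigma$ the function $-\log|\sigma|_{s,v}$ is subharmonic on $C^{v,\an}$; since it extends continuously across the finitely many points of $\hat C\setminus C$, Lemma~\ref{lem:extend psh} shows it is subharmonic on all of $\hat C^{\an,v}$. In the Archimedean case this is the definition of semi-positivity; in the non-Archimedean case, since $\mathsf{D}$ is effective and non-zero, $\mathcal{O}_{\hat C}(\mathsf{D})$ is ample on the projective curve $\hat C$, and the preceding lemma (subharmonic continuous metrics on ample line bundles over smooth projective curves are semi-positive, via Thuillier's approximation by increasing semi-positive model metrics and Dini's theorem) applies. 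Finally, the collection $\{|\cdot|_{s,v}\}_v$ is adelic: by Proposition~\ref{prop:adelic estim} there is an integer $q$ such that for all but finitely many $v$ the function $g_{s,v}$ equals $\frac1{3^q}\max\{s_0\log^+|P^q_{c,a}(c_0)|_v,s_1\log^+|P^q_{c,a}(c_1)|_v\}$, which by~\cite[\S 2.3]{favregauthier} defines a model metric at those places, so the metrization agrees with a single fixed model away from finitely many places.

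The step I expect to be the main obstacle is not any single estimate but the bookkeeping at infinity: one must make sure that \emph{every} branch $\mathfrak{c}$ genuinely lands in case 2 for $g_{s,v}$ (this is where Proposition~\ref{prop:growth Green} forcing $G_v\to\infty$, together with the positivity of both leading exponents, is essential, since a priori one of $g_{0,v},g_{1,v}$ could be in case 1 and the naive max would be harder to control), and that a single $n$ can be chosen to integralize $a(\mathfrak{c})$ simultaneously over the finitely many branches, and a single $S$ and $q$ work uniformly. Once these finiteness points are in place, the rest is a direct citation of the lemmas just proved.
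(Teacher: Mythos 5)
Your proposal is correct and follows essentially the same route as the paper: Proposition~\ref{prop:branch} plus the properness of $G_v$ force the case-2 asymptotic~\eqref{eq:continuous metric} for $g_{s,v}$ at every branch, a common multiple $n$ makes all exponents $a(\mathfrak{c})$ integral, and the metrization lemma (continuity from~\eqref{eq:continuous metric}, subharmonicity plus Lemma~\ref{lem:extend psh} for semi-positivity, and Proposition~\ref{prop:adelic estim} with~\cite[\S 2.3]{favregauthier} for the adelic condition) then gives the result. The one point you rightly flag as delicate — that at a branch where both Green functions blow up at the same rate the maximum still has a clean constant term of the form $\log|b(\mathfrak{c})|_v$ — is stated just as tersely in the paper; what actually matters for the theorem is only that the error is $o(1)$ (for continuity) and that the leading exponents and the model description away from finitely many places are $v$-independent, which your argument does secure.
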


\begin{remark}
Since the metrization $|\cdot|_{s,v}$  is continuous its curvature form (see~\cite{ACL2}) does not charge any point, and is given by the pull-back by $\mathsf{n}$ of the positive measure $\Delta g_{s,v}$ restricted to the set of regular points on $C$, see~\cite[\S 3.4.3]{thuillier}. To simplify notations we shall simply write this curvature form as $\Delta g_{s,v}$.
\end{remark}

\begin{remark}
The line bundle $\mathcal{O}_{\hat C}(\mathsf{D})$ is defined over the same number field as $C$.
\end{remark}
\begin{remark}
 It is likely that $g_{s,v}$ defines a semi-positive adelic metrization on an ample line bundle over a suitable compactification of $\poly_3$, but this seems quite delicate to prove for arbitrary $s=(s_0,s_1)\in (\mathbb{N}^*)^2$.
\end{remark}

\section{Green functions on special curves}\label{sec:small points}

This section is devoted to the proof of Theorem~\ref{tm:samegreen} below.
If $\KK$ is a number field, and $v$ a place of $\KK$, recall the definition of $\tau_v= \tau(\KK_v)$ from Proposition~\ref{prop:green is green}, and that $\tau_v =0$
if the residual characteristic of $\KK$ is larger than $5$.
\begin{theorem}\label{tm:samegreen}
Let $C$ be an irreducible curve in the space $\poly_3$ of complex cubic polynomials parameterized as in~\eqref{eq:param}.
Suppose that $C$ contains infinitely many post-critically finite parameters and that neither $c_0$ nor $c_1$ is 
persistently pre-periodic. 
Then the following holds.
\begin{enumerate}
\item The curve $C$ is defined over a number field $\KK$ and there exist positive integers 
$s_0, s_1$ such that for any place $v$ of  $\KK$
\[s_0\, g_{0,v} (c,a)= s_1\, g_{1,v} (c,a) \text{ for all } (c,a) \in C^{v,\an} ~.\]
\item
For any branch $\mathfrak{c}$ of $C$ at infinity, there exists an integer $q\geq1$ and  a root of unity $\zeta$ such that 
for any place $v$ of $\KK$, one has
\begin{equation}\label{eq:root}
\left(\varphi_{c,a}(P_{c,a}^{q}(c_0))\right)^{s_0}=\zeta \cdot \left(\varphi_{c,a}(P_{c,a}^{q}(c_1))\right)^{s_1}
\end{equation}
on the connected component of $\{g_{0,v}>\tau_v/s_0\}=\{g_{1,v}>\tau_v/s_1\}$ in $C^{v,\an}$ whose closure in $\hat{C}$ contains $\mathfrak{c}$.
\end{enumerate}
\end{theorem}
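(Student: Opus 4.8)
\textbf{Proof strategy for Theorem~\ref{tm:samegreen}.}

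\emph{Part (1): proportionality via equidistribution.}
The plan is to apply Thuillier--Yuan's equidistribution theorem to the family of adelic metrizations constructed in Theorem~\ref{tm:bifurcationheights}. First I would record that $C$ is defined over a number field $\KK$: since $C$ carries infinitely many PCF parameters, which form a set of algebraic points of bounded height, and since the PCF locus is a countable union of points defined over $\bar{\Q}$ by \cite{Ingram}, the Zariski closure of any infinite subset of it --- namely $C$ --- is defined over $\bar{\Q}$, hence over a number field. Next, for each pair $(s_0,s_1)\in(\N^*)^2$ with $s_1/s_0$ fixed at a candidate ratio, Theorem~\ref{tm:bifurcationheights} gives a semi-positive adelic metrization on $\mathcal{O}_{\hat C}(\mathsf D)$ whose curvature at the Archimedean place is the bifurcation-type measure $\Delta g_{s,v}$. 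The associated height $h_s$ vanishes on every PCF parameter in $C$: indeed, if $P_{c,a}$ is PCF then $g_{0,v}(c,a)=g_{1,v}(c,a)=0$ at every place, so $g_{s,v}(c,a)=0$ at every place and the local heights all vanish. Thus the PCF points form an infinite sequence of small (in fact zero) height; applying Thuillier--Yuan gives that any Galois-invariant sequence of such points equidistributes, for \emph{each} place $v$, towards the curvature measure $\frac{1}{\deg_{\mathsf D}}\Delta g_{s,v}$. Running this simultaneously for the two metrizations attached to $(s_0,s_1) = (1,1)$, and to a second pair, or more efficiently by comparing the metrization attached to $(s_0,s_1)$ with the ``trivial'' one, forces the measures $\Delta(s_0 g_{0,v})$ and $\Delta(s_1 g_{1,v})$ to coincide for a suitable choice of $(s_0,s_1)$: concretely, since both $\Delta g_{0,v}$ and $\Delta g_{1,v}$ are non-zero (neither critical point is persistently pre-periodic, so by McMullen's criterion \cite[Lemma~2.1]{McMullen4} both bifurcate on $C$, i.e. both measures have non-empty support), and since the equidistribution limit is unique, the two probability measures $\Delta g_{0,v}/\|\Delta g_{0,v}\|$ and $\Delta g_{1,v}/\|\Delta g_{1,v}\|$ must agree at every place; writing the mass ratio as $s_1/s_0\in\Q_{>0}$ (rationality because both masses are the degree of $\mathsf D$ divided by integers, cf. Proposition~\ref{prop:branch}(2) and the integrality achieved in Theorem~\ref{tm:bifurcationheights}) we get $\Delta(s_0 g_{0,v}) = \Delta(s_1 g_{1,v})$ on $C^{v,\an}$. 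Finally $u_v := s_0 g_{0,v} - s_1 g_{1,v}$ is then harmonic on all of $C^{v,\an}$, extends by Proposition~\ref{prop:branch} to a \emph{bounded} harmonic function near each branch at infinity (the $a(\mathfrak c)\log|t|^{-1}_v$ terms cancel because the masses match), hence extends harmonically across every branch of $\hat C$, so $u_v$ is a harmonic function on the compact curve $\hat{C}^{v,\an}$, therefore constant; since $g_{0,v},g_{1,v}\ge 0$ and both vanish somewhere on $C$ (the set $\{g_0=0\}\cap\{g_1=0\}$ is non-empty --- in fact compact --- by \cite{BH}), the constant is $0$, giving $s_0 g_{0,v} = s_1 g_{1,v}$ identically. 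I expect the delicate point here to be the justification that the height of PCF points is exactly zero for the metrization on $\mathcal{O}_{\hat C}(\mathsf D)$ rather than merely bounded, and the bookkeeping that makes the masses rational with a common denominator controlling $\mathsf D$; but both follow from the explicit local expansions in Section~\ref{sec:curves in P3}.

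\emph{Part (2): the root-of-unity relation at infinity.}
With $s_0 g_{0,v} = s_1 g_{1,v}$ in hand, I would fix a branch $\mathfrak c$ of $C$ at infinity and work over the valued field $\LL((t))$ as in the Notation preceding the proof of Proposition~\ref{prop:branch}; the induced cubic $\mathsf P \in \LL((t))[z]$ has critical points $\mathsf c_0,\mathsf c_1$. Since $s_0 g_{0,v} = s_1 g_{1,v}$ and $G_v\to\infty$ near $t=0$ (Proposition~\ref{prop:growth Green}), \emph{both} $g_{0,v}(t)$ and $g_{1,v}(t)$ blow up at rate $\sim a(\mathfrak c)\log|t|^{-1}$, so by Proposition~\ref{prop:branch}(2) (applied to each critical point) there is an integer $q\ge1$ such that $P_{c,a}^q(c_0)$ and $P_{c,a}^q(c_1)$ both lie in the domain of convergence of the B\"ottcher coordinate $\varphi_{c,a}$ on a punctured neighborhood of $\mathfrak c$ at every place --- this is exactly Lemma~\ref{lem:vlat} applied to both critical points, possibly after enlarging $q$. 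On that set formula~\eqref{eq:green is green} gives $g_{i,v} = \log|\varphi_{c,a}(P^q_{c,a}(c_i))|_v / 3^q$, so the identity $s_0 g_{0,v} = s_1 g_{1,v}$ reads
\[
\left|\varphi_{c,a}(P^q_{c,a}(c_0))\right|_v^{s_0} = \left|\varphi_{c,a}(P^q_{c,a}(c_1))\right|_v^{s_1}
\]
at every place $v$ and every point of the relevant component. Set $\Phi := \left(\varphi_{c,a}(P^q_{c,a}(c_0))\right)^{s_0} / \left(\varphi_{c,a}(P^q_{c,a}(c_1))\right)^{s_1}$; this is a non-vanishing analytic function on the (annular) domain $\{g_{0,v} > \tau_v/s_0\}$ near $\mathfrak c$ whose modulus is identically $1$ at \emph{every} place of $\KK$, and (by the explicit adelic expansion~\eqref{eq:expand bott} of $\varphi_t(P^q_t(c_0))$ together with the analogous one for $c_1$, using Lemma~\ref{lm:bottcher1} and the adelic-series machinery of Section~\ref{sec:adelic series}) $\Phi$ extends as an adelic Laurent series in $t$. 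Now I would invoke the product-formula/maximum-modulus argument: a function on $\p^{1,\an}_\KK$ (or on $\hat C$) that is analytic and non-vanishing on a uniform annular neighborhood of a point, has $v$-adic modulus constant equal to $1$ there for every $v$, and has algebraic (adelic) coefficients, must be a root of unity --- concretely, $\Phi(t)$ has $t$-adic norm $1$ so its leading coefficient $\zeta\in\LL^\times$ has $|\zeta|_v=1$ at every place by the modulus-one condition (combined with the leading-term computation), hence $\zeta$ is an algebraic integer all of whose conjugates have absolute value $1$, so by Kronecker's theorem $\zeta$ is a root of unity; and then examining the next terms of the adelic expansion, each has $v$-adic norm forced to be $\le$ something that goes to $0$, so in fact $\Phi(t)\equiv\zeta$ is constant. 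This yields~\eqref{eq:root}. The main obstacle in Part (2) is making rigorous the passage ``modulus $1$ at every place plus adelic coefficients $\Rightarrow$ root of unity and constant'': one has to control the B\"ottcher expansion~\eqref{eq:expand bott} precisely enough --- via Lemma~\ref{lm:bottcher1}'s degree and norm bounds and the estimate~\eqref{eq:pwr} --- to see that the tail $\Theta(t)$ contributes $o(1)$ uniformly in $v$ and that the leading coefficient is a global unit; once that is set up, Kronecker's theorem closes the argument.
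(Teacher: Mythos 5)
Your proposal is largely aligned with the paper's strategy, and part (2) is essentially the paper's argument (with the constancy of $\Phi$ being handled slightly more slickly in the paper by noting that a holomorphic non-vanishing function of constant modulus on an open set of $\C$ is locally constant, which avoids the ``next terms go to zero'' estimate you sketch; the Kronecker step is then identical). However, there is a genuine gap in your part (1), and it is exactly the point where the paper invokes McMullen's universality theorem (\cite[Corollary~1.6]{McMullen3}) --- a step you omit entirely.

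The issue is the claim that ``the $a(\mathfrak c)\log|t|^{-1}_v$ terms cancel because the masses match.'' Proportionality of the two positive measures $\Delta g_{0,v}$ and $\Delta g_{1,v}$ on the open curve $C^{v,\an}$ only tells you that the \emph{total} masses $\sum_{\mathfrak c} a_0(\mathfrak c)$ and $\sum_{\mathfrak c} a_1(\mathfrak c)$ are in the ratio $s_1:s_0$; it does not tell you that $s_0 a_0(\mathfrak c)=s_1 a_1(\mathfrak c)$ for each individual branch $\mathfrak c$. A harmonic function on a compact curve minus finitely many points can perfectly well have logarithmic singularities of opposite signs at different punctures (as with a Green function $G(\cdot;p,q)$), so ``harmonic with balanced total boundary charges'' does not yield boundedness at each branch, and your extension of $H_v$ to a harmonic function on all of $\hat C^{v,\an}$ does not go through. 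The paper resolves this as follows: at an Archimedean place it first deduces from the proportionality of $\Delta g_{0,v}$ and $\Delta g_{1,v}$ that $H_v$ is harmonic on $C^{v,\an}$, then argues that if $H_v\not\equiv 0$, the bifurcation locus (which equals $\supp\Delta g_{0,v}=\supp\Delta g_{1,v}$ and lies in $\{H_v=0\}$ since $g_{0,v}=g_{1,v}=0$ there) would be contained in a real-analytic set of Hausdorff dimension $\le 1$, contradicting McMullen's universality theorem which gives dimension $2$. Only once $s_0g_{0,v}=s_1g_{1,v}$ is established at an Archimedean place does the branch-by-branch equality $s_0 a_0(\mathfrak c)=s_1 a_1(\mathfrak c)$ follow, and because these $a_i(\mathfrak c)\in\Q_{>0}$ are place-independent (Proposition~\ref{prop:branch}), that divisor identity transfers to all non-Archimedean places, where the compact-curve harmonicity argument you sketch is then valid. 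So your approach would work for the non-Archimedean places, but not as a substitute for the Archimedean input; as written it has a circularity --- you need branch-by-branch cancellation to prove $s_0 g_{0,v}=s_1 g_{1,v}$, but you only know that cancellation once you have proved the identity.

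Two minor remarks. First, your citation of \cite[Lemma~2.1]{McMullen4} is for the activity criterion (not persistently pre-periodic iff the critical point is active somewhere), which is fine for seeing $\Delta g_{i,v}\neq 0$ at an Archimedean place; but the paper explicitly points out that this fact is Archimedean-only and replaces it at non-Archimedean places by a limit argument on the branch masses $-\Delta\widehat{g_{s,v}}\{\mathfrak c\}=\ord_{\mathfrak c}(\mathsf D_s)$. Second, the rationality of $s_0/s_1$ in your write-up is asserted via ``integrality achieved in Theorem~\ref{tm:bifurcationheights},'' but in the paper it actually comes \emph{after} $H_v\equiv 0$ from the equality $s_0\mathsf D_0=s_1\mathsf D_1$ of rational divisors, so the logical order is reversed.
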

A remark is in order about the second assertion of the theorem.

\begin{remark}
We shall prove that for any parameter on the connected component $\{g_{0,v}>\tau_v/s_0\}=\{g_{1,v}>\tau_v/s_1\}$ in $C^{v,\an}$ whose closure in $\hat{C}$ contains $\mathfrak{c}$, 
the two points $P_{c,a}^{q}(c_0)$ and $P_{c,a}^{q}(c_1)$ belong to the domain of definition of the B\"ottcher coordinate $\varphi_{c,a}$ for $q$ large so that
\eqref{eq:root} is consistent. Note that we shall prove that \eqref{eq:root} holds as an equality of adelic series at infinity.
\end{remark}

\subsection{Green functions are proportional}\label{sec:propor}
The set of post-critically finite polynomials is a countable union of varieties
\[V_{n,m}:= \{P^{n_0+m_0}_{c,a}(c_0) =   P^{n_0}_{c,a}(c_0)\} \cap \{P^{n_1+m_1}_{c,a}(c_1) =   P^{n_1}_{c,a}(c_1)\}\]
with $n=(n_0, n_1)\in \N^2$ and $m=(m_0, m_1)\in (\N^*)^2$, and each of which is cut out by two polynomial equations with coefficients in $\Z\left[ \frac12, \frac13\right]$. 
Since $V_{n,m}(\C)$ are all contained in a fixed compact set by~\cite{BH} (see also~\cite[Proposition 6.2]{favredujardin}), it is a finite set, hence all its solutions are defined over a number field. 

It follows that $C$ is an irreducible curve containing infinitely many algebraic points $(c_n,a_n)$. Let $Q \in \C [c,a]$ be a defining equation for $C$ with at least one coefficient equal to $1$ and pick $\sigma$
an element of the Galois group of $\C$ over the algebraic closure of $\Q$. Then $Q \circ \sigma$ vanishes also on $\{(c_n,a_n)\}$ hence everywhere on $C$,
and therefore $Q\circ \sigma = \lambda Q$ for some $\lambda \in \C^*$. Since one coefficient of $Q$ is $1$, we get $\lambda =1$ and $Q\in \KK[c,a]$ for a
number  field $\KK$.

\smallskip

Recall that we denote by $\mathsf{n}: \hat{C}\to \bar{C}$ the normalization of the completion $\bar{C}$ of $C$ in $\overline{\poly_3}\simeq\p^2$. 
Pick any pair of positive integers $s=(s_0,s_1)$ and scale them such that Theorem~\ref{tm:bifurcationheights} applies. This gives us a non-zero effective divisor
$\mathsf{D}_s$ supported on $\hat{C}\setminus \mathsf{n}^{-1}(C)$.
Replacing $s$ by a suitable multiple, we may suppose that it is very ample and pick a rational function $\phi$ on $\hat{C}$
whose divisor of poles and zeroes is greater or equal to $-\mathsf{D}_s$. Observe in particular that $\phi$ is a  regular function  on $\mathsf{n}^{-1}(C)$ that vanishes at finitely many points.

Consider the height $h_s$ induced by the  semi-positive adelic metrics given by $g_{s,v}$, see Theorem~\ref{tm:bifurcationheights}. If $(c,a)$ is a point in $\mathsf{n}^{-1}(C)$ that is defined over a finite extension $\KK$, 
denote by $\mathsf{O}(c,a)$ its orbit under the action of the absolute Galois group of $\KK$, and by $\deg(c,a)$ the cardinality of this orbit. 
Fix a rational function $\phi$ as above that is not vanishing at $(c,a)$ (this exists since $-\mathsf{D}_s$ is very ample).
Let $M_\KK$ be the set of places of $\KK$.
By~\cite[\S 3.1.3]{ACL2}, since $\phi(c,a)\neq0$ we have
\begin{eqnarray*}
h_s(c,a) 
&= &
\frac1{\deg(c,a)} \sum_{\mathsf{O}(c,a)} \sum_{v\in M_\KK}  - \log |\phi|_{s,v}(c',a')
\\
&=&
\frac1{\deg(c,a)} \sum_{\mathsf{O}(c,a)} \sum_{v\in M_\KK}  (g_{s,v} -  \log |\phi|_v) (c',a')
\\
&=&
\frac1{\deg(c,a)} \sum_{\mathsf{O}(c,a)} \sum_{v\in M_\KK}  g_{s,v} (c',a')\ge 0
\end{eqnarray*}
where the last equality follows from the product formula.

\smallskip

We now estimate the total height of the curve $\hat{C}$ using~\cite[(1.2.6) \& (1.3.10)]{ACL2}.
Choose any two meromorphic functions $\phi_0, \phi_1$  such that 
$\mathrm{div}(\phi_0)+ \mathsf{D}_s$ and $\mathrm{div}(\phi_1)+ \mathsf{D}_s$ are both effective with disjoint support included in $\mathsf{n}^{-1}(C)$.  Let $\sigma_0$ and $\sigma_1$ be the associated sections of $\mathcal{O}_{\hat{C}}(\mathsf{D}_s)$. Let $\sum n_i [c_i,a_i]$ be the divisor of zeroes of $\sigma_0$, and
$\sum n'_j [c'_j,a'_j]$ be the divisor of zeroes of $\sigma_1$.
Then 
\begin{eqnarray*}
h_s(\hat{C}) 
& = &
\sum_{v\in M_\KK} (\hdiv(\sigma_0)\cdot \hdiv(\sigma_1) | \hat{C})_v
\\
& = &
\sum_i n_i h_s(c_i,a_i) - 
\sum_{v\in M_\KK} \int_{\hat{C}} \log| \sigma_0|_{s,v}\,  \Delta g_{s,v}
\\
& = &
\sum_{v\in M_\KK} \int_{\hat{C}}  g_{s,v}\,  \Delta g_{s,v} \ge0~,
\end{eqnarray*}
where the third equality follows from Poincar\'e-Lelong formula and writing 
$\log| \sigma_0|_{s,v} = \log |\phi|_v - g_{s,v}$ with $\phi\in \KK(C)$ defining the section $\sigma_0$.

\smallskip

The formula for the height of a closed point implies that for all  post-critically finite polynomials $P_{c_n,a_n}$ we have
$h_s(c_n,a_n) =0$.  Since PCF polynomials are Zariski dense in $C$, the essential minimum of $h_s$ is non-positive. 
By the arithmetic Hilbert-Samuel theorem (see~\cite[Th\'eor\`eme~4.3.6]{thuillier},~\cite[Proposition~3.3.3]{MR1810122}, or ~\cite[Theorem 5.2]{zhang}), we get $h_s(\hat{C}) =0$
hence we may apply Thuillier-Yuan's theorem (see~\cite{thuillier,yuan} and~\cite[Th\'eor\`eme~4.2]{ACL}). It follows
that the sequence of probability measures $\mu_{n,v}$ that are equidistributed on $\mathsf{O}(c_n,a_n)$
in $\hat{C}^{v,\an}$ converges to a probability measure $\mu_{\infty,v}$ that is proportional to $\Delta g_{s,v}$.
We may thus write $\mu_{\infty,v} = w(s) \, \Delta g_{s,v}$ where $w(s) \in \R^*_+$  is equal to the
inverse of the mass of $\Delta g_{s,v}$, i.e. to $\deg(\mathsf{D}_s)^{-1}$.

We now observe that $g_{s,v}$ is homogeneous in $s$ (i.e. $g_{\tau s,v}=\tau g_{s,v}$ for any $\tau\in\R^*_+$), and continuous with respect to this parameter. 
It follows that $w(s)$ is also continuous on $(\R^*_+)^2$, and $\mu_{\infty,v} = w(s) \, \Delta g_{s,v}$
for all $s\in(\R^*_+)^2$.

\medskip

From now on we fix an Archimedean place $v$. We shall treat the non-Archimedean case latter. 
We work in $\mathsf{n}^{-1}(C^{v,\an})$ which is the complement of finitely many points in the analytification 
of the smooth projective curve $\hat{C}^{v,\an}$. To simplify notation we write $g_{0,v}, g_{1,v}$ instead of $g_{0,v}\circ \mathsf{n}, g_{1,v}\circ \mathsf{n}$.
Recall that by~\cite[Theorem 2.5]{favredujardin} (see also~\cite[Theorem 2.2]{McMullen4} or~\cite[Theorem 1.1]{Demarco-stable}) the equality $g_{0,v} =0$ on 
$\mathsf{n}^{-1}(C^{v,\an})$ implies
$c_0$ to be persistently pre-periodic. Since we assumed that both $c_0$ and $c_1$ are not persistently pre-periodic, the functions $g_{0,v}$ and $g_{1,v}$
are not identically zero on $\mathsf{n}^{-1}(C^{v,\an})$.

Recall also that $g_{0,v}$ is harmonic where it is positive and that the support of $\Delta g_{0,v}$ is exactly the boundary of $\{g_{0,v} =0\}$ (see e.g.~\cite[Proposition 6.7]{favredujardin}).
In particular $\Delta g_{0,v}$ is a non-zero positive measure, and its mass is finite by Proposition~\ref{prop:branch}. Observe now that $g_{s,v} \to g_{0,v}$ uniformly on compact sets
when $s$ tends to $(1,0)$, hence  $\Delta g_{s,v} \to \Delta g_{0,v}$ and $\Delta g_{0,v} = t_0 \mu_{\infty,v}$ for some positive $t_0$.  In the same way, we get $\Delta g_{s,v} \to \Delta g_{1,v}$ as $s \to (0,1)$ which implies that the three positive measures $\mu_{\infty,v}$,
$\Delta g_{0,v}$ and $\Delta g_{1,v}$ are proportional.
We may thus find $s_0, s_1 >0$ such that the function  $H_v:= s_0 g_{0,v} - s_1 g_{1,v}$ is harmonic on $\mathsf{n}^{-1}(C^{v,\an})$. 

\smallskip

Recall from~\cite{McMullen} that the bifurcation locus of the family $P_{c,a}$ parameterized by $(c,a)\in \mathsf{n}^{-1}(C^{v,\an})$ is defined as the 
set where either $c_0$ or $c_1$ is unstable (or active in the terminology of~\cite{favredujardin}). It follows from~\cite{favredujardin} that the bifurcation locus is equal to the union
of the support of $\Delta g_{0,v}$ and $\Delta g_{1,v}$, hence to the support of $\mu_{\infty,v}$.

Suppose now that $H_v$ is not identically zero. Then this support is included in the locus $\{H_v =0\}$ which is real-analytic.
This is impossible by McMullen's universality theorem, since
the Hausdorff dimension of the bifurcation locus of any one-dimensional analytic family is equal to $2$, see~\cite[Corollary~1.6]{McMullen3}.

We have proved that  $ s_0 g_{0,v} = s_1 g_{1,v}$ on $\mathsf{n}^{-1}(C^{v,\an})$ hence on $C^{v,\an}$ for some positive real numbers $s_0, s_1>0$.

\smallskip

Since $g_{0,v}$ and $g_{1,v}$ are proportional, and $G_v = \max\{g_{0,v}, g_{1,v}\}$ is proper on $C^{v,\an}$, 
it follows that $g_{0,v}$ is unbounded near any branch at infinity.
By Proposition~\ref{prop:branch},  $g_{0,v}$ admits an expansion
 of the form $g_{0,v}(t) = a(\mathfrak{c}) \log|t|^{-1} + O(1)$ with $a(\mathfrak{c})\in \Q^*_+$ on the branch $\mathfrak{c}$
 hence is locally superharmonic on that branch. 
 
It follows that  $\Delta g_{0,v}$ is a signed measure in $\hat{C}^{\an,v}$ whose negative part is a divisor
$\mathsf{D}_0$ with positive \emph{rational} coefficients at any point of $\hat{C} \setminus \mathsf{n}^{-1}(C)$.
The same being true for $\Delta g_{1,v}$, we conclude to the equality of divisors $s_0 \mathsf{D}_0 = s_1 \mathsf{D}_1$.
This implies that $s_0/s_1$ is rational, and we can assume $s_0$ and $s_1$ to be integers.
This ends the proof of the first statement in the case the place is Archimedean.

\smallskip

Assume now that $v$ is non-Archimedean. One cannot copy the proof we gave in the Archimedean setting since
we used the fact that $c_0$ is not persistently pre-periodic iff $\Delta g_{0,v} = 0$, and  
McMullen's universality theorem, two facts that  are valid only over $\C$.

Instead we apply Proposition~\ref{prop:branch}. For each $s'=(s'_0,s'_1)$ the function $g_{s',v}$ extends near any branch $\mathfrak{c}$ at infinity
as an upper-semicontinuous function $\widehat{g_{s',v}}$  whose Laplacian puts some non-positive mass at $\mathfrak{c}$. 
When $s'_0, s'_1 \neq 0$ then $g_{s',v}$ defines a positive continuous metric on $\mathcal{O}_{\hat{C}}(\mathsf{D}_s)$ hence
$\Delta \widehat{g_{s',v}} \{\mathfrak{c}\} = - \ord_{\mathfrak{c}}(\mathsf{D}_s)<0$.
This mass is in particular independent on the place. 
We get that 
\[- \Delta \widehat{g_{0,v}} \{\mathfrak{c}\} \ge \lim_{s\to (1,0)} 
- \Delta \widehat{g_{s,v}} \{\mathfrak{c}\} = \ord_{\mathfrak{c}}(\mathsf{D}_0)>0~.\]
We infer that the mass of $\Delta g_{0,v}$ is equal to the degree of $\mathsf{D}_0$ hence
is non-zero.

We may now argue as in the Archimedean case, and prove that $\Delta g_{0,v}$ and $\Delta g_{1,v}$
are proportional. The coefficient of proportionality is the only $t>0$ such that 
$\mathsf{D}_0 = t \mathsf{D}_1$ hence $t= s_0/s_1$. Then $H_v:= s_0g_{0,v} - s_1 g_{1,v}$ is harmonic on 
$C$ and bounded near any branch at infinity by Proposition~\ref{prop:branch}, hence defines a harmonic function
on the compact curve $\hat{C}^{\an,v}$. It follows $H_v$ is a constant (in the non-Archimedean case by~\cite[Proposition 2.3.2]{thuillier}) which is necessarily zero since it is zero at all post-critically finite parameters.

We have completed the proof of Theorem~\ref{tm:samegreen} (1). 
\smallskip

We mention here the following result that follows from the previous argument.

\begin{corollary}\label{cor:distrib}
Let $C$ be an irreducible curve in $\poly_3$ defined over a number field $\KK$ that
contains infinitely many post-critically finite parameters and that neither $c_0$ nor $c_1$ is 
persistently pre-periodic. Pick an Archimedean place $v$.

Pick any sequence $X_n\subset C(\bar{\KK})$ of  Galois-invariant finite sets of postcritically finite parameters such that $X_n\neq X_m$ for $m\neq n$.  Let $\mu_n$ be the measure equidistributed on $X_n\subset C^{v,\an}$. 

Then the sequence $\mu_n$ converges weakly to (a multiple of) $T_\bif\wedge[C]$ as $n\to\infty$.
\end{corollary}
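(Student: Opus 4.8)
The plan is to reuse the arithmetic machinery assembled in the proof of Theorem~\ref{tm:samegreen}(1). Fix positive integers $s_0,s_1$, scaled so that Theorem~\ref{tm:bifurcationheights} applies: the functions $g_{s,v}=\max\{s_0g_{0,v},s_1g_{1,v}\}$ define a continuous semi-positive adelic metrization on $\mathcal{O}_{\hat C}(\mathsf{D}_s)$, a line bundle of positive degree, hence ample on the projective curve $\hat C$; let $h_s$ denote the induced height. We established in \S\ref{sec:propor} that $h_s\ge0$, that $h_s(\hat C)=0$, that $h_s$ vanishes at every post-critically finite parameter, and that at the Archimedean place $v$ the limiting probability measure $\mu_{\infty,v}$ is proportional to $\Delta g_{s,v}$. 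Since Theorem~\ref{tm:samegreen}(1) gives $s_0g_{0,v}=s_1g_{1,v}$ on $C^{v,\an}$, the functions $g_{0,v},g_{1,v},g_{s,v}$ are pairwise proportional there, so $\Delta g_{0,v}$, $\Delta g_{1,v}$, $\Delta g_{s,v}$ and hence $T_\bif\wedge[C]$ (which restricts on $C^{v,\an}$ to a positive multiple of $\Delta g_{0,v}+\Delta g_{1,v}$) are all positive multiples of $\mu_{\infty,v}$. It therefore suffices to prove that $\mu_n\to\mu_{\infty,v}$ weakly on $\hat C^{v,\an}$.

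First I would enumerate all post-critically finite closed points of $C$ as a sequence $(w_i)_{i\ge1}$. Since $\mathcal{O}_{\hat C}(\mathsf{D}_s)$ is ample, $h_s$ has the Northcott property, so for every bound $D$ only finitely many $w_i$ have degree $\le D$; we may thus arrange that $\deg(w_i)\to\infty$. The $w_i$ are pairwise distinct, a proper Zariski-closed subset of the curve $\hat C$ is finite, and $h_s(w_i)=0$ for all $i$, so exactly as in \S\ref{sec:propor} Thuillier--Yuan's theorem (\cite{thuillier,yuan}, \cite[Th\'eor\`eme~4.2]{ACL}) applies and the Galois-orbit measures $\mu_{w_i}$ converge weakly to $\mu_{\infty,v}$ as $i\to\infty$.

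It remains to pass from individual orbits to the composite sets $X_n$. Write $X_n=\bigsqcup_{i\in I_n}\mathsf{O}(w_i)$ with $I_n\subset\N$ finite, so that $\mu_n=\sum_{i\in I_n}\lambda_i^{(n)}\mu_{w_i}$ with $\lambda_i^{(n)}=\deg(w_i)/\#X_n$ and $\sum_{i\in I_n}\lambda_i^{(n)}=1$. Because the sets $X_n$, hence the $I_n$, are pairwise distinct, only finitely many $I_n$ can be contained in $\{1,\dots,M\}$ for a fixed $M$, so $\max I_n\to\infty$; combined with $\deg(w_i)\to\infty$ this forces $\#X_n\ge\deg(w_{\max I_n})\to\infty$, and therefore $\sum_{i\in I_n,\,i\le M}\lambda_i^{(n)}\le M\deg(w_M)/\#X_n\to0$ for each fixed $M$. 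A routine Ces\`aro-type estimate then finishes: given a continuous test function $f$ and $\e>0$, choose $M$ so that $|\langle\mu_{w_i}-\mu_{\infty,v},f\rangle|<\e$ for all $i>M$, split $\mu_n$ into the pieces indexed by $i\le M$ and by $i>M$, and use $\sum_{i>M}\lambda_i^{(n)}\to1$ to conclude $\langle\mu_n,f\rangle\to\langle\mu_{\infty,v},f\rangle$. Hence $\mu_n$ converges weakly to $\mu_{\infty,v}$, which is a multiple of $T_\bif\wedge[C]$.

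The one genuinely delicate point is this last passage from equidistribution of single Galois orbits to equidistribution of the finite sets $X_n$: it is precisely here that the hypothesis that the $X_n$ are pairwise distinct, together with the Northcott property of the ample height $h_s$, are needed, ensuring simultaneously that $\max I_n\to\infty$ and that no parameter of bounded degree retains asymptotic mass. All the remaining ingredients are direct consequences of the arithmetic equidistribution theorem and of the proportionality of the Green functions already proved in Theorem~\ref{tm:samegreen}(1).
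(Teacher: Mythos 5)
Your argument is correct and follows the same route as the paper's proof: both reduce the question to the Thuillier--Yuan equidistribution already established in \S\ref{sec:propor} and then translate the limiting measure into a multiple of $T_\bif\wedge[C]$ via the proportionality $s_0g_{0,v}=s_1g_{1,v}$ of Theorem~\ref{tm:samegreen}(1). The one place you go beyond the paper is the second and third paragraphs, where you carefully justify the passage from equidistribution of \emph{individual} Galois orbits (which is what \S\ref{sec:propor} literally produces) to equidistribution of the arbitrary Galois-invariant finite sets $X_n$ of the statement, via Northcott for the ample metrized bundle $\mathcal{O}_{\hat C}(\mathsf{D}_s)$ and a Ces\`aro-type splitting; the paper compresses this into ``as seen above'', presumably because the version of the equidistribution theorem it cites is formulated directly for Galois-invariant finite sets. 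Your filled-in version is a valid and slightly more self-contained account of the same argument, not a different method.
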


Recall that $T_\bif$ is defined as the $dd^c$ of the plurisubharmonic function $g_0+g_1$, and $[C]$ is the current of integration over the analytic curve
$C^{v,\an}$.

\begin{proof}
Let $s_0,s_1>0$ be given by Theorem~\ref{tm:samegreen}. As seen above, the sequence $\mu_n$ converges weakly towards $dd^c\max\{s_0\cdot g_0,s_1\cdot g_1\}=s_0\cdot dd^cg_0$ on $ C^{v,\an}$. It thus only remains to prove that $dd^c(g_0|_{C^{v,\an}})=\kappa\cdot T_\bif\wedge[C]$ for some $\kappa>0$. Recall that $T_\bif=dd^c(g_0+g_1)$. By Theorem~\ref{tm:samegreen}, on $C$,
\[g_0+g_1=g_0+\frac{s_1}{s_0}\cdot g_0=\left(1+\frac{s_0}{s_1}\right)g_0.\]
Let $\kappa:=1+\frac{s_0}{s_1}$. We thus have $dd^c(g_0|_{ C^{v,\an}})=\kappa^{-1} \cdot dd^c((g_0+g_1)|_{ C^{v,\an}})$. Finally, since $g_0+g_1$ is continuous, we have $T_\bif\wedge[C]=dd^c((g_0+g_1)|_{ C^{v,\an}})$, which ends the proof.
\end{proof}

\subsection{Values of the B\"ottcher coordinates at critical points are proportional near infinity}
In this section, we prove Theorem~\ref{tm:samegreen} (2). 

Let us fix a branch at infinity $\mathfrak{c}$ of an irreducible curve $C$ containing infinitely many PCF polynomials, 
and an isomorphism of complete local rings $\mathcal{O}_{\hat{C},\mathfrak{c}} \simeq \mathbb{L}[[t]]$, 
such that $c(\mathsf{n}(t)) = t^{-n}$, and $a(\mathsf{n}(t))\in \mathcal{O}_{\mathbb{L},S}((t))$ is an adelic series.
Write $P_t = P_{c(\mathsf{n}(t)),a(\mathsf{n}(t))}$, and $\varphi_t= \varphi_{P_t}$.

By Lemma~\ref{lem:vlat}
there exists an integer $q\ge1$ large enough such that 
$P_t^q(c_0)$ and $P_t^q(c_1)$ both lie in the domain of convergence of the B\"ottcher coordinate $\varphi_t$ for $t$ small enough, and
\eqref{eq:expand bott} holds, i.e.
\begin{equation*}
\varphi_t(P_t^q(c_\varepsilon)) = 
\om\left(P_t^q(c_\varepsilon)-\frac{c(\mathsf{n}(t))}{2}\right)+ \Theta(t)
\end{equation*}
where $\Theta$ is an adelic series vanishing at $0$.

We now fix a place $v$ and compute using Proposition~\ref{prop:green is green} for $|t|_v \ll 1$. We get
\begin{eqnarray*}
 \frac{|\varphi_t\left(P_0^q(t)\right)|_v^{s_0}}{|\varphi_t\left(P_1^q(t)\right)|_v^{s_1}} & = & \frac{\exp\left(s_0\cdot g_{c(\mathsf{n}(t)), a(\mathsf{n}(t))}(P_0^q(t))\right)}{\exp\left(s_1\cdot g_{c(\mathsf{n}(t)), a(\mathsf{n}(t))}(P_1^q(t))\right)}\\
& = & \frac{\exp\left(3^q s_0\cdot g_{0,v}(c(\mathsf{n}(t)), a(\mathsf{n}(t)))\right)}{\exp\left(3^q s_1\cdot g_{1,v}(c(\mathsf{n}(t)), a(\mathsf{n}(t))\right)}=1~, \hspace{1cm} (\star)
\end{eqnarray*}
where the last equality follows from Theorem~\ref{tm:samegreen} (1). 

Applying $(\star)$  in the case of an Archimedean place, we see that the complex analytic map 
\[t\longmapsto\frac{\left(\varphi_t\left(P_0^q(t)\right)\right)^{s_0}}{\left(\varphi_t\left(P_1^q(t)\right)\right)^{s_1}}\]
has a modulus constant equal to $1$, hence is a constant, say $\zeta$. 
Since both power series $\varphi_t(P^q_0(t))$ and $\varphi_t(P^q_1(t))$ have their coefficients in $\mathcal{O}_{\mathbb{L},S}$, we conclude that
$\zeta \in \mathcal{O}_{\mathbb{L},S}$. 
But $|\zeta|_v =1$ for all place $v$ over $\mathbb{L}$ by $(\star)$ hence it is a root of unity.

Note also that the equality $\varphi_t\left(P_0^q(t)\right)^{s_0} = \zeta\, \varphi_t\left(P_1^q(t)\right)^{s_1}$ holds as equality between adelic series, so that it
is also true for analytic functions at any place.

\smallskip

To conclude the proof of Theorem~\ref{tm:samegreen}, pick a place $v$ of $\LL$ and consider the connected component $U$ of 
$\{g_{0,v}>\tau_v/s_0\}=\{g_{1,v}>\tau_v/s_1\}$ in $C^{v,\an}$ whose closure in $\hat{C}$ contains $\mathfrak{c}$.  We need to argue that $P^q_{c,a}(c_0)$ and $P^q_{c,a}(c_1)$ belongs to the domain of
convergence of the B\"ottcher coordinate $\varphi_{v,c,a}$ for any $c,a\in U$.

Recall that $s_0g_{0,v}(c,a) = s_1g_{1,v}(c,a)$ for some positive integers $s_0, s_1$.
It follows that 
\begin{multline*}
\min\{g_{c,a}(P^q_{c,a}(c_0)), g_{c,a}(P^q_{c,a}(c_1))\} 
= 
3^q 
\min\{g_{c,a}(c_0), g_{c,a}(c_1)\} 
> \\
3^q \min\left\{\frac{s_0}{s_1}, \frac{s_1}{s_0}\right\}\,
\max\{g_{c,a}(c_0), g_{c,a}(c_1)\} 
>\\
G(c,a) + \max\{s_0g_{c,a}(c_0), s_1g_{c,a}(c_1)\} 
>
G(c,a) + \tau_v
\end{multline*}
for $q$ large enough
and we conclude by Proposition~\ref{prop:green is green}.

The proof of Theorem~\ref{tm:samegreen} is now complete.

\section{Special curves having a periodic orbit with a constant multiplier}\label{sec:perm}
In this section, we prove Theorem~\ref{thm:perm}.

Pick an integer $m\ge1$, a complex number $\lambda\in \C$, and consider the set of polynomials $P_{c,a}$ that admits a periodic orbits of period $m$ and multiplier $\lambda$. It follows from~\cite[p. 225]{Silverman} that this set is an algebraic curve in $\poly_3$ (see also~\cite[Appendix D]{milnor3}, \cite[Theorem 2.1]{BB2} or \cite[\S 6.2]{favregauthier}). Let us be more precise:

\begin{theorem}[Silverman]\label{siverman}
For any integer $m\geq1$, there exists a polynomial $p_m\in\Q[c,a,\lambda]$ with the following properties.
\begin{enumerate}
\item For any $\lambda\in\C\setminus\{1\}$,  $p_m(c,a,\lambda)=0$ if and only if $P_{c,a}$ has a cycle of exact period $m$ and multiplier $\lambda$.
\item When $\lambda =1$, then $p_m(c,a,1)=0$ if and only if there exists an integer $k$ dividing $m$ such that $P_{c,a}$ has a cycle of exact period $k$ whose multiplier is a primitive $m/k$-th root of unity.
\end{enumerate}
\end{theorem}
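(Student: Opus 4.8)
The plan is to reduce the statement to the study of the fixed-point scheme of the iterate $P_{c,a}^m$ together with the classical trick of using the dynatomic polynomials to isolate points of exact period $m$. First I would recall that for a fixed $(c,a)$ the multiplier of a periodic cycle of exact period $m$ is a well-defined quantity (it is invariant along the cycle), and that the $m$-th \emph{dynatomic polynomial} $\Phi_{m,P_{c,a}}(z)$, obtained from $\prod_{k\mid m}(P_{c,a}^k(z)-z)^{\mu(m/k)}$, cuts out precisely the points of exact period $m$ (with the well-known caveat at parameters where cycles collide). One then forms the resultant
\[
p_m(c,a,\lambda) := \operatorname{Res}_z\bigl(\Phi_{m,P_{c,a}}(z),\, (P_{c,a}^m)'(z)-\lambda\bigr),
\]
suitably normalized and with spurious factors removed, and the content of the theorem is the precise description of its zero locus. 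Since $P_{c,a}$ has coefficients in $\Z[\tfrac12,\tfrac13][c,a]$ and the dynatomic polynomial and the resultant are built by integral polynomial operations followed by division by a polynomial in $\Q[c,a]$, one gets $p_m\in\Q[c,a,\lambda]$; this is the easy bookkeeping part.

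The substance is the case analysis for the two assertions. For (1), with $\lambda\neq1$: if $P_{c,a}$ has a cycle of exact period $m$ and multiplier $\lambda$ then $\Phi_{m,P_{c,a}}$ and $(P_{c,a}^m)'-\lambda$ share the root in that cycle, so the resultant vanishes; conversely if $p_m(c,a,\lambda)=0$ there is a common root $z_0$, and because $\lambda\neq1$ a point of the form $z_0$ lying on the zero set of $\Phi_{m,P_{c,a}}$ is forced to be a point of \emph{exact} period $m$ rather than of some proper divisor $k\mid m$ (the usual obstruction — a period-$k$ point masquerading as a period-$m$ point — only occurs when its multiplier is a primitive $(m/k)$-th root of unity, hence equals $1$ only in the degenerate configurations excluded by $\lambda\neq1$). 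This uses the standard facts about dynatomic polynomials, e.g. from Silverman's book and Milnor's appendix, which are exactly the references cited. For (2), with $\lambda=1$: here the subtlety is genuine — a parabolic cycle of period $k$ with multiplier a primitive $(m/k)$-th root of unity contributes period-$m$ points to the zero set of $\Phi_{m,P_{c,a}}$ with multiplier $1$, and conversely every zero of $p_m(c,a,1)$ arises either from an honest period-$m$ cycle of multiplier $1$ or from such a lower-period parabolic cycle; one checks this by the local normal-form analysis of a parabolic fixed point of $P_{c,a}^k$ and counting how the $(m/k)$ petals are permuted.

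The main obstacle I expect is \textbf{controlling the spurious factors and the normalization of $p_m$} so that the stated if-and-only-if holds on the nose rather than up to lower-dimensional bad loci: one must verify that dividing the naive resultant by the appropriate power of a discriminant-type polynomial still leaves an element of $\Q[c,a,\lambda]$ and that no component of $\{p_m=0\}$ is lost or spuriously created. Concretely, the delicate point is that for special parameters several cycles of period $m$ may have the same multiplier, or a cycle may be degenerate, and one needs the resultant construction to be insensitive to this; the cleanest way is to work generically (over the function field $\C(c,a)$, where $P_{c,a}$ has $3^m-\text{(lower terms)}$ distinct periodic points and the multiplier map is a finite map onto the $\lambda$-line) to pin down $p_m$ up to a scalar, then extend by continuity/specialization. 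Since the theorem is quoted verbatim from Silverman, in the paper itself one would simply cite~\cite[p.~225]{Silverman} (and~\cite[Appendix D]{milnor3}) rather than reproduce this analysis; the role of the statement here is only to fix notation for $p_m$ and the curve $\per_m(\lambda)=\{p_m(\cdot,\cdot,\lambda)=0\}$ used in Section~\ref{sec:perm}.
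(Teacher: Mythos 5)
The paper itself does not prove this statement: it is quoted from Silverman, with the citations \cite[p.~225]{Silverman} and \cite[Appendix D]{milnor3} (together with \cite[Theorem 2.1]{BB2} and \cite[\S 6.2]{favregauthier}) given in the sentence immediately before the theorem. You recognize this at the end of your sketch, and you are right that the role of the statement in this paper is simply to fix notation for $p_m$ and to describe $\per_m(\lambda)$ precisely for the work in Section~\ref{sec:perm}. Your summary of the underlying argument via dynatomic polynomials is a faithful account of the cited references, and the case split at $\lambda = 1$ in part (2) is exactly the parabolic phenomenon analyzed in Milnor's appendix. One refinement of your ``spurious factors'' remark is worth recording: because $(P_{c,a}^m)'$ is constant along a cycle of period $m$, the raw resultant $\operatorname{Res}_z\bigl(\Phi_{m,P_{c,a}},(P_{c,a}^m)'-\lambda\bigr)$ is, for generic $(c,a)$, a perfect $m$-th power of the polynomial with the desired zero locus (the $m$ points of a single cycle each contribute the identical linear factor $\mu-\lambda$). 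The normalization you need is therefore the extraction of that $m$-th root, not the removal of a discriminant factor; and the cleanest way to carry it out---the route Silverman and Milnor actually take---is to define $p_m$ directly as the polynomial in $\lambda$ whose roots are the multipliers of the period-$m$ cycles, constructed via elementary symmetric functions, rather than passing through the resultant at all.
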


We now come to the proof of Theorem~\ref{thm:perm}.

\medskip

One implication is easy. For any integer $m\ge1$, the curve $\Per_m(0)$ is contained in the union of the two curves $\{(c,a)\in\C^2\, ; \ P_{c,a}^m(c_0)=c_0\}$ and $\{(c,a)\in\C^2\, ; \ P_{c,a}^m(c_1)=c_1\}$. According to lemma~\ref{lm:per(n,k)infintepcf} below, it contains infinitely many post-critically finite parameters.

\begin{lemma}\label{lm:per(n,k)infintepcf}
Pick $n\geq0$, $k>0$ and $i\in\{0,1\}$. Any irreducible component $C$ of the set $\{(c,a),\, P_{c,a}^{n+k}(c_i)=P_{c,a}^n(c_i)\}$ contains infinitely many post-critically finite parameters.
\end{lemma}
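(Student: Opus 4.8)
The plan is to transfer the statement to the dynamics of the critical point $c_{1-i}$ that is \emph{not} constrained by the equation defining $C$, and to manufacture the post-critically finite parameters by making $c_{1-i}$ bifurcate.

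First I would record the elementary reduction. Along $C$ the critical point $c_i$ is persistently pre-periodic, so $g_i\equiv 0$ on $C^{\an}$ and a parameter $(c,a)\in C$ is post-critically finite precisely when $c_{1-i}$ is pre-periodic for $P_{c,a}$; it therefore suffices to produce infinitely many $(c,a)\in C$ at which $c_{1-i}$ is pre-periodic. I would also note that $c_{1-i}$ cannot be persistently pre-periodic on $C$: otherwise every point of $C$ would be post-critically finite, which is absurd since an irreducible curve has uncountably many points while PCF cubic polynomials form a countable set \cite[Corollary 3]{Ingram}.

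The next step is to check that $c_{1-i}$ is active somewhere on $C$, equivalently (by \cite[\S5]{Demarco1}) that the positive measure $\mu:=\Delta\big(g_{1-i}|_C\big)$ is non-zero. The quickest route is McMullen's dichotomy \cite[Lemma~2.1]{McMullen4} applied to $c_{1-i}$. Alternatively, and in the spirit of Sections~2--3, one argues by contradiction: if $\mu=0$ then $g_{1-i}|_C$ is harmonic on $C^{\an}$; since $g_i\equiv 0$ we have $g_{1-i}|_C=G|_C$, which is proper on the affine curve $C$ by Proposition~\ref{prop:growth Green}, so $g_{1-i}|_C\to+\infty$ at every branch at infinity, and by Proposition~\ref{prop:branch} (necessarily case (2) there) it admits the expansion $a(\mathfrak c)\log|t|_v^{-1}+O(1)$ with $a(\mathfrak c)\in\Q_+^*$; hence $g_{1-i}|_C$ extends through every branch at infinity to the compact curve $\hat C^{\an}$ as a function whose Laplacian is supported on the finitely many branches at infinity, with strictly negative mass at each of them, so of strictly negative total mass --- contradicting the vanishing of the total mass of a Laplacian on a compact curve. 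Thus $\mu\neq 0$; and since $g_{1-i}$ is continuous, $\mu$ has no atom, so $\supp\mu$ is infinite.

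Finally I would invoke the classical fact that the activity locus of a marked critical point is contained in the closure of the set of parameters at which that point is pre-periodic to a repelling cycle: near an active parameter one follows three distinct repelling periodic points holomorphically, and non-normality forces some iterate of $c_{1-i}$ to meet one of them at arbitrarily close parameters (see e.g.\ \cite{favredujardin}). Consequently the set $M\subset C$ of parameters at which $c_{1-i}$ is pre-periodic satisfies $\supp\mu\subseteq\overline M$, so $M$ is infinite, and every point of $M$ is post-critically finite by the first paragraph. The only genuinely non-formal ingredient is this last density statement; I would expect it, rather than any of the Green-function bookkeeping, to be the single point where external input must be quoted.
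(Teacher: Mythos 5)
Your proposal is correct and follows essentially the same route as the paper: establish that $c_{1-i}$ is active somewhere on $C$, then invoke the Montel-type density of pre-periodicity parameters in the activity locus (the paper cites~\cite[Lemma~2.3]{favredujardin} for this, exactly the step you flag as the one external input). The paper reaches activity a little more directly: since $g_i\equiv 0$ on $C$ and $G$ is proper on $\poly_3$, the restriction $g_{1-i}|_C=G|_C$ is proper, which simultaneously rules out persistent pre-periodicity of $c_{1-i}$ (no need for the countability argument via~\cite{Ingram}) and shows $g_{1-i}\not\equiv 0$, after which~\cite[Theorem~2.5]{favredujardin} gives an active parameter; your alternative total-mass argument via Propositions~\ref{prop:growth Green} and~\ref{prop:branch} is a valid (if heavier) substitute for that last citation.
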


\begin{proof}
We argue over the complex numbers, and use the terminology and results from~\cite{favredujardin}. 
In particular, a critical point $c_i$, $i=0,1$ is said to be active at a parameter $(c,a)$ if 
the family of analytic functions $P^n_{c,a}(c_i)$ is normal in a neighborhood of $(c,a)$. \label{page-pfB}

\smallskip

Suppose that $C$ is an irreducible component of the set
 \[\{(c,a),\, P_{c,a}^{n+k}(c_i)=P_{c,a}^n(c_i)\}\]
 where $n\geq0$, $k>0$ and $i\in\{0,1\}$. To fix notation we suppose $i=0$.
Observe that $g_{c,a}(c_0) =0$ on $C$, and since $G(c,a) = \max\{g_{c,a}(c_0), g_{c,a}(c_1)\}$ is a proper function 
on $\poly_3$ (see Proposition~\ref{prop:growth Green}) it follows that $g_{c,a}(c_1)$ is also proper on $C$. 
In particular, $c_1$ has an unbounded orbit when $c,a\in C$ is close enough to infinity in $\poly_3$. 
It follows from e.g.~\cite[Theorem~2.5]{favredujardin} (which builds on~\cite[Theorem 2.2]{McMullen4})
that $c_1$ is active at at least one point $(c_0,a_0)$ on $C$. The arguments of~\cite[Lemma 2.3]{favredujardin}
based on Montel's theorem show that $(c_0,a_0)$ is accumulated by parameters for which $c_1$ is pre-periodic to a repelling cycle,
hence by post-critically finite polynomials. In particular, it contains infinitely many post-critically finite parameters.
\end{proof}

\medskip

For the converse implication, we proceed by contradiction and suppose that we can find a complex number $\lambda \neq 0$, an integer $m\ge 1$, and an irreducible component $C$ of $\per_m(\lambda)$ containing infinitely many post-critically finite polynomials.

Observe that, whenever $0<|\lambda|\leq 1$, any parameter $(c,a)\in C\subset \Per_m(\lambda)$ has a non-repelling cycle which is not super-attracting. In particular, at least one of its critical points has an infinite forward orbit (see e.g.\cite{Milnor4}). It follows that $\Per_m(\lambda)$ contains no post-critically finite parameter when  $0<|\lambda|\leq 1$. This argument is however not sufficient to conclude in general. But we shall see that a combination  of this argument applied at a place of residual characteristic $3$  together with the study of the explosion of multipliers on a branch at infinity of $C$  gives a contradiction.

\begin{proposition}\label{prop:estimate-mult}
Suppose $C$ is an irreducible component of $\Per_m(\lambda)$ with $\lambda \in \C^*$ and $m\ge1$ containing infinitely many post-critically finite polynomials. 
Then one of the two critical points is persistently periodic on $C$ and $\lambda$ is equal to the multiplier of a repelling periodic orbit of a post-critically finite quadratic polynomial.
\end{proposition}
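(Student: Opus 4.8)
The plan is to work on a branch at infinity $\mathfrak{c}$ of $C$, which realizes a cubic polynomial $\mathsf{P}\in\LL((t))[z]$ over the field of Laurent series endowed with the $t$-adic (residual characteristic zero) norm. First I would use the fact that $C\subset\Per_m(\lambda)$ with $\lambda\ne0$ fixed, together with Theorem~\ref{tm:samegreen}~(1), to constrain the dynamics of $\mathsf{P}$: since $g_{0,v}$ and $g_{1,v}$ are proportional with positive coefficients, $G_v$ is unbounded near $\mathfrak{c}$ forces \emph{both} critical points to escape along $\mathfrak{c}$, unless one of them is persistently (pre)periodic on $C$. Invoking Kiwi's work~\cite{Kiwi:cubic} on non-Archimedean cubic polynomials over a residual-characteristic-zero field, a cubic polynomial in $\LL((t))[z]$ with both critical points escaping has \emph{all} its periodic multipliers exploding (their $t$-adic norm tends to $\infty$ as $t\to0$), in particular the multiplier of the period-$m$ cycle cannot be the constant $\lambda$. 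This contradiction shows that one of the two critical points, say $c_j$, is persistently periodic on $C$.

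Once $c_j$ is persistently periodic, the polynomial $P_{c,a}$ restricted to $C$ is, up to affine conjugacy, a one-parameter family of cubic polynomials with one superattracting cycle; the other critical point $c_{1-j}$ is the one carrying the modulus. The next step is to identify $\lambda$. Since $c_j$ is persistently periodic, $P_{c,a}$ has a persistent superattracting cycle, so the remaining periodic cycle of period $m$ with multiplier $\lambda$ must be accounted for by the ``free'' critical point. Here I would use that $C$ contains infinitely many PCF parameters, hence (after passing to a branch at infinity and using Proposition~\ref{prop:branch} / Theorem~\ref{tm:samegreen}) a parameter $(c,a)$ on $C$ over a suitable field of residual characteristic $3$ at which the free critical point escapes while the other one stays periodic; in such a non-Archimedean picture the polynomial degenerates to a \emph{unicritical} (quadratic-like) polynomial on the part of the Julia set carrying the period-$m$ cycle. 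Computing the norm of the multiplier of that cycle in a field of residual characteristic $3$ (as in the argument sketched in the introduction for Theorem~\ref{thm:perm}) pins down $|\lambda|$ and forces $\lambda$ to be the multiplier of a cycle of a \emph{quadratic} polynomial.

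To upgrade ``$|\lambda|$ is the multiplier of a quadratic cycle'' to ``$\lambda$ is the multiplier of a \emph{repelling} cycle of a \emph{post-critically finite} quadratic polynomial'', I would argue as follows. Because $C$ contains infinitely many PCF cubic polynomials and one critical point is persistently periodic on $C$, infinitely many of these PCF parameters have their free critical point preperiodic; at such a parameter the quadratic-like restriction carrying the period-$m$ cycle is itself post-critically finite, and its period-$m$ cycle, being disjoint from the postcritical set, is repelling with multiplier $\lambda$. The repelling assertion can also be seen from $|\lambda|>1$ at the residual-characteristic-$3$ place, which already rules out $|\lambda|\le1$; combined with the rigidity of the multiplier along the curve $\Per_m(\lambda)$ (the multiplier is \emph{constant} equal to $\lambda$ on $C$ by definition) this gives the statement.

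\textbf{Main obstacle.} The delicate point is the non-Archimedean bookkeeping: making precise, via~\cite{Kiwi:cubic}, the dichotomy ``both critical points escape along $\mathfrak{c}$'' versus ``one is persistently periodic'', and then, in the persistently-periodic case, extracting a genuine \emph{unicritical} degeneration over a residual-characteristic-$3$ field on which one can honestly compute $|\lambda|$. One must check that the period-$m$ cycle of $\mathsf{P}$ really is the one inherited by the quadratic-like restriction (and not some cycle created by the other critical point), and that the reduction to characteristic $3$ does not destroy the cycle or merge it with the superattracting one. I expect the rest — deducing $\lambda\ne0$ is impossible and that $\lambda$ is a repelling PCF quadratic multiplier — to follow fairly formally once this degeneration is set up correctly.
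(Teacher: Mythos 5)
The first half of your proposal matches the paper: pass to a branch at infinity, work over $\C((t))$, show that if neither critical point is persistently pre-periodic on $C$ then both escape over $\C((t))$ (using Theorem~\ref{tm:samegreen}~(1) and the properness of $G$), and then invoke Kiwi~\cite{Kiwi:cubic} to conclude that every cycle of $\mathsf{P}$ is repelling, contradicting the fact that the period-$m$ multiplier equals the constant $\lambda\in\C^*$, whose $t$-adic norm is $1$. This part is fine.

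The second half is where the proposal goes off the rails, and the source of the confusion is that you have imported the residual-characteristic-$3$ computation into the proof of this proposition, whereas in the paper that computation (Proposition~\ref{prop:multiplier}) is applied \emph{after} Proposition~\ref{prop:estimate-mult}, to the unicritical parameter on $C$ produced by Bezout and to the quadratic polynomial that the present proposition produces, in order to derive the final contradiction for Theorem~\ref{thm:perm}. The proof of Proposition~\ref{prop:estimate-mult} itself stays entirely over the field $\C((t))$, which has residual characteristic $0$. Concretely, what is missing from your sketch is the whole mechanism that manufactures the quadratic polynomial: Kiwi's Theorem~1.1\,(iii)(b) gives a periodic closed ball $B$ of some exact period $n$ containing the preperiodic critical point $\mathsf{c}_0$; after a base change and scaling one arranges $B$ to be the closed unit ball with the Gauss point fixed by $\mathsf{P}^n$; since $B$ contains exactly one of the two critical points, the reduction $\tilde P$ of $\mathsf{P}^n|_B$ is a \emph{quadratic} polynomial over $\C$, whose critical point (the reduction of $0$) is preperiodic, so $\tilde P$ is PCF; finally one checks that the orbit of the period-$m$ point $\mathsf p$ enters $B$ and lies on the shell $\{|\mathsf z|_t=1\}$, whence $(\mathsf{P}^{nk})'(\mathsf p)=(\tilde P^{k})'(\tilde p)$. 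None of this appears in your proposal. Moreover, your attempt to ``see repelling from $|\lambda|>1$ at the residual-characteristic-$3$ place'' confuses a non-Archimedean absolute value bound with the Archimedean repelling property; at the residual-characteristic-$3$ place one in fact gets $|\lambda|_v<1$ by Proposition~\ref{prop:multiplier}, and this is precisely what is used later to contradict $|\lambda|_v=1$ coming from the quadratic picture. In short: the step ``extract a PCF quadratic with a repelling cycle of multiplier $\lambda$'' requires the $\C((t))$ ball-reduction argument, and the residual-characteristic-$3$ step belongs to a different part of the proof of Theorem~\ref{thm:perm}.
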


We may thus assume that the curve $C$ is included in  $\Per_n(0)$ for some integer $n\ge1$ (possibly with $n\neq m$). Observe that the equation
$P^n_{c,a}(c_0) = c_0$ (resp. $P^n_{c,a}(c_1) = c_1$) is equivalent to the vanishing of a polynomial of the form
$\sqrt{3}^{1-3^n} a^{3^n} + \mathrm{l.o.t}$ (resp. $\sqrt{3}^{1-3^n} (a^3 - \frac{c^3}6)^{3^n} + \mathrm{l.o.t}$). 
It follows that the closure of $C$ in $\overline{\poly_3}$ intersects the line at infinity
in a set included in $\{[1:0:0], [\zeta:1:0]\}$ with $\zeta^3=6$ (see also~\cite[Theorem 4.2]{BB2}).

Consider the curve of unicritical polynomials  $c_0 = c_1$, which is defined by the equation  $c=0$. 
It intersects the line at infinity at $[0:1:0]$, so that Bezout' theorem implies the existence of 
a parameter $(c,a)\in C$ which is unicritical.

We conjugate $P_{c,a}$ by a suitable affine map to a polynomial $Q(z) = z^3 + t$. 
This unicritical polynomial has a periodic critical orbit. Proposition~\ref{prop:multiplier} below implies $|\lambda|_v <1$
at any place $v$ of residual characteristic $3$.
By the previous proposition, $Q$ also has a periodic orbit whose multiplier is equal to the multiplier
of a repelling orbit of a quadratic polynomial having a preperiodic critical point. 
Proposition~\ref{prop:multiplier} now gives $|\lambda|_v =1$ for this place, hence a contradiction.

The proof of Theorem~\ref{thm:perm} is complete.

\begin{proposition}\label{prop:multiplier}
Suppose $Q(z) = z^d + t$ is a post-critically finite unicritical polynomial of degree $d\ge 2$, and let $\lambda\neq 0$
be the multiplier of some periodic orbit of $P$.

Then $\lambda$ belongs to some number field $\KK$, and 
given any non-Archimedean place $v$ of $\KK$ we have:
\begin{itemize}
\item
$|\lambda|_v<1$ if the residual characteristic of $\KK_v$ divides $d$;
\item
$|\lambda|_v  =1$ if the residual characteristic of $\KK_v$ is prime to $d$.
\end{itemize}
\end{proposition}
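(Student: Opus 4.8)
The plan is to reduce everything to a statement about the reduction of $Q$ at $v$ and the good/bad reduction dichotomy for the periodic cycle. First I would record that since $Q$ is post-critically finite, the parameter $t$ is an algebraic number, and all periodic points of $Q$ (being roots of $Q^{\circ k}(z)-z$, a monic polynomial with coefficients in $\Z[t]$) are algebraic; hence so is every multiplier $\lambda = (Q^{\circ k})'(z_0) = d^k\prod_{j=0}^{k-1} z_j^{\,d-1}$, where $z_0,\dots,z_{k-1}$ is the cycle. So we may fix a number field $\KK$ containing $t$, all the $z_j$, and $\lambda$, and a non-Archimedean place $v$ of $\KK$. Scaling by the uniformizer shows it suffices to treat $z_0,\dots,z_{k-1}\in\KKalg$ and compute $|\lambda|_v = |d|_v^{\,k}\cdot\prod_j |z_j|_v^{\,d-1}$.

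The heart of the argument is to show that $|z_j|_v \le 1$ for every point of \emph{every} cycle of $Q$, and in fact that $|z_j|_v\le 1$ for every $v$-adically bounded orbit. Since $Q$ is PCF, the critical point $0$ has a finite (hence bounded) orbit at every place; I would argue that the whole filled Julia set $K_v(Q)$ in $\A^{1,\an}_{\KK_v}$ is contained in the closed unit disk, using Proposition~\ref{prop:growth Green} / Lemma~\ref{lem:classic-estim} applied to $Q$ — more directly, if $|z|_v>\max\{1,|t|_v\}$ then $|Q(z)|_v = |z|_v^d > |z|_v$ (strict, since $v$ is non-Archimedean), so such $z$ escapes; and since $0\in K_v(Q)$ forces $|t|_v = |Q(0)|_v\le 1$ (otherwise $|Q(0)|_v = |t|_v>1 = \max\{1,|t/t|_v\}$ is not right — rather: if $|t|_v>1$ then $g_v(0)>0$, contradicting that $0$ is preperiodic, so $|t|_v\le 1$). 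Hence $K_v(Q)\subseteq \overline{\mathbb D}(0,1)$, and in particular every periodic point satisfies $|z_j|_v\le 1$. Consequently $|\lambda|_v = |d|_v^{\,k}\prod_j |z_j|_v^{\,d-1} \le |d|_v^{\,k} \le 1$ at every non-Archimedean $v$. When the residual characteristic is prime to $d$ we have $|d|_v = 1$, so $|\lambda|_v\le 1$ automatically; when it divides $d$ we have $|d|_v<1$, giving $|\lambda|_v<1$ (since $\lambda\ne0$ and $k\ge1$) — this already settles the first bullet completely.

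It remains, for the second bullet, to promote $|\lambda|_v\le 1$ to $|\lambda|_v=1$ when $\mathrm{char}(\tilde\KK_v)\nmid d$; equivalently, to rule out $|z_j|_v<1$ for some $j$. The key observation is that $Q$ has \emph{good reduction} at such $v$: since $|t|_v\le 1$, the reduction $\tilde Q(z) = z^d + \tilde t$ over the residue field still has degree $d$ (the leading coefficient $1$ is a unit) and is separable precisely because $\gcd(d,\mathrm{char})=1$ makes $\tilde Q'(z)=d\,z^{d-1}\not\equiv 0$; so reduction commutes with iteration and with taking derivatives along the cycle. If some $|z_j|_v<1$ then the reduced cycle passes through $\tilde z_j = 0$, i.e. the critical point of $\tilde Q$; but a superattracting fixed point of the reduction would force the full orbit to have multiplier reducing to $0$, and I claim this is incompatible with $\lambda$ being a unit's worth — more carefully, I would show that $\tilde z_0,\dots,\tilde z_{k-1}$ is a genuine periodic cycle of $\tilde Q$ of the same length, and that if the critical value behaviour were degenerate the cycle would collapse; the cleanest route is: the reduced multiplier is $\tilde\lambda = \prod_j \tilde Q'(\tilde z_j) = d^k \prod_j \tilde z_j^{\,d-1}$, and separability of $\tilde Q^{\circ k}$ at a point of an honest cycle forces $\tilde\lambda\ne 0$ (a separable map has nonzero derivative at fixed points of iterates lying in the smooth locus), i.e. all $\tilde z_j\ne 0$, i.e. $|z_j|_v = 1$ for all $j$, whence $|\lambda|_v = |d|_v^k = 1$. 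The main obstacle I anticipate is making the ``good reduction $\Rightarrow$ nonzero reduced multiplier'' step airtight: one must check that the periodic points $z_j$ themselves lie in the ring of $v$-integers (so that reduction is defined on them), that distinct cycle points stay distinct mod $v$ (length is preserved), and that the formal/scheme-theoretic derivative of $Q^{\circ k}$ reduces correctly — all of which follow from $|t|_v\le 1$ and separability, but need to be spelled out, perhaps by invoking the standard theory of good reduction of dynamical systems (e.g. Silverman) rather than reproving it.
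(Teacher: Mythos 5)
Your treatment of the easy half is correct and matches the paper: $|t|_v\le 1$ forces periodic points into the closed unit ball, giving $|\lambda|_v = |d|_v^{k}\prod_j|z_j|_v^{d-1}\le |d|_v^k$, which settles the case $p\mid d$ immediately. The difficulty is entirely in the case $p\nmid d$, and there your argument has a genuine gap.

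The claim that ``separability of $\tilde Q^{\circ k}$ at a point of an honest cycle forces $\tilde\lambda\ne 0$'' is false. Separability of a polynomial means its derivative is not identically zero; it does not prevent the derivative from vanishing at particular points, and in particular a separable polynomial can perfectly well have a superattracting cycle. Concretely, take $\tilde Q(z)=z^2-1$ over $\mathbb{F}_p$ with $p$ odd: this is separable ($\tilde Q'=2z\not\equiv 0$), yet the cycle $\{0,-1\}$ has multiplier $\tilde Q'(0)\,\tilde Q'(-1)=0$. So ``good reduction plus separability'' cannot by itself rule out the reduced cycle passing through the critical point $0$; some further input is needed, and indeed you never seriously invoke post-critical finiteness in this half of the argument beyond the bound $|t|_v\le 1$. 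But PCF-ness is exactly the hypothesis that closes the gap. The paper argues directly in the Berkovich disk: since $p\nmid d$, $Q$ permutes the residue classes $B(z)=\{|w-z|<1\}$, and $B(0)$ is either strictly preperiodic among these balls --- in which case no periodic orbit meets it --- or $B(0)$ is a periodic ball, in which case $Q^{l}|_{B(0)}$ is a degree-$d$ proper self-map of the open unit ball and all of $B(0)$ lies in the attracting basin of its unique periodic orbit. Because the critical point $0\in B(0)$ has a \emph{finite} orbit (this is where PCF enters) it must actually land on that attracting orbit, i.e.\ $0$ is periodic, and the unique periodic orbit meeting $B(0)$ therefore has multiplier $0$. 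Since $\lambda\ne 0$, the orbit of $w$ is disjoint from it, hence never enters $B(0)$, so $|Q^i(w)|_v=1$ for all $i$ and $|\lambda|_v = |d|_v^{k}\prod|Q^i(w)|_v^{d-1}=1$. To repair your argument you would need to supply precisely this use of the PCF hypothesis (or an equivalent good-reduction statement that \emph{uses} the finiteness of the critical orbit, not just $|t|_v\le 1$ and separability); your stated route to $\tilde\lambda\ne 0$ does not go through.
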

\begin{proof}
Since $Q$ is post-critically finite, $t$ satisfies a polynomial equation with integral coefficients
hence belongs to a number field. Its periodic point are solutions of a polynomial of the form $Q^n(z)-z$  so that 
the periodic points of $Q$ and their multipliers also belong to a number field. We may thus fix a number field containing $t$, $\lambda$, and
 fix a place $v$ of $\KK$ of residual characteristic $p\ge 2$.
 
Observe that the completion of the algebraic closure of the completion of $\KK$ with respect to the norm induced by $v$
is a complete algebraically closed normed field isometric to the $p$-adic field $\C_p$.
We consider the action of $Q$ on the Berkovich analytification of the affine plane over that field.
To simplify notation we denote by $|\cdot|$ the norm on $\C_p$.

Suppose that  $|t|>1$. Then we have $ |Q(0)| = |t| > 1$, and thus $|Q^n(0)| = |Q^{n-1}(0)|^d = |t|^{d^n}\to \infty$ by an immediate induction.
This would imply the critical point to have an infinite orbit contradicting our assumption that $Q$ is post-critically finite.

We thus have $|t|\le1$. This implies that any point having a bounded orbit lies in the closed unit ball $\{z, |z| \le 1\}$.
Indeed the same induction as before yields  $|Q(z)| = |z|^d$ and  $|Q^n(z)| = |Q^{n-1}(z)|^d = |z|^{d^n}\to \infty$ for any $|z|>1$.

Pick any periodic point $w$ of period $k$ with multiplier $\lambda = (Q^k)'(w)\neq 0$. Observe that $Q'(z) = dz^{d-1}$.

Suppose first that $p$ divides $d$ so that $|d| <1$.
Since $|Q^j(w)|\le 1$ for all $j\ge0$ by what precedes, we have
\[|\lambda|=\prod_{j=0}^{k-1}|Q'(Q^j(w))| \le |d|^k <1~.\]
Suppose now that $p$ is prime to $d$, hence $|d|=1$. 
Observe that one has $Q(B(z)) = B(Q(z))$ for any $|z|\le 1$ where  $B(z) = \{w, \, |z-w|<1\}$ is the open ball of center $z$ and radius $1$. Since the critical point $0$ has a finite orbit,  two situations may arise. 

Either $B(0)$ is strictly preperiodic, and thus cannot contain any periodic orbit.
Or $B(0)$ is periodic, and is contained in the basin of attraction of some attracting periodic orbit. 
Since $0$ has a finite orbit, it has to be periodic. 
In both cases this implies the orbit of $w$ to be included in the annulus $\{|z|=1\}$.

We thus have
\[|\lambda| = \prod_{i=0}^{k-1} |Q'(Q^i(w))|=\prod_{i=0}^{k-1} |d\, (Q^i(w))^{d-1}| =  1~~,\]
which concludes the proof.
\end{proof}

\begin{proof}[Proof of Proposition~\ref{prop:estimate-mult}]
Since $C$ contains infinitely many post-critically finite polynomials we may assume it is defined over a number field $\KK$.
Let $\hat{C}$ be the normalization of the completion of $C$ in $\overline{\poly_3}$. 
Pick any branch $\mathfrak{c}$ of $C$ at infinity (i.e. a point in $\hat{C}$ which projects to the line at infinity in $\overline{\poly_3}$).
By Proposition~\ref{prop:defineL} we may choose an isomorphism of complete local rings $\widehat{\mathcal{O}_{\hat{C},\mathfrak{c}}}\simeq \mathbb{L}[[t]]$
such that $c(\mathsf{n}(t)), a(\mathsf{n}(t))$ are adelic series, i.e. formal Laurent series with coefficients in $\mathcal{O}_{\mathbb{L},S}((t))$ that are analytic at all places.

In the remaining of the proof, we fix an Archimedean place, and embed $\mathbb{L}$ into the field of complex numbers (endowed with its standard norm). 
We may suppose  $c(\mathsf{n}(t)), a(\mathsf{n}(t))$  are holomorphic in $0 < |t| < \epsilon$ for some $\epsilon$, and meromorphic at $0$. 
We get a one-parameter family of cubic polynomials $P_t:= P_{c(\mathsf{n}(t)), a(\mathsf{n}(t))}$ parameterized by the punctured disk $\D^*_\epsilon= \{0<|t|<\epsilon\}$.

Consider the subvariety $Z:= \{(z,t), \, P^m_t(z)=z\}\subset \C \times \D^*_\epsilon$. The projection map $Z \to \D_\epsilon^*$ is a finite cover which is unramified if $\epsilon$ is chosen small enough. By reducing $\epsilon$ if necessary, and replacing $t$ by $t^N$, we may thus assume that 
$Z\to \D^*_\epsilon$ is a trivial cover. In other words, there exists a meromorphic function $t\mapsto p(t)$ such that $P^m_t(p(t)) = p(t)$ and $(P^m_t)'(p(t)) = \lambda$.

\smallskip

As in Section~\ref{sec:curves in P3}, we denote by $\mathsf{P}(z)\in \C((t))[z]$ the cubic polynomial induced by the family $P_t$. 
It induces a continuous map on the analytification $\A^{1,\an}_{\C((t))}$, for which the point 
$\mathsf{p}\in \A^1(\C((t)))$ corresponding to $p(t)$ is periodic of period $m$ with multiplier
$(\mathsf{P}^m)'(\mathsf{p}) = \lambda$.
Observe that $\mathsf{P}$ has two critical points $\mathsf{c}_0$ and $\mathsf{c}_1$
 corresponding to the meromorphic functions $0$ and $c(\mathsf{n}(t))$ respectively.

\begin{lemma}
If $\mathsf{c}_0$ is not pre-periodic for $\mathsf{P}$, then 
$|\mathsf{P}^q(\mathsf{c}_0)|_t$ tends to infinity when $q\to \infty$.
\end{lemma}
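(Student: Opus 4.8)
The plan is to work entirely over the non-Archimedean field $\C((t))$ equipped with the $t$-adic norm, and to exploit the dynamics of $\mathsf{P}$ on the Berkovich analytic line $\A^{1,\an}_{\C((t))}$. Since $c(\mathsf{n}(t))=t^{-n}$ with $n\ge1$, the coefficients of $\mathsf{P}$ have poles at $t=0$, so $|\mathsf{P}|$ is ``large'' near the Gauss point, and intuitively the filled Julia set of $\mathsf{P}$ is small; the claim is that the only way for $\mathsf{c}_0$ to have a bounded orbit is for it to be pre-periodic. This is precisely the dichotomy that appears in the proof of Proposition~\ref{prop:branch}: for each $q$ we set $e_q:=|\mathsf{P}^q(\mathsf{c}_0)|_t$, and either $\{e_q\}_q$ is bounded or $e_q\to\infty$ exponentially fast. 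So the content of the lemma is that the bounded case forces pre-periodicity of $\mathsf{c}_0$.

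The key input is Kiwi's work~\cite{Kiwi:cubic} on cubic polynomials over a non-Archimedean field of residual characteristic zero, which is cited in the introduction as playing a central role here. First I would invoke the description of the filled Julia set $K(\mathsf{P})$ (equivalently, the set of points with bounded orbit) in terms of the reduction/Newton-polygon data of $\mathsf{P}$: when a critical point of a cubic polynomial over such a field has bounded orbit, Kiwi's analysis shows the critical orbit lands in a controlled piece of the Julia set, and in particular — because the only source of instability here is the single pole of order $n$ in the coefficients — the dynamics on the relevant component is post-critically finite in a strong sense, forcing $\mathsf{c}_0$ to be (pre)periodic. Concretely, I would argue: if $\{e_q\}_q$ is bounded, then $\mathsf{c}_0\in K(\mathsf{P})$; Kiwi's classification of the possible types of cubic polynomials over $\C((t))$ then shows that a critical point in the filled Julia set whose forward orbit never escapes must have finite forward orbit (this uses that the residual characteristic is zero, so there are no wild ramification phenomena, and that a cubic polynomial has at most two critical points). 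Hence $\mathsf{c}_0$ is pre-periodic, contradicting the hypothesis; therefore $e_q\to\infty$.

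The alternative, more self-contained route — which I would pursue if the precise citation to Kiwi is awkward — is to translate everything back to the holomorphic family $P_t$ over the punctured disk $\D^*_\epsilon$: $\mathsf{c}_0$ pre-periodic for $\mathsf{P}$ is equivalent to $c_0=0$ being persistently pre-periodic on the branch $\mathfrak{c}$, i.e. $P^{q+r}_t(0)=P^q_t(0)$ identically in $t$ for some $q,r$. If this fails, then for every $q$ the function $t\mapsto P^q_t(0)$ is a nonzero meromorphic function, and one shows $\mathrm{ord}_{t=0}P^q_t(0)\to-\infty$: indeed $P_t$ has a term $a(\mathsf{n}(t))^3$ and a term $-\tfrac{c(\mathsf{n}(t))}{2}z^2=-\tfrac{1}{2t^n}z^2$, and a Newton-polygon / dominant-term bookkeeping on $P^{q}_t(0)$ shows that once the orbit point has a pole, the cubing in $P_t$ triples the pole order at each further step (up to bounded corrections from the lower-order terms), so $\log e_q$ grows like $3^q$ times a positive constant unless the orbit collapses to a pre-periodic pattern. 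The main obstacle is making this last step rigorous: one must rule out miraculous cancellations in the iterates $P^q_t(0)$ that would keep the pole order bounded without the orbit being genuinely pre-periodic, and the cleanest way to do that is exactly to cite Kiwi's structural classification rather than to redo the combinatorics by hand; so I expect the real work to be locating and quoting the correct statement from~\cite{Kiwi:cubic} (phrased for $\C((t))$ via the residual-characteristic-zero case), after which the lemma follows immediately.
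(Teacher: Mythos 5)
Your proposal does not use the standing hypothesis of Proposition~\ref{prop:estimate-mult} --- that $C$ is a \emph{special} curve containing infinitely many post-critically finite parameters --- and this is a genuine gap, because the lemma is false without it. The paper's proof first shows, at the Archimedean place, that $g_0(t):=g_{P_t}(c_0)\to\infty$ as $t\to0$: when $c_1$ is persistently pre-periodic this follows from $G|_C=g_0$ together with properness of $G$ (Proposition~\ref{prop:growth Green}); when it is not, the paper invokes Theorem~\ref{tm:samegreen}(1), the proportionality $s_0\,g_{0,v}=s_1\,g_{1,v}$, which is the output of the equidistribution machinery and is exactly where the special-curve hypothesis enters. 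Proposition~\ref{prop:branch}(2) then gives $g_0(t)=a\log|t|^{-1}+O(1)$ with $a>0$, and the $t$-adic escape $|\mathsf{P}^q(\mathsf{c}_0)|_t\to\infty$ follows by comparing Archimedean growth rates with pole orders. Your argument never makes contact with any of this.

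Your Route~1 asserts that Kiwi's classification forces a critical point of a cubic over $\C((t))$ with bounded orbit to be pre-periodic, but this is not what Kiwi's theorem says, and it is not true for a general cubic with bad reduction. Consider the scenario handled by \cite[Theorem~1.1~(iii)(b)]{Kiwi:cubic}: $\mathsf{c}_1$ escapes and $\mathsf{c}_0$ lands in a closed ball $B$ that is periodic of period $n$ and on which $\mathsf{P}^n$ restricts to a degree-$2$ map. The reduced dynamics on $B$ is then a complex quadratic polynomial, and nothing in the residual-characteristic-zero assumption prevents its critical orbit from being bounded and infinite --- a generic parameter interior to the Mandelbrot set does exactly that --- and this lifts to a bounded infinite orbit for $\mathsf{c}_0$. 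Kiwi's Theorem~1.1~(iii) takes the pre-periodicity of the non-escaping critical point as an \emph{input}; it does not derive it. Your Route~2 acknowledges the cancellation gap honestly, but pointing to Kiwi again does not close it, for the same reason. What actually closes it is the proportionality of the Green functions on the special curve, combined with properness of $G$ at the branch.
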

\begin{proof}
Observe that our assumption is equivalent to the fact that $c_0$ is not persistently pre-periodic on $C$.

We claim that $g_0(t):= g_{P_t}(c_0)$ tends to infinity when $t\to0$.

Suppose first that $c_1$ is persistently pre-periodic on $C$. Then the function $g_1$ is identically zero on $C$, so that 
$G|_C = \max\{g_0, g_1\}|_C = g_0$. Since $G$ is  proper by Proposition~\ref{prop:growth Green}, and $(c(\mathsf{n}(t)), a(\mathsf{n}(t)))$ tends to infinity in $\overline{\poly_3}$ when $t\to 0$,
 we conclude that $g_0(t) \to \infty$.

When $c_1$ is not persistently pre-periodic on $C$, the two functions $g_0(t)$ and $g_1(t) := g_{P_t}(c_1)$ 
are proportional on $\mathfrak{c}$  by Theorem~\ref{tm:samegreen} (1). As before $\max\{g_0, g_1\} \to \infty$ as $t\to 0$
so that again $g_0(t) \to \infty$. 

\smallskip

By Proposition~\ref{prop:branch},  we can find $a >0$ such that 
$g_0(P_t) = a \log |t|^{-1} + O(1)$. And~\cite[Lemma 6.4]{favredujardin}
 implies\footnote{observe that the statement of the lemma is incorrectly stated in~\cite{favredujardin}, and the constant $C$ is  actually \emph{independent} on $P$.} 
 the existence of a constant $C>0$ such that  
 $g_{P_t}(z) \le \log \max \{ |z| , |c(\mathsf{n}(t))|, |a(\mathsf{n}(t))| \} + C$  for all $t$.
Since $g_{P_t} \circ P_t = 3 g_{P_t}$, we conclude that for all  $q\ge1$
\[\log \max \{ |P^q_t(c_0)| , |c(\mathsf{n}(t))|, |a(\mathsf{n}(t))| \} \ge 3^q g_t(0) - C=  3^q a \log |t|^{-1} + O(1)~.\]
This implies $|P^q_t(c_0)|_t \ge 3^q a |t|_t \to \infty$ when $q\to\infty$ as required.
\end{proof}

We continue the proof of Proposition~\ref{prop:estimate-mult}.
Suppose neither $c_0$ nor $c_1$ is persistently pre-periodic so that the previous lemma applies to both critical points. 
Translating its conclusion over the non-Archimedean field $\C((t))$, we get that $\mathsf{P}^q(\mathsf{c}_0)$ and 
$\mathsf{P}^q(\mathsf{c}_1)$ both tend to infinity when $q\to \infty$.
We may thus  apply~\cite[Theorem 1.1 (ii)]{Kiwi:cubic}, and \cite[Corollary 1.4]{Kiwi:cubic} (which is directly inspired
from a result of Bezivin). We conclude that all periodic cycles of $\mathsf{P}$ are repelling 
so that $|(\mathsf{P}^m)'(\mathsf{p})|_t >1$. This contradicts $|\lambda|_t =1$.

\smallskip

Suppose next that $c_0$ is persistently pre-periodic (which implies $c_1$ not to be persistently pre-periodic). 
Then $\mathsf{c}_0$ is pre-periodic whereas $\mathsf{c}_1$ escapes to infinity by the previous lemma.
Observe that if  $\mathsf{c}_0$ is eventually mapped to a point in the Julia set of $\mathsf{P}$, then \cite[Theorem 1.1 (iii) (a)]{Kiwi:cubic} combined with \cite[Corollary 1.4]{Kiwi:cubic}
implies that all cycles of $\mathsf{P}$ are repelling which gives a contradiction.

We can thus apply~\cite[Theorem 1.1 (iii) (b)]{Kiwi:cubic} to $\mathsf{P}$, and
the preperiodic critical point $\mathsf{c}_0 (=0)$ is contained in a closed ball $B= \{\mathsf{z}\in \C((t)),\, |\mathsf{z}|_t \le r\}$ for some positive $r>0$ that is periodic of exact period $n$. 
Since $B$ is fixed by the polynomial $\mathsf{P}^n (z)= \sum_{j\ge 2} b_j z^j$ with coefficients $b_j \in\C((t))$, 
the radius $r$ satisfies an equation of the form $|b_j| r^j = r$ for some $j$ hence
$r = |t|_t^l$ for some $l\in \Q$.
To simplify the discussion to follow we do a suitable base change $t \to t^N$, and we conjugate $\mathsf{P}$ by the automorphism $\mathsf{z} \mapsto t^{-l} \mathsf{z}$ so that $B$ becomes the closed unit ball. 
Observe that $0$ remains a critical point of $\mathsf{P}$ after this conjugacy.
 
Recall that the closed unit ball $B$ defines the Gauss point  $\mathsf{x}_g\in \A^{1,\an}_{\C((t))}$ for which we have
\[|\mathsf{Q} (\mathsf{x}_g)| := \sup_{z\in B} |\mathsf{Q}(z)|_t = \max |q_i| \text{ for all } \mathsf{Q} = \sum q_i z^i \in \C((t))[z]~.\]
Since $B$ is fixed by $\mathsf{P}^n$, it follows that $\mathsf{x}_g$ is also fixed by $\mathsf{P}^n$.
This is equivalent to say that $\mathsf{P}^n$ can be written as  $\mathsf{P}^n(z)= \sum_{i=1}^{3^n} a_i z^i$ where $\max |a_i| = 1$.

\smallskip

For any $\mathsf{z}\in \C((t))$ of norm $1$, denote by $\tilde{z}$ the unique complex number such that $|\mathsf{z} - \tilde{z}|_t <1$.

\begin{lemma}\label{lem:reduc}
We have $a_1 =0$, $|a_0| \le |a_2| = 1$, and $|a_i| <1$ for all $i\ge3$; and the complex quadratic polynomial
$\tilde{P}(z):= \widetilde{a_2} z^2 + \widetilde{a_0}$ has a preperiodic critical orbit.
\end{lemma}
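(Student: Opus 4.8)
The plan is to reduce the cubic polynomial $\mathsf{P}^n$ modulo the maximal ideal of the valuation ring $\C[[t]]$ of $\C((t))$ and to read off all three assertions from the shape of this reduction. Write $\mathsf{P}^n(z)=\sum_{i\ge0}a_iz^i$. Two facts are immediate. First, $0$ is a critical point of $\mathsf{P}$, so by the chain rule $(\mathsf{P}^n)'(0)=\prod_{j=0}^{n-1}\mathsf{P}'(\mathsf{P}^j(0))=0$, and since $(\mathsf{P}^n)'(0)=a_1$ we get $a_1=0$. Second, $a_0=\mathsf{P}^n(0)$ lies in $B$, which is the closed unit ball, so $|a_0|_t\le1$; and since the Gauss point $\mathsf{x}_g$ of $B$ is fixed by $\mathsf{P}^n$ we also have $\max_{i\ge1}|a_i|_t=1$. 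Everything then reduces to showing that the \emph{reduction} $\bar{\mathsf{P}}^n(z):=\sum_{|a_i|_t=1}\widetilde{a_i}\,z^i\in\C[z]$ has degree exactly $2$: this gives at once $|a_2|_t=1$ and $|a_i|_t<1$ for $i\ge3$, whence also $|a_0|_t\le1=|a_2|_t$.

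To compute $\deg\bar{\mathsf{P}}^n$ I would work with local degrees at Berkovich points. The ball $B=B_0$ belongs to a cycle $B_0\to B_1\to\cdots\to B_{n-1}\to B_0$ of closed balls of exact period $n$, which are pairwise disjoint (two distinct balls in such a cycle are disjoint). The local degree of $\mathsf{P}^n$ at $\mathsf{x}_g$ equals $\deg\bar{\mathsf{P}}^n$ on one hand, and by multiplicativity it equals $\prod_{i=0}^{n-1}d_i$ where $d_i$ is the local degree of $\mathsf{P}\colon B_i\to B_{i+1}$ on the other. For a polynomial restricted to a closed ball one has $d_i-1=\#\{\text{critical points of }\mathsf{P}\text{ in }B_i\text{, with multiplicity}\}$ (Newton polygon, or Rivera--Letelier's theory as used in~\cite{Kiwi:cubic}), so $\deg\bar{\mathsf{P}}^n-1$ equals the number of critical points of $\mathsf{P}$ inside $B_0\cup\cdots\cup B_{n-1}$. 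Since $\mathsf{P}$ is cubic with critical points $0$ and $\mathsf{c}_1$, with $0\in B_0$ and $\mathsf{c}_1$ having an unbounded forward orbit (hence lying in no periodic ball), this count is $1$, so $\deg\bar{\mathsf{P}}^n=2$. This bookkeeping with cycles of balls and their local degrees is the technical heart of the argument; everything else is formal.

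Finally, combining $a_1=0$ with the degree computation gives $\bar{\mathsf{P}}^n(z)=\widetilde{a_2}z^2+\widetilde{a_0}=\tilde P(z)$, and $\widetilde{a_2}\neq0$ so this is a genuine quadratic polynomial, with critical point $0$. The reduction map $\{|\cdot|_t\le1\}\to\C$, $\mathsf{z}\mapsto\tilde{\mathsf{z}}$, is a ring homomorphism, so for $|\mathsf{z}|_t\le1$ one has $\widetilde{\mathsf{P}^n(\mathsf{z})}=\sum_i\widetilde{a_i}\,\tilde{\mathsf{z}}^i=\tilde P(\tilde{\mathsf{z}})$; thus $\mathsf{P}^n|_B$ is semiconjugate to $\tilde P$ through reduction. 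Now $\mathsf{c}_0=0$ is preperiodic for $\mathsf{P}$, so the set $\{\mathsf{P}^{nk}(0):k\ge0\}$ is finite, and it is contained in $B$ because $B$ is $\mathsf{P}^n$-invariant and $0\in B$; applying reduction, $\{\tilde P^{k}(0):k\ge0\}$ is the image of this finite set and is therefore finite. That is exactly the statement that $\tilde P$ has a preperiodic critical orbit, completing the proof.
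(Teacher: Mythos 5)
Your proof is correct and follows essentially the same route as the paper: both compute the local degree of $\mathsf{P}^n$ at the Gauss point of $B$ by counting critical points, use the fact that $0$ is the only critical point landing in the periodic cycle of balls (since $\mathsf{c}_1$ escapes and the periodic balls are pairwise disjoint), and then read off the quadratic reduction $\tilde P$. The only blemish is the intermediate assertion that $\deg\bar{\mathsf{P}}^n-1$ equals the \emph{total} number of critical points of $\mathsf{P}$ in $B_0\cup\cdots\cup B_{n-1}$: since the local degree is multiplicative along the cycle, the correct general identity is $\deg\bar{\mathsf{P}}^n=\prod_i d_i$ with $d_i-1$ the critical count in $B_i$, and the additive form you wrote only agrees with this when at most one $d_i>1$; that happens to be the case here (exactly one critical point, namely $0\in B_0$), so your conclusion $\deg\bar{\mathsf{P}}^n=2$ is valid, but the formula as stated would fail if, say, two distinct $B_i$'s each contained a critical point. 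The paper sidesteps this by counting critical points of $\mathsf{P}^n$ directly in the single ball $B$, which gives $d-1$ without any product.
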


\begin{lemma}\label{lem:orbit-p}
The orbit of the periodic point $\mathsf{p}$ intersects the ball $B$.
\end{lemma}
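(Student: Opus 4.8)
The plan is to prove the stronger statement that the orbit $O$ of $\mathsf{p}$ lies entirely in the cyclic family of balls $\mathcal{B}:=\bigcup_{i=0}^{n-1}\mathsf{P}^i(B)$. This family is forward invariant, indeed $\mathsf{P}(\mathcal{B})=\mathcal{B}$ since $\mathsf{P}^n(B)=B$, and its $n$ members $\mathsf{P}^i(B)$ are pairwise disjoint: two distinct of them are nested or disjoint, and nesting would contradict the exact period $n$ of $B$. Granting $O\subset\mathcal{B}$, pick $j$ and $i\in\{0,\dots,n-1\}$ with $\mathsf{P}^j(\mathsf{p})\in\mathsf{P}^i(B)$; then $\mathsf{P}^{j+n-i}(\mathsf{p})=\mathsf{P}^{n-i}\big(\mathsf{P}^j(\mathsf{p})\big)\in\mathsf{P}^n(B)=B$ is a point of $O$ lying in $B$, as wanted.

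Two observations get the argument started. First, $\lambda\in\C^*$ is a \emph{nonzero constant} of $\C((t))$, hence $|\lambda|_t=1$: the cycle $O$ is \emph{indifferent}, in particular not repelling and contained in the filled Julia set of $\mathsf{P}$. Second, since $\lambda$ is the multiplier of a cycle of a parameter on $C$ and $C$ carries infinitely many post-critically finite parameters, one has $|\lambda|>1$ for the complex absolute value (otherwise, as recalled earlier, $\per_m(\lambda)$ would contain no post-critically finite parameter); so $\lambda$ is not a root of unity, $\lambda^j-1$ is a nonzero constant for every $j\ge1$, there are no small divisors, and $\mathsf{P}^m$ is analytically linearizable at $\mathsf{p}$ in a maximal ball $\mathcal{S}\ni\mathsf{p}$ fixed by $\mathsf{P}^m$, on whose $\mathsf{P}$-orbit (a bounded cyclic family of closed balls) some iterate of $\mathsf{P}$ is injective.

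The heart of the proof is that this indifferent cycle is trapped inside $\mathcal{B}$. This is where the detailed structure of case (iii)(b) of~\cite[Theorem~1.1]{Kiwi:cubic} enters: there $\mathsf{P}$ is ``renormalizable'', the renormalization being the post-critically finite complex quadratic $\tilde{P}$ of Lemma~\ref{lem:reduc} acting on the ball cycle $\mathcal{B}$, and — the only non-escaping critical point $\mathsf{c}_0$ being captured by $\mathcal{B}$ while $\mathsf{c}_1$ escapes to infinity — every non-repelling cycle of $\mathsf{P}$ must lie in $\mathcal{B}$. (Heuristically: outside $B$ one has $|\mathsf{P}^n(z)|_t\ge|z|_t^2$ off finitely many spheres, and on the peripheral balls of the complement of $\mathcal{B}$ the map $\mathsf{P}$ acts with local degree one, so an indifferent cycle with a constant multiplier sitting there would have to be reconciled with $\mathsf{P}^n$ being genuinely two-to-one on the central member $B$; making this dichotomy precise is exactly what Kiwi's classification does. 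In linearization language: the maximal linearization ball $\mathcal{S}$ of $\mathsf{p}$ must be reconciled with $\mathsf{c}_0$'s trapping cycle $\mathcal{B}$, and since $\mathsf{P}^n$ is injective on no ball meeting $\mathcal{S}$ but two-to-one on $B$, one gets $\mathsf{P}^{j_0}(\mathcal{S})\subseteq\mathsf{P}^i(B)$ for suitable $i,j_0$.) Since $O$ is indifferent, $O\subset\mathcal{B}$, and we conclude as in the first paragraph.

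The main obstacle is precisely this last step: extracting from Kiwi's classification of case (iii)(b) that the non-repelling dynamics of $\mathsf{P}$ cannot leave the renormalization ball cycle $\mathcal{B}$ — equivalently, ruling out that $O$ is an indifferent cycle hidden in a peripheral ball (mapping injectively into $\mathcal{B}$) or in the basin of infinity. The cleanest formulation is to quote directly from~\cite{Kiwi:cubic} that in case (iii)(b) every cycle of $\mathsf{P}$ not contained in $\mathcal{B}$ is repelling, after which $O\subset\mathcal{B}$ is immediate from the indifference of $O$.
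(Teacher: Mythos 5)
Your first two observations are sound and match the paper's starting point: since $\lambda$ is a nonzero complex constant, $|\lambda|_t=1$, so the cycle through $\mathsf{p}$ is indifferent and a small ball $U$ around $\mathsf{p}$ lies in the filled-in Julia set of $\mathsf{P}$. From that point on, however, your argument has a genuine gap that you yourself flag as ``the main obstacle.'' You want the statement that in case (iii)(b) of Kiwi every non-repelling cycle lies in the ball cycle $\mathcal{B}=\bigcup_i\mathsf{P}^i(B)$, but you do not actually have a reference for it and the heuristics you offer (injectivity on peripheral balls, linearization domains, renormalization) do not close the argument. The paper uses a sharper and simpler tool: \cite[Corollary 4.8]{Kiwi:cubic} asserts that any ball contained in the filled-in Julia set is eventually mapped into $B$; applied to the small ball $U\ni\mathsf{p}$, this immediately puts some forward iterate of $\mathsf{p}$ in $B$, which is all the lemma requires. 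There is no need to prove your stronger claim $O\subset\mathcal{B}$, nor to establish a dichotomy about cycles outside $\mathcal{B}$.

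Several of your side remarks, while individually correct, do not bear on this lemma and risk obscuring the argument. The fact that $|\lambda|>1$ for the complex absolute value (ruling out $|\lambda|\leq 1$) belongs to an earlier reduction in the proof of Proposition~\ref{prop:estimate-mult}, not here; and the discussion of small divisors, linearization balls, and the maximal linearization domain $\mathcal{S}$ is irrelevant in this non-Archimedean setting where the needed fact is the much softer statement that a ball in the filled-in Julia set cannot wander. In short: same overall direction (indifference plus Kiwi's classification forces the orbit near the captured critical point), but you lack the precise citation that makes the argument complete, so as written the proof does not go through.
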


Replacing $\mathsf{p}$ by its image by a suitable iterate of $\mathsf{P}$ we may suppose that it belongs to $B$, i.e. $|\mathsf{p}|_t \le1$.
In fact we have $|\mathsf{P}^i(\mathsf{p})|_t =1$ for all $i\ge0$. 

Indeed if it were not the case, then the open unit ball would be periodic. Since it contains a critical point, it would be contained in the basin of attraction of an attracting periodic orbit which yields a contradiction. 

Observe also that the period of $\mathsf{p}$ is necessarily a multiple of $n$, say $nk$ with $k\ge1$. 

To render the computation of the multiplier of $\mathsf{p}$
easier, we conjugate $\mathsf{P}^n$ by  $z \mapsto a_2 z$. Since $|a_2|=1$, we still have $|\mathsf{p}|_t =1$, and the equality
$a_2 =1$ is now satisfied. 

By Lemma~\ref{lem:reduc}, we get $\sup_B |\mathsf{Q}| <1$ with $\mathsf{Q}:= \mathsf{P}^{n} - \widetilde{P}$, so that
\[(\mathsf{P}^{nk})'(\mathsf{p}) =  \prod_{i=0}^{k-1}(\mathsf{P}^{n})'(\mathsf{P}^{ni}(\mathsf{p}))=
(\widetilde{P}^{k})'(\widetilde{p})~.\]
But the multiplier of $\mathsf{p}$ is equal to $\lambda \in \C$. Hence it is equal to the multiplier of a repel\-ling periodic orbit of some
quadratic polynomial (namely $\tilde{P}$) having a preperiodic critical orbit, as was to be shown.
\end{proof}

\begin{proof}[Proof of Lemma~\ref{lem:reduc}]
The point $0$ is critical for $\mathsf{P}$ hence $a_1 =0$. 

Since the Gauss point is fixed by  $\mathsf{P}^n$, we have $\max_{i\ge 2} |a_i| =1$.
Let $d\ge2$ be the maximum over all integers $i$ such that $|a_i|= 1$.  The number of critical points
of $\mathsf{P}^n$ lying in the closed unit ball (counted with multiplicity) is precisely equal to $d-1$.

Since the exact period of $\mathsf{x}_g$ is $n$, and the other point escapes to infinity, the ball 
$B$ contains a unique critical point of $\mathsf{P}^n$ namely $0$. It follows that $d =2$, and $|a_2| =1 > \max_{i\ge3} |a_i|$.

Finally  $0$ is preperiodic by $\mathsf{P}^n$, hence the complex quadratic polynomial
$\widetilde{P^n}$ has a preperiodic critical orbit.
\end{proof}

\begin{proof}[Proof of Lemma~\ref{lem:orbit-p}]
Since the multiplier of $\mathsf{p}$ is $\lambda \in \C$, its $t$-adic norm is $1$, hence a small ball $U$ centered at $\mathsf{p}$ of positive radius is included in the filled-in Julia set of $\mathsf{P}$. By~\cite[Corollary 4.8]{Kiwi:cubic}, $U$ is eventually mapped into $B$, hence the claim.
\end{proof}

\section{A polynomial on a special curve admits a symmetry}\label{sec:symmetry}

We fix $\KK$ a number field, and $s_0, s_1$ two positive integers such that $s_0$ and $s_1$ are coprime.
We shall say that a cubic polynomial $P:= P_{c,a}$ with $c,a$ in a finite extension $\LL$ of $\KK$ satisfies the condition $(\mathfrak{P})$\label{def:dag} if 
the following holds: 
\begin{itemize}
\item[($\mathfrak{P}1$)]
For any place $v$ of $\LL$, we have $s_0g_{P,v}(c_0) = s_1g_{P,v}(c_1)$.
\item[($\mathfrak{P}2$)]
Given any place $v$ of $\LL$, if $\min\{g_{P,v}(P^n(c_0)), g_{P,v}(P^n(c_1))\} > G_v(P) + \tau_v$ for some integer $n\ge1$, then 
$$\frac{\varphi_{P,v}(P^n(c_0))^{s_0}}{\varphi_{P,v}(P^n(c_1))^{s_1}}$$ is a root of unity lying in $\KK$.
\end{itemize}

\smallskip

Recall the definition of the constant $\tau_v := \tau(\LL_v)$ from Proposition~\ref{prop:green is green}.

Observe that if the condition in $(\mathfrak{P}2)$ never occurs, then the normalized heights by $P$  
of both sequence of points $P^n(c_0)$ and $P^n(c_1)$ are bounded, hence $P$ is post-critically finite. 
We prove here the following

\begin{theorem}\label{thm:cubic-pol}
Suppose $P = P_{c,a}$ is a cubic polynomial defined over a number field $\LL$ satisfying the assumptions $(\mathfrak{P})$ which is not post-critically finite and such that 
$\min\{g_{P,v}(P^q(c_0)), g_{P,v}(P^q(c_1))\} > G_v(P) + \tau_v$ for some integer $q$ and some place $v$ of $\LL$.

Then there exists a root of unity $\zeta\in \KK$, an integer $q' \le C(\KK,q)$, and an integer $m\ge0$ such that the polynomial $Q(z) := \zeta P^m(z) + (1-\zeta) \frac{c}2$ commutes with 
all iterates $P^k$ such that $\zeta^{3^k} = \zeta$, and either $Q(P^{q'}(c_0)) = P^{q'}(c_1)$, or $Q(P^{q'}(c_1)) = P^{q'}(c_0)$.
\end{theorem}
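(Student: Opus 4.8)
The plan is to turn the relation provided by condition $(\mathfrak{P}2)$ into a formal curve germ in $\p^1\times\p^1$ that is periodic under $(P,P)$, to algebraise it via Xie's theorem, and finally to identify the resulting algebraic curve through the Medvedev--Scanlon classification of invariant subvarieties for $(\phi,\phi)$-dynamics. Fix a place $v$ of $\LL$ at which $\min\{g_{P,v}(P^q(c_0)),g_{P,v}(P^q(c_1))\}>G_v(P)+\tau_v$. For $n\ge q$ and $i\in\{0,1\}$ we have $g_{P,v}(P^{n}(c_i))=3^{\,n-q}g_{P,v}(P^{q}(c_i))>G_v(P)+\tau_v$, so $P^{n}(c_i)$ stays in the domain $U_{c,a}$ of the B\"ottcher coordinate $\varphi_{P,v}$; putting $\zeta_n:=\varphi_{P,v}(P^n(c_0))^{s_0}/\varphi_{P,v}(P^n(c_1))^{s_1}$, the functional equation $\varphi_{P,v}\circ P=\varphi_{P,v}^3$ gives $\zeta_{n+1}=\zeta_n^3$, while $(\mathfrak{P}2)$ says each $\zeta_n$ is a root of unity in $\KK$. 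As $\KK$ contains only finitely many roots of unity, of order bounded solely in terms of $\KK$, there are integers $j_0,N\le C(\KK)$ with $\zeta:=\zeta_{q+j_0}$ satisfying $\zeta^{3^N}=\zeta$; set $q':=q+j_0\le C(\KK,q)$. Since by Lemma~\ref{lm:bottcher1} the Laurent series $\varphi_P(z)=\om\bigl(z-\tfrac{c}{2}\bigr)+\sum_{k\ge1}a_k(c,a)z^{-k}$ has coefficients in $\LL$, the equation $\varphi_P(z)^{s_0}=\zeta\,\varphi_P(w)^{s_1}$ cuts out a formal curve germ $\Gamma$ at $(\infty,\infty)\in\p^1\times\p^1$; because $\gcd(s_0,s_1)=1$ this germ is irreducible, admitting over a finite extension $\LL'$ of $\LL$ a parameterisation $z=\eta^{-s_1}$, $w=\kappa\,\eta^{-s_0}+\cdots$ with $\kappa\ne0$. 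From $\varphi_P\circ P=\varphi_P^3$ one sees that $(P,P)$ carries $\Gamma$ to the germ attached to $\zeta^3$, so that $\Gamma$ is invariant under $f:=(P^N,P^N)$; moreover $(P^{q'}(c_0),P^{q'}(c_1))$ lies on the analytic curve $\{\varphi_{P,v}(z)^{s_0}=\zeta\,\varphi_{P,v}(w)^{s_1}\}$.

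The core of the argument is to prove that $\Gamma$ is algebraic. The map $f=(P^N,P^N)$ extends to an endomorphism of $\p^1\times\p^1$ fixing $(\infty,\infty)$, where its differential vanishes (the point is superattracting); in an affine chart centred at $(\infty,\infty)$ it induces a formal self-map, and I would invoke Xie's algebraisation criterion \cite[Theorem~1.5]{Xie} to conclude that $\Gamma$ is a branch of an algebraic curve. Taking the Zariski closure of the germ produces an irreducible curve $Z\subset\p^1\times\p^1$ defined over $\LL'$, having $\Gamma$ among its branches at $(\infty,\infty)$, and a short argument using that $f$ is finite gives $f(Z)=Z$. I expect this to be the main obstacle: Xie's theorem is designed for formal curves at a \emph{finite} point of the affine plane, so one must justify its application to a germ located at infinity, i.e.\ control $\Gamma$ with respect to the superattracting fixed point $(\infty,\infty)$ of $f$ and rule out any resonance obstruction. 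This replaces the more computational reasoning of \cite[\S5.6]{BD}, which required the curve $C$ to have a single branch at infinity.

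It remains to recognise $Z$ and extract $Q$. Since $P$ is not post-critically finite it is conjugate neither to $z^3$ nor to a Chebyshev polynomial, hence $P^N$ is not of the exceptional type, and \cite[Theorem~6.24]{medvedev-scanlon} (see also \cite[Theorem~4.9]{pakovich}) describes the $f$-invariant irreducible curves: vertical or horizontal fibres through periodic points, or graphs $\{w=g(z)\}$ or $\{z=g(w)\}$ with $g$ a polynomial commuting with $P^N$. As both coordinate projections are non-constant on $\Gamma$, $Z$ is not a fibre; assume $Z=\{w=g(z)\}$ (the symmetric case leads to the second alternative of the statement). By Ritt's theory of commuting polynomials, and because the only M\"obius maps commuting with an iterate of $P$ are the identity and $z\mapsto -z+c$ — as one checks after the change of variables $z\mapsto z+\tfrac{c}{2}$ centring $P$, using that $P$ is not conjugate to $z^3$ — one obtains $g(z)=\zeta_0\,P^m(z)+(1-\zeta_0)\tfrac{c}{2}$ for some integer $m\ge0$ and $\zeta_0\in\{1,-1\}\subset\KK$; in particular $g$ commutes with every iterate of $P$, a fortiori with all $P^k$ such that $\zeta^{3^k}=\zeta$. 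Substituting the parameterisation of $\Gamma$ into $w=g(z)$ and into $\varphi_P(z)^{s_0}=\zeta\,\varphi_P(w)^{s_1}$ and comparing leading coefficients forces $s_1=1$, $s_0=3^m=\deg g$, and $\zeta_0=\zeta^{-1}$; since $\zeta_0=\pm1$ this also gives $\zeta=\zeta^{-1}=\zeta_0$, and the substitution reads $\varphi_P(g(z))=\zeta^{-1}\varphi_P(z)^{s_0}$ as an identity of Laurent series near infinity. Evaluating this identity at $z=P^{q'}(c_0)$ — legitimate because both $P^{q'}(c_0)$ and $g(P^{q'}(c_0))$ lie in the domain of convergence of $\varphi_{P,v}$, the latter since $|P^{q'}(c_0)|_v$ is large — and combining with $\varphi_{P,v}(P^{q'}(c_0))^{s_0}=\zeta\,\varphi_{P,v}(P^{q'}(c_1))$ yields $\varphi_{P,v}(g(P^{q'}(c_0)))=\varphi_{P,v}(P^{q'}(c_1))$. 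Finally the injectivity of $\varphi_{P,v}$ on $U_{c,a}$ (Proposition~\ref{prop:green is green}) gives $g(P^{q'}(c_0))=P^{q'}(c_1)$, that is $Q(P^{q'}(c_0))=P^{q'}(c_1)$ with $Q:=g=\zeta P^m+(1-\zeta)\tfrac{c}{2}$, which is the desired conclusion.
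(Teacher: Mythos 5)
Your overall architecture — cut a formal germ $\Gamma$ out of the B\"ottcher relation $\varphi_P(z)^{s_0}=\zeta\,\varphi_P(w)^{s_1}$, algebraise it, then invoke Medvedev--Scanlon to recognise the resulting invariant curve as a graph and compare B\"ottcher coordinates to extract $Q$ — is essentially the paper's plan, and your last paragraph (identifying $s_1=1$, $s_0=3^m$, reading off $Q$ from the leading terms of $\varphi_P$, evaluating at $P^{q'}(c_0)$ and using injectivity of $\varphi_{P,v}$) runs parallel to the argument in \S\ref{sec:symmetry}. But the algebraisation step is where your proposal breaks, and the breakdown is not the one you flag.

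You propose to apply Xie's criterion to the single formal germ $\Gamma$ \emph{because it is invariant under the super-attracting map $(P^N,P^N)$}, and you worry only about the germ being located at $(\infty,\infty)$ rather than at a finite point. This misreads the theorem. Theorem~\ref{tm:xie} is not a geometric algebraisation statement about invariant formal curves at super-attracting fixed points (such a statement is false: a formal curve invariant under $(z,w)\mapsto(z^d,w^d)$ at the origin need not be algebraic). It is an \emph{arithmetic} statement whose inputs are (i) finitely many \emph{adelic} branches $\mathfrak{s}_1,\dots,\mathfrak{s}_l$, i.e.\ Laurent series with $S$-integral coefficients converging at every place, and (ii) a sequence of distinct \emph{algebraic} points $p_n\in\A^2(\KK)$ such that \emph{at every place} $v$ each $p_n$ either lies in a fixed bounded region or on one of the $Z^v(\mathfrak{s}_i)$. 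Your proposal supplies the forward orbit $p_n=(P^n(c_0),P^n(c_1))$ but never checks the adelic conditions; you fixed a single place $v$ at the outset and never look at the others. And this is not a cosmetic omission: condition $(\mathfrak{P}2)$ gives you, at each place $w$ where the critical orbits escape, that the B\"ottcher ratio is a root of unity in $\KK$, but this root of unity depends on $w$. Your single germ $\Gamma$, attached to the root $\zeta$ seen at the one place $v$, need not contain $p_n$ at the other places. This is exactly why the paper's proof of Theorem~\ref{thm:symmetry} works with the full finite family $\{\mathfrak{c}_\zeta\}_{\zeta\in U_\KK}$ of adelic branches given by Lemma~\ref{lem:rightform}, verifies that each $p_n$ is either bounded or on one of the $Z^v(\mathfrak{c}_\zeta)$ at each place using $(\mathfrak{P}1)$ and $(\mathfrak{P}2)$, and only then invokes Xie. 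The pre-periodicity of the resulting curve under $\phi=(P,P)$ is then deduced from the finiteness of $U_\KK$, not posited a priori.

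A smaller issue: you argue that $\zeta_0\in\{1,-1\}$ because the only M\"obius maps commuting with an iterate of $P$ are the identity and $z\mapsto -z+c$. That claim is not justified in general (the centraliser of $P^N$ in $\mathrm{Aff}$ can a priori be a larger cyclic group for special $P$, e.g.\ unicritical ones), and more importantly it is not needed here: the theorem only asserts $\zeta$ is a root of unity in $\KK$, and the paper derives the form $Q=\zeta P^m+(1-\zeta)\tfrac{c}{2}$ directly from the branch-at-infinity equation and the first-order expansion of $\varphi_P$, with no restriction on $\zeta$ at this stage. The reduction to $\zeta=\pm1$ is a later, separate fact established in Proposition~\ref{curves Z(q,m,zeta)} when classifying which $Z(q,m,\zeta)$ actually contain curves.
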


\begin{remark}
We shall prove along the way that there exists an integer $k\geq1$ with $\zeta^{3^k}=\zeta$ so that the commutativity statement is non empty.
\end{remark}

\subsection{Algebraization of adelic branches at infinity}

The material of this section is taken from~\cite{Xie}. Let $\KK$ be a number field. For any place $v$ on $\KK$, denote by $\KK_v$ the completion of $\KK$ w.r.t. the $v$-adic norm. 

We cover the line at infinity $H_\infty$ of the compactification of the affine space $\A^2_\KK = \textup{Spec}\,\KK[x,y]$ by $\p^2_\KK$ by charts $U_\alpha = \spec \KK[x_\alpha, y_\alpha]$ centered at $\alpha\in H_\infty(\KK)$ such that $\alpha = \{(x_\alpha, y_\alpha) = (0, 0)\}$, $H_\infty\cap U_\alpha =\{x_\alpha =0\}$, and $x_\alpha =1/x$, $y_\alpha =y/x+c$ for some $c\in \KK$ (or $x_\alpha =1/y$, $y_\alpha = x/y$).

\smallskip

Fix $S$ a finite set of places of $\KK$. By definition, an adelic branch $\mathfrak{s}$  at infinity defined over the ring  $\mathcal{O}_{\KK,S}$
is a formal branch based at a point $\alpha\in H_\infty(\KK)$ given in coordinates $x_\alpha, y_\alpha$ as above by a formal Puiseux series 
\[y_\alpha = \sum_{j\geq1} a_j x_\alpha^{j/m} \in \mathcal{O}_{\KK,S}[[x_\alpha^{1/m}]]\]
such that $\sum_{j\geq1} a_j x^{j}$ is an adelic series.

Observe that for any place $v\not\in S$, then the radius of convergence is a least $1$. In the sequel, we set $r_{\mathfrak{s},\alpha,v}$ to be the minimum between $1$ and the radius of convergence over $\KK_v$ of this Puiseux series.
Any adelic branch $\mathfrak{s}$ based at $\alpha$ at infinity thus defines an analytic curve in an (unbounded) open subset of $\A^{2,\an}_v$:
\[Z^v (\mathfrak{s}) := \left\{(x_\alpha,y_\alpha) \in U_\alpha(\KK_v)\, ; \ y_\alpha^{m} = \sum_{j\geq1}a_{j} x_\alpha^{j}~, \ 0 < |x_\alpha|_v< r_{\mathfrak{s},\alpha,v}\right\}~.\]

\begin{theorem}[Xie]\label{tm:xie}
Suppose $\mathfrak{s}_1,\ldots, \mathfrak{s}_l$ are adelic branches at infinity, and let  $\{B_v\}_{v\in M_\KK}$  be a set of positive real numbers such that $B_v = 1$ for all but finitely many places.

Assume that there exists a sequence of distinct points $p_n = (x_n,y_n) \in \A^2(\KK)$ such that for all $n$ and for each place $v\in M_\KK$ then either we have $\max\{|x_n|_v,|y_n|_v\}\leq B_v$ or $p_n\in\cup_{i=1}^lZ^v(\mathfrak{s}_i)$. 

Then there exists an irreducible algebraic curve $Z$ defined over $\KK$ such that any branch of $Z$ at infinity is contained in the set $\{\mathfrak{s}_1,\ldots,\mathfrak{s}_l\}$ and $p_n$ belongs to $Z(\KK)$ for all $n$ large enough.
\end{theorem}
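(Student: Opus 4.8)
The plan is to reduce the statement to the main algebraization criterion of~\cite{Xie}, namely~\cite[Theorem~1.5]{Xie}, whose hypothesis is formulated in terms of a single "adelic" analytic subset of a punctured neighbourhood of the line at infinity together with a height/capacity condition. The first step is to package the finitely many adelic branches $\mathfrak{s}_1,\ldots,\mathfrak{s}_l$ into a single closed analytic subset: for each place $v$, set $\mathfrak{W}_v := \{\max\{|x|_v,|y|_v\}\le B_v\}\cup\bigcup_{i=1}^l Z^v(\mathfrak{s}_i)$, viewed inside $\A^{2,\an}_v$ (a Berkovich space when $v$ is non-Archimedean). Because $B_v = 1$ and $r_{\mathfrak{s}_i,\alpha,v}=1$ for all but finitely many $v$, and because the branches are defined over $\mathcal{O}_{\KK,S}$, the collection $\{\mathfrak{W}_v\}_v$ is genuinely adelic: away from a finite set of places it is exactly the "integral" locus $\{\max\{|x|_v,|y|_v\}\le 1\}$. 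The hypothesis then says precisely that every $p_n$ lies in $\mathfrak{W}_v$ for every $v$, i.e. the $p_n$ form an infinite set of $\KK$-points adelically contained in this family.

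The second step is to verify the numerical hypothesis of Xie's criterion. The branches $\mathfrak{s}_i$ have positive radius of convergence at each place and radius $\ge 1$ at all but finitely many places; hence each analytic curve germ $Z^v(\mathfrak{s}_i)$ at infinity contributes a positive local "width", and the archimedean and finitely many non-archimedean places where $B_v\neq 1$ contribute only a finite correction. One checks that the sum over all places of these contributions is finite, which is exactly the adelic-convergence condition needed to apply~\cite[Theorem~1.5]{Xie}; this is where one uses that an adelic series has $r_v = 1$ for all but finitely many $v$ together with $\sum_v \log B_v < \infty$. Once the hypotheses are in place, Xie's theorem produces an algebraic curve $Z$, defined over $\KK$, containing infinitely many of the $p_n$ and whose branches at infinity are analytically contained in $\bigcup_i Z^v(\mathfrak{s}_i)$ for each $v$.

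The third step is bookkeeping to match the precise conclusion stated here. From "$Z$ contains infinitely many $p_n$" one upgrades to "$p_n\in Z(\KK)$ for all $n$ large": indeed the $p_n$ are distinct, so if infinitely many lie on the irreducible curve $Z$ while infinitely many do not, one produces a second algebraic curve through a cofinite subset of those not on $Z$ — but iterating is not automatic, so instead one argues that any branch at infinity of any curve through infinitely many $p_n$ must coincide (as a formal branch, by unicity of Puiseux expansions) with one of the $\mathfrak{s}_i$, and that there are only finitely many algebraic curves with prescribed branches at infinity of bounded degree; a degree bound for $Z$ comes out of Xie's construction (it depends only on the $\mathfrak{s}_i$ and the $B_v$). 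Hence all but finitely many $p_n$ lie on a single such $Z$, and after discarding finitely many indices we get $p_n\in Z(\KK)$ for all large $n$, with every branch of $Z$ at infinity in $\{\mathfrak{s}_1,\ldots,\mathfrak{s}_l\}$.

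The main obstacle is entirely contained in the second step: correctly translating the concrete data here — the $B_v$, the $r_{\mathfrak{s}_i,\alpha,v}$, the fact that the $\mathfrak{s}_i$ are adelic — into the hypotheses of~\cite[Theorem~1.5]{Xie}, in particular checking that the relevant adelic sum of local contributions converges and that the local analytic sets $\mathfrak{W}_v$ satisfy Xie's regularity requirements at the non-archimedean places where $v\mid 2,3$ or $v\in S$. Everything else is formal: the packaging in step~1 and the finiteness/descent argument in step~3 are routine given unicity of Puiseux parameterizations (Proposition~\ref{lm:Puiseux}) and the degree bound furnished by Xie's construction.
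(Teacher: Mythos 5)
The paper does not prove this statement: it is Xie's algebraization criterion, stated as a quoted result. The section containing it opens with ``The material of this section is taken from \cite{Xie},'' and the introduction pins the reference down as \cite[Theorem~1.5]{Xie}. There is consequently no proof in the paper to compare your proposal against, and your proposal does not actually supply one either, since it is a plan to deduce the theorem from \cite[Theorem~1.5]{Xie}, which is (possibly up to a minor reformulation) the very theorem being stated. A ``proof'' that consists of citing the result under a different label is circular.

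To the extent that a genuine translation between the paper's phrasing (finitely many adelic branches $\mathfrak{s}_1,\dots,\mathfrak{s}_l$ together with place-by-place bounds $B_v$) and Xie's original hypotheses is actually needed, your Step~1 (packaging the branches and the region $\{\max\{|x|_v,|y|_v\}\le B_v\}$ into a single adelic family $\mathfrak{W}_v$) is the right move. But Step~2 invokes an unspecified ``adelic-convergence condition'' and a sum $\sum_v \log B_v$ without stating what Xie's numerical hypothesis actually is, so the claimed verification cannot be checked; and Step~3 asserts, without argument, a degree bound ``coming out of Xie's construction'' and a finiteness statement for algebraic curves with prescribed branches at infinity, while also abandoning its own alternative argument (``iterating is not automatic'') without replacing it. If you want a genuine proof you would have to reproduce the core of Xie's arithmetic algebraization argument, which this does not do; if you only want to justify the reformulation, you must quote Xie's hypotheses and conclusion precisely and exhibit the dictionary, which this also does not do.
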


\subsection{Construction of an invariant correspondence}
Our aim is to prove the following statement. 
\begin{theorem}\label{thm:symmetry}
Suppose $P = P_{c,a}$ is a cubic polynomial satisfying the assumptions $(\mathfrak{P})$.
Then there exists a (possibly reducible) algebraic curve
$Z_P \subset \A^1 \times \A^1$ such that:
\begin{enumerate}
\item 
$\phi(Z_P) = Z_P$ with  $\phi(x,y):=(P(x), P(y))$;
\item
for all $n$ large enough, we have $(P^n(c_0), P^n(c_1))\in Z_P$;
\item
any branch at infinity of $Z_P$ is given by an equation
$\varphi_P(x)^{s_0} = \zeta\cdot\varphi_P(y)^{s_1}$ for some root of unity $\zeta \in \KK$.
\end{enumerate}
\end{theorem}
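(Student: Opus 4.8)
The plan is to obtain $Z_P$ by feeding the orbit points $p_n:=(P^n(c_0),P^n(c_1))$, $n\ge 0$, to Xie's algebraization theorem (Theorem~\ref{tm:xie}), and then to promote the curve it produces to a $\phi$-invariant one through a Zariski-density argument. We may assume that $P$ is not post-critically finite, which by the remark after $(\mathfrak{P})$ means that the non-degeneracy hypothesis of $(\mathfrak{P}2)$ holds for some $n\ge 1$ and some place $v$; in particular at least one critical orbit is infinite, so the points $p_n\in\A^2(\LL)$ are pairwise distinct for $n$ large.

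First I would run a place-by-place analysis. Let $S_{\mathrm{esc}}$ be the set of places $v$ of $\LL$ with $g_{P,v}(c_0)>0$. Since $c$ and $a$ are $v$-integral for all but finitely many $v$, the set $S_{\mathrm{esc}}$ is finite, and by $(\mathfrak{P}1)$ it equals $\{v:\ g_{P,v}(c_1)>0\}$. For $v\notin S_{\mathrm{esc}}$ both critical orbits stay inside the $v$-adically bounded filled Julia set of $P$, which is the closed unit ball for all but finitely many $v$, so $\max\{|P^n(c_0)|_v,|P^n(c_1)|_v\}\le B_v$ with $B_v=1$ for all but finitely many $v$. For $v\in S_{\mathrm{esc}}$ one has $g_{P,v}(P^n(c_i))=3^n g_{P,v}(c_i)\to\infty$, so I may fix an integer $q$, uniform over the finite set $S_{\mathrm{esc}}$, such that for every $v\in S_{\mathrm{esc}}$ the points $P^n(c_0)$ and $P^n(c_1)$ lie in the domain of $\varphi_{P,v}$ for all $n\ge q$ (Proposition~\ref{prop:green is green}); then $(\mathfrak{P}2)$ supplies a root of unity $\zeta_{n,v}\in\KK$ with $\varphi_{P,v}(P^n(c_0))^{s_0}=\zeta_{n,v}\,\varphi_{P,v}(P^n(c_1))^{s_1}$. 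Combining this with $\varphi_P\circ P=\varphi_P^{3}$ gives $\zeta_{n+1,v}=\zeta_{n,v}^{3}$, so as $n$ and $v$ vary the $\zeta_{n,v}$ remain inside a finite subset $R$ of the group of roots of unity of $\KK$.

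Next I would realize, for each $\zeta\in R$, the branch at infinity of $\{\varphi_P(x)^{s_0}=\zeta\,\varphi_P(y)^{s_1}\}$ as an adelic branch in the sense of Xie. By Lemma~\ref{lm:bottcher1} the B\"ottcher series $\varphi_P$ is an adelic series at infinity, and Lemma~\ref{lem:adelic series} (applied to its reciprocal) yields an adelic inverse $\psi_P$ near infinity with $\psi_P(u)=\om^{-1}u+\cdots$. Choosing $a,b\in\Z$ with $as_0-bs_1=1$ (possible because $\gcd(s_0,s_1)=1$), the formal map $t\mapsto\bigl(\psi_P(\zeta^{a}t^{s_1}),\,\psi_P(\zeta^{b}t^{s_0})\bigr)$ parameterizes the branch at infinity of $\{\varphi_P(x)^{s_0}=\zeta\,\varphi_P(y)^{s_1}\}$; its two coordinates are adelic series at infinity by the substitution rules of \S\ref{sec:adelic series}, and Lemma~\ref{lem:adelic pseudo root} --- used exactly as in the proof of Proposition~\ref{lm:Puiseux} --- converts this parameterization into the Puiseux form $y_\alpha=\sum_j a_j x_\alpha^{j/m}$ demanded by Theorem~\ref{tm:xie}, with coefficients in some $\mathcal{O}_{\LL',S'}$ and positive radius of convergence at every place, after replacing $\LL$ by a finite extension $\LL'$ and $S$ by a larger finite set $S'$. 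Let $\mathfrak{s}_1,\dots,\mathfrak{s}_l$ be the finitely many adelic branches obtained as $\zeta$ runs through $R$; each is based at the point $[0:1:0]$, $[1:0:0]$ or $[\zeta:1:0]$ of $H_\infty$, which is $\KK$-rational.

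Finally I would apply Theorem~\ref{tm:xie} over $\LL'$ with the points $p_n$ ($n\ge q$), the bounds $B_v$, and the branches $\mathfrak{s}_i$: for every $n\ge q$ and every place $v$, either $v\notin S_{\mathrm{esc}}$ and $\max\{|x_n|_v,|y_n|_v\}\le B_v$, or $v\in S_{\mathrm{esc}}$ and $p_n\in Z^v(\mathfrak{s}_i)$ for the branch attached to $\zeta_{n,v}$. This produces an irreducible algebraic curve $Z$ with $p_n\in Z(\LL')$ for all $n$ large --- which is assertion (2) --- all of whose branches at infinity are among the $\mathfrak{s}_i$, hence of the form $\varphi_P(x)^{s_0}=\zeta\,\varphi_P(y)^{s_1}$ with $\zeta\in R\subset\KK$, which is assertion (3). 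For assertion (1), observe that $\phi(x,y)=(P(x),P(y))$ is a finite, hence closed, morphism, so $\phi(Z)$ is again an irreducible curve; it contains $\phi(p_n)=p_{n+1}$ for all $n$ large, so it shares infinitely many points with $Z$ and therefore coincides with it. We then set $Z_P:=Z$. I expect the main obstacle to be the construction in the third paragraph, namely verifying that $\{\varphi_P(x)^{s_0}=\zeta\,\varphi_P(y)^{s_1}\}$ genuinely defines an adelic branch at infinity --- controlling the $S$-integrality and the positivity of the radii of convergence through the inversion of $\varphi_P$, the implicit root extractions, and the Puiseux normalization --- which is exactly the situation the lemmas of \S\ref{sec:adelic series} are tailored for, but which requires careful bookkeeping of the finitely many places one must adjoin to $S$.
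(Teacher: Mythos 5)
Your proof is correct and follows the same overall strategy as the paper's: apply Xie's algebraization theorem (Theorem~\ref{tm:xie}) to the sequence $p_n=(P^n(c_0),P^n(c_1))$, using the finitely many adelic branches $\{\varphi_P(x)^{s_0}=\zeta\varphi_P(y)^{s_1}\}$, $\zeta\in U_\KK$, together with the bounds $B_v$ at the non-escaping places. The main genuine difference is in how you establish the $\phi$-invariance (assertion (1)). The paper shows that $\phi$ permutes the branches via $\phi(\mathfrak{c}_\zeta)=\mathfrak{c}_{\zeta^3}$, deduces that the orbit $\{\phi^n(Z)\}_n$ lives in a finite set of irreducible curves (because two irreducible curves sharing a branch at infinity are equal), concludes $\phi^{l+k}(Z)=\phi^k(Z)$, and finally sets $Z_P:=\bigcup_{j=k}^{l+k-1}\phi^j(Z)$, a possibly reducible curve. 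You instead observe directly that $\phi(Z)$, being the image of an irreducible curve under a finite morphism, is again an irreducible curve, and that it contains $p_{n+1}=\phi(p_n)$ for all $n$ large; since $Z$ does too, the two irreducible curves share infinitely many points (here you use that the $p_n$ are eventually pairwise distinct, because $P$ is not PCF) and hence coincide, giving $\phi(Z)=Z$ and $Z_P:=Z$ at once. This shortcut is legitimate because Theorem~\ref{tm:xie} as stated gives $p_n\in Z$ for \emph{all} $n$ large, not merely infinitely many $n$; it is slightly simpler, avoids the orbit-union step, and produces an irreducible $Z_P$ for free. The remainder of your argument — the place-by-place dichotomy according to whether $v\in S_{\mathrm{esc}}$, the uniform choice of $q$ over the finite set $S_{\mathrm{esc}}$, the construction of the adelic branch as $t\mapsto(\psi_P(\zeta^a t^{s_1}),\psi_P(\zeta^b t^{s_0}))$ with $as_0-bs_1=1$ (a variant of the paper's $\xi$ with $\xi^{s_1}\zeta=1$ that stays inside $\KK$), and the appeal to the lemmas of \S\ref{sec:adelic series} to certify adelicity — all match the paper's Lemma~\ref{lem:rightform} and its surrounding argument.
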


\begin{proof}
The proof is a direct application of Xie's theorem. 
Recall that the set $U_\KK$ of roots of unity that is contained in the number field $\KK$ is finite.

Recall that for each place $v$ over $\LL$, we let $g_{P,v} := \lim_n \frac1{3^n}\log^+|P^n|_v$ be the Green function of $P$, and write
$G_v(P) = \max\{g_{P,v}(c_0), g_{P,v}(c_1)\}$.

\begin{lemma}\label{lem:rightform}
For any $\zeta \in U$, there exists an adelic branch $\mathfrak{c}_\zeta$ based at a point $q \in H_\infty(\LL)$ 
such that for any place $v$ the analytic curve $Z^v (\mathfrak{c}_\zeta)$ is defined by the equation 
$\{\varphi_{P,v}(x)^{s_0} = \zeta\cdot \varphi_{P,v}(y)^{s_1}\}$ in the range
$\min\{|x|_v, |y|_v\} > \exp(G_v(P)+\tau_v)$.
\end{lemma}

Define $(x_n,y_n) := (P^{n}(c_0), P^{n}(c_1))\in \A^2(\LL)$, and consider the family of all adelic curves
$\mathfrak{c}_\zeta$ given by Lemma~\ref{lem:rightform} for all $\zeta \in U_\KK$.  We shall first check that 
all hypothesis of Xie's theorem are satisfied. 

To do so pick any integer $n$ and any place $v$ on $\LL$. Suppose first that $g_{P,v}(c_0) =0$. 
Since $g_{P,v}(P^{n}(c_0)) = 3^{n} g_{P,v}(c_0)=0$, we get 
$|x_n|_v\le e^{C_v}=: B_v$ by Lemma~\ref{lem:classic-estim}. The same upper bound applies to $|y_n|_v$  since
$g_{P,v}(c_1) =0$ by $(\mathfrak{P}1)$
so that 
$\max\{|x_n|_v,|y_n|_v\}\le B_v$ in this case.
Observe that $B_v=1$ for all but finitely many places $v$ of $\mathbb{L}$ by Lemma~\ref{lem:classic-estim}.

Suppose now that $g_{P,v}(c_0) >0$ so that $g_{P,v}(c_1) >0$ by $(\mathfrak{P}1)$. Fix $N$ large enough such that 
$g_{P,v}(P^N(c_0)) >G_v(P) + \tau_v$ and $g_{P,v}(P^N(c_1)) >G_v(P) + \tau_v$.
Then $P^N(c_0)$ and $P^N(c_1)$ lie in the domain of definition of the B\"ottcher coordinate  by Proposition~\ref{prop:green is green}. Since 
\[g_{P,v}(P^{n}(c_0)) = 3^{n-N} g_{P,v}(P^N(c_0))\ge g_{P,v}(P^N(c_0))> G_v(P)~,\]
we may also evaluate $\varphi_{P}$ at $x_n$  for all $n\ge N$. The same holds for $y_n$ and we get that
\[\frac{\varphi_{P}(x_n)^{s_0}}{\varphi_{P}(y_n)^{s_1}}
\]
is a root of unity $\zeta\in \KK$ by $(\mathfrak{P}2)$
hence $(x_n,y_n)$ belongs to $Z^v(\mathfrak{c}_{\zeta})$ for all $n\ge N$.

\smallskip

Xie's theorem thus applies to the sequence $\{(x_n,y_n)\}_{n\ge N}$, and we get an irreducible curve $Z \subset \A^1 \times\A^1$ that contains infinitely many points
$(x_n,y_n)$ and such that each of its branch at infinity is equal to $\mathfrak{c}_\zeta$ for some $\zeta\in U_\KK$.

\smallskip

Recall that $\phi(x,y)=(P(x),P(y))$, and pick any integer $n\ge1$. Then $\phi^n(Z)$ is an irreducible curve defined over $\LL$ whose branches at infinity
are the images under $\phi^n$ of the branches at infinity of $Z$.

Fix $\zeta\in U_\KK$ and pick $(x,y)\in Z^v(\mathfrak{c}_\zeta)$. Then $(x',y') = (P(x),P(y))$ satisfies
\[\frac{\varphi_{P}(x')^{s_0}}{\varphi_{P}(y')^{s_1}}
=
\frac{\varphi_{P}(x)^{3s_0}}{\varphi_{P}(y)^{3s_1}}
=\zeta^3
~,\]
hence $\phi(\mathfrak{c}_\zeta) = \mathfrak{c}_{\zeta^3}$. We conclude that any branch at infinity of $\phi^n(Z)$ is of the form $\mathfrak{c}_\zeta$ for some $\zeta\in U_\KK$.
Since two irreducible curves having a branch at infinity in common are equal, we see that $Z$ is pre-periodic  for the morphism $\phi$ so that $\phi^{l+k}(Z) = \phi^k(Z)$ for some $l, k>0$. Setting $Z_P:= \cup_{j=k}^{l+k-1}\phi^j(Z)$, we obtain a (possibly reducible) curve defined over $\LL$ such that 
$\phi(Z_P) = Z_P$ and $(x_n,y_n)\in Z_P$ for all $n\ge k$. This concludes the proof of the theorem.
\end{proof}

\begin{proof}[Proof of Lemma~\ref{lem:rightform}]
Recall from Lemma~\ref{lm:bottcher1} that \[\varphi_P(z)=\om\left(z-\frac{c}{2}\right)+\sum_{k\geq1}\frac{a_k}{z^{k}},\]
is an adelic series at infinity in the sense of \S\ref{sec:adelic series} , and therefore
\[\varphi^{-1}_P(z)=\frac1\om\, z + \frac{c}2+\sum_{k\geq1}\frac{b_k}{z^{k}},\]
too by Lemma~\ref{lem:adelic series}. We may assume that  $a_k, b_k \in \mathcal{O}_{\KK,S}$. Recall from Proposition~\ref{prop:green is green} that $\varphi_{P,v}$ induces an analytic isomorphism between
$\{z,\, g_{P,v}(z) > G_v(P)+\tau_v\}$ and $\{z', \, |z'|_v > \exp(G_v(P)+\tau_v)\}$.
By Lemma~\ref{lem:adelic series} the formal map $\varphi^{-1}_P$ defines an adelic
series at infinity in the terminology of \S\ref{sec:adelic series}. For each place $v$, this series coincides with the inverse map of $\varphi_P$
on the complement of the closed disk  of radius $\exp(G_v(P)+\tau_v)$ hence its domain of convergence is exactly
 $\{z', \, |z'|_v > \exp(G_v(P)+\tau_v)\}$.

It follows that 
\[Z_v:= \{(x,y),\, \varphi_P(x)^{s_0} =  \zeta\, \varphi_P(y)^{s_1}\}\]
defines an analytic curve in the domain $\min\{g_{P,v}(x),g_{P,v}(y)\}>G_v(P)+\tau_v$,
whose image under the isomorphism $(x',y') :=(\varphi_{P,v}(x),\varphi_{P,v}(y))$ is given by
\[Z'_v:= \{(x',y'),\, (x')^{s_0} = \zeta\, (y')^{s_1}\}\]
where $\min\{|x'|_v,|y'|_v\}>\exp(G_v(P)+\tau_v)$.

Pick any $\xi\in\bar{\Q}$ such that $\xi^{s_1} \zeta =1$. Let  $\mathfrak{c}_\zeta$
be the adelic branch at infinity defined by the formal Laurent series $(\varphi_P^{-1}(t^{-s_1}), \varphi_P^{-1}(\xi \, t^{-s_0}))$.
For any place $v$, the analytic curve $Z^v(\mathfrak{c}_\zeta)$ is included in $Z_v$. 
Since $s_0$ and $s_1$ are coprime, for any pair $(x',y')$ with $(x')^{s_0} = \zeta\, (y')^{s_1}$ 
and $\min\{|x'|_v,|y'|_v\}>\exp(G_v(P)+\tau_v)$, there exists $0<|t|_v<\exp\left(-\frac{G_v(P)+\tau_v}{\min\{s_0,s_1\}}\right)$ such that $x' = t^{-s_1}$ and $y' = \xi t^{-s_0}$. 

\smallskip

This proves that $Z^v(\mathfrak{c}_\zeta) = Z_v$ for all place as required.
\end{proof}

\subsection{Invariant correspondences are graphs}

Let $Z_0, \ldots, Z_{p-1}$ be the irreducible components of $Z_P$ such that $\phi(Z_i) = Z_{i+1}$ (the index computed modulo $p$). Since we assumed $P$ not to be post-critically finite, it is non-special  in the sense of \cite{pakovich}. 
We may thus apply Theorem~4.9 of op. cit. (or \cite[Theorem~6.24]{medvedev-scanlon}) to the component $Z_0$ of $Z_P$ that 
is $\phi^p$-invariant. It implies that after exchanging $x$ and $y$ if necessary, $Z_0$ is the graph of a polynomial map, i.e.
$Z_0 = \{ (Q(t),t)\}$ for some $Q\in \LL[t]$ such that $Q\circ P^p = P^p \circ Q$. Observe that by~\cite{Julia} the two polynomials $P$ and $Q$ share  a common iterate
 when $\deg(Q)\ge2$ since we assumed $P$ not to be post-critically finite.

\smallskip

We now work at an Archimedean place. Recall that the branch at infinity of $Z_0$ is of the form $\varphi_P(x)^{s_0} = \zeta \varphi_P(y)^{s_1}$ for some $\zeta \in U_\KK$.
Since $s_0$ and $s_1$ are coprime, it follows that $s_0 =1$ and $s_1 = \deg(Q)$, and therefore $s_1$ is a power of $3$, say $s_1 = 3^m$.
We get
\begin{equation}\label{eq:end of pf}
\varphi_P(Q(t)) = \zeta \varphi_P(t)^{3^m} = \zeta \varphi_P(P^m(t))~.
\end{equation}
for all $t$ of large enough norm. 
By Lemma~\ref{lm:bottcher1}, we get that 
$\varphi_P(t) = \om \left( t - \frac{c}2\right) + o(1)$ so that 
\begin{equation}
 \omega \left( Q(t) - \frac{c}2\right) = \om\zeta \left(P^m(t) - \frac{c}2\right) + o(1)
\end{equation}
which implies $Q(t) :=\zeta P^m(t) + (1-\zeta) \frac{c}2$ since 
a polynomial which tends to $0$ at infinity is identically zero.

\smallskip

At this point, recall our assumption that $\min\{g_{P,v}(P^q(c_0)), g_{P,v}(P^q(c_1))\} > G_v(P) + \tau_v$ for some integer $q$ and some place $v$ of $\LL$.
Then by $(\mathfrak{P}2)$
$\varphi_P(P^q(c_0))^{s_0} = \xi \varphi_P(P^q(c_1))^{s_1}$ for some root of unity $\xi\in \KK$
which implies  $\varphi_P(P^{q+n}(c_0)) = \xi^{3^n} \varphi_P(P^{q+n}(c_1))^{3^m}$. Since for some $n$ large enough
the point $(P^{q+n}(c_0), P^{q+n}(c_1))$ belongs to $Z_0$, we get $\xi^{3^n} = \zeta$.
Now observe that the least integer $n$ such that $\xi^{3^{n}} = \zeta$ is less that the cardinality of $U_\KK$. 
We get the existence of $q' \le C(\KK,q)$ such that $\varphi_P(P^{q'}(c_0)) = \zeta \varphi_P(P^{q'}(c_1))^{3^m}$.
Since $\varphi_P$ is injective, the equation~\eqref{eq:end of pf} shows that   $P^{q'}(c_0) = Q(P^{q'}(c_1))$.

\smallskip

Observe that $\zeta^{3^p}=\zeta$. Indeed, since $Z_0$ is $\phi^p$-invariant and since $\phi(\mathfrak{c}_\zeta)=\mathfrak{c}_{\zeta^3}$, we get $\mathfrak{c}_{\zeta^{3^p}}=\mathfrak{c}_{\zeta}$, hence $\zeta^{3^p}=\zeta$.

We now pick \emph{any} integer $k\geq1$ such that $\zeta^{3^k} = \zeta$. Then for all $t$ large enough, we have 
\[\varphi_P (Q \circ P^k(t)) = \zeta \varphi_P(P^k(t))^{3^m} = \zeta \varphi_P^{3^{m+k}}(t)\]
whereas
\[\varphi_P (P^k \circ Q(t))= \varphi_P (Q(t))^{3^k} = \zeta^{3^k} \varphi_P^{3^{k+m}}(t)~.\]
Since $\varphi_P$ is injective on a neighborhood of $\infty$, and since $\zeta^{3^k}=\zeta$ by assumption, we conclude that $Q\circ P^k = P^k \circ Q$.

\smallskip

This concludes the proof of Theorem~\ref{thm:symmetry}.

\section{Classification of special curves}\label{sec:thmA}

In this section, we prove Theorems~\ref{tm:alaBDM} and~\ref{tm:classification}.

\medskip

Before starting the proofs,  let us introduce some notation.
Pick $q,m\ge0$ and $\zeta$ a root of unity. We let
$Z(q,m,\zeta)$ be the algebraic set of those $(c,a) \in \A^2$ such that the polynomial $Q_{c,a}:=\zeta P_{c,a}^m + (1-\zeta) \frac{c}2$ commutes with all iterates $P_{c,a}^k$ of $P_{c,a}$ such that $\zeta^{3^k} = \zeta$, and either $Q_{c,a}(P_{c,a}^q(c_0)) = P_{c,a}^q(c_1)$, or $Q_{c,a}(P_{c,a}^q(c_1)) = P_{c,a}^q(c_0)$.

Observe that, when $m\ge1$, $Q$ has degree $3^m$ and when $(c,a)$ belongs to a fixed normed field $K$ then the Green function $g_{Q} := \lim_n \frac1{3^{nm}} \log^+|Q^n|$ is equal to $g_{c,a}$. Indeed since
$Q$ and $P^k$ commute they have the same filled-in Julia set, $\K_Q$ coincides with the filled-in Julia set $\K_P$ of $P$. 
And $g_{P}$ (resp. $g_{Q}$) is the unique continuous sub-harmonic function $g$ on $\A^{1,\an}_K$ that is zero on $\K_P$, harmonic
outside, with a logarithmic growth at infinity $g_{P} (z) = \log |z| +O(1)$ (resp. $g_{Q} (z) = \log |z| +O(1)$). As $\K_P=\K_Q$, this gives $g_{c,a}=g_{Q}$.

\subsection{Proof of Theorem~\ref{tm:alaBDM}}

The implication $(1)\Rightarrow (3)$ is exactly point $1.$ of Theorem~\ref{tm:samegreen}. 

\smallskip

The implication $(3)\Rightarrow (4)$ follows from Corollary~\ref{cor:distrib} when $s_0$ and $s_1$ are both nonzero. Indeed, by assumption the critical point $c_0$ is pre-periodic for $P_{c,a}$ if and only if $c_1$ is. Moreover we observe that $c_0$ is active at at least one parameter of $C$, since the function $G$ is proper, hence that the set for which it is pre-periodic is infinite, by e.g. \cite[Lemma 2.3]{favredujardin}.

When $s_1=0$, 
then $g_{0,v} \equiv 0$ on $C$ at all places.
By \cite[Theorem 2.5]{favredujardin} there exist $n>0$ and $k\geq0$ such that $C$ is an irreducible component of $\{(c,a)\in\A^2\, ; \ P_{c,a}^{n+k}(c_0)=P_{c,a}^k(c_0)\}$. 
By Theorem~\ref{tm:bifurcationheights} (applied to arbitrary weights) the family of functions $\{g_{1,v}\}_{v\in M_\KK}$ induces a semi-positive adelic metric
on some ample line bundle on the normalization of the completion of $C$ so that 
Thuillier-Yuan's theorem applies. This gives $(4)$ by observing that $g_0+g_1=g_1$. 
The case $s_0=0$ is treated similarly.

\smallskip

The implication $(4) \Rightarrow (1)$ is obvious.

\smallskip

To prove $(2) \Rightarrow (1)$, we observe that if $c_0$ is not persistently pre-periodic on $C$ then it is active at at least one parameter by \cite[Theorem 2.5]{favredujardin}
and that the set of parameters for which it is pre-periodic is infinite by e.g. \cite[Lemma 2.3]{favredujardin}.

\smallskip

We now prove $(3) \Rightarrow (2)$.  We suppose $c_0$ is not persistently pre-periodic on $C$. 
Pick some parameter $(c,a)\in C$ and suppose $c_0$ is pre-periodic.
We need to show that $P_{c,a}$ is post-critically finite.  

Since $c_0$ is not persistently pre-periodic on $C$ we have $s_1\neq 0$ (again by \cite[Theorem 2.5]{favredujardin} applied at any Archimedean place). 
In the case $s_0 =0$ then $c_1$ is persistently pre-periodic and $P_{c,a}$ is clearly post-critically finite. 
We may thus assume that $s_0$ and $s_1$ are both non-zero and the functions $g_{0,v}, g_{1,v}$ vanish on the same set in $C^{v,\an}$ for any place $v$ of $\KK$.
Observe that $c_0$ being pre-periodic implies $(c,a)$ to be defined
over a number field. It follows that for all the Galois-conjugates $(c',a')$ of $(c,a)$ (over the defining field $\KK$ of the curve $C$)
we have $G_v(c',a') := \max\{g_{0,v}(c',a'),g_{1,v}(c',a')\} =0$.
It follows from~\cite{Ingram} or~\cite[Theorem 3.2]{favregauthier} that $P_{c,a}$ is post-critically finite.

\smallskip

Let us now prove $(3)\Rightarrow (5)$. We suppose $C$ is special.
If either $c_0$ or $c_1$ is persistently pre-periodic in $C$, the assertion $(5)$ holds true with $\zeta=1$ and $i=j$ by \cite[Theorem 2.5]{favredujardin}.

Assume from now on that we are not in this case. Replacing $\KK$ by a finite extension
we may assume that all roots of unity $\zeta$ appearing in Theorem~\ref{thm:cubic-pol} 2. belong to $\KK$, 
since there are at most the number of branches at infinity of $C$ of such roots of unity.

Let $\mathcal{B}$ be the set of all $(c,a)\in C(\LL)$ where $\LL$ is  a finite extension of $\KK$ such that $P_{c,a}$ is not post-critically finite. Given a place $v$ of $\KK$
we also define the subset 
$\mathcal{B}_v$ of $\mathcal{B}$ of parameters $c,a$ such that 
$g_{0,v}(c,a)>0$. 
This set is infinite  since post-critically finite polynomials form a bounded set in $C^{v,\an}$.
\begin{lemma}\label{lem:P2}
For any $(c,a)\in \mathcal{B}$, the polynomial $P_{c,a}$ satisfies $(\mathfrak{P}1)$ and $(\mathfrak{P}2)$.
\end{lemma}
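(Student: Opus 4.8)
The plan is to unwind the definitions of $(\mathfrak{P}1)$ and $(\mathfrak{P}2)$ for a parameter $(c,a)\in\mathcal{B}$, and match them against what we already know about a special curve $C$ defined over $\KK$ with $s_0\cdot g_{0,v}=s_1\cdot g_{1,v}$ at every place $v$ of $\KK$ (coming from $(3)$, i.e. from Theorem~\ref{tm:samegreen}~(1)). First I would fix a finite extension $\LL$ of $\KK$ with $(c,a)\in C(\LL)$, and recall that each place $w$ of $\LL$ lies over a place $v$ of $\KK$, with the $w$-adic norm on $\LL$ extending the $v$-adic norm on $\KK$; hence the identity $s_0\cdot g_{0,v}=s_1\cdot g_{1,v}$, which holds on $C^{v,\an}$, yields $s_0\cdot g_{P,w}(c_0)=s_1\cdot g_{P,w}(c_1)$ after specializing to the point $(c,a)$ viewed in $C(\LL_w)$. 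Since $g_{P,w}(c_i)=g_{i,v}(c,a)$ by definition, this is exactly $(\mathfrak{P}1)$.

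For $(\mathfrak{P}2)$, I would take a place $w$ of $\LL$ and an integer $n\ge1$ with $\min\{g_{P,w}(P^n(c_0)),g_{P,w}(P^n(c_1))\}>G_w(P)+\tau_w$, and let $\mathfrak{c}$ be a branch at infinity of $C$ whose corresponding connected component $U$ of $\{g_{0,v}>\tau_v/s_0\}=\{g_{1,v}>\tau_v/s_1\}$ in $C^{v,\an}$ contains the point $(c,a)$ — such a branch exists because the hypothesis on $n$, together with $g_{P,w}(P^n(c_i))=3^n g_{i,v}(c,a)$ and Proposition~\ref{prop:green is green}, forces $g_{0,v}(c,a)>0$, and the superlevel set $\{g_{0,v}>0\}$ accumulates on the line at infinity since $G_v$ is proper and $g_{0,v},g_{1,v}$ are proportional. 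On this component, Theorem~\ref{tm:samegreen}~(2) gives a root of unity $\zeta$ and an integer $q\ge1$ with $\varphi_{c,a}(P_{c,a}^q(c_0))^{s_0}=\zeta\cdot\varphi_{c,a}(P_{c,a}^q(c_1))^{s_1}$ as an identity of adelic series, hence valid at every place. Using the functional equation $\varphi_{c,a}\circ P_{c,a}=\varphi_{c,a}^3$ to pass from exponent $q$ to exponent $n$ (possible because the relevant points remain in the domain of $\varphi$ by $3^j g_{i,v}(c,a)>G_v+\tau_v$ once $j$ is large, and the quotient is a $3^{\text{power}}$-th power of a root of unity, hence again a root of unity in $\KK$ after enlarging $\KK$ to contain the finitely many $\zeta$'s that can arise from the branches of $C$), we conclude that $\varphi_{P,w}(P^n(c_0))^{s_0}/\varphi_{P,w}(P^n(c_1))^{s_1}$ is a root of unity in $\KK$, which is precisely $(\mathfrak{P}2)$.

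The main obstacle I anticipate is bookkeeping at non-Archimedean places: making sure the adelic-series identity in Theorem~\ref{tm:samegreen}~(2), which is proved by specializing the branch $\mathfrak{c}$ to the field of Laurent series, genuinely transfers to the fixed complex or $p$-adic point $(c,a)$ lying on that branch's component, and that the passage between iterate exponents (from $q$ to $n$ or to a common large value) does not run outside the domain of convergence of $\varphi_{P,w}$. This is handled by the inequalities of the form $3^j\min\{g_{0,v}(c,a),g_{1,v}(c,a)\}>3^j\min\{s_0/s_1,s_1/s_0\}\max\{\dots\}>G_v(c,a)+\tau_v$ for $j$ large, exactly as in the last display of the proof of Theorem~\ref{tm:samegreen}, so the verification is essentially routine once that estimate is in place. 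I would then state that the lemma follows, the two conditions $(\mathfrak{P}1)$ and $(\mathfrak{P}2)$ being verified for every $(c,a)\in\mathcal{B}$, which is what allows us to invoke Theorem~\ref{thm:cubic-pol} in the sequel.
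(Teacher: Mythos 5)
Your argument for $(\mathfrak{P}1)$ is exactly the paper's, and the overall strategy for $(\mathfrak{P}2)$ — feed the hypothesis into Theorem~\ref{tm:samegreen}~(2) along the branch at infinity and propagate to $(c,a)$ using the functional equation $\varphi\circ P=\varphi^3$ — is the same as well. However, there is a genuine gap in the key topological step.

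You want to place $(c,a)$ inside the connected component of $\{g_{0,v}>\tau_v/s_0\}=\{g_{1,v}>\tau_v/s_1\}$ whose closure contains a branch $\mathfrak{c}$, because this is the region where Theorem~\ref{tm:samegreen}~(2) holds. To justify this you only show that the hypothesis of $(\mathfrak{P}2)$ forces $g_{0,v}(c,a)>0$. But $g_{0,v}(c,a)>0$ does \emph{not} imply $g_{0,v}(c,a)>\tau_v/s_0$ when $\tau_v>0$, which can happen at places of residual characteristic $2$ or $3$. Concretely, with $s_0=s_1=1$ and $n=1$ the hypothesis of $(\mathfrak{P}2)$ is $3\,G_v(c,a)>G_v(c,a)+\tau_v$, i.e.\ $G_v(c,a)>\tau_v/2$, which leaves an open range $\tau_v/2<G_v(c,a)\le\tau_v$ of parameters to which Theorem~\ref{tm:samegreen}~(2) does not directly apply. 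More generally, using $s_0\,g_{0,v}=s_1\,g_{1,v}$ the hypothesis at level $n$ only yields $g_{0,v}(c,a)>\tau_v s_1/(3^n s_0-s_1)$, which is strictly weaker than $g_{0,v}>\tau_v/s_0$ once $3^n>s_1(s_0+1)/s_0$. So the set of parameters covered by your argument is a priori smaller than the set covered by $(\mathfrak{P}2)$.

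The paper's proof handles precisely this difficulty. It introduces the nested families $\Omega_{n,v}$ (where the hypothesis of $(\mathfrak{P}2)$ at level $n$ holds) and $U_{n,v}:=\{G_v>\tau_v/(3^n-1)\}$, and proves the sandwich $\Omega_{n,v}\subset U_{n,v}\subset\Omega_{n+r,v}$ (Lemma~\ref{lem of lem}); the crucial observation is that every $U_{n,v}$ has only unbounded components (maximum principle for the proper subharmonic function $G_v$), so each of its components enters the domain where Theorem~\ref{tm:samegreen}~(2) is available. One then uses that $M_n$ is analytic on $\Omega_{n,v}$, is constant on $U_{N,v}$ by Theorem~\ref{tm:samegreen}~(2), and propagates the constancy by analytic continuation along these unbounded sets back to the component containing $(c,a)$; the small-$n$ case is treated separately by taking $3^{n_0-n}$-th powers and passing to a bounded extension $\KK'$ of $\KK$. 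You should not dismiss this as ``routine bookkeeping'': without the intermediate sets $U_{n,v}$ and the maximum-principle argument you have no way to reach $(c,a)$ from the branch at infinity, and without the final extension $\KK'$ you cannot control the field of definition of $M_n$ for small $n$.
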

Pick $q$ large enough such that $3^q > \max\{ s_0/s_1, s_1/s_0\}$, and 
fix a place $v$ of residual characteristic $\ge 5$. 
Now choose any $(c,a)\in\mathcal{B}_v$.  Then $g_{1,v}(c,a)$ is also positive
and $\min\{ g_{c,a,v}(P^q(c_0)),  g_{c,a,v}(P^q(c_1))\} > G_v(c,a)$ so that Theorem~\ref{thm:cubic-pol} applies by the previous lemma.
We get a positive integer $q'$ (bounded from above by a constant $C$ depending only on $\KK$ and $q$), a root of unity $\zeta\in \KK$  
and an integer $m\ge0$ such that  $(c,a)\in Z({q'},m,\zeta)$. 

Since $g_Q =g_P$, and $Q(P^{q'}(c_0)) = P^{q'}(c_1)$ we have 
\[3^mg_{P,v} (P^{q'}(c_0))= g_{P,v} (Q(P^{q'}(c_0))) = g_{P,v}(P^{q'}(c_1))\]
so that $3^m = \frac{s_0}{s_1}$. We conclude that the algebraic set consisting of the union of the curves $Z({q'},m,\zeta)$ with  $3^m = \frac{s_0}{s_1}$, $q'\le C$
and $\zeta$ ranging over all roots of unity lying in $\KK$ contains $\mathcal{B}_v$. 

It follows that $C$ is an irreducible component of one of these curves.

\smallskip

To end the proof of the theorem, we are left with proving $(5) \Rightarrow (3)$. Suppose that $C$ is an irreducible component of $Z(m,q,\zeta)$ for some $m\ge0$ and $q\geq0$ and some root of unity $\zeta$. Observe that $Z(q,m,\zeta)$ hence $C$ are defined over a number field say $\KK$.

When $m\ge1$, then for all place $v$ of that number field we have $g_{Q_{c,a},v}= g_{P_{c,a},v}$ for all $(c,a)\in C(\LL)$ for some finite extension $\LL$ of $\KK$. In particular
$Q_{c,a}(P_{c,a}^q(c_i)) = P_{c,a}^q(c_j)$ implies $3^{m} g_{i,v} (c,a)= g_{j,v}(c,a)$ which proves $(3)$ (with $s_0=0$ or $s_1=0$
if $i=j$).  When $m=0$ and $\zeta \neq 1$ and $C \neq \{ c=0\}$, then $Q_{c,a}(c_0) = (1-\zeta)c/2 \neq 0$ 
hence $i\neq j$. It follows that $ g_{P_{c,a},v} \circ Q_{c,a}=  g_{P_{c,a},v}$ hence $g_{0,v} = g_{1,v}$.
When $C = \{ c=0\}$, then $c_0 = c_1$ so that again $g_{0,v} = g_{1,v}$.
Finally when $m=0$ and $\zeta =1$, then $i\neq j$ and $P_{c,a}^q(c_0) = P_{c,a}^q(c_1)$ hence $g_{0,v} = g_{1,v}$ at all places.

\begin{proof}[Proof of Lemma~\ref{lem:P2}]
Pick $(c,a)\in \mathcal{B}$.  By the point 1. of Theorem~\ref{tm:samegreen}, $P_{c,a}$ satisfies $(\mathfrak{P}1)$. 
To check $(\mathfrak{P}2)$, we need to introduce a few sets. Fix any place $v$ of $\KK$, and for any integer $n\ge 0$, 
define the open subset of $C^{v,\an}$
$$
\Omega_{n,v} := \{(c',a'),\, \min\{g_{c',a',v}(P^n_{c',a'}(c_0)),  g_{c',a',v}(P^n_{c',a'}(c_1))\} > G_v(c',a') + \tau_v\}~.
$$
On  $\Omega_{n,v}$ one can define the analytic map
$$
M_n(c',a'):= \frac{\varphi_{c',a'}(P^n_{c',a'}(c_0))^{s_0}}{\varphi_{c',a'}(P^n_{c',a'}(c_1))^{s_1}}~.
$$
Observe that $\Omega_{n+1,v} \subset \Omega_{n,v}$, and  $M_{k+l}(c',a') = M_{k}(c',a')^{3^l}$ on $\Omega_{k,v}$ for all integers $k,l\ge 0$.
We also define the increasing sequence of open sets
$$
U_{n,v} := \left\{(c',a'),\, G_v(c',a') > \frac{\tau_v}{3^n-1}\right\} \subset C^{v,\an}~.
$$
Since $G_v$ is subharmonic and proper on $C^{v,\an}$, the set $U_{n,v}$ contains no bounded component by the maximum principle.
\begin{lemma}\label{lem of lem}
Suppose $3^r \ge \max\{s_0/s_1, s_1/s_0\}$. Then
we have $\Omega_{n,v} \subset U_{n,v}$ and $U_{n,v} \subset \Omega_{n+r,v}$.
\end{lemma}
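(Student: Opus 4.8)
The plan is to translate both claimed inclusions into elementary scalar inequalities. For a point $(c',a')\in C^{v,\an}$ write $g_0:=g_{c',a',v}(c_0)$ and $g_1:=g_{c',a',v}(c_1)$, so that $G_v(c',a')=\max\{g_0,g_1\}$; since $g_{c',a',v}$ is the Green function of $P_{c',a'}$ one has $g_{c',a',v}\circ P_{c',a'}=3\,g_{c',a',v}$, hence $g_{c',a',v}(P^k_{c',a'}(c_i))=3^k g_i$ for $i\in\{0,1\}$. Therefore, for $k\ge1$,
\[ \Omega_{k,v}=\{\,3^k\min\{g_0,g_1\}>\max\{g_0,g_1\}+\tau_v\,\},\qquad U_{k,v}=\{\,\max\{g_0,g_1\}>\tfrac{\tau_v}{3^k-1}\,\}, \]
and recall $\tau_v\ge 0$ by Proposition~\ref{prop:green is green}.

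For $\Omega_{n,v}\subset U_{n,v}$ I would argue directly: if $(c',a')\in\Omega_{n,v}$, then using $\max\{g_0,g_1\}\ge\min\{g_0,g_1\}$ we get $3^n\min\{g_0,g_1\}>\min\{g_0,g_1\}+\tau_v$, hence $(3^n-1)\min\{g_0,g_1\}>\tau_v\ge0$, so $\min\{g_0,g_1\}>\tfrac{\tau_v}{3^n-1}$ and a fortiori $G_v(c',a')=\max\{g_0,g_1\}>\tfrac{\tau_v}{3^n-1}$, i.e. $(c',a')\in U_{n,v}$. This inclusion uses nothing about $C$ beyond $g_i\ge 0$.

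For $U_{n,v}\subset\Omega_{n+r,v}$ I would invoke the proportionality $s_0 g_{0,v}=s_1 g_{1,v}$ on $C^{v,\an}$ from Theorem~\ref{tm:samegreen}(1): pointwise it gives $s_0 g_0=s_1 g_1$, whence $\min\{g_0,g_1\}=\tfrac{\min\{s_0,s_1\}}{\max\{s_0,s_1\}}\max\{g_0,g_1\}$ when $(g_0,g_1)\neq(0,0)$, so that $\min\{g_0,g_1\}\ge 3^{-r}\max\{g_0,g_1\}$ because $\max\{s_0/s_1,s_1/s_0\}\le 3^r$ (and this last inequality is trivial if $g_0=g_1=0$). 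Now if $(c',a')\in U_{n,v}$, then $(3^n-1)\max\{g_0,g_1\}>\tau_v$, i.e. $3^n\max\{g_0,g_1\}>\max\{g_0,g_1\}+\tau_v$, and therefore
\[ 3^{n+r}\min\{g_0,g_1\}\ \ge\ 3^{n+r}\cdot 3^{-r}\max\{g_0,g_1\}\ =\ 3^n\max\{g_0,g_1\}\ >\ \max\{g_0,g_1\}+\tau_v\ =\ G_v(c',a')+\tau_v, \]
which is exactly the defining condition of $\Omega_{n+r,v}$.

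The computation is entirely elementary, so there is no genuine obstacle; the points that need a little care are keeping every inequality strict and the degenerate cases $\tau_v=0$ and $g_0=g_1=0$ (both harmless: if $\tau_v>0$ the condition defining each of $\Omega_{k,v}$ and $U_{k,v}$ already forces $\max\{g_0,g_1\}>0$, while if $\tau_v=0$ all the displayed inequalities are homogeneous in $(g_0,g_1)$). The one genuinely external input is the proportionality of $g_{0,v}$ and $g_{1,v}$ along the special curve $C$, i.e. Theorem~\ref{tm:samegreen}(1); it is what makes the second inclusion work and exactly why a constant $r$ with $3^r\ge\max\{s_0/s_1,s_1/s_0\}$ is needed.
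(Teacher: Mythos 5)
Your proof is correct and follows essentially the same approach as the paper's: both reduce the inclusions to scalar inequalities via $g_{c',a',v}(P^k(c_i))=3^kg_i$, both use the proportionality $s_0 g_{0,v}=s_1 g_{1,v}$ from Theorem~\ref{tm:samegreen}(1) together with the bound $3^r\ge\max\{s_0/s_1,s_1/s_0\}$ for the second inclusion, and both observe that the $c_0$-side estimate follows from the $U_{n,v}$ condition directly. The only cosmetic difference is that you phrase things symmetrically in terms of $\min$ and $\max$, whereas the paper fixes without loss of generality $G_v(c',a')=g_{c',a',v}(c_0)$ and argues coordinate by coordinate.
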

By the point 2. of Theorem~\ref{tm:samegreen}, one can find two integers $q\ge1$ and $N\ge1$ such that $M_q$ is well-defined and constant equal to a root of unity lying in $\KK$
in each component of $U_{N,v}$.

Let $V$ be the connected component of $\Omega_{n,v}$ containing $(c,a)$. This open set might or might not be bounded. 
By the previous lemma, if $n\ge \max\{r+N,q\}$, then $U_{n-r,v}\subset\Omega_{n,v}$ so that   
$M_{n}$ is well-defined on $U_{n-r,v}$. Since all components of $U_{n-r,v}$ are unbounded, and
$M_n = M_q^{3^{n-q}}$ in $U_{N,r}$, we conclude that $M_n$ is locally constant in $U_{n-r,v}$ (hence on $V$)
equal to a root of unity lying in $\KK$. 

When $n\le n_0 = \max\{r+N,q\}$, then $(M_n)^{3^{n_0-n}} = M_{n_0}$ which we know is constant in $V$ equal to a root of unity
lying in $\KK$. We conclude that $M_n$ is constant on $V$ equal to a root of unity lying in a fixed extension $\KK'$ of $\KK$ that only depends
on the constants $r, N$ and $q$. Since these constants are in turn independent of the place $v$, we conclude the proof of the lemma
replacing $\KK$ by $\KK'$.
\end{proof}

\begin{proof}[Proof of Lemma~\ref{lem of lem}]
Pick $(c',a')\in \Omega_{n,v}$. We may suppose that $G_v(c',a') = g_{c',a',v}(c_0)$ so that 
$G_v(c',a')= g_{c',a',v}(c_0) = \frac1{3^n}g_{c',a',v}(P^n_{c',a'}(c_0)) > \frac1{3^n} (G_v(c',a') + \tau_v)$
which implies $(c',a')\in U_{n,v}$.

Conversely suppose  $(c',a')\in U_{n,v}$. As before we may suppose that $G_v(c',a') = g_{c',a',v}(c_0)$ so that
\begin{multline*}
g_{c',a',v}(P^{n+r}(c_0)) \ge g_{c',a',v}(P^n(c_0)) = 3^n  g_{c',a',v}(c_0) =\\ 3^n G_v(c',a') = G_v(c',a') + (3^n-1) G_v(c',a') > G_v(c',a') + \tau_v~.
\end{multline*}
Similarly we have
$$g_{c',a',v}(P^{n+r}(c_1)) = 3^{n+r}\frac{s_0}{s_1}\, g_{c',a',v}(c_1)\ge 3^n g_{c',a',v}(c_0)> G_v(c',a') + \tau_v$$
hence $(c',a')\in \Omega_{n+r,v}$.
\end{proof}

\subsection{Proof of Theorem~\ref{tm:classification}}

According to the implication $(1) \Rightarrow (5)$ of Theorem~\ref{tm:alaBDM},  any irreducible algebraic curve $C$ of $\poly_3$ containing infinitely many 
post-critically finite polynomials is a component of some $Z(q,m,\zeta)$ so that Theorem~\ref{tm:classification} reduces to the following.

\begin{proposition}\label{curves Z(q,m,zeta)}
~
\begin{enumerate}
\item The set $Z(q,m,1)$ is equal to the union $\{P_{c,a}^{m+q}(c_1) = P_{c,a}^q(c_0)\} \cup \{P_{c,a}^{m+q}(c_0) = P_{c,a}^q(c_1)\}$, hence contains an algebraic curve. Moreover, one has $Z(1,0,1)=Z(0,0,1)$. 
\item The set $Z(q,m, -1)$ is infinite if and only if $m=0$, and  we have $Z(q,0,-1)=\{12a^3-c^3-6c=0\}$ for any $q\geq0$.
\item if $\zeta^2\neq1$, the set $Z(q,m,\zeta)$ is finite.
\end{enumerate}
\end{proposition}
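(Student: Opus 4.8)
The plan is to treat the three values $\zeta=1$, $\zeta=-1$, $\zeta^2\neq1$ separately, the first two by direct computation with $P_{c,a}$ and the third via a functional equation for its B\"ottcher coordinate; recall that the two critical points of $P_{c,a}$ are $0$ and $c$, so $\{c_0,c_1\}=\{0,c\}$, and that $P_{c,a}(c)-P_{c,a}(0)=-\tfrac{c^3}{6}$. When $\zeta=1$ we have $Q_{c,a}=P_{c,a}^m$, which commutes with every iterate of $P_{c,a}$; hence the commutation condition is vacuous and $Z(q,m,1)=\{P_{c,a}^{m+q}(c_0)=P_{c,a}^q(c_1)\}\cup\{P_{c,a}^{m+q}(c_1)=P_{c,a}^q(c_0)\}$. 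For $m=0$ the set $\{c=0\}$ is contained in it (there $c_0=c_1=0$), while for $m\ge1$ the first locus is cut out by a polynomial of positive degree in $a$ (namely $3^{m+q}>3^q$), hence is a curve; in all cases $Z(q,m,1)$ contains a curve. Moreover $Z(1,0,1)=\{P_{c,a}(c_0)=P_{c,a}(c_1)\}=\{c=0\}=\{c_0=c_1\}=Z(0,0,1)$.

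For $\zeta=-1$ we have $Q_{c,a}=R_c\circ P_{c,a}^m$ with $R_c(z):=c-z$, the affine involution swapping the critical points $0$ and $c$. Expanding $R_c\circ P_{c,a}$ and $P_{c,a}\circ R_c$ shows that $R_c$ commutes with $P_{c,a}$ exactly on $\Gamma:=\{12a^3-c^3-6c=0\}$, and since the gradient of $12a^3-c^3-6c$ has no zero on $\Gamma$ this cubic is smooth, hence irreducible. As $P_{c,a}^m$ is surjective, $Q_{c,a}$ commutes with $P_{c,a}$ --- equivalently with every $P_{c,a}^k$, since each $3^k$ is odd --- if and only if $(c,a)\in\Gamma$. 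On $\Gamma$, $R_c$ conjugates $P_{c,a}$ to itself, so it carries the orbit of $c_0$ onto that of $c_1$ and $g_{c,a}(c_0)=g_{c,a}(c_1)$; using $Q_{c,a}(P^q(c_0))=R_c(P^{m+q}(c_0))=P^{m+q}(c_1)$, the remaining condition becomes $P_{c,a}^{m+q}(c_1)=P_{c,a}^q(c_1)$ or $P_{c,a}^{m+q}(c_0)=P_{c,a}^q(c_0)$. For $m=0$ this holds identically, so $Z(q,0,-1)=\Gamma$ is infinite. For $m\ge1$, $Z(q,m,-1)$ is the intersection of the irreducible curve $\Gamma$ with two proper hypersurfaces; since $G=\max\{g_{c,a}(c_0),g_{c,a}(c_1)\}$ is proper on $\A^2_{\C}$ by Proposition~\ref{prop:growth Green} and equals $g_{c,a}(c_0)=g_{c,a}(c_1)$ on $\Gamma$, both critical points escape near every branch of $\Gamma$ at infinity, hence neither is persistently pre-periodic on $\Gamma$, so $\Gamma$ lies in neither hypersurface and the intersection is finite.

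Finally, for $\zeta^2\neq1$ we may assume $\zeta^{3^k}=\zeta$ for some integer $k\ge1$ (equivalently, the order of $\zeta$ is prime to $3$) --- the only situation produced by the classification, cf.\ the remark after Theorem~\ref{thm:cubic-pol}. Fix $(c,a)\in Z(q,m,\zeta)$ and put $Q=Q_{c,a}$, $\varphi=\varphi_{c,a}$. Since $Q$ commutes with $P_{c,a}^k$, its B\"ottcher conjugate $h:=\varphi\circ Q\circ\varphi^{-1}$, a formal series at infinity, satisfies $h(w^{3^k})=h(w)^{3^k}$; by Lemma~\ref{lm:bottcher1} and $\om^2=\tfrac13$ its leading term is $\zeta w^{3^m}$, and comparing coefficients in this functional equation forces $h(w)=\zeta w^{3^m}$ identically. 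Hence $\varphi\circ Q=\zeta\cdot\varphi^{3^m}=\zeta\cdot(\varphi\circ P_{c,a}^m)$ as formal series at infinity, and substituting $U=P_{c,a}^m(z)$ (licit since $P_{c,a}^m$ is non-constant) gives
\[\varphi\!\left(\zeta U+(1-\zeta)\tfrac{c}{2}\right)=\zeta\,\varphi(U).\]
Comparing the coefficients of $U^{-1}$ and recalling from Lemma~\ref{lm:bottcher1} that this coefficient of $\varphi_{c,a}$ is $-\tfrac{\om}{4}c^2$, we obtain $-\tfrac{\om}{4}c^2(\zeta^{-1}-\zeta)=0$, so $c=0$; thus $Z(q,m,\zeta)\subset\{c=0\}$. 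On $\{c=0\}$ one has $c_0=c_1=0$ and $Q_{0,a}=\zeta P_{0,a}^m$, and, using surjectivity of $P_{0,a}^m$ once more, commutation of $Q$ with $P_{0,a}^k$ forces $P_{0,a}^k(\zeta u)=\zeta P_{0,a}^k(u)$ for all $u$; evaluating at $u=0$ gives $P_{0,a}^k(0)=0$, an equation with finitely many solutions $a$. Hence $Z(q,m,\zeta)$ is finite. The main obstacle lies in this last case: one must exploit the relation $Q\circ P^k=P^k\circ Q$ not merely up to bounded error but as an \emph{exact} identity of B\"ottcher series, in order to pin $\varphi\circ Q$ down to $\zeta\cdot\varphi^{3^m}$ and thereby force the correction term $a_1=-\tfrac{\om}{4}c^2$ to vanish.
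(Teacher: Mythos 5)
Your proof is essentially correct and reaches the same conclusions as the paper, but by a genuinely different route for $\zeta\neq1$. The paper's argument is built around two lemmas on how $Q_{c,a}$ permutes the critical set of iterates of $P_{c,a}$ (Lemma~\ref{lm:permute-critic} and Lemma~\ref{lem:clarify pf}), combined with Green-function level comparisons to force $Q_{c,a}(c_0),Q_{c,a}(c_1)\in\{c_0,c_1\}$, and then a separate leading-coefficient computation on $\{c=0\}$. You replace all of this by (i) reasoning directly on the irreducible curve $\Gamma=\{12a^3-c^3-6c=0\}$ in the case $\zeta=-1$, using properness of $G$ to show neither critical point is persistently pre-periodic there, and (ii) for $\zeta^2\neq1$, a B\"ottcher-coordinate functional-equation argument: the identity $Q\circ P^k=P^k\circ Q$ forces $h:=\varphi\circ Q\circ\varphi^{-1}$ to satisfy $h(w^{3^k})=h(w)^{3^k}$ with leading term $\zeta w^{3^m}$, hence $h(w)=\zeta w^{3^m}$ exactly, and comparing the $U^{-1}$ coefficient in $\varphi(\zeta U+(1-\zeta)\tfrac c2)=\zeta\varphi(U)$ kills $a_1=-\tfrac{\om}{4}c^2$, whence $c=0$; then on $\{c=0\}$ you evaluate the commutation at $u=0$ to get $P^k_{0,a}(0)=0$. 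This is an elegant and more unified treatment of the $\zeta^2\neq1$ case than the paper's two-step (Step~1/Step~4) argument, and it buys you the conclusion without ever invoking the critical-set lemma. You also make explicit — which the paper leaves implicit — that the whole analysis is only meaningful when some $k\ge1$ satisfies $\zeta^{3^k}=\zeta$ (equivalently $3\nmid\mathrm{ord}(\zeta)$); this is exactly the situation produced by Theorem~\ref{thm:cubic-pol}, so the restriction is harmless.

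Two small remarks. First, ``smooth, hence irreducible'' is not a valid implication for an affine plane curve (e.g.\ $\{x(x-1)=0\}$ is smooth in $\A^2$ but reducible); irreducibility of $\Gamma$ should instead be justified directly, for instance by noting that $12a^3-c^3-6c$, viewed as a cubic in $a$, factors only if $c(c^2+6)$ were a perfect cube in $\C[c]$, which it is not since it has three distinct roots. That said, your argument does not actually require irreducibility: properness of $G$ on every branch at infinity of $\Gamma$ shows that $c_0$ and $c_1$ escape on \emph{each} irreducible component, so each component meets the two pre-periodicity loci in a finite set. Second, the uniqueness step $h(w)=\zeta w^{3^m}$ deserves one line: writing $h(w)=\zeta w^{3^m}(1+g(w))$ with $g\in w^{-1}\KK[[w^{-1}]]$, the equation becomes $(1+g(w))^{3^k}=1+g(w^{3^k})$, and if $g\neq0$ with lowest-order term $c_n w^{-n}$ ($n\ge1$), the left side has a nonzero coefficient in degree $-n$ while the right side has none below degree $-3^k n$, a contradiction; this is the comparison-of-coefficients you allude to, but spelling it out would make the key step airtight.
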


We shall rely on the following observation. Denote by $\crit(P)$ the set of critical points of the polynomial $P$.

\begin{lemma}\label{lm:permute-critic}
Pick any $(c,a)\in Z(q,m,\zeta)$, and suppose that $$Q_{c,a}= \zeta P_{c,a}^m + (1-\zeta)\frac{c}2$$ is a polynomial that 
commutes with $P_{c,a}^k$ and 
$\zeta$ is a $(3^k-1)$-root of unity. 
Then we have
\[Q_{c,a}(\crit(P_{c,a}^{k+m}))=Q_{c,a}(\crit(P_{c,a}^m))\cup \crit(P_{c,a}^k)~.\]
\end{lemma}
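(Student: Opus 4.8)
The plan is to decompose $P_{c,a}^{k+m}$ as a composition and exploit the fact that, up to an invertible affine change of coordinates, $Q_{c,a}$ is literally an iterate of $P_{c,a}$.

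Write $P:=P_{c,a}$, $Q:=Q_{c,a}$ and $L(w):=\zeta w+(1-\zeta)\tfrac c2$, so that $L$ is an invertible affine self-map of $\C$ and $Q=L\circ P^{m}$. Since the derivative of a composition factors, one has the set-theoretic identity
\[ \crit(P^{k+m})=\crit(P^{k}\circ P^{m})=\crit(P^{m})\cup (P^{m})^{-1}\big(\crit(P^{k})\big), \]
because $(P^{k}\circ P^{m})'=\big((P^{k})'\circ P^{m}\big)\cdot (P^{m})'$ vanishes exactly on $\crit(P^{m})$ together with the set $\{z:\ P^{m}(z)\in\crit(P^{k})\}$. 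Applying $Q=L\circ P^{m}$ and using that $P^{m}\colon\C\to\C$ is onto (being a nonconstant polynomial, so $P^{m}((P^{m})^{-1}(S))=S$ for every subset $S$), this yields
\[ Q\big(\crit(P^{k+m})\big)=Q\big(\crit(P^{m})\big)\cup L\big(\crit(P^{k})\big). \]
Thus the lemma reduces to the single identity $L\big(\crit(P^{k})\big)=\crit(P^{k})$.

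To establish it I would first show that $L$ itself commutes with $P^{k}$. Since $P^{m}$ and $P^{k}$ commute, the hypothesis $Q\circ P^{k}=P^{k}\circ Q$ reads $(L\circ P^{k})\circ P^{m}=L\circ P^{m}\circ P^{k}=P^{k}\circ L\circ P^{m}=(P^{k}\circ L)\circ P^{m}$, and cancelling the surjection $P^{m}$ on the right gives $L\circ P^{k}=P^{k}\circ L$. Differentiating this identity and using $L'\equiv\zeta\neq0$ gives $(P^{k})'(z)=(P^{k})'(L(z))$ for every $z$; hence $z\in\crit(P^{k})$ if and only if $L(z)\in\crit(P^{k})$, and since $L$ is a bijection of $\C$ this forces $L\big(\crit(P^{k})\big)=\crit(P^{k})$, completing the argument.

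The computation is essentially formal, so the only point demanding a little care — and the one I would state explicitly — is the bookkeeping at infinity: one either works throughout with the finite critical points (the zeros of the relevant derivatives), as above, or, viewing the maps on $\p^1$, one checks that $\infty$ is fixed by $P$, $Q$ and $L$ and totally ramified for every iterate of $P$, so that it is added consistently to both sides of the claimed equality. The assumption $\zeta^{3^{k}}=\zeta$ plays no further role here; it only serves, in the definition of $Z(q,m,\zeta)$, to guarantee that the displayed $Q$ actually commutes with $P^{k}$, which we have taken as a hypothesis.
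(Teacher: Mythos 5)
Your proof is correct, and it takes a genuinely different route from the paper's. The paper never separates the affine factor from $Q$: it differentiates the commutation relation $Q\circ P^k = P^k\circ Q$ directly to compute the critical set of the common composite in two ways, getting
\[
\crit(P^{k+m}) \;=\; \crit(Q\circ P^k) \;=\; \crit(P^k\circ Q) \;=\; \crit(P^m)\cup Q^{-1}(\crit(P^k)),
\]
(using $\crit(Q)=\crit(P^m)$ and $Q\circ P^k = L\circ P^{k+m}$), and then simply pushes forward by the surjection $Q$. You instead factor $Q = L\circ P^m$, use the chain-rule decomposition of $\crit(P^{k+m})=\crit(P^k\circ P^m)$, and reduce everything to the claim $L(\crit(P^k))=\crit(P^k)$, which you obtain by first extracting the sub-commutation $L\circ P^k = P^k\circ L$ (cancelling the surjection $P^m$ in the hypothesis) and then differentiating it. The paper's argument is a bit more compact, since it only invokes the hypothesis once and never isolates $L$; yours is longer by one cancellation step but yields the sharper intermediate fact that the affine map $L(w)=\zeta w+(1-\zeta)\tfrac c2$ itself commutes with $P_{c,a}^k$ and permutes $\crit(P_{c,a}^k)$ — a byproduct worth recording, as it makes the dynamical role of $\zeta$ transparent. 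Both arguments are equally rigorous; your final remark about the bookkeeping at $\infty$ is a sensible precaution but unnecessary here, since $\crit$ is taken in the affine sense throughout the paper.
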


\begin{proof}
Write $P = P_{c,a}$ and $Q = Q_{c,a}$.
Differentiate the equality $P^k\circ Q=Q\circ P^k$. Since $Q'=\zeta\cdot (P^m)'$, we get 
\begin{multline*}
\crit(Q\circ P^k) = P^{-k}(\crit(Q))\cup \crit(P^k) = \\
 P^{-k}(\crit(P^m))\cup \crit(P^k)=
\crit(P^{k+m})~,
\end{multline*}
and therefore
\[\crit(P^{k+m})=\crit(P^k\circ Q) = \crit(P^m)\cup Q^{-1}(\crit(P^k))~,\]
and we conclude taking the image of both sides by $Q$.
\end{proof}

We now come to the proof of the Proposition.

\begin{proof}[Proof of Proposition~\ref{curves Z(q,m,zeta)}]
We may and shall assume that all objects are defined over the field of complex numbers. 

\medskip

\noindent {\bf 1}. Suppose $Z(q,0,\zeta)$ contains an irreducible curve $C$. We shall prove that either $\zeta = \pm 1$, or $C=\{c=0\}$.   

Observe that for any $(c,a)\in Z(q,0,\zeta)$, the polynomial $Q_{c,a}$ is an affine map which commutes with $P_{c,a}^k$, hence $g_{c,a}(Q_{c,a}(z))=g_{c,a}(z)$ for all $z\in\C$. Without loss of generality, we may suppose that $Q_{c,a}(P^q_{c,a}(c_0))=P^q_{c,a}(c_1)$, hence $G(c,a)=g_{0}(c,a)=g_1(c,a)$.

Suppose that $Z(q,0,\zeta)$ contains an irreducible curve $C$. If $g_0$ vanishes identically on $C$ then $g_1$ also, and this implies both critical points to be persistently preperiodic so that all polynomials in $C$ are post-critically finite. This cannot happen, so that  we can find an open set $U$ in $C$ such that $G(c,a)>0$ for all $(c,a)\in U$. 

Pick any parameter $(c,a)$ in $U$. We have $Q_{c,a}(\crit(P_{c,a}^k))=\crit(P_{c,a}^k)$ by Lemma~\ref{lm:permute-critic}, so that $Q_{c,a}(c_0),Q_{c,a}(c_1)\in \crit(P_{c,a}^k)$.
Since
\[\crit(P_{c,a}^k)=\bigcup_{0\leq j\leq k-1}P_{c,a}^{-j}(\crit(P_{c,a}))\]
we get $g_{c,a}(\alpha)=3^{-j}g_{c,a}(c_0)<g_{c,a}(c_0)=G(c,a)$  for any $\alpha$ lying  in $\crit(P_{c,a}^k)$ but not in $\crit(P_{c,a})$. 
However  $g_{c,a}(Q_{c,a}(c_0))=g_{c,a}(Q_{c,a}(c_0))=G(c,a)$, therefore we have $Q_{c,a}(c_0),Q_{c,a}(c_1)\in \crit(P_{c,a})=\{c_0,c_1\}$. 

This implies either $(1-\zeta)c=0$, or $(1+\zeta)c=0$, hence $\zeta =\pm 1$ or $C=\{c=0\}$ as required.

\medskip

\noindent {\bf 2}. Suppose now that $C$ is an irreducible curve included in $Z(q,m,\zeta)$ with $m>0$. We claim that either $\zeta = 1$, or $C=\{c=0\}$ as above.   

We proceed similarly as in the previous case. We suppose that $Z(q,m,\zeta)$ is infinite. For any $(c,a)\in Z(q,m,\zeta)$,  the polynomial $Q_{c,a}$ commutes with $P_{c,a}^k$ for some $k$, and has degree $3^m>1$.  In particular we have equality of Green functions $g_{Q_{c,a}}=g_{c,a}$. Without loss of generality we may (and shall) assume $Q_{c,a}(P^q_{c,a}(c_0))=P^q_{c,a}(c_1)$, which implies $g_0(c,a)=3^mg_1(c,a)$.

Assume now by contradiction that $\zeta\neq1$. Proceeding as in the previous case, we can find an open set $U\subset C$ such that $G(c,a)=g_0(c,a)>0$ for all $(c,a)\in U$. Pick now $(c,a)\in U$.
\begin{lemma}\label{lem:clarify pf}
For any $\alpha\in P_{c,a}^{-m}\{c_0\}$, we have $Q_{c,a}(\alpha)\in\{ c_0, Q_{c,a}(c_1)\}$.
\end{lemma}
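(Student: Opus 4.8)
The plan is to combine the arithmetic of escape rates along the critical grand orbit with Lemma~\ref{lm:permute-critic}. I would start by recording the standing facts on $U$: since $(c,a)\in U\subset Z(q,m,\zeta)$ with $m\ge 1$, the map $Q:=Q_{c,a}$ has degree $3^m$ and commutes with some iterate $P^k$ ($k\ge 1$) of $P:=P_{c,a}$, with $\zeta$ a $(3^k-1)$-th root of unity; hence $\K_Q=\K_P$, so $g_{c,a}=g_Q$ and therefore $g_{c,a}\circ Q=3^m\,g_{c,a}$, while $g_{c,a}\circ P=3\,g_{c,a}$ as usual. I would also recall that $g_0(c,a)>0$ and $g_0(c,a)=3^m\,g_1(c,a)$ on $U$.

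Then, fixing $\alpha\in P^{-m}\{c_0\}$, I would first compute the escape rate of $Q(\alpha)$: from $P^m(\alpha)=c_0$ one gets $3^m g_{c,a}(\alpha)=g_{c,a}(c_0)=g_0(c,a)$, hence $g_{c,a}(Q(\alpha))=3^m g_{c,a}(\alpha)=g_0(c,a)$. Next, since $c_0\in\crit(P)$ we have $\alpha\in\crit(P^{m+1})\subset\crit(P^{k+m})$, so Lemma~\ref{lm:permute-critic} gives
\[Q(\alpha)\in Q(\crit(P^m))\cup\crit(P^k)~.\]
It then remains to discard the unwanted branches using $\crit(P^n)=\bigcup_{0\le j\le n-1}P^{-j}\{c_0,c_1\}$. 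If $Q(\alpha)\in\crit(P^k)$, say $P^j(Q(\alpha))=c_i$ with $0\le j\le k-1$, then $g_{c,a}(Q(\alpha))$ equals $3^{-j}g_0(c,a)$ when $i=0$ and $3^{-j}g_1(c,a)=3^{-j-m}g_0(c,a)$ when $i=1$; comparing with $g_0(c,a)>0$ eliminates the case $i=1$ and forces $j=0$, i.e. $Q(\alpha)=c_0$. If instead $Q(\alpha)=Q(\beta)$ with $\beta\in\crit(P^m)$, say $P^j(\beta)=c_i$ with $0\le j\le m-1$, then $g_{c,a}(Q(\alpha))=3^m g_{c,a}(\beta)=3^{m-j}g_i(c,a)$; comparing with $g_0(c,a)$ excludes $i=0$ (which would need $j=m$) and forces $j=0$ when $i=1$, so $\beta=c_1$ and $Q(\alpha)=Q(c_1)$. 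In every case $Q(\alpha)\in\{c_0,Q(c_1)\}$, as wanted.

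I do not expect a genuine obstacle in this argument; it is essentially bookkeeping once the inputs are in place. The two points where care is needed are the identities $g_{c,a}\circ Q=3^m g_{c,a}$ and $g_0=3^m g_1$ on $U$ — this is exactly where the membership $(c,a)\in Z(q,m,\zeta)$ (the commutation $Q\circ P^k=P^k\circ Q$ and the normalization $Q_{c,a}(P^q(c_0))=P^q(c_1)$) enters — and the verification that $k\ge 1$, which is what guarantees $\crit(P^{m+1})\subset\crit(P^{k+m})$ and hence that Lemma~\ref{lm:permute-critic} applies to $\alpha$.
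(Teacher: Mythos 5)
Your proof is correct and follows the same strategy as the paper's: compute $g_{c,a}(Q_{c,a}(\alpha))=g_0(c,a)>0$, invoke Lemma~\ref{lm:permute-critic} to place $Q_{c,a}(\alpha)$ in $\crit(P^k)\cup Q_{c,a}(\crit(P^m))$, and then rule out every element of that set other than $c_0$ and $Q_{c,a}(c_1)$ by comparing escape rates. The only cosmetic difference is organizational: the paper argues by contradiction (assuming $z\notin\{c_0,Q_{c,a}(c_1)\}$ and showing $g_{c,a}(z)\neq g_0(c,a)$), whereas you run the same case split on $(i,j)$ directly to force $j=0$ and the right $i$.
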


Observe that
\[Q_{c,a}(\alpha)=\zeta P_{c,a}^m(\alpha)+(1-\zeta)\frac{c}{2}=\zeta c_0+(1-\zeta)\frac{c}{2}=(1-\zeta)\frac{c}{2}.\]
The equality $Q_{c,a}(c_1)=Q_{c,a}(\alpha)$ therefore gives 
$Q_{c,a}(c_1)=\zeta P_{c,a}^m(c_1)+(1-\zeta)\frac{c}{2}=(1-\zeta)\frac{c}{2}$, and we find $P_{c,a}^m(c_1)=0=c_0$
so that $C$ is a component of $Z(1,m,1)$.

The equality $c_0=Q_{c,a}(\alpha)$, implies $(1-\zeta)\frac{c}{2}=0$ so that either $\zeta =1$, or $C$ equals $\{c=0\}$.

\medskip

\noindent {\bf 3}. We have $Z(q,0,-1)=Z(0,0,-1)$ for all $q\geq0$.

Fix $q\ge0$, and pick any $(c,a)\in Z(q,0,-1)$. 
Observe that $(-1)^{3} = -1$ hence $Q_{c,a}(z) = - z +c$ commutes with $P_{c,a}$ by definition.
A direct computation shows that this happens if and only if $(c,a)$ belongs to the curve
$D_1:= \{12a^3-c^3-6c=0\}$. 

One can also check that $Q_{c,a}(c_0) = c_1$ for any parameter on $D_1$, and this implies
$(Q_{c,a} \circ P_{c,a}^q) (c_0) = P_{c,a}^q ( Q_{c,a}(c_0)) = P_{c,a}^q(c_1)$ for any $q\ge0$. 
This implies the claim.

\medskip

\noindent {\bf 4}. The irreducible curve $D_0= \{c=0\}$ is included in $Z(q,m,\zeta)$ if and only if $m=0$ and $\zeta =1$.

Observe that any polynomial $P:=P_{0,a}$ in $D_0$ is unicritical with a single critical point at $0$, 
so that $D_0$ is included in $Z(q,0,1)$ for all $q\ge0$. 
Observe also that  $g_0  = g_1 >0$ on a non-empty open subset of $D_0$.

Suppose that $D_0$ is included in $Z(q,m,\zeta)$ for some positive integer $m>0$.
Then the Green function of $Q:= Q_{0,a} $ is equal to $g_P$ and
the equation $Q(P^q(c_0)) = P^q(c_1)$ implies $g_P(c_0) = 3^m g_P(c_1) > g_P(c_1)$
at least when $P$ is close enough to infinity. This is absurd. 

Suppose now that  $D_0$ is included in $Z(q,0,\zeta)$ with $\zeta\neq 1$ so that 
$Q (z)= \zeta z$. One checks by induction that for any integer $k\ge1$ one has
\[P^k(z) = q_k z^{3^k} +  s_ka^3 z^{3^k-3} + \text{l.o.t}\]
with $q_k, s_k\in \Q_+^*$. Choose $k$ minimal such that $\zeta^{3^k} = \zeta$.  We get 
\[Q^{-1} \circ P^k \circ Q (z) =  q_k z^{3^k} +  s_ka^3 \zeta^{3^k -4} z^{3^k-3} + \text{l.o.t}\neq P^k\]
which yields a contradiction, and concludes the proof of our claim.

\medskip

\noindent {\bf 5}. We may now prove the proposition. The first statement follows from the definition of $Z(q,m,1)$, since  in that case we have $Q =P^m$ which always commutes with $P$. Moreover, the curve $Z(1,0,1)$ is given by the equation
\[0=P_{c,a}(c_0)-P_{c,a}(c_1)=a^3-\left(a^3-\frac{c^3}{6}\right)=\frac{c^3}{6}~,\]
whence $Z(1,0,1)=\{c=0\}=Z(0,0,1)$.

For the second statement, suppose first that $Z(q,m,-1)$ is infinite. 
By the second step, we have $m=0$, or $D_0 = \{c=0\}$ is included in $Z(q,m,-1)$. The fourth step rules out the latter possibility so that 
$m=0$. Conversely if $m=0$ we may apply the third step to conclude that $Z(q,0,-1)$ is a curve equal to $D_1 = \{12a^3-c^3-6c=0\}$. 

For the third statement, pick $\zeta \neq \pm 1$ and suppose by contradiction that $Z(q,m,\zeta)$ is infinite. The first and second step imply that 
$Z(q,m,\zeta)$ contains $D_0$ which is impossible by Step 4. 

This concludes the proof of the proposition.
\end{proof}

\begin{proof}[Proof of Lemma~\ref{lem:clarify pf}]
Take $\alpha\in P_{c,a}^{-m}\{c_0\}$, and observe that $\alpha\in \crit(P_{c,a}^{k+m})$. 
According to Lemma~\ref{lm:permute-critic}, we have
\[Q_{c,a}(\alpha)\in \crit(P_{c,a}^k)\cup Q_{c,a}(\crit(P_{c,a}^{m}))\]
and $g_{c,a}(Q_{c,a}(\alpha))=3^mg_{c,a}(\alpha)=3^m\cdot3^{-m}g_{c,a}(c_0)=g_{c,a}(c_0)=G(c,a)>0$. 

\smallskip

Pick any point $z\in \crit(P_{c,a}^k)\cup Q_{c,a}(\crit(P_{c,a}^{m}))$, and suppose it is equal to neither $c_0$ nor $Q_{c,a}(c_1)$. Then we are in one of the following (excluding) cases:
\begin{enumerate}
\item $z$ is a preimage of $c_0$ under $P_{c,a}^{j}$ for some $1\leq j\leq k-1$,  and $g_{c,a}(z)<g_P(c_0)$;
\item $z$ is a preimage of $c_1$ under $P_{c,a}^{j}$ for some $0\leq j\leq k-1$,  in which case $g_{c,a}(z)\leq g_{c,a}(c_1)<g_{c,a}(c_0)$;
\item $z\in Q_{c,a}(\crit(P_{c,a}^{m}))$, so that  $g_{c,a}(z)=3^mg_{c,a}(w)$ for some point $w\in \crit(P_{c,a}^m)=\bigcup_{0\leq j\leq m-1}P_{c,a}^{-j}(\crit(P_{c,a}))$. 
\end{enumerate}
In the last case two sub-cases arise. When $w$ is a preimage of $c_0$, we find 
\[ g_{c,a}(z) = 3^m g_{c,a}(w) \ge 3^m \frac1{3^{m-1}} g_{c,a}(c_0)> g_{c,a}(c_0)~.\]
Otherwise $w$ is a preimage of $c_1$ distinct from $c_1$ since $z\neq Q_{c,a}(c_1)$.
And we find $g_{c,a}(z) = 3^m g_{c,a}(w)\leq 3^{m-1} g_{c,a}(c_1)<g_{c,a}(c_0)$.

 Since $g_{c,a}(Q_{c,a}(\alpha)) = g_{c,a}(c_0)$ we conclude that $z\neq Q_{c,a}(\alpha)$ as required.
\end{proof}

\bibliographystyle{short}
\bibliography{biblio}

\begin{thebibliography}{DMWY}

\bibitem[A]{MR1810122}
Pascal Autissier.
\newblock Points entiers sur les surfaces arithm\'etiques.
\newblock {\em J. Reine Angew. Math.}, 531:201--235, 2001.

\bibitem[B]{Berko}
Vladimir~G. Berkovich.
\newblock {\em Spectral theory and analytic geometry over non-{A}rchimedean
  fields}, volume~33 of {\em Mathematical Surveys and Monographs}.
\newblock American Mathematical Society, Providence, RI, 1990.

\bibitem[BB1]{BB3}
Giovanni Bassanelli and Fran{\c{c}}ois Berteloot.
\newblock Lyapunov exponents, bifurcation currents and laminations in
  bifurcation loci.
\newblock {\em Math. Ann.}, 345(1):1--23, 2009.

\bibitem[BB2]{BB2}
Giovanni Bassanelli and Fran{\c{c}}ois Berteloot.
\newblock Distribution of polynomials with cycles of a given multiplier.
\newblock {\em Nagoya Math. J.}, 201:23--43, 2011.

\bibitem[BDM]{BD}
Matthew Baker and Laura De~Marco.
\newblock Special curves and postcritically finite polynomials.
\newblock {\em Forum Math. Pi}, 1:e3, 35, 2013.

\bibitem[BFJ]{MR3419957}
S{\'e}bastien Boucksom, Charles Favre, and Mattias Jonsson.
\newblock Singular semipositive metrics in non-{A}rchimedean geometry.
\newblock {\em J. Algebraic Geom.}, 25(1):77--139, 2016.

\bibitem[BH1]{baker-hsia}
M.~Baker and L.~H. H'sia.
\newblock Canonical heights, transfinite diameters, and polynomial dynamics.
\newblock {\em J. Reine Angew. Math}, (585):61--92, 2005.

\bibitem[BH2]{BH}
Bodil Branner and John~Hamal Hubbard.
\newblock The iteration of cubic polynomials. {I}. {T}he global topology of
  parameter space.
\newblock {\em Acta Math.}, 160(3-4):143--206, 1988.

\bibitem[BKM]{BKM}
Araceli Bonifant, Jan Kiwi, and John Milnor.
\newblock Cubic polynomial maps with periodic critical orbit. {II}. {E}scape
  regions.
\newblock {\em Conform. Geom. Dyn.}, 14:68--112, 2010.

\bibitem[C]{Chirka}
Evgenii~Mikhailovich Chirka.
\newblock {\em Complex analytic sets}, volume~46 of {\em Mathematics and its
  Applications (Soviet Series)}.
\newblock Kluwer Academic Publishers Group, Dordrecht, 1989.
\newblock Translated from the Russian by R. A. M. Hoksbergen.

\bibitem[CL1]{ACL}
Antoine Chambert-Loir.
\newblock Mesures et \'equidistribution sur les espaces de {B}erkovich.
\newblock {\em J. Reine Angew. Math.}, 595:215--235, 2006.

\bibitem[CL2]{ACL2}
Antoine Chambert-Loir.
\newblock Heights and measures on analytic spaces. {A} survey of recent
  results, and some remarks.
\newblock In {\em Motivic integration and its interactions with model theory
  and non-{A}rchimedean geometry. {V}olume {II}}, volume 384 of {\em London
  Math. Soc. Lecture Note Ser.}, pages 1--50. Cambridge Univ. Press, Cambridge,
  2011.

\bibitem[De1]{Demarco1}
Laura DeMarco.
\newblock Dynamics of rational maps: a current on the bifurcation locus.
\newblock {\em Math. Res. Lett.}, 8(1-2):57--66, 2001.

\bibitem[De2]{Demarco-stable}
Laura DeMarco.
\newblock Bifurcations, intersections, and heights, 2014.
\newblock preprint arXiv:1412.2708.

\bibitem[Du1]{dujardin2}
Romain Dujardin.
\newblock Cubic polynomials: a measurable view on parameter space.
\newblock In {\em Complex dynamics : families and friends / edited by Dierk
  Schleicher}, pages 451--490. A K Peters, Ltd., Wellesley, Massachussets,
  2009.

\bibitem[Du2]{dujardin-higher}
Romain Dujardin.
\newblock The supports of higher bifurcation currents.
\newblock {\em Ann. Fac. Sci. Toulouse Math. (6)}, 22(3):445--464, 2013.

\bibitem[DF]{favredujardin}
Romain Dujardin and Charles Favre.
\newblock Distribution of rational maps with a preperiodic critical point.
\newblock {\em Amer. J. Math.}, 130(4):979--1032, 2008.

\bibitem[DH]{orsay1}
Adrien Douady and John~Hamal Hubbard.
\newblock {\em \'{E}tude dynamique des polyn\^omes complexes. {P}artie {I}},
  volume~84 of {\em Publications Math\'ematiques d'Orsay [Mathematical
  Publications of Orsay]}.
\newblock Universit\'e de Paris-Sud, D\'epartement de Math\'ematiques, Orsay,
  1984.

\bibitem[DMS]{DMS}
Laura De~Marco and Aaron Schiff.
\newblock The geometry of the critically periodic curves in the space of cubic
  polynomials.
\newblock {\em Exp. Math.}, 22(1):99--111, 2013.

\bibitem[DMWY]{DWY}
Laura De~Marco, Xiaoguang Wang, and Hexi Ye.
\newblock Bifurcation measures and quadratic rational maps.
\newblock {\em Proc. Lond. Math. Soc. (3)}, 111(1):149--180, 2015.

\bibitem[FG]{favregauthier}
Charles Favre and Thomas Gauthier.
\newblock Distribution of postcritically finite polynomials.
\newblock {\em Israel J. Math.}, 209(1):235--292, 2015.

\bibitem[FRL]{MR2578470}
Charles Favre and Juan Rivera-Letelier.
\newblock Th\'eorie ergodique des fractions rationnelles sur un corps
  ultram\'etrique.
\newblock {\em Proc. Lond. Math. Soc. (3)}, 100(1):116--154, 2010.

\bibitem[G1]{Article1}
Thomas Gauthier.
\newblock Strong bifurcation loci of full {H}ausdorff dimension.
\newblock {\em Ann. Sci. \'Ec. Norm. Sup\'er. (4)}, 45(6):947--984, 2012.

\bibitem[G2]{DistribTbif}
Thomas Gauthier.
\newblock Equidistribution towards the bifurcation current {I}: Multipliers and
  degree $d$ polynomials, 2013.
\newblock to appear in Math. Ann.

\bibitem[G3]{gauthier-higher}
Thomas Gauthier.
\newblock Higher bifurcation currents, neutral cycles, and the {M}andelbrot
  set.
\newblock {\em Indiana Univ. Math. J.}, 63(4):917--937, 2014.

\bibitem[GV]{gauthiervigny1}
Thomas Gauthier and Gabriel Vigny.
\newblock Distribution of postcritically finite polynomials ii: Speed of
  convergence, 2015.
\newblock preprint arXiv:1505.07325.

\bibitem[H1]{MR1045639}
Lars H{\"o}rmander.
\newblock {\em An introduction to complex analysis in several variables},
  volume~7 of {\em North-Holland Mathematical Library}.
\newblock North-Holland Publishing Co., Amsterdam, third edition, 1990.

\bibitem[H2]{MR2311920}
Lars H{\"o}rmander.
\newblock {\em Notions of convexity}.
\newblock Modern Birkh\"auser Classics. Birkh\"auser Boston, Inc., Boston, MA,
  2007.
\newblock Reprint of the 1994 edition.

\bibitem[I]{Ingram}
Patrick Ingram.
\newblock A finiteness result for post-critically finite polynomials.
\newblock {\em Int. Math. Res. Not. IMRN}, (3):524--543, 2012.

\bibitem[J]{Julia}
Gaston Julia.
\newblock M\'emoire sur la permutabilit\'e des fractions rationnelles.
\newblock {\em Ann. Sci. \'Ecole Norm. Sup. (3)}, 39:131--215, 1922.

\bibitem[K]{Kiwi:cubic}
Jan Kiwi.
\newblock Puiseux series polynomial dynamics and iteration of complex cubic
  polynomials.
\newblock {\em Ann. Inst. Fourier (Grenoble)}, 56(5):1337--1404, 2006.

\bibitem[L]{Levin1}
Genadi Levin.
\newblock On the theory of iterations of polynomial families in the complex
  plane.
\newblock {\em J. Soviet Math.}, 52(6):3512--3522, 1990.

\bibitem[McM1]{McMullen4}
Curtis McMullen.
\newblock Families of rational maps and iterative root-finding algorithms.
\newblock {\em Ann. of Math. (2)}, 125(3):467--493, 1987.

\bibitem[McM2]{McMullen}
Curtis~T. McMullen.
\newblock {\em Complex dynamics and renormalization}, volume 135 of {\em Annals
  of Mathematics Studies}.
\newblock Princeton University Press, Princeton, NJ, 1994.

\bibitem[McM3]{McMullen3}
Curtis~T. McMullen.
\newblock The {M}andelbrot set is universal.
\newblock In {\em The {M}andelbrot set, theme and variations}, volume 274 of
  {\em London Math. Soc. Lecture Note Ser.}, pages 1--17. Cambridge Univ.
  Press, Cambridge, 2000.

\bibitem[Mi1]{milnor3}
John Milnor.
\newblock Geometry and dynamics of quadratic rational maps.
\newblock {\em Experiment. Math.}, 2(1):37--83, 1993.
\newblock With an appendix by the author and Lei Tan.

\bibitem[Mi2]{Milnor4}
John Milnor.
\newblock {\em Dynamics in one complex variable}, volume 160 of {\em Annals of
  Mathematics Studies}.
\newblock Princeton University Press, Princeton, NJ, third edition, 2006.

\bibitem[Mi3]{Milnor-cubic}
John Milnor.
\newblock Cubic polynomial maps with periodic critical orbit. {I}.
\newblock In {\em Complex dynamics}, pages 333--411. A K Peters, Wellesley, MA,
  2009.

\bibitem[MS]{medvedev-scanlon}
Alice Medvedev and Thomas Scanlon.
\newblock Invariant varieties for polynomial dynamical systems.
\newblock {\em Ann. of Math. (2)}, 179(1):81--177, 2014.

\bibitem[P]{pakovich}
Fedor Pakovich.
\newblock Polynomial semiconjugacies, decompositions of iterations, and
  invariant curves, 2015.
\newblock preprint arXiv:1505.06351.

\bibitem[Se]{MR2179691}
Jean-Pierre Serre.
\newblock {\em Lie algebras and {L}ie groups}, volume 1500 of {\em Lecture
  Notes in Mathematics}.
\newblock Springer-Verlag, Berlin, 2006.
\newblock 1964 lectures given at Harvard University, Corrected fifth printing
  of the second (1992) edition.

\bibitem[Si1]{Silverman}
Joseph~H. Silverman.
\newblock {\em The arithmetic of dynamical systems}, volume 241 of {\em
  Graduate Texts in Mathematics}.
\newblock Springer, New York, 2007.

\bibitem[Si2]{Silverman-modulispace}
Joseph~H. Silverman.
\newblock {\em Moduli spaces and arithmetic dynamics}, volume~30 of {\em CRM
  Monograph Series}.
\newblock American Mathematical Society, Providence, RI, 2012.

\bibitem[T]{thuillier}
Amaury Thuillier.
\newblock Th\'eorie du potentiel sur les courbes en g\'eom\'etrie analytique
  non archim\'edienne. applications \`a la th\'eorie d'arakelov, 2005.
\newblock Th\`ese de l'Universit\'e de Rennes 1, viii + 184 p.

\bibitem[X]{Xie}
Junyi Xie.
\newblock Intersections of valuation rings in {$k[x,y]$}.
\newblock {\em Proc. Lond. Math. Soc. (3)}, 111(1):240--274, 2015.

\bibitem[Y]{yuan}
Xinyi Yuan.
\newblock Big line bundles over arithmetic varieties.
\newblock {\em Invent. Math.}, 173(3):603--649, 2008.

\bibitem[Z]{zhang}
Shou-Wu Zhang.
\newblock Positive line bundles on arithmetic varieties.
\newblock {\em J. Amer. Math. Soc.}, 8:187--221, 1995.

\end{thebibliography}

\end{document}